\newtheorem{main}{Theorem}
\newtheorem{mcor}[main]{Corollary}
\newtheorem{theorem}{Theorem}[section]
\newtheorem{lem}[theorem]{Lemma}
\newtheorem{prop}[theorem]{Proposition}
\newtheorem{cor}[theorem]{Corollary}
\theoremstyle{definition}
\newtheorem{definition}[theorem]{Definition}
\newtheorem{notation}[theorem]{Notation}
\newtheorem{Remark}[theorem]{Remark}
\newtheorem{claim}[theorem]{Claim}
\def\ca{\curvearrowright}
\def\ra{\rightarrow}
\def\la{\lambda}
\def\La{\Lambda}
\def\ve{\varepsilon}
\def\g{\gamma}
\def\G{\Gamma}
\def\Cal{\mathcal}
\numberwithin{equation}{section}
\def\mod{\mathrm{Mod}}
\def\pmod{\mathrm{PMod}}
\newcommand{\calg}{\mathcal{G}}
\newcommand{\cals}{\mathcal{S}}
\newcommand{\caln}{\mathcal{N}}
\newcommand{\calm}{\mathcal{M}}
\newcommand{\calh}{\mathcal{H}}
\newcommand{\calt}{\mathcal{T}}
\newcommand{\calu}{\mathcal{U}}
\newcommand{\calc}{\mathcal{C}}
\newcommand{\call}{\mathcal{L}}
\newcommand{\calcp}{\mathcal{CP}}
\newcommand{\cali}{\mathcal{I}}
\newcommand{\calk}{\mathcal{K}}
\newcommand{\comm}{\mathrm{Comm}}
\begin{document}

\title[Superrigidity Results for Actions by Surface Braid Groups]{$OE$ and $W^*$ Superrigidity Results for Actions by Surface Braid Groups}
\author[I. Chifan]{Ionut Chifan}
\address{Department of Mathematics, The University of Iowa, 14 MacLean Hall, IA  
52242, USA and IMAR, Bucharest, Romania}
\email{ionut-chifan@uiowa.edu}
\thanks{I.C.\ was supported in part by the Old Gold Fellowship, University of Iowa and by NSF Grant \#1301370.}
\author[Y. Kida]{Yoshikata Kida}
\address{Graduate School of Mathematical Sciences, The University of Tokyo, Komaba, Tokyo 153-8914, Japan}
\email{kida@ms.u-tokyo.ac.jp}
\thanks{Y.K.\ was supported by JSPS Grant-in-Aid for Young Scientists (B), No.25800063}
\subjclass[2010]{20F36; 37A20; 46L10}
%\subjclass[2010]{22D40; 20F65; 43A15}
\date{September 1, 2015}
\dedicatory{}
\keywords{}

\begin{abstract}

We show that several important normal subgroups $\G$ of the mapping class group of a surface satisfy the following property: any free, ergodic, probability measure preserving action $\G\ca X$ is stably $OE$-superrigid. These include the central quotients of most surface braid groups and most Torelli groups and Johnson kernels. In addition, we show that all these groups satisfy the measure equivalence rigidity and we describe all their lattice-embeddings.

%We single out natural families of normal subgroups $\G$ of the mapping class group of a surface satisfying the property that \emph{any} free, aperiodic, measure preserving action $\G\ca X$ on a standard probability space is $OE$-superrigid. These include most surface braid groups (and their central quotients), Torelli groups, and Johnson kernels.  

Using these results in combination with previous results from \cite{CIK13} we deduce that any free, ergodic, probability measure preserving action of almost any surface braid group is stably $W^*$-superrigid, i.e., it can be completely reconstructed from its von Neumann algebra.

\end{abstract}

\maketitle

%%%%%%%%%%%%%%%%%%%%%%%%%%%%%%%%%%%%%%%%%%%%%%%%%%%%%%%%%%%%%%%%%%%%%%%%%%%%%
%%%%%%%%%%%%%%%%%%%%%%%%%%%%%%%%%%%%%%%%%%%%%%%%%%%%%%%%%%%%%%%%%%%%%%%%%%%%%
%%%%%%%%%%%%%%%%%%%%%%%%%%  INTRODUCTION %%%%%%%%%%%%%%%%%%%%%%%%%%%%%%%%%%%%%%%%%%
%%%%%%%%%%%%%%%%%%%%%%%%%%%%%%%%%%%%%%%%%%%%%%%%%%%%%%%%%%%%%%%%%%%%%%%%%%%%%%
%%%%%%%%%%%%%%%%%%%%%%%%%%%%%%%%%%%%%%%%%%%%%%%%%%%%%%%%%%%%%%%%%%%%%%%%%%%%%

\section{Introduction}

The study of rigidity phenomena in orbit equivalence has its inception in the fundamental work of Zimmer, who showed that free, ergodic, probability measure preserving (p.m.p.) actions of higher-rank simple Lie groups and their lattices satisfy a strong-type rigidity, \cite{Zi80}. This is a consequence of Zimmer's cocycle superrigidity theorem which in turn extends Margulis' superrigidity theorem in the context of measurable cocycles, \cite{Ma74}. In the last decades extreme forms of rigidity in orbit equivalence have been discovered. Two free, p.m.p.\ actions of countable groups $\G\ca X$ and $\La \ca Y$ are called \emph{orbit equivalent} if there exists a probability measure spaces isomorphism $\psi \colon X\ra Y$ such that $\psi(\G x)=\La \psi(x)$, for almost every $x\in X$. A free, ergodic, p.m.p.\ action of a countable group $\Gamma \ca X$  is called \textit{(stably) $OE$-superrigid} if it satisfies the property that any free, ergodic, p.m.p.\ action of a countable group $\Lambda \ca Y$, which is (stably) orbit equivalent to $\Gamma \ca X$, is (stably) conjugate to $\G \ca X$. In other words, these are actions which are completely reconstructible from their orbits. The first such examples emerged from the groundbreaking work of Furman, who showed that the action of $SL_n(\mathbb{Z})$ on the $n$-torus with $n\geq 3$ is $OE$-superrigid, \cite{Fu99a, Fu99b}. Since then, many other examples have been discovered through combined efforts spawning from several directions of research including measurable methods group theory, geometric group theory, Popa's deformation/rigidity theory, or $C^*$-algebraic techniques, \cite{BFS10, Fu06, Io08, Io14, Ki08, Ki09b, Ki10, MS04, Po05, Po08, PV08, Sa09}. For further references and other related topics in orbit equivalence we encourage the reader to consult the following excellent surveys \cite{Fu09, Ga09, Po06, Sh05, V10}.

Let $S=S_{g, k}$ be a connected, compact, orientable surface of genus $g$ with $k$ boundary components and denote by $\mod(S)$ the corresponding mapping class group. 
The second author showed that, given any high complexity surface $S$ ($3g+k-4>0$), any free, ergodic, p.m.p.\ action $\mod(S)\ca X$ is stably $OE$-superrigid, \cite{Ki06, Ki08}; this was the first occurrence of examples of infinite, countable groups whose \emph{arbitrary} action is stably $OE$-superrigid.
In the subsequent work, the same rigidity was obtained for free, ergodic, p.m.p.\ actions of the amalgamated free product $SL_3(\mathbb{Z})\ast_\Sigma SL_3(\mathbb{Z})$, where $\Sigma$ is the subgroup of $SL_3(\mathbb{Z})$ consisting of matrices $a=(a_{ij})$ with $a_{31}=a_{32}=0$, \cite{Ki09b}.

One of the main goals of this paper is to provide other natural examples of groups which satisfy the superrigidity phenomenon described above. Let $M$ be a closed, orientable surface of genus $g$, let $k$ be a positive integer, and denote by $F_k(M)$ the space of $k$-ordered, mutually distinct points of $M$. The fundamental group of $F_k(M)$ is denoted by $PB_k(M)$ and it is called the \textit{pure braid group of $k$ strands on $M$}. Notice that $PB_1(M)=\pi_1(M)$.
Moreover, using Birman's exact sequence \cite{Bi74}, the group $PB_k(M)$---with only a few exceptions---can be naturally identified with a certain normal subgroup of the mapping class group $\mod(S)$, where $S=S_{g, k}$. This subgroup of $\mod(S)$ will be henceforth denoted by $P(S)$. Building upon previous methods and results from \cite{Ki05, Ki06, KY10a, KY10b}, we show the following:

\begin{main}\label{main1}
Let $M$ be a closed orientable surface of genus $g\geq 2$ and let $k\geq 2$ be an integer. Then any free, ergodic, p.m.p.\ action $PB_k(M)\ca X$ is stably $OE$-superrigid.
\end{main}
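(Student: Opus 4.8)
The plan is to import the strategy developed by the second author for the full mapping class group and to adapt it to the normal subgroup $P(S)\cong PB_k(M)$ of $\mod(S)$, where $S=S_{g,k}$. Since $g\geq 2$ and $k\geq 2$, the surface $S$ has high complexity, $3g+k-4>0$, and the Birman identification $PB_k(M)\cong P(S)$ holds without exception; thus the measure-equivalence machinery for $\mod(S)$ from \cite{Ki05,Ki06} and its refinements for subgroups from \cite{KY10a,KY10b} are available. The argument unfolds in three movements: first I would encode a hypothetical stable orbit equivalence as a measure equivalence coupling together with its associated orbit-equivalence cocycle; next I would rigidify this coupling using the combinatorial geometry of the curve complex; and finally I would untwist the cocycle to a group isomorphism and read off stable conjugacy.

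Concretely, suppose $PB_k(M)\ca X$ is stably orbit equivalent to a free, ergodic, p.m.p.\ action $\La\ca Y$, and set $\G:=P(S)$. This produces a measure equivalence coupling $\Omega$ of $\G$ and $\La$ together with a measurable cocycle $\al\colon \G\times X\ra \La$ implementing the equivalence. The theorem reduces to showing that $\al$ is cohomologous to a group isomorphism $\G\ra\La$: granting this, the freeness of both actions and the standard compression bookkeeping upgrade the cohomology to a stable conjugacy.

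The engine for the untwisting is the measure equivalence rigidity of $\G=P(S)$. The key geometric input is that $\G$ acts on the curve complex $\calc(S)$ by restriction of the ambient $\mod(S)$-action, and that this restricted action is non-elementary: $P(S)$ contains pseudo-Anosov elements, since point-pushing along a filling loop produces such a mapping class. Consequently there should be an essentially unique $\G$-equivariant Borel map from $\Omega$ into the space of probability measures on the boundary $\partial\calc(S)$, and its uniqueness forces the coupling $\Omega$ to be governed entirely by this combinatorial geometry. Feeding this into the computation of the (measurable) commensurator of $P(S)$ --- the analogue of Ivanov's theorem $\comm(\mod(S))=\mod(S)$, now carried out for the point-pushing subgroup in \cite{KY10a,KY10b} --- pins down the range of the untwisted homomorphism, shows $\al$ is cohomologous to a homomorphism $\G\ra\La$, and, by running the argument symmetrically and using freeness, that this homomorphism is in fact an isomorphism. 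The desired stable conjugacy follows.

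The step I expect to be the main obstacle is this middle movement: establishing measure equivalence rigidity for the proper normal subgroup $P(S)$ rather than for all of $\mod(S)$. Because $P(S)$ is the kernel of the Birman map forgetting the $k$ marked points, one must verify that it nonetheless acts with enough irreducibility on $\calc(S)$ that no nontrivial $\G$-equivariant measurable datum can collapse onto a reducible or elementary configuration --- equivalently, that the canonical equivariant map to probability measures on $\partial\calc(S)$ behaves as rigidly for $P(S)$ as it does for $\mod(S)$, and that the commensurator computation goes through despite $P(S)$ being only a normal, not finite-index, subgroup. Once this geometric non-degeneracy and the attendant commensurator rigidity are secured, the remaining untwisting and the passage to conjugacy follow the established template with only routine adaptations.
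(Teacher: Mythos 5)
Your high-level architecture is the right one and matches the paper's: convert the stable orbit equivalence into a self-coupling/cocycle problem, prove a tautness (coupling rigidity) statement for $P(S)$ relative to $\mod^*(S)$, and then run Furman's machinery to untwist the cocycle and obtain stable conjugacy. However, the middle movement --- which you yourself flag as the main obstacle --- is where the actual content lies, and the route you sketch for it would not work as stated. An essentially unique $\G$-equivariant map from the coupling into probability measures on $\partial\calc(S)$ does not by itself produce an equivariant map into the discrete group $\mod^*(S)$, which is what tautness requires; even for the full mapping class group in \cite{Ki06} the argument is not a boundary-measure argument but a reconstruction of the curve complex from the groupoid, followed by Ivanov's automorphism theorem. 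For the proper normal subgroup $P(S)$ the situation is strictly harder, and the boundary of $\calc(S)$ is the wrong combinatorial object.

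The missing mechanism is the following. Since $P(S)$ is generated by Dehn twists about hole-bounding curves (HBCs) and hole-bounding pairs (HBPs), and $D_\sigma\cap P(S)$ is generated by exactly those twists supported in $\sigma$ (Lemma \ref{lem-twist}), the only subgroupoids of $\G\ltimes X$ that can be pinned down in purely algebraic terms are the stabilizers $\calg_v$ of HBCs and non-separating HBPs; this is done via chains of normal subgroupoids with prescribed amenability properties (Lemmas \ref{lem-2-cases}--\ref{lem-p-stab}, resting on the T/IA/IN analysis of reducible subgroupoids and their canonical reduction systems). One then shows that a groupoid isomorphism between two actions carries these stabilizers and their twist subgroupoids to one another (Lemmas \ref{lem-stab-pre} and \ref{lem-twist-pre}), which yields, pointwise on the base, a \emph{superinjective} map from the subcomplex $\calcp_n(S)$ into the complex $\calcp(S)$ of hole-bounding curves and pairs --- not an automorphism of $\calc(S)$, and not a commensurator computation in the abstract. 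The rigidity input is then Theorem \ref{thm-si} (\cite[Corollary 8.15]{KY10b}): every superinjective map $\calcp_n(S)\to\calcp(S)$ is induced by a unique element of $\mod^*(S)$. Note also the asymmetry between $\calcp_n(S)$ and $\calcp(S)$ is forced: the chain characterization genuinely fails for some separating HBPs (see the Remark following Lemma \ref{lem-bp-nor}), so one cannot simply invoke an automorphism theorem for $\calcp(S)$. Without this algebraic characterization of the HBC/HBP stabilizer subgroupoids and the superinjectivity rigidity of $\calcp_n(S)$, your proposal does not close the gap it identifies.
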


When $g=0$ or $1$, the group $PB_k(M)$ has infinite center and the same conclusion holds for its central quotient, assuming $2g+k\geq 5$. In addition, we show that the theorem still holds true if instead of $PB_k(M)\simeq P(S)$ one considers other natural, normal subgroups of $\mod(S)$, such as the \textit{Torelli group}, $\cali(S)$ and the \textit{Johnson kernel}, $\calk(S)$ (see Theorem \ref{oesuperrigidity}) or, more generally, any finite direct product between all the aforementioned groups (see Remark \ref{oerig-prod}).

As a by-product of our methods, we obtain measure equivalence rigidity results for $PB_k(M)$ and we describe all its lattice-embeddings. Measure equivalence is a notion introduced by Gromov \cite{Gr93} as a measure-theoretic counterpart to quasi-isometry between finitely generated groups.
It is well known that two countable groups $\Gamma$ and $\Lambda$ are measure equivalent if and only if there exist free, ergodic, p.m.p.\ actions $\Gamma \ca X$ and $\Lambda \ca Y$ which are stably orbit equivalent,  \cite{Fu99b}. Thus, assertion (i) in the following theorem is a corollary of Theorem \ref{main1}. Assertion (ii) follows by appealing to the methods developed in \cite{Fu01} to describe all lattice-embeddings for higher rank lattices.

\begin{main}\label{main2}
Let $M$ be a closed, orientable surface of genus $g\geq 2$ and let $k\geq 2$ be an integer. Then the following hold:
\begin{enumerate}
\item[(i)] Any countable group that is measure equivalent to $PB_k(M)$ is virtually isomorphic to $PB_k(M)$.
\item[(ii)] Let $\Gamma$ be a finite index subgroup of $PB_k(M)$.
Then any locally compact, second countable group containing a lattice isomorphic to $\Gamma$ admits a finite index subgroup which is continuously isomorphic to a semidirect product $\Gamma \ltimes K$, where $K$ is a compact group. 
\end{enumerate}
\end{main}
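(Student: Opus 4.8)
The plan is to derive both assertions from the stable $OE$-superrigidity established in Theorem \ref{main1}, together with Furman's dictionary between measure equivalence and orbit equivalence. Write $\Pi := PB_k(M)$ and recall that, for $g \geq 2$, the group $\Pi$ is torsion-free with trivial center; consequently ``virtually isomorphic'' reduces here to the existence of isomorphic finite-index subgroups.

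For (i), let $\Lambda$ be a countable group measure equivalent to $\Pi$. By the Furman correspondence \cite{Fu99b}, there exist free, ergodic, p.m.p.\ actions $\Pi \ca X$ and $\Lambda \ca Y$ that are stably orbit equivalent; enlarging $Y$ by a free ergodic action if necessary, one may assume the $\Lambda$-action is free. Theorem \ref{main1} applies to $\Pi \ca X$ and forces $\Lambda \ca Y$ to be stably conjugate to $\Pi \ca X$. Since both actions are essentially free and both groups are torsion-free, this stable conjugacy is implemented by a measure-scaling isomorphism intertwining the two actions through an isomorphism between finite-index subgroups of $\Pi$ and of $\Lambda$; hence $\Lambda$ is virtually isomorphic to $\Pi$.

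For (ii), let $\Gamma \leq \Pi$ be of finite index and suppose $\Gamma$ embeds as a lattice in a locally compact, second countable group $G$. I would follow the scheme of \cite{Fu01}. The inclusion $\Gamma < G$ turns $(G, m_G)$ into a $(\Gamma, G)$-coupling in the sense of \cite{Fu01}, and composing it with its inverse over $G$ yields an ergodic self-coupling of $\Gamma$ carrying a residual $G$-symmetry. The measure equivalence rigidity of $\Gamma$---the self-coupling analysis underpinning Theorem \ref{main1} and Theorem \ref{oesuperrigidity}, built on \cite{Ki05, Ki06, KY10a, KY10b}---identifies every ergodic self-coupling of $\Gamma$ with the tautological one, up to the action of the commensurator. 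Transporting the residual symmetry through this identification produces a continuous homomorphism $\rho \colon G \to \comm(\Gamma)$ whose kernel $K := \ker \rho$ is compact.

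The decomposition is then completed by commensurator rigidity. Since $P(S)$ is a canonical normal subgroup of $\mod(S)$, the Ivanov-type rigidity underlying \cite{KY10a, KY10b} shows that $\comm(\Gamma)$ is a \emph{discrete} group commensurable with $\Pi$; hence $\rho(G)$ is discrete, and it contains the image $\rho(\Gamma)$, which has finite index in $\comm(\Gamma)$. As $\Gamma$ is torsion-free and $K$ is compact, $\Gamma \cap K = 1$, so $\rho$ restricts to an injection of $\Gamma$ into the discrete group $\rho(G)$ and the lattice $\Gamma$ itself provides a section of $\rho$ over the finite-index subgroup $G_0 := \rho^{\inv}(\rho(\Gamma))$. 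Thus $G_0 = \Gamma \ltimes K$ with $K$ compact, as required. I expect the genuine difficulty to lie precisely in the discreteness of the target: one must verify that $\comm(\Gamma)$ is discrete---equivalently, that $\Gamma$ is commensurator-rigid inside $\mod(S)$---for otherwise $\rho(G)$ need not be discrete (as happens already for arithmetic lattices in higher-rank Lie groups) and the compact kernel would fail to split off. Establishing this rigidity, rather than the passage from the coupling to the homomorphism, is where the specific structure of surface braid groups and the computations of \cite{KY10a, KY10b} are essential.
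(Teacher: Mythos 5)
Your overall route is the paper's: both assertions are driven by the tautness of $P(S)$ relative to $\mod^*(S)$ (Theorem \ref{thm-self}) fed into Furman's machinery --- (i) via the ME/stable-OE dictionary of \cite{Fu99b}, exactly as in Theorem \ref{thm-mer}, and (ii) via the lattice-embedding analysis of \cite[Section 2]{Fu01}, exactly as in Theorem \ref{thm-lat}. Part (i) is essentially fine, modulo the caveat that ``virtually isomorphic'' must allow quotienting $\Lambda$ by a finite normal subgroup: the conclusion one actually gets (Theorem \ref{thm-mer}) is a homomorphism $\Lambda \to \mod^*(S)$ with \emph{finite}, not trivial, kernel (e.g.\ $\Lambda = PB_k(M)\times \mathbb{Z}/2$ is ME to $PB_k(M)$), so you cannot reduce the statement to isomorphic finite-index subgroups of $\Lambda$ itself.

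In (ii) there is a concrete error. The group $\comm(\Gamma)$ is \emph{not} commensurable with $PB_k(M)$, and $\rho(\Gamma)$ does \emph{not} have finite index in it: by \cite{KY10a} (and by tautness together with \cite[Lemma 3.9]{Ki09b}) the abstract commensurator of $P(S)$ is essentially all of $\mod^*(S)$, in which $P(S)$ has infinite index. So your justification that $G_0=\rho^{-1}(\rho(\Gamma))$ has finite index in $G$ collapses as written. The correct step --- and it is part of the conclusion of Theorem \ref{thm-lat}, obtained from \cite{Fu01} --- is that the \emph{image} $\rho(G)$ is commensurable with $\Gamma$: since $\ker\rho$ is compact and $\Gamma$ is a lattice in $G$, the pushforward of Haar measure yields a finite invariant measure on the countable discrete homogeneous space $\rho(G)/\rho(\Gamma)$, which forces $[\rho(G):\rho(\Gamma)]<\infty$. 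Relatedly, your diagnosis that the ``genuine difficulty'' is the discreteness of $\comm(\Gamma)$ is misplaced: for a finitely generated group the abstract commensurator is automatically countable, and in this paper the target of the equivariant map is the countable group $\mod^*(S)$ from the outset. The real work is tautness condition (1) of Definition \ref{defn-taut} --- producing the $(\Gamma\times\Gamma)$-equivariant map from an \emph{arbitrary} self-coupling into $\mod^*(S)$ --- which occupies Sections \ref{sec-groupoids-mcg}--\ref{sec-sbg} and rests on the superinjective-map rigidity of $\calcp(S)$ (Theorem \ref{thm-si}), not on commensurator discreteness.
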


As before, the same conclusion holds for the Torelli group and the Johnson kernel of most surfaces, as well as any finite direct product of such groups.
We refer the reader to Section \ref{sec-me-oe} for the precise statements.

\vskip 0.03in

A free, ergodic, p.m.p.\ action $\G \ca X$ is called \emph{(stably) $W^*$-superrigid} if, for any given free, ergodic, p.m.p.\ action $\La \ca  Y$, any isomorphism between the group von Neumann algebras $L^\infty(X)\rtimes \G$ and $L^\infty(Y)\rtimes\La$ entails a (stable) conjugacy between the actions $\G \ca X$ and $ \La\ca Y$. In other words, these are actions which can be completely reconstructed---up to (stable) conjugacy---from their von Neumann algebras. The study of $W^*$-superrigid actions has received a lot of attention over the last years as it plays a central role in the classification of the group measure space von Neumann algebras. It was noted by Singer that two free, ergodic, p.m.p.\ actions are orbit equivalent if and only if  there exists an isomorphism between the corresponding von Neumann algebras preserving the canonical Cartan subalgebras, \cite{Si55}. Thus, a free, ergodic, p.m.p.\  action $\G \ca X$ is (stably) $OE$-superrigid if and only if the following property is satisfied: whenever $ \La\ca Y$  is a free, ergodic, p.m.p.\ action, the existence of a $*$-isomorphism $\psi \colon L^\infty(X)\rtimes \G \ra L^\infty(Y)\rtimes \La$ satisfying $\psi(L^\infty(X))=L^\infty (Y)$ entails that the actions $\G\ca X$ and $\La\ca Y$ must be (stably) conjugate. Consequently, (stable) $W^*$-superrigidity is a priori stronger than (stable) $OE$-superrigidity as it is the logical sum of the former and the property that $L^\infty(X)$ is the unique group measure space Cartan subalgebra of  $L^\infty(X)\rtimes \G$, up to unitary conjugacy. Free, ergodic, p.m.p.\ actions satisfying this last property are called \emph{$\mathcal C$-superrigid}. 

To study the $W^*$-superrigidity phenomenon and other problems of central importance in the classification of von Neumann algebras and related fields, Popa introduced (over a decade ago) a completely new conceptual framework, now termed  \emph{Popa's deformation/rigidity theory}. This theory develops a powerful technical paraphernalia designed to incorporate various cohomological, geometric, and algebraic information of a group and its actions at the von Neumann algebraic level. Through this novel approach Popa and his co-authors obtained striking classification results for the group von Neumann algebras and beyond. For instance, in their remarkable work \cite{OP07}, Ozawa and Popa discovered the first examples of $\mathcal C$-superrigid actions: all free, ergodic, profinite p.m.p.\ actions of non-abelian free groups on standard probability spaces. This is a result that deeply influenced the entire subsequent developments on the classification of group measure space von Neumann algebras.

%\begin{enumerate}
%\item [a)] \emph{$\G\ca  X$ is $\mathcal C$-superrigid} (or it has the \emph{unique group measure space Cartan subalgebra property}): whenever its corresponding von Neumann algebra $N = L^\infty(X) \rtimes \G$ admits another group measure space decomposition $N = L^\infty(Y ) \rtimes \Lambda$ arising from a free, ergodic, p.m.p.\ action $\La\ca Y$, then the group measure space Cartan subalgebras $L^\infty (X)$ and $L^\infty(Y)$ are conjugated by a unitary in $N$;
%\item [b)] \emph{$\G\ca  X$ is (stably) $OE$-superrigid}. 
%\end{enumerate}

In \cite{Pe09}, Peterson proved the existence of $W^*$-superrigid actions. Shortly after, Popa and Vaes were able to provide the first concrete examples $W^*$-superrigid actions \cite{PV09}. Their examples include large classes of Bernoulli actions of various amalgamated free products groups. Later, Ioana managed to show that any Bernoulli action $\G\ca [0,1]^\G$ is $W^*$-superrigid whenever $\G$ is an ICC property (T) group. Other examples of $W^*$-superrigid actions were unveiled subsequently, through combined efforts spawning from both measurable methods in orbit equivalence and Popa's deformation/rigidity theory, \cite{Po05,Ki06,Ki08,Io08,Pe09,Ki09b,PV09,FV10,CP10,HPV10,Io10,IPV10,Va10,CS11,CSU11,PV11,PV12,Bo12,CIK13}.

In particular, Houdayer-Popa-Vaes discovered that if $\Gamma =SL_3(\mathbb{Z})\ast_\Sigma SL_3(\mathbb{Z})$ then any free, ergodic, p.m.p.\ action $\Gamma \ca X$ is $\mathcal C$-superrigid, \cite{HPV10}. Combining this with the second author's stable $OE$-superrigidity result from \cite{Ki09b} it follows that any free, ergodic, p.m.p.\ action $\Gamma \ca X$ is stably $W^*$-superrigid. This was the first instance of a countable infinite group whose \emph{every} free, ergodic, p.m.p.\ action is stably $W^*$-superrigid.  Ioana and the authors proved in \cite{CIK13} that all mapping class groups $\mod(S_{g,k})$ of high complexity surfaces $S_{g,k}$ of low genus ($g\leq 2$) are other examples of such groups. This was essentially obtained from the second author's previous results on stable $OE$-superrigidity for mapping class groups \cite{Ki06} and the $\mathcal C$-superrigidity results for actions by finite step extensions of hyperbolic groups, \cite[Corollary 3.2]{CIK13} (see also \cite[Proposition 4.6]{VV14}). 

In this paper we show that almost all surface braid groups $PB_k(M)$ satisfy the superrigidity phenomenon described above, thus adding numerous new examples to the list. Indeed, by \cite[Theorem 3.7]{CKP14}, most of these groups are finite step extensions by hyperbolic groups and, by  \cite[Corollary 3.2]{CIK13}, all their free, ergodic, p.m.p.\ actions are $\mathcal C$-superrigid. This, in combination with Theorem \ref{main1}, leads to the following:

\begin{mcor}\label{main3}
Let $M$ be a closed orientable surface of genus $g\geq 2$ and let $k\geq 2$ be an integer. Then any free, ergodic, p.m.p.\ action $PB_k(M)\ca X$ is stably $W^*$-superrigid.
\end{mcor}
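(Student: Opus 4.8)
The plan is to obtain stable $W^*$-superrigidity as the conjunction of two properties, exactly as recalled in the introduction following Singer's observation. Concretely, a free, ergodic, p.m.p.\ action $\G\ca X$ is stably $W^*$-superrigid if and only if it is simultaneously stably $OE$-superrigid and $\mathcal C$-superrigid, the latter meaning that $L^\infty(X)$ is the unique group measure space Cartan subalgebra of $L^\infty(X)\rtimes\G$ up to unitary conjugacy. Since Theorem \ref{main1} already supplies the stable $OE$-superrigidity of every free, ergodic, p.m.p.\ action of $\G=PB_k(M)$, it remains only to establish the $\mathcal C$-superrigidity of each such action.

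For the Cartan uniqueness I would rely on the geometric group structure of these braid groups. Under the standing hypotheses $g\geq 2$ and $k\geq 2$, the pure braid group $PB_k(M)$ is a finite step extension by hyperbolic groups; this is \cite[Theorem 3.7]{CKP14}, reflecting the iterated point-pushing structure coming from the Birman exact sequence, in which the successive kernels are hyperbolic (free or surface) groups and the bottom term is the hyperbolic surface group $\pi_1(M)$. This is precisely the class of groups for which \cite[Corollary 3.2]{CIK13} establishes $\mathcal C$-superrigidity of arbitrary free, ergodic, p.m.p.\ actions. Applying that corollary to $\G=PB_k(M)$ yields the required uniqueness of the group measure space Cartan subalgebra.

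Combining the two inputs finishes the argument. Given any free, ergodic, p.m.p.\ action $\La\ca Y$ and any $*$-isomorphism $\psi\colon L^\infty(X)\rtimes\G\to L^\infty(Y)\rtimes\La$, the pullback $\psi^{-1}(L^\infty(Y))$ is a group measure space Cartan subalgebra of $L^\infty(X)\rtimes\G$, so by $\mathcal C$-superrigidity it is unitarily conjugate to $L^\infty(X)$. After correcting $\psi$ by this unitary we obtain a $*$-isomorphism carrying $L^\infty(X)$ onto $L^\infty(Y)$, which by Singer's theorem realizes a stable orbit equivalence between $\G\ca X$ and $\La\ca Y$; the stable $OE$-superrigidity of Theorem \ref{main1} then upgrades this to a stable conjugacy. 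Because both ingredients are already at hand, there is no serious analytic difficulty remaining; the only point deserving attention---and the one I regard as the main obstacle---is to confirm that the hypotheses of \cite[Theorem 3.7]{CKP14} and \cite[Corollary 3.2]{CIK13} genuinely hold throughout the range $g\geq 2$, $k\geq 2$, so that the finite step extension property and the ensuing $\mathcal C$-superrigidity apply without exception.
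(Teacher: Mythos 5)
Your argument is correct and is essentially the paper's own proof: the authors likewise obtain Corollary \ref{main3} by combining the stable $OE$-superrigidity of Theorem \ref{main1} with the $\mathcal C$-superrigidity supplied by \cite[Corollary 3.2]{CIK13}, applicable because \cite[Theorem 3.7]{CKP14} places $PB_k(M)\simeq P(S_{g,k})$ for $g\geq 2$, $k\geq 2$ in the class ${\bf Quot}(\mathcal C_{rss})$ of finite-step extensions of groups in $\mathcal C_{rss}$, and then gluing the two via Singer's observation exactly as you describe.
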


Moreover, we show the same result holds for most Torelli groups and Johnson kernels associated with surfaces of genus one (Theorem \ref{wsuperrigid}). As a result, Remark \ref{identification} combined with Corollary \ref{main3} and  \cite[Theorem A]{CIK13} settle completely the $W^*$-superrigidity question for all free, ergodic, p.m.p.\ actions of the central quotients of surface braid groups. 

In the last part of the paper we exhibit a family of subgroups in direct products of iterated amalgams or HNN-extensions over abelian subgroups whose all free, mixing, p.m.p.\ actions are $\mathcal C$-superrigid (see Theorem \ref{c-sr1} for the precise statement). The result is obtained by combining powerful, recent classification results on normalizers of subalgebras in II$_1$ factors from \cite{PV12,Io12,Va13} with techniques from \cite{CIK13}.  

When this is combined with recent developments in group theory \cite{KM13,KM12} we obtain the following:

\begin{mcor}\label{main4}  Let $\Lambda$ be a torsion-free, non-elementary hyperbolic group. If $\G$ is any finitely presented, ICC, residually-$\Lambda$ group, then  any free, mixing, p.m.p.\ action $\G \ca X$ on a non-atomic probability space is $\mathcal C$-superrigid.
\end{mcor}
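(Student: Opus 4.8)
The plan is to realize any such $\G$ as a member of the family of groups covered by Theorem~\ref{c-sr1}, so that the $\Cal C$-superrigidity of its free, mixing actions becomes a direct consequence of that theorem; the only genuinely new work is the group-theoretic input that matches the hypotheses of Theorem~\ref{c-sr1}, which is where \cite{KM12, KM13} enter.

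First I would produce an embedding of $\G$ into a \emph{finite} direct product of groups that split as iterated amalgams and HNN-extensions over abelian subgroups. The residually-$\La$ hypothesis gives an injective homomorphism $\G \hookrightarrow \prod_{\vp} \La$, where $\vp$ ranges over the separating homomorphisms $\G \ra \La$. A priori this is an infinite product, so the key reduction is to collapse it to finitely many factors. Here I would invoke that torsion-free, non-elementary hyperbolic groups are equationally Noetherian, together with the structure theory of $\La$-limit groups developed in \cite{KM12, KM13}: since $\G$ is finitely presented (hence finitely generated), the separating maps factor through finitely many maximal $\La$-limit quotients $G_1, \dots, G_n$, yielding an injection $\G \hookrightarrow G_1 \times \cdots \times G_n$. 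Each $G_j$, being a $\La$-limit group over a torsion-free hyperbolic group, carries a hierarchical decomposition as an iterated amalgamated free product and HNN-extension over finitely generated abelian edge subgroups, with hyperbolic vertex groups. This places $G_1 \times \cdots \times G_n$ precisely in the ambient class considered in Theorem~\ref{c-sr1}.

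Next I would verify that the embedded copy of $\G$ satisfies the remaining membership conditions of Theorem~\ref{c-sr1}. The ICC hypothesis on $\G$ is given; combined with the amenability of the abelian edge groups of the splittings, the deformation/rigidity input from \cite{PV12, Io12, Va13} (as packaged in the proof of Theorem~\ref{c-sr1} through the techniques of \cite{CIK13}) applies to any crossed product $L^\infty(X)\rtimes \G$ arising from a free, mixing, p.m.p.\ action $\G \ca X$ on a non-atomic space. Invoking Theorem~\ref{c-sr1} then gives that $L^\infty(X)$ is, up to unitary conjugacy, the unique group measure space Cartan subalgebra of $L^\infty(X)\rtimes \G$; that is, the action is $\Cal C$-superrigid, which is the desired conclusion.

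The main obstacle is the group theory of the second paragraph: passing from the abstract residual embedding into $\prod_{\vp}\La$ to a \emph{finite} direct product of groups equipped with an explicit amalgam/HNN hierarchy over \emph{abelian} edge subgroups, since the uniqueness-of-Cartan machinery of \cite{PV12, Io12, Va13} is driven precisely by the amenability of those edge groups and by the control of how $\G$ sits inside the finite direct product. Ensuring that the $\La$-limit quotients split over abelian (and not over larger, possibly non-amenable) subgroups, and that the mixing hypothesis together with ICC suffices to exclude the degenerate embeddings ruled out by Theorem~\ref{c-sr1}, is the crux; everything on the von Neumann algebra side is then inherited from Theorem~\ref{c-sr1}.
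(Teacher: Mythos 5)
Your overall strategy --- embed $\G$ into a finite direct product of groups built over $\La$ by an abelian amalgam/HNN hierarchy and then quote Theorem~\ref{c-sr1} --- is the same as the paper's (Corollary~\ref{c-sr-La}), and the reduction of the infinite product $\prod_{\vp}\La$ to finitely many maximal $\La$-limit quotients $G_1,\ldots,G_n$ is fine. The gap is in the sentence claiming that this ``places $G_1\times\cdots\times G_n$ precisely in the ambient class considered in Theorem~\ref{c-sr1}.'' Theorem~\ref{c-sr1} does not apply to arbitrary iterated amalgams and HNN-extensions over abelian edge groups: Notation~\ref{not2} demands, at each stage, either an amalgam $\La^k_{i-1}\star_{A^k_i}B^k_i$ in which the \emph{entire second vertex group} $B^k_i$ is abelian and the edge group $A^k_i$ is malnormal in $\La^k_{i-1}$, or an HNN-extension over an abelian malnormal subgroup satisfying $\la A^k_i\la^{-1}\cap\phi^k_i(A^k_i)=\{ e\}$ for all $\la$. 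The abelian (cyclic) JSJ hierarchy of a $\La$-limit group fails these conditions in general: both vertex groups of a given splitting can be non-abelian, the edge groups need not be malnormal (they need not be maximal abelian), and QH vertex groups produce splittings fitting neither template. So you cannot feed the $G_j$'s, with their raw hierarchical decompositions, into Theorem~\ref{c-sr1}.

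The missing ingredient is exactly the step the paper takes: by \cite[Theorem 3.21]{KM13}, the finitely presented, residually-$\La$ group $\G$ embeds into a finite direct product $H_1\times \cdots \times H_s$ where each $H_i$ is an \emph{iterated extension of centralizers} of $\La$. An extension of centralizers $H\star_C(C\times \mathbb{Z})$ does satisfy Notation~\ref{not2}: the second vertex group $C\times \mathbb{Z}$ is abelian, and $C$ is maximal abelian, hence malnormal, in $H$ because toral relative hyperbolicity (and with it the CSA property) is preserved at each stage by Dahmani's combination theorem \cite{Da03}. With that replacement your argument closes up and coincides with the paper's proof; note also that finite presentation (not merely finite generation) is what \cite{KM13} requires here, which is why this statement is formulated more restrictively than its residually-free analogue, Corollary~\ref{c-sr}.
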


%In the last part of the paper we show that large classes actions by finitely presented, residually free groups are $\mathcal C$-superrigid. Combining powerful, recent classification results on normalizers of subalgebras in II$_1$ factors from \cite{PV12,Io12} with techniques from \cite{CIK13} and recent developments in group theory \cite{KM98a,KM98b,Se01,KM13} we show the following:
% \begin{main}\label{main4}  Let $\G$ be a finitely presented, residually free, ICC group. Then any free, mixing, p.m.p.\ action $\G \ca X$ on a non-atomic probability space is $\mathcal C$-superrigid.  \end{main}

%This result recovers to some extent some previous results. For instance if $\G$ is assumed finitely generated and residually free to belong to the class $FP_\infty(\mathbb Q)$ then it follows from \cite[Theorem A]{BHMS09} that $\G$ is a direct product of limit groups in the sense of Sela and this result is already covered by \cite[Theorem 1.3]{PV12}.  

When $\G$ is limit group the result is already known from \cite[Theorem 1.1]{PV12}. When $\G$ is an ICC, residually free group then the result still holds if we only assume that $\G$ is finitely generated as opposed to finitely presented. This follows because results in \cite{CG05,KM98a,KM98b,Se01} still allow one to embed such groups in a finite direct product of iterated amalgams over abelian subgroups and Theorem \ref{c-sr1} applies.

The $W^*$-superrigidity question for the group actions described in Corollary \ref{main4} remains open as we do not have a complete understanding of an additional, general condition on these groups and their actions to insure the $OE$-superrigidity part. For instance, there exist such group actions which are $OE$-superrigid (e.g., any Bernoulli action $\G\ca [0,1]^\G$, where $\G=\mathbb F_2\times \mathbb F_2$, \cite{Po08}) but there also exist such group actions which are not $OE$-superrigid (e.g.,  any Bernoulli action $\G\ca [0,1]^\G$, where $\G=\mathbb F_2$). However, we conjecture the $OE$-superrigidity statement holds under the additional assumption that $\G$ is not fully residually-$\Lambda$ (Section \ref{prelimrhg}).

%\vskip 0.05in
\subsection{Comments on the proofs} Our strategy to show $OE$-superrigidity follows closely the methods from \cite{Ki06}, where it was first shown that $\mod(S)$ satisfies the following strong-type rigidity: for any two free, ergodic, p.m.p.\ actions of $\mod(S)$ that are orbit equivalent, the corresponding cocycle is equivalent to a virtual automorphism of $\mod(S)$. Once this is established, the $OE$-superrigidity for arbitrary actions of $\mod (S)$ is deduced via Furman's general machinery \cite{Fu99a, Fu99b}. This general strategy was successfully applied to establish many  subsequent $OE$-superrigidity results, \cite{BFS10, Ki09b, Ki10, MS04, Sa09}.

A large part of our paper is devoted to show strong-type rigidity for groups $P(S)$, $\cali(S)$ and $\calk(S)$. Given two free, ergodic, p.m.p.\ actions $P(S)\ca X$, $P(S)\ca Y$ together with an isomorphism between the associated groupoids, $\phi\colon P(S)\ltimes X\simeq P(S)\ltimes Y$, we will analyze geometric subgroupoids and how they are preserved under $\phi$. To understand algebraic properties of such groupoids, we will exploit their classification from \cite{Ki05, Ki06} which in essence parallels McCarthy and Papadopoulos' classification of subgroups of $\mod(S)$, \cite{MP89}. A key ingredient to conclude that $\phi$ is equivalent to a conjugacy between $P(S)\ca X$ and $P(S)\ca Y$ is the complex of hole-bounding curves and pairs of $S$, denoted by $\calcp(S)$. This is a version of the complex of curves of $S$ that was introduced in \cite{KY10a} to compute virtual automorphisms of $P(S)$, inspired by \cite{Iv97, IIM03}. Finally,  to conclude our result we will use the results from \cite{KY10a, KY10b} that describe all simplicial automorphisms of $\calcp(S)$ as well as the structure of a certain injection from a subcomplex of $\calcp(S)$ into $\calcp(S)$.
In the case of $\cali(S)$ and $\calk(S)$, we will use similar results from \cite{BM04, BM08, FI05, Ki09c, KY10c} which are applicable to the corresponding versions of curves complexes. We refer the reader to Subsections \ref{subsec-pre-sbg} and \ref{subsec-cpx-t-j} for more details on these simplicial complexes.
%The last section focuses on proving several structural results for von Neumann algebras associated with actions of residually hyperbolic groups. More precisely, the powerful recent classification results on normalizers of subalgebras of II$_1$ factors \cite{PV12,Io12} are combined with techniques from \cite{CIK13} and developments in group theory \cite{KM13,KM12} to show that all free, mixing, p.m.p.\ actions of many finitely presented, residually hyperbolic groups give rise to von Neumann algebras with unique Cartan subalgebra.
%\vskip 0.05in

\subsection{Organization of the paper} In Section \ref{sec-pre}, we review the mapping class groups and the classification of their subgroups. 
%In Section  \ref{sec-groupoids-mcg} we review from  \cite{Ki05, Ki06} the classification of subgroupoids of $\Gamma \ltimes X$ arising from a subgroup$\Gamma <\mod(S)$ and a p.m.p.\ action $\Gamma \ca X$.
In Section \ref{sec-groupoids-mcg}, given a subgroup $\Gamma <\mod(S)$ and a p.m.p.\ action $\Gamma \ca X$, we review the classification of subgroupoids of $\Gamma \ltimes X$ developed in \cite{Ki05, Ki06}.
We study a certain chain of subgroupoids to characterize geometric subgroupoids algebraically.
In Section \ref{sec-sbg} we prove the strong-type rigidity for $P(S)$, then Section \ref{sec-t-j} deals with the strong-type rigidity of $\cali(S)$ and $\calk(S)$.
In Section \ref{sec-me-oe}, we discuss several consequences of the strong-type rigidity: OE superrigidity, measure equivalence rigidity, description of lattice-embeddings, and rigidity of direct products. In Section \ref{sec-w}, we present several applications to $W^*$-superrigidity. In the last section, we prove the technical steps leading to the $\mathcal C$-superrigidity of all free, mixing, p.m.p.\ actions by finitely presented, residually hyperbolic groups.

%%%%%%%%%%%%%%%%%%%%%%%%%%%%%%%%%%%%%%%%%%%%%%%%%%
\subsection{Acknowledgments} The authors are very grateful to the anonymous referee for his comments and suggestions which greatly improved the exposition and the overall mathematical quality of the paper.

\section{Preliminaries on mapping class groups}\label{sec-pre}

\subsection{Surfaces and curves}\label{subsec-curve}

Unless otherwise mentioned, we assume a surface to be connected, compact and orientable. 
Let $S=S_{g, k}$ be a surface of genus $g$ with $k$ boundary components.
A simple closed curve in $S$ is called {\it essential} in $S$ if it is neither homotopic to a single point of $S$ nor isotopic to a component of $\partial S$.
When there is no confusion, we mean by a curve in $S$ either an essential simple closed curve in $S$ or its isotopy class. 
A curve $\alpha$ in $S$ is called {\it separating} in $S$ if $S\setminus \alpha$ is not connected.
Otherwise $\alpha$ is called {\it non-separating} in $S$.
Whether $\alpha$ is separating in $S$ or not depends only on the isotopy class of $\alpha$.
A pair of non-separating curves in $S$, $\{ \beta, \gamma \}$, is called a \textit{bounding pair (BP)} in $S$ if $\beta$ and $\gamma$ are disjoint and non-isotopic and $S\setminus (\beta \cup \gamma)$ is not connected.
This condition depends only on the isotopy classes of $\beta$ and $\gamma$ (see Figure \ref{fig-snsbp}).
%===============================

\begin{figure}
\begin{center}
\includegraphics[width=7cm]{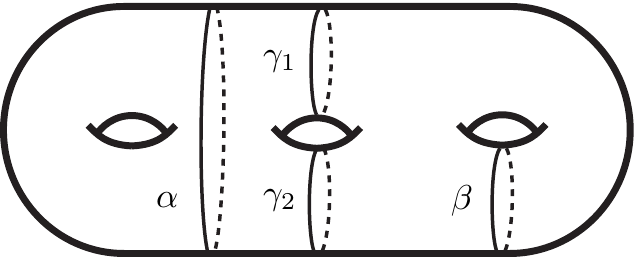}
\caption{The curve $\alpha$ is a separating curve, $\beta$ is a non-separating curve, and $\{ \gamma_1, \gamma_2\}$ is a BP.}\label{fig-snsbp}
\end{center}
\end{figure}
%================================

We define $V(S)$ as the set of isotopy classes of essential simple closed curves in $S$. 
We denote by $I\colon V(S)\times V(S)\rightarrow \mathbb{Z}_{\geq 0}$ the {\it geometric intersection number}, i.e., the minimal cardinality of the intersection of representatives for two elements of $V(S)$.
Let $\Sigma(S)$ denote the set of non-empty finite subsets $\sigma$ of $V(S)$ with $I(\alpha, \beta)=0$ for any $\alpha, \beta \in \sigma$.
We extend the function $I$ to the symmetric function on the square of $V(S)\sqcup \Sigma(S)$ with $I(\alpha, \sigma)=\sum_{\beta \in \sigma}I(\alpha, \beta)$ and $I(\sigma, \tau)=\sum_{\beta \in \sigma, \gamma \in \tau}I(\beta, \gamma)$ for any $\alpha \in V(S)$ and $\sigma, \tau \in \Sigma(S)$.
Let $V_s(S)$ denote the subset of $V(S)$ consisting of isotopy classes of separating curves in $S$.
Let $V_{bp}(S)$ denote the subset of $\Sigma(S)$ consisting of isotopy classes of BPs in $S$.

Let $\sigma$ be an element of $\Sigma(S)$.
We denote by $S_\sigma$ the surface obtained by cutting $S$ along all curves in $\sigma$.
When $\sigma$ consists of a single curve $\alpha$, we denote $S_\sigma$ by $S_\alpha$.
Each component of $S_\sigma$ is often identified with a complementary component of a tubular neighborhood of a one-dimensional submanifold representing $\sigma$ in $S$.
Let $W(S)$ denote the set of isotopy classes of the whole surface $S$ and such subsurfaces of $S$ as obtained from some $\sigma \in \Sigma(S)$.
For any $Q\in W(S)$, the set $V(Q)$ is naturally identified with a subset of $V(S)$.

We mean by a {\it pair of pants} a surface homeomorphic to $S_{0, 3}$.
We mean by a {\it handle} a surface homeomorphic to $S_{1, 1}$

\medskip

\noindent {\bf Complex $\calc(S)$.} When $3g+k-4>0$, we define $\calc(S)$ as the abstract simplicial complex so that the sets of vertices and simplices of $\calc(S)$ are $V(S)$ and $\Sigma(S)$, respectively.

When $S=S_{1, 1}$, we define $\calc(S)$ as the simplicial graph so that the set of vertices is $V(S)$ and two vertices $\alpha, \beta \in V(S)$ are adjacent if and only if $I(\alpha, \beta)=1$.
When $S=S_{0, 4}$, we define a simplicial graph $\calc(S)$ in the same manner after replacing the last condition by $I(\alpha, \beta)=2$.

The complex $\calc(S)$ is called the \textit{complex of curves} of $S$.

\medskip

The 1-skeleton of $\calc(S)$ is known to be a Gromov-hyperbolic metric space with respect to the graph distance (\cite{Mi96, MM99}).
Let $\partial \calc(S)$ denote the Gromov boundary of the 1-skeleton of $\calc(S)$.
We refer to \cite{Ha06, Kl99} for a description of $\partial \calc(S)$.

%%%%%%%%%%%%%%%%%%%%%%%%%%%%%%%%%%%%%%%%%%%%%

\subsection{Mapping class groups}

Let $S$ be a surface.
The {\it extended mapping class group} of $S$, denoted by $\mod^*(S)$, is the group of isotopy classes of homeomorphisms from $S$ onto itself, where isotopy may move points of the boundary of $S$.
The \textit{mapping class group} of $S$, denoted by $\mod(S)$, is the subgroup of $\mod^*(S)$ that consists of isotopy classes of orientation-preserving homeomorphisms from $S$ onto itself.
The {\it pure mapping class group} of $S$, denoted by $\pmod(S)$, is the subgroup of $\mod(S)$ that consists of isotopy classes of orientation-preserving homeomorphisms from $S$ onto itself which preserve each component of $\partial S$.
We refer to \cite{FM11, FLP79, Iv92, Iv02} for fundamentals of these groups.
We have the natural actions of $\mod^*(S)$ on the sets $V(S)$, $\Sigma(S)$, $W(S)$, $\calc(S)$, $\partial \calc(S)$, etc.
Let $\mod(S; 3)$ denote the subgroup of $\mod(S)$ consisting of elements acting on $H_1(S, \mathbb{Z}/3\mathbb{Z})$ trivially.
The group $\mod(S; 3)$ is torsion-free (\cite[Corollary 1.5]{Iv92}).

\begin{theorem}\label{thm-pure-t}\cite[Theorem 1.2 and Lemma 1.6]{Iv92}
Let $S=S_{g, k}$ be a surface with $3g+k-4\geq 0$.
Pick $h\in \mod(S; 3)$ and $\sigma \in \Sigma(S)$ with $h\sigma =\sigma$.
Then, for any $\alpha \in \sigma$, we have $h\alpha =\alpha$; and for any component $Q$ of $S_\sigma$, the element $h$ preserves $Q$ and each component of $\partial Q$, and the element of $\pmod(Q)$ induced by $h$ is either neutral or of infinite order.
\end{theorem}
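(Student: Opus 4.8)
The plan is to run the two inputs available for this congruence subgroup: by definition $h$ acts trivially on $H_1(S,\mathbb{Z}/3\mathbb{Z})$, and $\mod(S;3)$ is torsion-free (Corollary 1.5 of \cite{Iv92}, stated just above). Write $\Gamma_0$ for the stabilizer of $\sigma$ in $\mod(S;3)$, so that $h\in\Gamma_0$. Recording how an element of $\Gamma_0$ permutes the curves of $\sigma$, the components of $S_\sigma$, and the boundary components of each such component defines a homomorphism $\rho\colon\Gamma_0\to F$ to a finite group $F$, together with a restriction homomorphism $R\colon\ker\rho\to\prod_Q\pmod(Q)$. In this language the three assertions read: $\rho(h)$ fixes every curve of $\sigma$ (first assertion); $\rho(h)$ fixes every component and every boundary component (second assertion); and each coordinate $R(h)_Q\in\pmod(Q)$ is trivial or of infinite order (third assertion).

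First I would dispose of the orientations and of the ``generic'' permutations homologically. If $\alpha\in\sigma$ is non-separating, then $[\alpha]$ is primitive in $H_1(S,\mathbb{Z})$ and hence nonzero in $H_1(S,\mathbb{Z}/3\mathbb{Z})$; since $-1\neq 1$ in $\mathbb{Z}/3\mathbb{Z}$, triviality of $h_*$ modulo $3$ forbids $h$ from reversing the orientation of a fixed $\alpha$ and, more generally, from moving $\alpha$ to any curve carrying a different class modulo $3$. The same intersection-pairing bookkeeping rules out every permutation of curves, swap of complementary pieces, or swap of boundary components that is visible in $H_1(S,\mathbb{Z}/3\mathbb{Z})$.

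The remaining cases are the homologically invisible symmetries: permutations interchanging curves with equal class modulo $3$ (for instance the swap of a bounding pair), and interchanges of complementary pieces or of their boundaries that happen to act trivially on $H_1(S,\mathbb{Z}/3\mathbb{Z})$. These I would eliminate by combining homology with torsion-freeness: such a move is of finite order at the combinatorial level, and a case analysis of the complementary pieces according to their genus and boundary pattern (as in Theorem 1.2 of \cite{Iv92}) shows that it either does after all act nontrivially on $H_1(S,\mathbb{Z}/3\mathbb{Z})$, which is excluded, or else can be promoted to a nontrivial finite-order element of $\mod(S;3)$, contradicting torsion-freeness. This yields $h\in\ker\rho$, i.e.\ the first two assertions. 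For the third, once $h$ fixes every curve and preserves every piece $Q$ with its boundary, the induced restriction $\bar h_Q=R(h)_Q\in\pmod(Q)$ is defined, and one verifies that the level-$3$ condition on $S$ forces the corresponding level-$3$ condition on $Q$; hence $\bar h_Q$ lies in the torsion-free subgroup $\mod(Q;3)$, so if it has finite order it is neutral. Thus each $\bar h_Q$ is neutral or of infinite order.

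I expect the main obstacle to be exactly the homologically invisible configurations of the previous paragraph, together with the transfer of the level-$3$ condition from $S$ to each component $Q$: the comparison map $H_1(Q,\mathbb{Z}/3\mathbb{Z})\to H_1(S,\mathbb{Z}/3\mathbb{Z})$ need not be injective, so after $h$ has been shown to fix all of $\sigma$ and to preserve each piece one must still argue that its action on the internal homology of $Q$ is constrained enough to force purity. Handling these degenerate cases is precisely the content of the analysis in \cite[Theorem 1.2 and Lemma 1.6]{Iv92}, and it is what the two engines---the $\mathbb{Z}/3\mathbb{Z}$-homology action and torsion-freeness---are there to resolve.
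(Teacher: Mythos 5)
A preliminary remark: the paper does not prove Theorem \ref{thm-pure-t} at all --- the statement is imported verbatim from Ivanov, with the citation \cite[Theorem 1.2 and Lemma 1.6]{Iv92} standing in for a proof. Your proposal is therefore competing with Ivanov's original argument, and measured against the statement itself it has two genuine gaps, both located exactly where you yourself express unease in your closing paragraph, and both of which constitute the actual mathematical content of the theorem rather than ``degenerate cases'' to be mopped up.

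The first gap is the disposal of the homologically invisible symmetries. Your mechanism is to observe that such a move ``is of finite order at the combinatorial level'' and then to ``promote'' it to a nontrivial finite-order element of $\mod(S;3)$, contradicting torsion-freeness. But the element $h$ you are handed is in general of infinite order; only its induced permutation of the finite set of curves, components and boundary circles has finite order, and there is no canonical procedure that converts a mapping class inducing a finite-order permutation into a torsion mapping class lying in the level-$3$ subgroup. Concretely: if $h$ interchanges the two curves of a bounding pair $\{\beta,\gamma\}\in V_{bp}(S)$, then $[\beta]=\pm[\gamma]$ in $H_1(S,\mathbb{Z}/3\mathbb{Z})$, so the swap is invisible to homology, and $h$ itself (say, such a candidate composed with high powers of twists about $\beta$ and $\gamma$) has infinite order, so neither of your two engines applies directly; ruling this out is precisely what \cite[Theorem 1.2]{Iv92} accomplishes, and your sketch assumes the conclusion at this point. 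The second gap is the third assertion: you need $\bar h_Q\in\mod(Q;3)$, but, as you concede, $i_*\colon H_1(Q,\mathbb{Z}/3\mathbb{Z})\to H_1(S,\mathbb{Z}/3\mathbb{Z})$ can have nontrivial kernel $K$ (spanned by boundary classes of $Q$ coming from separating curves of $\sigma$). Equivariance of $i_*$ together with triviality of $h_*$ only gives that $(\bar h_Q)_*$ is the identity on $K$ and on $H_1(Q,\mathbb{Z}/3\mathbb{Z})/K$, i.e.\ that it is unipotent, not that it is trivial; and the intersection form on $H_1(Q,\mathbb{Z}/3\mathbb{Z})$ is degenerate with radical containing $K$, so it does not force the remaining transvection to vanish. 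As written, then, the proposal is an accurate inventory of the two ingredients (triviality on mod-$3$ homology and torsion-freeness of $\mod(S;3)$) rather than a proof; the passage from those ingredients to the conclusion is the nontrivial analysis of \cite{Iv92} that the paper is deliberately citing rather than reproducing.
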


Fix $\sigma \in \Sigma(S)$.
We set $\Gamma =\mod(S; 3)$ and denote by $\Gamma_\sigma$ the stabilizer of $\sigma$ in $\Gamma$.
We have the natural homomorphism
\[\theta_\sigma \colon \Gamma_\sigma \to \prod_Q\pmod(Q),\]
where $Q$ runs through all components of $S_\sigma$.
Let $D_\sigma$ denote the subgroup of $\pmod(S)$ generated by Dehn twists about curves in $\sigma$.
When $\sigma$ consists of a single curve $\alpha$, we denote $D_\sigma$ by $D_\alpha$.
The kernel of $\theta_\sigma$ is equal to $D_\sigma \cap \Gamma_\sigma$ (\cite[Corollary 4.1.B]{Iv02}).
For $Q\in W(S)$, we denote by $\Gamma_Q$ the stabilizer of $Q$ in $\Gamma$ and have the homomorphism $\theta_Q\colon \Gamma_Q\to \pmod(Q)$.

For $\alpha, \beta \in V(S)$, we say that $\alpha$ and $\beta$ \textit{fill} $S$ if there exists no $\gamma \in V(S)$ with $I(\alpha, \gamma)=0$ and $I(\beta, \gamma)=0$.
When $3g+k-4>0$, $\alpha$ and $\beta$ fill $S$ if and only if the graph distance between them in the 1-skeleton of $\calc(S)$ is at least 3.
When $S=S_{1, 1}$ or $S_{0, 4}$, any two distinct elements of $V(S)$ fill $S$.

\begin{lem}\cite[Proposition 2.8]{FM11}\label{lem-curve-stab}
Let $S=S_{g, k}$ be a surface with $3g+k-4\geq 0$.
Pick $\alpha, \beta \in V(S)$.
If $\alpha$ and $\beta$ fill $S$, then the stabilizer of $\alpha$ and $\beta$ in $\mod(S)$,
\[\{ \, h\in \mod(S)\mid h\alpha =\alpha,\ h\beta =\beta \, \},\]
is finite.
The stabilizer of $\alpha$ and $\beta$ in $\mod(S; 3)$ is therefore trivial.
\end{lem}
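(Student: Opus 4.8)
The plan is to prove that the stabilizer of two filling curves $\alpha,\beta$ in $\mod(S)$ is finite, and then deduce triviality in $\mod(S;3)$ from torsion-freeness. The cleanest approach is to exploit the hyperbolicity of the curve complex together with the fact that filling curves are far apart in $\calc(S)$. First I would record the standard fact (cited just before the statement) that when $3g+k-4>0$, two curves fill $S$ precisely when their distance in the $1$-skeleton of $\calc(S)$ is at least $3$; in particular, they are distinct vertices with no common link, so any mapping class fixing both of them must preserve the (finite) set of geodesics, or more robustly the whole hyperbolic space structure, in a constrained way. The key point I want to extract is that a filling pair determines $S$ rigidly: there is no positive-dimensional ``room'' for an infinite-order mapping class to move while fixing both curves.

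The heart of the argument is the following dichotomy, which I would run via the Nielsen--Thurston classification. Let $h\in\mod(S)$ fix $\alpha$ and $\beta$. Since $h$ fixes the curve $\alpha$, the element $h$ cannot be pseudo-Anosov (a pseudo-Anosov fixes no isotopy class of curve), so $h$ is either periodic or reducible. If $h$ is reducible with a canonical reduction system $\mathcal R$, then $\mathcal R$ is $h$-invariant and disjoint from every curve that $h$ fixes; but $\alpha$ and $\beta$ fill $S$, so no essential curve is disjoint from both $\alpha$ and $\beta$ simultaneously, which means $\mathcal R$ would have to be compatible with a proper subsurface decomposition on which $h$ acts --- and the restriction of $h$ to each complementary piece again fixes the traces of $\alpha,\beta$. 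The filling hypothesis forces these pieces to carry no infinite-order behavior: I would argue that the only way $h$ can fix both a separating-free filling pair is for $h$ to act with finite order on each canonical piece and to have trivial twisting about the curves of $\mathcal R$, hence $h$ itself is of finite order. This reduces everything to showing there are only finitely many \emph{periodic} (finite-order) classes fixing the pair, which is immediate once we know $h$ must be periodic.

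To finish I would invoke the classical finiteness of the (extended) mapping class group modulo torsion, or more directly the fact that a filling pair $(\alpha,\beta)$ has finite stabilizer because it endows $S$ with a singular flat (or hyperbolic) structure canonically associated to the pair, and a mapping class fixing both curves preserves this structure, hence lies in the finite isometry group of that structure. This is the Thurston construction: a filling pair of curves gives a square-tiled (or more general) Euclidean cone structure, and the mapping classes preserving the pair embed into the automorphisms of the resulting cell decomposition, a finite group. Either route shows the stabilizer $\{h\in\mod(S)\mid h\alpha=\alpha,\ h\beta=\beta\}$ is finite. The last sentence is then automatic: intersecting with $\mod(S;3)$ gives a finite subgroup of a torsion-free group (torsion-freeness of $\mod(S;3)$ was recorded just above), and a finite subgroup of a torsion-free group is trivial.

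The main obstacle I expect is the reducible case of the dichotomy: ruling out \emph{infinite-order} reducible mapping classes fixing the filling pair. A pseudo-Anosov on a subsurface, or a nontrivial power of a Dehn twist about a curve in the reduction system, could a priori fix $\alpha$ and $\beta$ if those curves were positioned compatibly --- so the work is in showing the filling condition genuinely obstructs this, i.e.\ that a filling pair cannot be simultaneously disjoint from, nor supported away from, any nontrivial reduction system. The structural statement of Theorem~\ref{thm-pure-t} (applied after passing to $\mod(S;3)$ where reduction systems behave well) is exactly the tool that makes this rigorous: it says that an element of $\mod(S;3)$ fixing a simplex $\sigma$ fixes each curve and each complementary piece, and acts there with infinite order or neutrally --- so the filling obstruction can be localized piece by piece. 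Handling the low-complexity cases $S_{1,1}$ and $S_{0,4}$ separately (where ``fill'' just means ``distinct'') is routine but must be checked, since there the curve complex is only a graph.
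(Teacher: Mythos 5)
The paper does not actually prove the finiteness assertion: the lemma is quoted from \cite[Proposition 2.8]{FM11}, whose proof is the Alexander method --- realize $\alpha\cup\beta$ in minimal position, note that the filling hypothesis forces the complementary regions to be disks or peripheral annuli, and conclude that a mapping class fixing both isotopy classes is determined up to isotopy by the induced permutation of the finitely many cells of this decomposition, so the stabilizer injects into a finite group. The only step the paper supplies itself is the final sentence, deduced from the torsion-freeness of $\mod(S;3)$ recorded just above the lemma; you handle that step correctly, and your closing paragraph (the cell-decomposition/finite-automorphism-group argument) is in substance the FM11 proof, so that route is sound.

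Your primary route, via Nielsen--Thurston, is also workable but has one genuine soft spot: after arguing that every element of the stabilizer is periodic, you assert that finiteness of the stabilizer ``is immediate,'' and it is not --- a subgroup of $\mod(S)$ all of whose elements have finite order is not a priori finite. The standard repair is precisely the virtual torsion-freeness you gesture at: such a subgroup meets the torsion-free, finite-index normal subgroup $\mod(S;3)$ trivially, hence embeds into the finite quotient $\mod(S)/\mod(S;3)$; this simultaneously yields the last sentence of the lemma. Separately, the reducible case is cleaner if run through the defining property of the canonical reduction system rather than through restrictions to complementary pieces: if $h$ is reducible of infinite order, then every curve $\gamma$ in the CRS of $\langle h\rangle$ satisfies $I(\gamma,\delta)=0$ for every curve $\delta$ fixed by $h$ (this is exactly the characterization of the CRS quoted in the paper), so $I(\gamma,\alpha)=I(\gamma,\beta)=0$, contradicting that $\alpha$ and $\beta$ fill $S$. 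Your version (``the restriction of $h$ to each piece fixes the traces of $\alpha,\beta$'') is vaguer, since those traces are arcs and the ``trivial twisting'' claim is not justified as written. With these two repairs the Nielsen--Thurston route closes; as written, it does not yet establish finiteness.
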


%%%%%%%%%%%%%%%%%%%%%%%%%%%%%%%%%%%%%%%%%%%

\subsection{Classification of subgroups}

Let $S=S_{g, k}$ be a surface with $3g+k-4\geq 0$.
An element of $\mod(S)$ is called \textit{reducible} if it fixes some element of $\Sigma(S)$.
By the celebrated Nielsen-Thurston classification, any element of $\mod(S)$ is fallen into exactly one of the following three kinds: elements of finite order; reducible elements of infinite order; and pseudo-Anosov elements.
Two pseudo-Anosov elements $h_1$, $h_2$ of $\mod(S)$ are called \textit{independent} if the fixed point sets of $h_1$ and $h_2$ in the Thurston boundary are disjoint.
In this case, sufficiently large powers of $h_1$ and $h_2$ generate a free group of rank 2 (\cite[Corollary 8.4]{Iv92}).

A subgroup of $\mod(S)$ is called \textit{reducible} if it fixes some element of $\Sigma(S)$.
Otherwise it is called \textit{irreducible}.
It is known that an infinite subgroup of $\mod(S)$ is irreducible if and only if it contains a pseudo-Anosov element (\cite[Corollary 7.14]{Iv92}).
A subgroup of $\mod(S)$ is called \textit{non-elementary} if it contains two independent pseudo-Anosov elements.
Any subgroup of $\mod(S)$ is known to be fallen into exactly one of the following four kinds: finite subgroups; infinite and reducible subgroups; infinite, irreducible and virtually cyclic subgroups; and non-elementary subgroups (\cite{MP89}, \cite[Corollary 7.15]{Iv92}).
The following theorem is obtained from this classification of subgroups, \cite[Proposition 5.1 (3)]{MP89} and \cite[Corollary 7.13]{Iv92}.

\begin{theorem}\label{thm-subgr}
Let $N$ and $G$ be infinite subgroups of $\mod(S)$ with $N\vartriangleleft G$.
Then, $N$ and $G$ are subgroups of the same kind.
\end{theorem}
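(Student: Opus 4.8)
The plan is to exploit the Nielsen--Thurston trichotomy together with the north--south dynamics of pseudo-Anosov elements on Thurston's sphere $\mathcal{PMF}(S)$ of projective measured foliations. Since $N$ and $G$ are both infinite, the classification of subgroups recalled above leaves only three possible kinds, distinguished by two binary invariants: \emph{reducible vs.\ irreducible}, and, in the irreducible case, \emph{virtually cyclic vs.\ non-elementary}. Because $N\le G$, the ``upward'' implications are immediate: if $N$ is irreducible it contains a pseudo-Anosov element (by \cite[Corollary 7.14]{Iv92}), hence so does $G$, so $G$ is irreducible; and if $N$ is non-elementary it contains two independent pseudo-Anosov elements, hence so does $G$. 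Thus the content of the theorem lies in the two \emph{downward} implications, and I would prove each by contradiction, using normality of $N$ in $G$ to transport a pseudo-Anosov dynamical structure from $G$ down to $N$.

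For the reducibility invariant, suppose $N$ is reducible but $G$ is irreducible, so $G$ contains a pseudo-Anosov $f$ with attracting/repelling foliations $[F^\pm]\in\mathcal{PMF}(S)$. Fix $\sigma\in\Sigma(S)$ with $N\sigma=\sigma$. For each $n$ and each $h\in N$ normality gives $f^{-n}hf^{n}\in N$, whence $h(f^n\sigma)=f^n((f^{-n}hf^n)\sigma)=f^n\sigma$; that is, $N$ fixes $f^n\sigma$ for every $n$. Since $\sigma$ is a multicurve it is not the arational class $[F^-]$, so Thurston's north--south dynamics give $f^n\sigma\to[F^+]$ in $\mathcal{PMF}(S)$ as $n\to+\infty$; by continuity of the $N$-action, $N$ fixes $[F^+]$ (and, letting $n\to-\infty$, also $[F^-]$). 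Now choose an infinite-order $h_0\in N$, which exists because $N\cap\mod(S;3)$ is an infinite, torsion-free, finite-index subgroup of $N$. As $h_0$ fixes the arational foliation $[F^+]$, which is supported on no proper subsurface, $h_0$ cannot be reducible; being of infinite order it must be pseudo-Anosov, contradicting $N\sigma=\sigma$. Hence $N$ reducible forces $G$ reducible, and with the upward implication, $N$ and $G$ are simultaneously reducible or irreducible.

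It remains to match the second invariant assuming $N$ and $G$ are both infinite and irreducible. Suppose $G$ is non-elementary but $N$ is not; then $N$ is virtually cyclic and contains a pseudo-Anosov $f$ with $\langle f\rangle$ of finite index. Every infinite-order element of $N$ is then commensurable with $f$ and so shares the foliation pair $\{[F^+],[F^-]\}$, which is therefore the $N$-invariant limit set of $N$. For any $g\in G$, normality gives $gfg^{-1}\in N$; as $gfg^{-1}$ is a pseudo-Anosov element of the virtually cyclic group $N$, its foliation pair $\{g[F^+],g[F^-]\}$ must coincide with $\{[F^+],[F^-]\}$. Thus $G$ stabilizes the unordered pair $\{[F^+],[F^-]\}$ of arational foliations, and the stabilizer of such a pair in $\mod(S)$ is virtually cyclic (cf.\ \cite[Proposition 5.1(3)]{MP89} and \cite[Corollary 7.13]{Iv92}). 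Hence $G$ is virtually cyclic, contradicting non-elementarity; together with the upward implication this shows $N$ and $G$ are non-elementary together.

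I expect the main obstacle to be exactly these two downward implications, and concretely the two structural facts about fixed-point sets in $\mathcal{PMF}(S)$: that fixing an arational foliation forbids reducibility, and that a pseudo-Anosov inside a virtually cyclic group rigidly determines its foliation pair. Both rest on the precise dynamics on Thurston's sphere supplied by McCarthy--Papadopoulos and Ivanov, which is where the cited references enter; the rest of the argument is bookkeeping with normality and the Nielsen--Thurston trichotomy.
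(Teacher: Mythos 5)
Your argument is correct and is essentially the proof the paper has in mind: the paper gives no written proof but states that the theorem ``is obtained from this classification of subgroups, \cite[Proposition 5.1 (3)]{MP89} and \cite[Corollary 7.13]{Iv92},'' and your two downward implications are exactly a reconstruction of that derivation --- north--south dynamics transporting a fixed multicurve to the pseudo-Anosov fixed points, and the fact that the stabilizer of a pseudo-Anosov's (pair of) fixed points in $\mathcal{PMF}$ is virtually infinite cyclic with all infinite-order elements pseudo-Anosov, which is precisely the content of the two cited results. The only place to tighten wording is the step ``fixes an arational foliation, hence not reducible'': as stated this is a consequence of those same citations (the fixed-point set of an infinite-order reducible element contains no arational class), so you should lean on them explicitly rather than on the informal ``supported on no proper subsurface.''
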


Let $\Gamma$ be a non-trivial and reducible subgroup of $\mod(S; 3)$.
We then have a unique element $\sigma$ of $\Sigma(S)$ satisfying the following:
Any element of $\sigma$ is fixed by $\Gamma$, and for any $\alpha \in \sigma$ and any $\beta \in V(S)$ with $I(\alpha, \beta)\neq 0$, there exists an $h\in \Gamma$ with $h\beta \neq \beta$ (\cite[Corollary 7.12]{Iv92}).
This $\sigma$ is called the \textit{canonical reduction system (CRS)} of $\Gamma$.
The CRS $\sigma$ of $\Gamma$ is the minimal element of $\Sigma(S)$ satisfying the following:
Any element of $\sigma$ is fixed by $\Gamma$, and for any component $Q$ of $S_\sigma$, the subgroup $\theta_Q(\Gamma)$ of $\pmod(Q)$ is either trivial or infinite and irreducible (\cite[Theorem 7.16]{Iv92}).
Let $Q$ be a component of $S_\sigma$.
We say that $Q$ is \textit{T, IA}, or \textit{IN} for $\Gamma$ if $\theta_Q(\Gamma)$ is trivial; infinite, irreducible and virtually cyclic; or non-elementary as a subgroup of $\pmod(Q)$, respectively.
Here, ``T", ``IA" and ``IN" stands for ``trivial", ``irreducible and amenable" and ``irreducible and non-amenable", respectively.

\begin{lem}\label{lem-icc}
Let $S=S_{g, k}$ be a surface with $3g+k-4>0$ and $(g, k)\neq (1, 2), (2, 0)$.
Let $\Gamma$ be an infinite, normal subgroup of some finite index subgroup of $\mod(S)$.
Then, the Dirac measure on the neutral element is the only probability measure on $\mod^*(S)$ invariant under conjugation by $\Gamma$.
\end{lem}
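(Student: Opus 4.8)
The plan is to exploit that $\mod^*(S)$ is countable, so that any conjugation-invariant probability measure is supported on finite conjugacy classes, and then to show that the only element of $\mod^*(S)$ with finite $\Gamma$-conjugacy class is the identity.

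First I would record the measure-theoretic reduction. Since $\mod^*(S)$ is countable, any probability measure $\mu$ on it is purely atomic, and if $\mu$ is invariant under conjugation by $\Gamma$ then it is constant on each $\Gamma$-conjugacy orbit $O(x)=\{\,gxg^{-1}:g\in\Gamma\,\}$. An infinite orbit must therefore carry zero mass, so $\mu$ is supported on the elements with finite $\Gamma$-conjugacy class. Hence it suffices to prove that $O(x)$ is infinite for every $x\in\mod^*(S)\setminus\{e\}$: then $\operatorname{supp}(\mu)\subseteq\{e\}$ and $\mu=\delta_e$. Assume for contradiction that some $x\neq e$ has $O(x)$ finite, and let $K$ be the kernel of the permutation action $\Gamma\to\operatorname{Sym}(O(x))$. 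Then $K$ has finite index in $\Gamma$ and centralizes $x$, so $x\in C_{\mod^*(S)}(K)$. Thus the lemma follows once I establish the \emph{Claim}: if $K$ is a non-elementary subgroup of $\mod(S)$, then $C_{\mod^*(S)}(K)=\{e\}$.

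Next I would check that $K$ is non-elementary. Let $\Gamma_0$ be the finite-index subgroup of $\mod(S)$ with $\Gamma\vartriangleleft\Gamma_0$. For $3g+k-4>0$ the group $\mod(S)$ contains two independent pseudo-Anosov elements, and passing to suitable powers (which are again independent pseudo-Anosov, with the same fixed points) shows the same for $\Gamma_0$; thus $\Gamma_0$ is non-elementary. Since $\Gamma$ is infinite and normal in $\Gamma_0$, Theorem \ref{thm-subgr} forces $\Gamma$ to be of the same kind, i.e.\ non-elementary, and then so is its finite-index subgroup $K$. To prove the Claim, fix $z\in C_{\mod^*(S)}(K)$ and two independent pseudo-Anosov elements $a,b\in K$. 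From $zaz^{-1}=a$ the homeomorphism $z$ fixes the attracting and repelling foliations of $a$, hence both fixed points of $a$ in the Thurston boundary, and likewise for $b$. Now $z$ cannot be pseudo-Anosov, for its centralizer would be virtually cyclic and could not contain the non-elementary group $K$; and $z$ cannot be reducible of infinite order, since its canonical reduction system $\sigma(z)\in\Sigma(S)$ is nonempty and canonical, so $K$ would preserve the finite set $\sigma(z)$ and a finite-index subgroup of $K$ would fix every curve in it, making that subgroup reducible and contradicting that finite-index subgroups of $K$ are non-elementary, hence irreducible. Therefore $z$ has finite order.

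The main obstacle is to rule out a nontrivial finite-order $z$, and this is exactly where the hypotheses $(g,k)\neq(1,2),(2,0)$ (together with $3g+k-4>0$, which already excludes $S_{1,1}$ and $S_{0,4}$) enter: on all remaining surfaces the natural action of $\mod^*(S)$ on $V(S)$, equivalently on $\calc(S)$, is faithful, whereas on the excepted surfaces the central hyperelliptic involution acts trivially. Granting faithfulness, it is enough to show that $z$ fixes every curve. The fixed-curve set $\operatorname{Fix}(z)\subseteq V(S)$ is invariant under $C_{\mod^*(S)}(z)\supseteq K$, and since $K$ is non-elementary it admits no nonempty finite invariant subset of $V(S)$; so $\operatorname{Fix}(z)$ is either empty or infinite. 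Using that $z$ fixes the attracting and repelling foliations of every pseudo-Anosov element of $K$, hence fixes the infinite limit set of $K$ in $\partial\calc(S)$ pointwise, one excludes the empty case and upgrades this to $\operatorname{Fix}(z)=V(S)$, whence faithfulness gives $z=e$. I expect this final passage, converting ``$z$ fixes the limit set of $K$ pointwise'' into ``$z$ acts trivially on $\calc(S)$'' and invoking faithfulness, to be the technically delicate heart of the argument, with the precise control of the exceptional surfaces being essential.
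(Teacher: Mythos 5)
Your reduction to showing that every non-neutral element of $\mod^*(S)$ has infinite $\Gamma$-conjugacy class is fine, and your elimination of the pseudo-Anosov and infinite-order reducible cases for a centralizing element $z$ is correct. But the step you yourself flag as the ``delicate heart'' is a genuine gap, and the Claim you reduce to --- that $C_{\mod^*(S)}(K)=\{ e\}$ for \emph{every} non-elementary subgroup $K<\mod(S)$ --- is false. For example, on $S_{3,0}$ let $z$ be a hyperelliptic involution; its centralizer $K=C_{\mod(S)}(z)$ is non-elementary (by Birman--Hilden it surjects, modulo $\langle z\rangle$, onto a mapping class group of a sphere with $8$ marked points, and independent pseudo-Anosov elements lift), yet $z\in C_{\mod^*(S)}(K)\setminus \{ e\}$. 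So no argument using only non-elementarity of $K$ can rule out a non-trivial finite-order $z$: knowing that $z$ fixes an \emph{infinite} subset of $\partial\calc(S)$ or of $V(S)$ pointwise is far from forcing $\textrm{Fix}(z)=V(S)$, since the fixed foliation set of a non-central finite-order element is typically an infinite proper closed subset of $\mathcal{PMF}$.

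The missing ingredient is exactly where the normality hypothesis must be used beyond Theorem \ref{thm-subgr}: by McCarthy--Papadopoulos, the limit set of an infinite normal subgroup of a finite-index subgroup of $\mod(S)$ is all of $\mathcal{PMF}$, i.e.\ the fixed points of pseudo-Anosov elements of $\Gamma$ (equivalently of your $K$, which has finite index in $\Gamma$) are \emph{dense} in $\mathcal{PMF}$, not merely infinite. With density, $z$ fixes a dense subset of $\mathcal{PMF}$ pointwise, hence all of $\mathcal{PMF}$, hence all of $V(S)$, and faithfulness of the action on $V(S)$ for the non-exceptional surfaces gives $z=e$; this would close your argument. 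The paper's proof uses this same density result but organizes the endgame differently: it avoids the Nielsen--Thurston case analysis altogether by working with $\textrm{Fix}(\gamma_0)\subset \mathcal{T}\cup\mathcal{PMF}$, using Brouwer's fixed point theorem on the Thurston ball to see $\textrm{Fix}(\gamma_0)\neq \emptyset$ and the north--south dynamics of two independent pseudo-Anosov elements of $\Gamma$ with attracting fixed points outside $\textrm{Fix}(\gamma_0)$ to force a contradiction unless $\mathcal{PMF}\subset \textrm{Fix}(\gamma_0)$. Until you incorporate the limit-set density (or an equivalent use of normality), the finite-order case of your argument does not go through.
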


\begin{proof}
Let $\mathcal{T}$ and $\mathcal{PMF}$ denote the Teichm\"uller space and the Thurston boundary for $S$, respectively, on which $\mod^*(S)$ naturally acts.
As precisely proved in the next paragraph, we first note that the limit set of $\Gamma$ in $\mathcal{PMF}$ is the whole space $\mathcal{PMF}$.

Let $G$ be a non-elementary subgroup of $\mod(S)$.
Denote by $\Lambda_0(G)\subset \mathcal{PMF}$ the set of fixed points of pseudo-Anosov elements in $G$.
Following \cite{MP89}, we call the closure of $\Lambda_0(G)$ in $\mathcal{PMF}$ the \textit{limit set} of $G$ in $\mathcal{PMF}$.
As noted in \cite[Section 5, Example 1]{MP89}, by \cite[Expos\'e 6, \S VII, Th\'eor\`eme]{FLP79}, the limit set of $\mod(S)$ is $\mathcal{PMF}$.
This is also proved in \cite[Lemma 9.12]{Iv92}.
It follows from \cite[Propositions 5.4 and 5.5]{MP89} that for any infinite, normal subgroup of a finite index subgroup of $\mod(S)$, its limit set is $\mathcal{PMF}$.
The set $\Lambda_0(\Gamma)$ is thus dense in $\mathcal{PMF}$.

The rest of the proof follows the proof of \cite[Theorem 2.9]{Ki06}.
For the reader's convenience, we give a complete proof.
Pick $\gamma_0\in \mod^*(S)$ such that the set $\{ \, \gamma \gamma_0\gamma^{-1}\mid \gamma \in \Gamma \, \}$ is finite.
We set $\textrm{Fix}(\gamma_0)=\{ \, x\in \mathcal{T}\cup \mathcal{PMF}\mid \gamma_0x =x\, \}$.
If the inclusion $\mathcal{PMF}\subset \textrm{Fix}(\gamma_0)$ is shown, then it implies $\gamma_0=e$, and the lemma follows.
Assuming $\mathcal{PMF}\not\subset \textrm{Fix}(\gamma_0)$, we deduce a contradiction.
For a pseudo-Anosov element $\gamma$ of $\mod(S)$, let $F_\pm(\gamma)$ denote its two fixed points in $\mathcal{PMF}$ so that for any $x\in \mathcal{T}\cup \mathcal{PMF}$ except for $F_-(\gamma)$, $\gamma^nx$ converges to $F_+(\gamma)$ as $n\to +\infty$, and $F_-(\gamma)=F_+(\gamma^{-1})$. 
By density of $\Lambda_0(\Gamma)$ in $\mathcal{PMF}$, there exist two independent pseudo-Anosov element $\gamma_1, \gamma_2\in \Gamma$ with $F_+(\gamma_1), F_+(\gamma_2)\not\in \textrm{Fix}(\gamma_0)$.

As the union $\mathcal{T}\cup \mathcal{PMF}$ is homeomorphic to a finite-dimensional Euclidean closed ball by \cite[Expos\'e 11, \S II, Th\'eor\`eme 3]{FLP79}, the Brouwer fixed point theorem implies that $\textrm{Fix}(\gamma_0)$ is non-empty.
Pick $x\in \textrm{Fix}(\gamma_0)$.
There exists a subsequence $\{ n_i\}_i$ of $\mathbb{N}$ with $\gamma_1^{-n_i}\gamma_0\gamma_1^{n_i}=\gamma_0$ for any $i$.
The equation $\gamma_1^{-n_i}\gamma_0\gamma_1^{n_i}x=\gamma_0x=x$ then holds, and therefore $\gamma_1^{n_i}x\in \textrm{Fix}(\gamma_0)$ for any $i$.
If $x$ were not $F_-(\gamma_1)$, then $\gamma_1^{n_i}x$ would converge to $F_+(\gamma_1)\not\in \textrm{Fix}(\gamma_0)$ as $i\to \infty$.
This is a contradiction.
We thus have $x=F_-(\gamma_1)$.
Similarly, we have $x=F_-(\gamma_2)$.
This contradicts that $\gamma_1$ and $\gamma_2$ are independent.
\end{proof}

Throughout the following lemma and its proof, for any subgroup $\Gamma$ of $\pmod(S)$ and any $\sigma \in \Sigma(S)$, we will denote by $\Gamma_\sigma$ the stabilizer of $\sigma$ in $\Gamma$.

\begin{lem}\label{lem-group-chain}
Let $\Gamma$ be a subgroup of $\mod(S; 3)$ with $\Gamma \vartriangleleft \pmod(S)$.
Fix $\sigma \in \Sigma(S)$ and assume that there exist two distinct components $Q$, $R$ of $S_\sigma$ such that $\theta_Q(\Gamma_\sigma)$ and $\theta_R(\Gamma_\sigma)$ are non-trivial. Then, there exist subgroups $M_1$, $M_2$ and $N_1$ of $\Gamma$ such that $N_1$ is infinite and amenable, $M_2$ is non-amenable, and we have $N_1\vartriangleleft M_1$, $M_2\vartriangleleft M_1$ and $M_2\vartriangleleft \Gamma_\sigma$.
\end{lem}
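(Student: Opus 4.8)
The plan is to build the required chain from the two ``active'' components $Q$ and $R$, taking for $M_2$ the part of $\Gamma_\sigma$ supported on $Q$, for $N_1$ a cyclic group supported on $R$, and for $M_1$ their product. Concretely, for a component $Q'$ of $S_\sigma$ I would write $N_{Q'}$ for the set of $h\in \Gamma_\sigma$ with $\theta_{Q''}(h)=e$ for every component $Q''\neq Q'$. By Theorem \ref{thm-pure-t} every element of $\Gamma_\sigma$ fixes each curve of $\sigma$ and preserves every component of $S_\sigma$, so each $\theta_{Q''}$ is defined on $\Gamma_\sigma$ and $N_{Q'}$ is a subgroup. Comparing coordinates in $\prod_{Q''}\pmod(Q'')$ shows at once that $N_{Q'}\vartriangleleft \Gamma_\sigma$; more usefully, $N_{Q'}$ is normal in the finite-index subgroup $G$ of the stabilizer of $\sigma$ in $\pmod(S)$ consisting of classes fixing every component of $S_\sigma$, since $\Gamma \vartriangleleft \pmod(S)$ forces $\Gamma_\sigma=\Gamma\cap G\vartriangleleft G$, and conjugating an element of $N_{Q'}$ by an element of $G$ keeps the $Q''$-coordinate trivial for $Q''\neq Q'$.

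The heart of the argument is to prove that $M_2:=N_Q$ is non-amenable. Since $\theta_Q(\Gamma_\sigma)$ is non-trivial, $Q$ is neither a pair of pants nor an annulus, so $\pmod(Q)$ is non-elementary; as $G$ contains every class supported on $Q$, the image $\theta_Q(G)$ has finite index in $\pmod(Q)$ and is therefore non-elementary as well. Because $N_Q\vartriangleleft G$, the image $\theta_Q(N_Q)$ is normal in $\theta_Q(G)$, so by Theorem \ref{thm-subgr} it suffices to check that $\theta_Q(N_Q)$ is infinite in order to conclude it is non-elementary, whence $N_Q$ is non-amenable. To produce a non-trivial element of $\theta_Q(N_Q)$ I would exploit the normality of $\Gamma$: pick $a\in \Gamma_\sigma$ with $\theta_Q(a)$ pseudo-Anosov (possible since $\theta_Q(\Gamma_\sigma)$ is an infinite normal subgroup of the non-elementary $\theta_Q(G)$, hence non-elementary), and pick $b\in\pmod(S)$ supported on the subsurface $Q$ with $\theta_Q(b)$ not commuting with $\theta_Q(a)$ (such $b$ exists because the centralizer of a pseudo-Anosov is virtually cyclic while the supported-on-$Q$ classes fill out a finite-index, hence non-elementary, subgroup of $\pmod(Q)$). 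Then $bab^{-1}\in\Gamma$ by normality, so $[b,a]\in\Gamma_\sigma$; as $b$ acts trivially off $Q$ we get $[b,a]\in N_Q$, while $\theta_Q([b,a])=[\theta_Q(b),\theta_Q(a)]\neq e$, an element of infinite order by Theorem \ref{thm-pure-t}. This commutator step, which manufactures elements of $\Gamma$ genuinely localized on $Q$ out of the ambient normality of $\Gamma$ in $\pmod(S)$, is the main obstacle.

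It then remains to build $N_1$ and $M_1$. Running the previous paragraph with $R$ in place of $Q$ shows $\theta_R(N_R)$ is non-elementary, so I may choose $c\in N_R$ with $\theta_R(c)$ pseudo-Anosov, in particular of infinite order. Set $Z:=D_\sigma\cap\Gamma_\sigma=\ker\theta_\sigma$; since every element of $\Gamma_\sigma$ fixes each curve of $\sigma$, conjugation fixes the corresponding Dehn twists, so $Z$ is central in $\Gamma_\sigma$. Put $N_1:=\langle c\rangle Z$, which is abelian and infinite, hence infinite amenable, and $M_1:=N_Q N_1$ (a subgroup, as $N_1\leq\Gamma_\sigma$ normalizes $N_Q\vartriangleleft\Gamma_\sigma$). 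For $h\in N_Q$, using $Q\neq R$ and comparing coordinates in $\prod_{Q''}\pmod(Q'')$ gives $\theta_\sigma(hch^{-1})=\theta_\sigma(c)$, so $hch^{-1}c^{-1}\in Z$ and thus $hch^{-1}\in N_1$; together with $hZh^{-1}=Z$ this shows $N_Q$ normalizes $N_1$, so $N_1\vartriangleleft M_1$. Finally $M_1\leq\Gamma_\sigma$ and $N_Q\vartriangleleft\Gamma_\sigma$ yield $M_2=N_Q\vartriangleleft M_1$ and $M_2\vartriangleleft\Gamma_\sigma$, and all three groups lie in $\Gamma$, which completes the construction.
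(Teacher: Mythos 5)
Your proof is correct and takes essentially the same route as the paper's: the key step in both is the commutator trick, using $\Gamma\vartriangleleft\pmod(S)$ together with the finiteness of the center of $\pmod(Q)$ to manufacture a non-neutral element of $\Gamma_\sigma$ localized on a single component, and then Theorems \ref{thm-pure-t} and \ref{thm-subgr} to upgrade a non-trivial normal image in $\pmod(Q)$ to a non-elementary one. The only differences are cosmetic: the paper takes $M_2=\ker\theta_Q\cap\Gamma_\sigma$ and a cyclic $N_1$ generated by a commutator supported on $Q$ (so that $M_2$ and $N_1$ commute outright), whereas you take for $M_2$ the subgroup supported on one component and absorb the central twist subgroup $D_\sigma\cap\Gamma_\sigma$ into $N_1$ to secure its normality in $M_1$.
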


\begin{proof}
For $X\in W(S)$, let us say that an element of $\pmod(S)$ is \textit{supported} on $X$ if it has a representative which is the identity outside $X$.
By assumption, we have $\gamma, \delta \in \Gamma_\sigma$ such that $\theta_Q(\gamma)$ and $\theta_R(\delta)$ are non-neutral.
Pick an element $\gamma_1$ of $\pmod(S)_\sigma$ such that it is supported on $Q$ and the element $\theta_Q([\gamma_1, \gamma])$ is non-neutral, where for two elements $x$, $y$ of a group, $[x, y]$ denotes the commutator $xyx^{-1}y^{-1}$.
Such a $\gamma_1$ is found as follows:
There exists $\gamma_1'\in \pmod(Q)$ with $[\gamma_1', \theta_Q(\gamma)]$ non-neutral because the center of $\pmod(Q)$ is finite (\cite[Section 3.4]{FM11}) and $\theta_Q(\gamma)$ is not central by Theorem \ref{thm-pure-t}.
Let $\gamma_1$ be an element of $\pmod(S)$ supported on $Q$ with $\theta_Q(\gamma_1)=\gamma_1'$.
This is a desired one.
Similarly we can find an element $\delta_1$ of $\pmod(S)_\sigma$ such that it is supported on $R$ and the element $\theta_R([\delta_1, \delta])$ is non-neutral.
Since $\Gamma \vartriangleleft \pmod(S)$, the elements $[\gamma_1, \gamma]$ and $[\delta_1, \delta]$ belong to $\Gamma$, and are moreover supported on $Q$ and $R$, respectively.

We set $L_R=(\bigcap_X\ker \theta_X)\cap \Gamma_\sigma$, where $X$ runs through all components of $S_\sigma$ other than $R$.
We have $\theta_R(L_R)\vartriangleleft \theta_R(\Gamma_\sigma)$ and $\theta_R(\Gamma_\sigma)\vartriangleleft \theta_R(\mod(S; 3)_\sigma)$ because $\Gamma \vartriangleleft \pmod(S)$.
Since $[\delta_1, \delta]$ belongs to $L_R$, the group $\theta_R(L_R)$ is non-trivial.
By Theorems \ref{thm-pure-t} and \ref{thm-subgr}, $\theta_R(L_R)$ is a non-elementary subgroup of $\pmod(R)$, and $L_R$ is therefore non-amenable.

Let $N_1$ be the group generated by $[\gamma_1, \gamma]$.
The group $N_1$ is infinite because $[\gamma_1, \gamma]$ is a non-neutral element of $\mod(S; 3)$.
We set $M_2=\ker \theta_Q\cap \Gamma_\sigma$ and $M_1=N_1\vee M_2$.
The groups $N_1$ and $M_2$ commute, and we thus have $N_1\vartriangleleft M_1$ and $M_2\vartriangleleft M_1$.
We also have $M_2\vartriangleleft \Gamma_\sigma$.
The group $M_2$ contains $L_R$ and is therefore non-amenable.
\end{proof}

%%%%%%%%%%%%%%%%%%%%%%%%%%%%%%%%%%%%%%%%%%%%%%%

%%%%%%%%%%%%%%%%%%%%%%%%%%%%%%%%%%%%%%%%%%%%%%%

\section{Groupoids from mapping class groups}\label{sec-groupoids-mcg}

\subsection{Discrete measured groupoids}

We refer to \cite{ADR00} and \cite[Chapter XIII]{Ta03} for discrete measured groupoids and their amenability.
By a \textit{standard probability space} we mean a standard Borel space equipped with a probability measure. All relations involving measurable sets and maps that appear throughout the paper are understood to hold up to sets of measure zero, unless otherwise mentioned.
Let $(X, \mu)$ be a standard probability space.
A \textit{non-negligible} subset of $A\subset X$ is a measurable subset satisfying $\mu(A)>0$.
Let $\calg$ be a discrete measured groupoid on $(X, \mu)$ with the range and source maps $r, s\colon \calg \to X$.
A subgroupoid of $\calg$ is a measurable one of $\calg$ whose unit space is $(X, \mu)$.
For a non-negligible subset $A\subset X$, we denote by
\[\calg|_A=\{ \, g\in \calg\,:\, r(g), s(g)\in A\, \}\]
the restriction of $\calg$ to $A$.

Let $\Gamma$ be a countable group, $M$ a standard Borel space on which $\Gamma$ acts, and $\rho \colon \calg \to \Gamma$ a homomorphism.
Let $A\subset X$ be a measurable subset.
A measurable map $\varphi \colon A\to M$ is called \textit{$(\calg, \rho)$-invariant} if $\rho(g)\varphi(s(g))=\varphi(r(g))$ for almost every $g\in \calg$.
Suppose that $K$ is a compact metrizable space on which $\Gamma$ acts continuously and that $M$ is the space of probability measures on $K$.
If $\calg$ is amenable, then there exists a $(\calg, \rho)$-invariant map from $X$ into $M$.

We say that $\calg$ is \textit{finite} if for almost every $x\in X$, the set $r^{-1}(x)$ is finite.
We say that $\calg$ is \textit{nowhere finite} if for almost every $x\in X$, the set $r^{-1}(x)$ is infinite.
We say that $\calg$ is \textit{nowhere amenable} if for any non-negligible subset $A$ of $X$, the restriction $\calg|_A$ is not amenable.

Let $\Gamma$ be a countable group and $\Gamma \ca (X, \mu)$ a p.m.p.\ action.
The product space $\Gamma \times X$ has the following natural structure of a groupoid on $X$:
the range and source maps are defined by $r(\gamma, x)=\gamma x$ and $s(\gamma, x)=x$, respectively, for $\gamma \in \Gamma$ and $x\in X$.
The product is defined by $(\gamma_1, \gamma_2x)(\gamma_2, x)=(\gamma_1\gamma_2, x)$ for $\gamma_1, \gamma_2\in \Gamma$ and $x\in X$.  
The element $(e, x)$ is the unit at $x\in X$. 
The inverse of $(\gamma, x)\in \Gamma \times X$ is defined by $(\gamma^{-1}, \gamma x)$.
This groupoid also has the structure of a discrete measured groupoid on $(X, \mu)$ and is denoted by $\Gamma \ltimes (X, \mu)$;  when $\mu$ is self-understood from the context, then it will be denoted by $\Gamma \ltimes X$.

When $\cals$ is a normal subgroupoid of $\calg$, we write $\cals \vartriangleleft \calg$.
We refer to \cite[Definition 2.2]{Ki06} for a definition of normal subgroupoids.
The following lemma collects basic facts on normal subgroupoids:

\begin{lem}\label{lem-normal}\cite[Lemmas 2.13 and 2.16]{Ki06}
The following assertions hold:
\begin{enumerate}
\item[(i)] Let $\Gamma$ be a countable group, $N$ a normal subgroup of $\Gamma$ and $\Gamma \ca (X, \mu)$ a p.m.p.\ action.
Then $N\ltimes X \vartriangleleft \Gamma \ltimes X$.
\item[(ii)] Let $\calg$ be a discrete measured groupoid on $(X, \mu)$, $\caln$ a subgroupoid of $\calg$ with $\caln \vartriangleleft \calg$, and $A$ a non-negligible subset of $X$.
Then $\caln |_A\vartriangleleft \calg |_A$.
\end{enumerate}
\end{lem}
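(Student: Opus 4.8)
The plan is to verify both assertions directly from the definition of normality in \cite[Definition 2.2]{Ki06}, which furnishes, for a normal subgroupoid $\caln \vartriangleleft \calg$, a countable family $\{\phi_n\}$ of \emph{partial isomorphisms} of $\calg$ (Borel subsets on which both $r$ and $s$ are injective) such that the left cosets $\caln \phi_n$ partition $\calg$ and each $\phi_n$ \emph{normalizes} $\caln$, i.e.\ $\phi_n\caln\phi_n^{-1}\subseteq \caln$. I would prove (i) by exhibiting one such family explicitly, and (ii) by transporting a given family to the restriction $\calg|_A$.

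For (i), the natural family is $\phi_\gamma\df\{\,(\gamma, x)\,:\,x\in X\,\}$, indexed by $\gamma\in\Gamma$. This family is countable, each $\phi_\gamma$ is a globally defined partial isomorphism of $\Gamma\ltimes X$, and the cosets $(N\ltimes X)\phi_\gamma$, with $\gamma$ running over representatives of $\Gamma/N$, partition $\Gamma\ltimes X$. The only computation needed is the conjugation identity: for $(n,x)\in N\ltimes X$ one has
\[
(\gamma, nx)\,(n,x)\,(\gamma, x)^{-1}=(\gamma n\gamma^{-1}, \gamma x),
\]
which lies in $N\ltimes X$ precisely because $N\vartriangleleft \Gamma$ gives $\gamma N\gamma^{-1}=N$. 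Hence each $\phi_\gamma$ normalizes $N\ltimes X$, and $N\ltimes X\vartriangleleft \Gamma\ltimes X$ follows.

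For (ii), I would begin from a normalizing family $\{\phi_n\}$ witnessing $\caln\vartriangleleft \calg$ and build one witnessing $\caln|_A\vartriangleleft \calg|_A$. Intersecting the partition $\calg=\bigsqcup_n\caln\phi_n$ with the restriction yields $\calg|_A=\bigsqcup_n(\caln\phi_n\cap\calg|_A)$, and within each Borel set $\caln\phi_n\cap\calg|_A$ I would select one element per source value; after the standard decomposition of a Borel subset of $\calg|_A$ into countably many partial isomorphisms, this produces a countable family $\{\psi_j\}$. Two elements $g=h\phi$, $g'=h'\phi'$ of a fixed piece sharing the same source satisfy $\phi=\phi'$ (injectivity of $s$ on $\phi_n$), whence $g'g^{-1}=h'h^{-1}\in\caln|_A$; this shows the cosets $\caln|_A\psi_j$ partition $\calg|_A$. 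For the normalizing property, taking $w_1=h_1\phi_1$, $w_2=h_2\phi_2$ in a fixed $\psi_j$ (so $\phi_1,\phi_2\in\phi_n$) and $k\in\caln|_A$ with matching endpoints, one has $\phi_1 k\phi_2^{-1}\in\phi_n\caln\phi_n^{-1}\subseteq\caln$, and therefore $w_1kw_2^{-1}=h_1(\phi_1k\phi_2^{-1})h_2^{-1}\in\caln|_A$, since both endpoints remain in $A$.

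The main obstacle is the bookkeeping in (ii): in a factorization $g=h\phi$ of an element $g\in\calg|_A$ the intermediate point $s(h)=r(\phi)$ need not lie in $A$. The resolution is that $h\in\caln$ forces this point to be $\caln$-equivalent to the endpoint $r(g)\in A$, so that the coset structure closes up within $\caln|_A$; crucially, the left $\caln|_A$-coset of $g$ inside a fixed piece is governed entirely by its source, via injectivity of $s$ on $\phi_n$, while the descent of the normalizing property rests on the identity $\phi_n\caln\phi_n^{-1}\subseteq\caln$. Care is needed only to guarantee, through a measurable selection and a further decomposition, that each $\psi_j$ is a genuine partial isomorphism of $\calg|_A$; this verification is routine. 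Part (ii) may alternatively be invoked verbatim from \cite[Lemma 2.16]{Ki06}, but the self-contained argument above is the approach I would present.
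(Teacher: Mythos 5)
The paper offers no proof of this lemma, deferring entirely to \cite[Lemmas 2.13 and 2.16]{Ki06}, and your self-contained argument is correct and is essentially the argument of the cited lemmas: the explicit conjugation identity $(\gamma,nx)(n,x)(\gamma,x)^{-1}=(\gamma n\gamma^{-1},\gamma x)$ for (i), and for (ii) the restriction of the coset decomposition together with the observation that, although the intermediate point $s(h)=r(\phi)$ of a factorization $g=h\phi$ need not lie in $A$, the left $\caln|_A$-coset of $g$ is determined by its source and the normalizing identity $\phi_n\caln\phi_n^{-1}\subseteq\caln$ descends to the pieces $\psi_j$. The only cosmetic discrepancy is that in \cite[Definition 2.2]{Ki06} the normalizing family consists of Borel sections of the source map rather than partial isomorphisms, but this does not affect any step of your argument.
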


\subsection{Preliminary results on subgroupoids}\label{subsec-pre-res}

First we summarize some results from \cite{Ki05, Ki06} regarding groupoids associated with p.m.p.\ actions of mapping class groups. Afterwards we show a few consequences which will be used in the sequel.

Throughout this subsection, we fix the following notation:
Let $S=S_{g, k}$ be a surface with $3g+k-4>0$.
Let $\Gamma$ be an infinite subgroup of $\mod(S; 3)$ and $\Gamma \ca (X, \mu)$ a p.m.p.\ action.
We set $\calg =\Gamma \ltimes X$ and define $\rho \colon \calg \to \Gamma$ as the projection.
For a set $V$ on which $\Gamma$ acts (e.g., $V(S)$, $\Sigma(S)$ and $W(S)$) and for $v\in V$, we denote by $\Gamma_v$ the stabilizer of $v$ in $\Gamma$ and set $\calg_v =\Gamma_v \ltimes X$.
For $\sigma \in \Sigma(S)$ and $Q\in W(S)$, we define homomorphisms $\rho_\sigma$, $\rho_Q$ by
\[\rho_\sigma =\theta_\sigma \circ \rho \colon \calg_\sigma \to \prod_R\pmod(R)\quad \textrm{and}\quad \rho_Q=\theta_Q\circ \rho \colon \calg_Q\to \pmod(Q),\]
where $R$ runs through all components of $S_\sigma$.
Let $Y$ be a non-negligible subset of $X$ and $\cals$ a nowhere finite subgroupoid of $\calg|_Y$.

\begin{definition}
With the above notation,
\begin{enumerate}
\item[(i)] we say that $\cals$ is \textit{reducible} if there exists an $(\cals, \rho)$-invariant map from $Y$ into $\Sigma(S)$. 
\item[(ii)] For $\alpha \in V(S)$ and a measurable subset $A$ of $Y$, we say that the pair $(\alpha, A)$ is $(\cals, \rho)$-\textit{invariant} if there exists a partition $A=\bigsqcup_nA_n$ into countably many measurable subsets such that for any $n$, the constant map from $A_n$ into $V(S)$ whose value is $\alpha$ is $(\cals, \rho)$-invariant.
\item[(iii)] Suppose that $(\alpha, A)$ is a $(\cals, \rho)$-invariant pair.
We say that $(\alpha, A)$ is \textit{purely $(\cals, \rho)$-invariant} if for any $\beta \in V(S)$ with $I(\alpha, \beta)\neq 0$ and for any non-negligible subset $B$ of $A$, the pair $(\beta, B)$ is not $(\cals, \rho)$-invariant.
\end{enumerate}
\end{definition}

\begin{Remark}
In \cite{Ki06}, reducible subgroupoids are defined in a different way.
We show that the two definitions are equivalent.
Let $\mathcal{PMF}$ denote the Thurston boundary for $S$ and $\mathcal{MIN}$ denote the subset of $\mathcal{PMF}$ consisting of minimal measured foliations.
The group $\mod(S)$ naturally acts on $\mathcal{PMF}$, and $\mathcal{MIN}$ is a subset invariant under this action. 
Let $M(\mathcal{PMF})$ be the space of probability measures on $\mathcal{PMF}$, on which $\mod(S)$ naturally acts.
In \cite[Definition 3.2]{Ki06}, a subgroupoid $\cals$ of $\calg |_Y$ is called reducible if there exists an $(\cals, \rho)$-invariant map $\psi \colon Y\to M(\mathcal{PMF})$ such that for almost every $x\in Y$, the measure $\psi(x)$ is supported on the complement of $\mathcal{MIN}$.
If such a map $\psi$ exists, then there exists an $(\cals, \rho)$-invariant map from $Y$ into $\Sigma(S)$ (\cite[Theorem 3.6]{Ki06}).
The converse also holds because we have the embedding of $\Sigma(S)$ into the complement of $\mathcal{MIN}$ that is equivariant under the action of $\mod(S)$ (\cite[Remarque in Expos\'e 4, \S II]{FLP79}).
\end{Remark}

\begin{theorem}\cite[Theorem 3.6]{Ki06}
Suppose that $\cals$ is reducible.
Then, there exists a unique $(\cals, \rho)$-invariant map $\varphi \colon Y\to \Sigma(S)$ satisfying the following conditions (1) and (2):
\begin{enumerate}
\item For any $\sigma \in \Sigma(S)$ with $\varphi^{-1}(\sigma)$ non-negligible and for any $\alpha \in \sigma$, the pair $(\alpha, \varphi^{-1}(\sigma))$ is purely $(\cals, \rho)$-invariant.
\item If $(\alpha, A)$ is a purely $(\cals, \rho)$-invariant pair, then $\alpha \in \varphi(x)$ for almost every $x\in A$.
\end{enumerate}
\end{theorem}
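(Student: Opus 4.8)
The plan is to build $\varphi$ from the \emph{purely} invariant pairs directly, and then check that the resulting assignment lands in $\Sigma(S)$. For each $\alpha\in V(S)$ let $A_\alpha\subseteq Y$ be an essentially maximal measurable set with $(\alpha,A_\alpha)$ purely $(\cals,\rho)$-invariant. Such a set exists and is unique modulo null sets because, for fixed $\alpha$, the family of purely invariant pairs is closed under countable unions: invariance is defined through a countable partition and so survives unions, while if some crossing curve $\beta$ with $I(\alpha,\beta)\neq 0$ were invariant on a non-negligible piece of $A\cup A'$ it would already be invariant on a non-negligible piece of $A$ or of $A'$, contradicting purity. Since $V(S)$ is countable, this yields a countable family $\{A_\alpha\}$, and I set $\varphi(x)=\{\,\alpha : x\in A_\alpha\,\}$.

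Granting that $\varphi$ takes values in $\Sigma(S)$ almost everywhere, properties (1), (2) and uniqueness are essentially formal. If $(\alpha,A)$ is purely invariant, maximality gives $A\subseteq A_\alpha$ up to null, so $\alpha\in\varphi(x)$ for a.e.\ $x\in A$, which is (2). Conversely, if $\varphi^{-1}(\sigma)$ is non-negligible and $\alpha\in\sigma$, then $\varphi^{-1}(\sigma)\subseteq A_\alpha$, and restricting the purely invariant pair $(\alpha,A_\alpha)$ to this subset keeps it purely invariant, which is (1). Uniqueness is a two-line sandwich: if $\varphi'$ also satisfies (1) and (2), then (1) for $\varphi'$ feeds into (2) for $\varphi$ to give $\varphi'\subseteq\varphi$, and symmetrically $\varphi\subseteq\varphi'$. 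Invariance of $\varphi$ follows from canonicity of the maximal sets: writing $\cals$ as a countable union of bisections on which $\rho\equiv\gamma$ is constant, such a bisection carries the purely invariant pair $(\alpha,B)$ to the purely invariant pair $(\gamma\alpha,\gamma B)$ (invariance transports by conjugation, and $I(\gamma\alpha,\gamma\beta)=I(\alpha,\beta)$), hence carries $A_\alpha$ into $A_{\gamma\alpha}$; applying this to the inverse bisection yields $\rho(g)\varphi(s(g))=\varphi(r(g))$.

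That $\varphi(x)\in\Sigma(S)$ splits into disjointness and finiteness. Disjointness is immediate from purity: if $\alpha,\beta\in\varphi(x)$ on a common non-negligible set with $I(\alpha,\beta)\neq 0$, then $(\beta,\cdot)$ is invariant on a non-negligible subset of $A_\alpha$, contradicting purity of $(\alpha,A_\alpha)$. Thus the curves in $\varphi(x)$ are pairwise disjoint and pairwise non-isotopic, so $\lvert\varphi(x)\rvert\leq 3g-3+k$ and $\varphi(x)$ is finite.

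The one substantial point, and where I expect the real work, is \emph{non-emptiness}: $\bigcup_\alpha A_\alpha=Y$ up to null. This is the measured-groupoid analogue of Ivanov's statement that a reducible subgroup has a non-empty canonical reduction system. I would argue by contradiction: suppose a non-negligible $Y_0$ carries no purely invariant curve on any non-negligible subset. Restricting to $Y_0$ preserves reducibility, so there is an invariant map into $\Sigma(S)$; on a fiber $\psi^{-1}(\sigma)$ the set $\sigma$ is invariant, and Theorem \ref{thm-pure-t} upgrades this to each $\alpha\in\sigma$ being fixed by $\rho(g)$ on all within-$\psi^{-1}(\sigma)$ arrows, i.e.\ to an honest (constant) invariant curve over a non-negligible set. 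Starting from such a curve—which, lacking purity, must cross another invariant curve on a non-negligible subset (after refining to the piece of the partition where invariance is constant)—I would greedily enlarge the subsurface filled by the accumulated invariant curves, each step adjoining a crossing invariant curve and strictly raising the complexity $-\chi$. Since the complexity is bounded and only a non-negligible subset is discarded at each of the finitely many steps, this halts at a maximal invariant filled subsurface $F$ over a non-negligible set $B'$. If $F=S$ the curves fill $S$, so $\rho(\cals|_{B'})$ lies in the joint stabilizer of a filling system, which is trivial in $\mod(S;3)$ by the Alexander method (Lemma \ref{lem-curve-stab} is the two-curve case); then $\rho$ is trivial on $\cals|_{B'}$, forcing $\cals|_{B'}$ to consist of units and hence be finite, contradicting nowhere finiteness of $\cals$ (which persists on $B'$ by recurrence). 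If $F\subsetneq S$, then by Theorem \ref{thm-pure-t} every essential boundary curve $\delta$ of $F$ is invariant over $B'$, and maximality of $F$ forbids any crossing curve from being invariant on a non-negligible subset of $B'$—otherwise $F$ would enlarge—so $(\delta,B')$ is purely invariant, contradicting the choice of $Y_0$. Hence $Y_0$ is null and $\varphi$ is non-empty a.e. The delicate bookkeeping throughout is keeping the working set non-negligible while passing to subsets and invoking countability to fix a single configuration of curves; the two cases $F=S$ and $F\subsetneq S$ are precisely where nowhere finiteness and Theorem \ref{thm-pure-t} respectively enter.
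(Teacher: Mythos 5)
The paper gives no proof of this statement: it is imported verbatim from \cite[Theorem~3.6]{Ki06}, so there is nothing in the present text to compare against line by line. Your reconstruction follows the same route as the cited source: your purely invariant pairs are exactly Kida's essential reduction classes, $\varphi$ is assembled from the essentially maximal sets $A_\alpha$, and non-emptiness is the measured-groupoid analogue of the Birman--Lubotzky--McCarthy/Ivanov filling argument. The formal parts (closure of purely invariant pairs under countable unions, disjointness and finiteness of $\varphi(x)$, conditions (1)--(2), the uniqueness sandwich, and equivariance via transport of purely invariant pairs along bisections) are correct, and you correctly identify that nowhere finiteness of $\cals$ and Theorem \ref{thm-pure-t} are the two inputs that make non-emptiness work.

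Two small points in the non-emptiness argument should be tightened, though neither is a genuine gap. In the case $F=S$, invariance of each filling curve only yields triviality of $\rho$ on $\cals|_{C}$ for $C$ a piece of the \emph{common refinement} of the finitely many countable partitions attached to those curves; arrows joining different pieces need not fix the curves, so ``$\rho$ is trivial on $\cals|_{B'}$'' is too strong as stated. The fix is immediate: some piece $C$ is non-negligible, $\cals|_{C}$ consists of units, and the contradiction with nowhere finiteness (which does pass to restrictions, by the recurrence/isotropy dichotomy you invoke) is run on $C$; the triviality of the joint stabilizer of a filling system in $\mod(S;3)$ is the Alexander method plus torsion-freeness, extending Lemma \ref{lem-curve-stab}. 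Second, $-\chi$ of the filled subsurface need not strictly increase at every step once complementary disks are capped off; the greedy process should instead be terminated by the standard fact that a strictly increasing chain of isotopy classes of essential subsurfaces of $S$ has bounded length. With these repairs the argument is complete.
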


Following terminology for reducible subgroups, we call this $(\cals, \rho)$-invariant map $\varphi$ the \textit{canonical reduction system (CRS)} of $\cals$.
For any non-negligible subset $A$ of $Y$, the CRS of $\cals|_A$ is the restriction of $\varphi$ to $A$.

\begin{lem}\label{lem-nor-red}
Let $\caln$ be a nowhere finite subgroupoid of $\cals$ with $\caln \vartriangleleft \cals$.
Then, the following assertions hold:
\begin{enumerate}
\item[(i)] If $\caln$ is amenable and $\cals$ is nowhere amenable, then $\cals$ is reducible.
\item[(ii)] If $\caln$ is reducible, then the CRS of $\caln$ is $(\cals, \rho)$-invariant.
In particular, $\cals$ is reducible.
\item[(iii)] In assertion (ii), let $\varphi, \psi \colon Y\to \Sigma(S)$ denote the CRS's of $\caln$ and $\cals$, respectively.
Then $\varphi(x)\subset \psi(x)$ for almost every $x\in Y$.
\end{enumerate}
\end{lem}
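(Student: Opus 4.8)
The plan is to establish the three assertions using the normality $\caln \vartriangleleft \cals$ together with the classification of reducible subgroupoids and the characterization of canonical reduction systems stated above. I will work throughout with the homomorphism $\rho \colon \calg \to \Gamma$ and the restricted homomorphisms $\rho_\sigma$, $\rho_Q$, and I will repeatedly use that $(\caln, \rho)$-invariance of a map can be upgraded to $(\cals, \rho)$-invariance of a canonically associated object by exploiting that $\caln$ is normal in $\cals$.

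For assertion (i), the strategy is to argue by contradiction via an amenability/invariant-measure dichotomy. Suppose $\cals$ is not reducible. Because $\caln$ is amenable and normal in $\cals$, there is a $(\caln, \rho)$-invariant map $\psi \colon Y \to M(\mathcal{PMF})$ into the space of probability measures on the Thurston boundary (this is the invariant-measure output of amenability, applied to the action of $\Gamma$ on the compact metrizable space $\mathcal{PMF}$). The key point is to average or push $\psi$ forward using the normality of $\caln$ in $\cals$ so as to produce a map that is $(\cals, \rho)$-invariant: since conjugation by elements of $\cals$ preserves $\caln$, the set of $(\caln,\rho)$-invariant measures is preserved, and one extracts a canonical $(\cals,\rho)$-invariant assignment (e.g.\ by taking the barycenter or the support of a canonically chosen invariant measure). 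Now one invokes the structural trichotomy for the $\mod(S)$-action on $\mathcal{PMF}$: a $(\cals,\rho)$-invariant probability measure on $\mathcal{PMF}$ must either be supported on the complement of $\mathcal{MIN}$—in which case $\cals$ is reducible by the Remark—or be supported on $\mathcal{MIN}$ and concentrated on the fixed points of a pseudo-Anosov-type structure, which would force $\cals|_A$ to be amenable on some non-negligible $A$, contradicting nowhere amenability. This dichotomy is exactly the groupoid analogue of the subgroup trichotomy (finite / reducible / virtually cyclic / non-elementary) and should be extractable from the cited results of \cite{Ki05, Ki06}.

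For assertions (ii) and (iii), let $\varphi \colon Y \to \Sigma(S)$ be the CRS of $\caln$, which exists by the previous theorem once $\caln$ is reducible. The core claim is that $\varphi$ is $(\cals, \rho)$-invariant. To see this, fix $g \in \cals$ and consider the translate $x \mapsto \rho(g)\varphi(s(g))$; I want to show this equals $\varphi(r(g))$ almost everywhere. The mechanism is that for $g \in \cals$, conjugation by $g$ carries $\caln|_{s(g)}$ to $\caln|_{r(g)}$ (normality), so it carries the CRS of the former to the CRS of the latter; by the uniqueness clause in the theorem characterizing the CRS, the translated map $\rho(g)\varphi(s(g))$ must coincide with $\varphi(r(g))$. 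Concretely, one checks that each pair $(\alpha, A)$ that is purely $(\caln,\rho)$-invariant is carried by $g$ to a pair $(\rho(g)\alpha, gA)$ that is again purely $(\caln,\rho)$-invariant, and then invokes uniqueness. This yields $(\cals,\rho)$-invariance of $\varphi$, hence $\cals$ is reducible, and its CRS $\psi$ exists. For (iii), since $\varphi$ is an $(\cals,\rho)$-invariant map into $\Sigma(S)$ whose values consist of purely $(\caln,\rho)$-invariant curves, each such curve $\alpha \in \varphi(x)$ is in particular $(\cals,\rho)$-invariant on a suitable partition, and condition (2) characterizing $\psi$ (that purely $(\cals,\rho)$-invariant curves lie in $\psi$) together with the fact that $(\cals,\rho)$-invariance is weaker than $(\caln,\rho)$-invariance forces $\varphi(x) \subset \psi(x)$ almost everywhere.

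The main obstacle I anticipate is the rigorous transfer of invariance from $\caln$ to $\cals$ in a measurable, groupoid-theoretic setting. Unlike the group case, where one simply conjugates, here one must handle the measurable partitioning into countably many pieces on which constant maps are invariant, verify that the normality relation $\caln|_A \vartriangleleft \cals|_A$ from Lemma~\ref{lem-normal}(ii) behaves well under restriction, and confirm that the uniqueness in the CRS theorem is strong enough to pin down the translated map almost everywhere rather than merely up to the partition. In particular, for (i), producing an honestly $(\cals,\rho)$-invariant measure (as opposed to merely a $(\caln,\rho)$-invariant one) from the amenability of $\caln$ is the delicate step, and I expect it to rely essentially on the normality together with a measurable selection or barycenter argument to break the ambiguity in the choice of invariant measure.
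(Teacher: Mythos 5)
Your treatment of assertions (ii) and (iii) matches what the paper actually does: the paper simply cites \cite[Theorem 3.6 (iii)]{Ki06} for (ii) (whose proof is exactly your conjugation-plus-uniqueness argument), and for (iii) it gives the one-line observation that a pair which is purely $(\caln, \rho)$-invariant and $(\cals, \rho)$-invariant is automatically purely $(\cals, \rho)$-invariant, after which condition (2) of the CRS theorem for $\cals$ yields the inclusion. One small correction to your wording in (iii): since $\caln < \cals$, it is $(\caln,\rho)$-invariance that is the \emph{weaker} condition (implied by $(\cals,\rho)$-invariance); the argument needs precisely this direction, namely that a pair which fails to be $(\caln,\rho)$-invariant cannot be $(\cals,\rho)$-invariant. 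Your sentence states the implication backwards, though the surrounding reasoning uses it correctly.

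The genuine gap is in your proof of (i). The step ``average or push $\psi$ forward using the normality of $\caln$ \dots\ e.g.\ by taking the barycenter or the support'' does not go through as stated: there is no $\mod(S)$-equivariant barycenter on the space of probability measures on $\mathcal{PMF}$, and the support of a $(\caln,\rho)$-invariant measure is a priori only $(\caln,\rho)$-invariant --- upgrading any such object to an $(\cals,\rho)$-invariant one is exactly the technical content you would need to supply, and it is not a formal consequence of normality. The paper avoids this entirely by routing (i) through (ii): one first applies \cite[Proposition 4.1]{Ki06} to the amenable, nowhere finite, normal subgroupoid $\caln$ itself, which says that $\caln$ is either reducible or admits an invariant pair of boundary points; in the first case (ii) immediately gives reducibility of $\cals$, and in the second case normality transfers the boundary pair to $\cals$ and contradicts nowhere amenability. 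In other words, the classification is performed on $\caln$ (where amenability is available) and only its \emph{output} --- a canonical invariant object in $\Sigma(S)$ or in $\partial_2\calc(Q)$, which is rigid enough to be pinned down by uniqueness --- is transported to $\cals$, rather than attempting to transport an arbitrary invariant measure. If you want to complete your version of (i), you should replace the barycenter step by this two-case analysis of $\caln$.
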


\begin{proof}
Assertion (ii) is \cite[Theorem 3.6 (iii)]{Ki06}.
Assertion (i) follows from assertion (ii) and \cite[Proposition 4.1]{Ki06}.
Assertion (iii) holds because if a pair $(\alpha, A)$ is purely $(\caln, \rho)$-invariant and is $(\cals, \rho)$-invariant, then it is purely $(\cals, \rho)$-invariant by definition.
\end{proof}

The set $W(S)$ was introduced in Subsection \ref{subsec-curve}.
We define $\Omega(S)$ as the set of finite subsets $F$ of $W(S)$ (including the empty set) such that any two distinct elements of $F$ have disjoint representatives in $S$.
For $\sigma \in \Sigma(S)$, let $\omega_\sigma \in \Omega(S)$ denote the set of components of $S_\sigma$.
For a surface $Q=S_{g', k'}$ with $3g'+k'-4\geq 0$, we define $\partial_2\calc(Q)$ as the quotient space of $\partial \calc(Q)\times \partial \calc(Q)$ obtained by identifying any its point $(x, y)$ with $(y, x)$.

\begin{theorem}\label{thm-tiain}\cite[Theorems 3.13 and 3.15]{Ki06}
Suppose that $\cals$ is reducible and denote by $\varphi \colon Y\to \Sigma(S)$ the CRS of $\cals$.
Then, there exist unique $(\cals, \rho)$-invariant maps $\varphi_t, \varphi_{ia}, \varphi_{in}\colon Y\to \Omega(S)$ satisfying the following conditions (1)--(4):
\begin{enumerate}
\item[(1)] For almost every $x\in Y$, the equation $\varphi_t(x)\cup \varphi_{ia}(x)\cup \varphi_{in}(x)=\omega_{\varphi(x)}$ holds, the sets $\varphi_t(x)$, $\varphi_{ia}(x)$ and $\varphi_{in}(x)$ are mutually disjoint, and any element of $\omega_{\varphi(x)}$ that is a pair of pants belongs to $\varphi_t(x)$.
\item[(2)] Pick $F\in \Omega(S)$ with $\varphi_t^{-1}(F)$ non-negligible and pick $Q\in F$.
Then, for any $\alpha \in V(Q)$, the pair $(\alpha, \varphi_t^{-1}(F))$ is $(\cals, \rho)$-invariant.
\item[(3)] Pick $F\in \Omega(S)$ with $\varphi_{ia}^{-1}(F)$ non-negligible and pick $Q\in F$.
Then, for any non-negligible subset $A$ of $Y$, there exists no $(\cals, \rho_Q)$-invariant map from $A$ into $\Sigma(Q)$, and there exists an $(\cals, \rho_Q)$-invariant map from $Y$ into $\partial_2\calc(Q)$.
\item[(4)] Pick $F\in \Omega(S)$ with $\varphi_{in}^{-1}(F)$ non-negligible and pick $Q\in F$.
Then, for any non-negligible subset $A$ of $Y$, there exists no $(\cals, \rho_Q)$-invariant map from $A$ into $\Sigma(Q)\cup \partial_2\calc(Q)$.
\end{enumerate}
\end{theorem}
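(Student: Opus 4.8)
The plan is to establish the measured-groupoid analogue of Ivanov's classification of the components of $S_\sigma$ into the types T, IA and IN relative to a reducible subgroup. Since $\Sigma(S)$ is countable, I would first decompose $Y$ into the at most countably many non-negligible pieces $Y_\sigma = \varphi^{-1}(\sigma)$, $\sigma \in \Sigma(S)$. On each $Y_\sigma$ the value of the CRS is the constant $\sigma$, so the set $\omega_\sigma$ of components of $S_\sigma$ is a fixed finite family, and it suffices to assign to every $Q\in \omega_\sigma$ a measurable subset of $Y_\sigma$ on which $Q$ is declared to be of type T, IA or IN, to check that for each $x$ these subsets partition $\omega_{\varphi(x)}=\omega_\sigma$, and finally to glue the resulting assignments into the maps $\varphi_t, \varphi_{ia}, \varphi_{in}\colon Y\to \Omega(S)$. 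If $Q$ is a pair of pants then $V(Q)=\emptyset$ and $\rho_Q$ is essentially trivial, so I would place such $Q$ in $\varphi_t$, which yields the last clause of condition (1).

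Fixing a component $Q\in \omega_\sigma$ that is not a pair of pants, I would consider, for a non-negligible $A\subset Y_\sigma$, the three mutually exclusive conditions on $\cals|_A$ measured through $\rho_Q$: that there is no $(\cals, \rho_Q)$-invariant map $A\to \Sigma(Q)\cup \partial_2\calc(Q)$ (type IN); that there is an $(\cals, \rho_Q)$-invariant map into $\partial_2\calc(Q)$ but none into $\Sigma(Q)$ (type IA); and that every curve of $V(Q)$ is $(\cals, \rho)$-invariant over $A$ (type T). These are pairwise incompatible by definition, so disjointness in condition (1) is immediate. Using an essential-supremum/exhaustion argument over measurable subsets---exactly as in the construction of the CRS in \cite[Theorem 3.6]{Ki06}---I would extract a maximal measurable set carrying type IN, and then, on its complement, a maximal set carrying a genuine reduction; the canonicity and the purely-invariant property of $\varphi$ together with Lemma \ref{lem-nor-red} force any such reduction that does not fix all of $V(Q)$ to enlarge the CRS, a contradiction, so the reducible locus is exactly the type-T set and what remains is type IA. Here torsion-freeness of $\Gamma\subset \mod(S; 3)$ and Theorem \ref{thm-pure-t} are used to rule out finite-order phenomena, so that ``trivial image'' genuinely means every curve of $V(Q)$ is fixed.

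The heart of the argument, and the step I expect to be the main obstacle, is \emph{exhaustiveness}: that these three sets cover $Y_\sigma$. On any non-negligible $A$ for which the restriction associated with $Q$ is amenable, the amenability criterion recalled in Subsection \ref{subsec-pre-res} furnishes an $(\cals, \rho_Q)$-invariant map into the space of probability measures on the Thurston boundary $\mathcal{PMF}(Q)$ of $Q$. I would then analyze the support of these invariant measures. If some mass sits on non-filling foliations, the reducibility theorem \cite[Theorem 3.6]{Ki06} (see also the Remark following it, relating reducibility to invariant measures on the complement of $\mathcal{MIN}$) produces an $(\cals, \rho_Q)$-invariant map into $\Sigma(Q)$, putting $A$ in the reducible locus. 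Otherwise the measures are carried by the filling, hence arational, foliations, which constitute the Gromov boundary $\partial\calc(Q)$ of the hyperbolic complex $\calc(Q)$; using the hyperbolicity of $\calc(Q)$ and the description of $\partial\calc(Q)$ from \cite{Ha06, Kl99}, a circumcentre-type argument would confine the support to at most two points and yield an $(\cals, \rho_Q)$-invariant map into $\partial_2\calc(Q)$, i.e.\ type IA. Where instead no invariant probability measure can exist at all, the restriction is nowhere amenable and $Q$ is of type IN. This dichotomy is the measured counterpart of the statement that an irreducible amenable subgroup of $\pmod(Q)$ is virtually cyclic and fixes a pair of arational foliations; the obstruction to an invariant measure in the non-amenable case is precisely the dynamics of two independent pseudo-Anosov elements, excluded by the same mechanism used in the proof of Lemma \ref{lem-icc}.

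Finally, conditions (2)--(4) hold by construction and, being existence/non-existence statements about $(\cals, \rho_Q)$-invariant maps, they characterize the three loci uniquely. This uniqueness is what permits the pointwise assignments $x\mapsto \{Q : Q \text{ is T}\}$, and likewise for IA and IN, to be patched measurably across the pieces $Y_\sigma$ into well-defined maps $\varphi_t, \varphi_{ia}, \varphi_{in}\colon Y\to \Omega(S)$, whose $(\cals, \rho)$-invariance is inherited from the invariance of the defining conditions under the action of the groupoid.
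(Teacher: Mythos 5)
The paper does not prove this theorem: it is imported verbatim from \cite[Theorems 3.13 and 3.15]{Ki06}, so there is no in-paper argument to measure your proposal against. Your sketch does follow the same strategy as the original proof in \cite{Ki06}: decompose $Y$ into the countably many pieces on which the CRS is constant, classify each component $Q$ of $S_\sigma$ by an exhaustion over measurable subsets according to the existence of $(\cals,\rho_Q)$-invariant maps into $\Sigma(Q)$ and $\partial_2\calc(Q)$, use amenability of restrictions to produce invariant measures on the Thurston boundary of $Q$, and rule out the ``reducible but not T'' alternative by showing it would enlarge the canonical reduction system. Two steps are compressed to the point of hiding the real work. First, a curve of $V(Q)$ that is purely invariant with respect to $\rho_Q$ must be checked to be purely invariant in $S$ before the maximality property of $\varphi$ can be invoked (this holds because any curve of $S$ meeting it either lies in $Q$ or meets the purely invariant curves of $\sigma$); you gesture at this but do not isolate it. Second, and more seriously, a ``circumcentre-type argument'' does not by itself confine the support of an invariant measure carried by the arational foliations to two points: the correct dichotomy is that either the pushforward to $\partial\calc(Q)$ under Klarreich's map has at most two points in its support, yielding the $(\cals,\rho_Q)$-invariant map into $\partial_2\calc(Q)$, or one extracts an equivariant bounded subset of $\calc(Q)$ and must still convert that into an invariant map into $\Sigma(Q)$ (a bounded set of curves is not a simplex) in order to contradict irreducibility; this conversion is where much of the technical effort of \cite{Ki06} is spent. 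With those caveats, the trichotomy, the treatment of pairs of pants, the gluing over the fibers of $\varphi$, and the uniqueness argument via the mutual exclusivity of conditions (2)--(4) are all consistent with the cited proof.
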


Following terminology for reducible subgroups again, we call the maps $\varphi_t$, $\varphi_{ia}$ and $\varphi_{in}$ in Theorem \ref{thm-tiain} the \textit{T, IA} and \textit{IN systems} of $\cals$, respectively.
For any non-negligible subset $A$ of $Y$, the T, IA and IN systems of $\cals|_A$ are the restrictions of $\varphi_t$, $\varphi_{ia}$ and $\varphi_{in}$ to $A$, respectively.
We often reduce our argument to the case where the CRS of $\cals$ and all of these three maps are constant on $Y$.
In this case, we call elements of the constant values of $\varphi_t$, $\varphi_{ia}$ and $\varphi_{in}$ the \textit{T, IA} and \textit{IN components} of $\cals$, respectively.

\begin{Remark}\label{rem-t-comp}
Suppose that $\cals$ is reducible and that all of the CRS and the T, IA and IN systems of $\cals$ are constant.
Let $Q$ be a T component of $\cals$.
By Lemma \ref{lem-curve-stab} and condition (2) in Theorem \ref{thm-tiain}, there exists a partition $Y=\bigsqcup_nY_n$ into countably many measurable subsets such that for any $n$, the image $\rho_Q(\cals|_{Y_n})$ consists of only the neutral element.
\end{Remark}

\begin{theorem}\label{thm-ia-max}\cite[Theorem 3.15 and Lemma 3.16]{Ki06}
Suppose that $\cals$ is reducible and all of the CRS and the T, IA and IN systems of $\cals$ are constant.
Let $Q$ be an IA component of $\cals$.
Then, the following assertions hold:
\begin{enumerate}
\item[(i)] There exists a unique $(\cals, \rho_Q)$-invariant map $\psi_0\colon Y\to \partial_2\calc(Q)$ satisfying the following maximality:
For any non-negligible subset $A$ of $Y$ and any $(\cals, \rho_Q)$-invariant map $\psi \colon A\to \partial_2\calc(Q)$, the inclusion $\psi(x)\subset \psi_0(x)$ holds for almost every $x\in A$, where each point of $\partial_2\calc(Q)$ is naturally regarded as a non-empty subset of $\partial \calc(Q)$ consisting of at most two points.
\item[(ii)] Let $\calm$ be a subgroupoid of $\calg|_Y$ with $\cals \vartriangleleft \calm$.
Note that $\calm$ is reducible by Lemma \ref{lem-nor-red} (ii).
Suppose that all of the CRS and the T, IA and IN systems of $\calm$ are constant.
Then, the map $\psi_0$ in assertion (i) is $(\calm, \rho_Q)$-invariant, and $Q$ is an IA component of $\calm$.
\end{enumerate}
\end{theorem}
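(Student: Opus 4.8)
The plan is to transport to the measured-groupoid setting the elementary picture for IA subgroups, where the canonical invariant pair of boundary points is the attracting/repelling fixed-point set of the underlying pseudo-Anosov, and where any invariant pair must be contained in it. The two inputs that drive everything are the defining properties of an IA component from Theorem \ref{thm-tiain}(3)---namely the existence of \emph{some} $(\cals,\rho_Q)$-invariant map $Y\to\partial_2\calc(Q)$ together with the \emph{absence} of any $(\cals,\rho_Q)$-invariant map into $\Sigma(Q)$ over a non-negligible set---and the fact that $\partial\calc(Q)$ is the Gromov boundary of a hyperbolic space on which $\pmod(Q)$ acts with convergence-type dynamics (no pseudo-Anosov fixing three boundary points).

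For assertion (i), I would first record existence: Theorem \ref{thm-tiain}(3) supplies at least one $(\cals,\rho_Q)$-invariant map $\psi_1\colon Y\to\partial_2\calc(Q)$. The heart of the matter is a \emph{merging lemma}: if $\psi,\psi'\colon A\to\partial_2\calc(Q)$ are $(\cals,\rho_Q)$-invariant on a non-negligible $A$, then $x\mapsto\psi(x)\cup\psi'(x)$ has values that are subsets of $\partial\calc(Q)$ of cardinality at most two on almost all of $A$. Indeed, on the set $B\subset A$ where the union has cardinality $\ge 3$ one would obtain an $(\cals,\rho_Q)$-invariant map from $B$ into the finite subsets of $\partial\calc(Q)$ of cardinality $\ge 3$; using the convergence dynamics of $\pmod(Q)$ on $\partial\calc(Q)$ one converts this into an $(\cals,\rho_Q)$-invariant map from $B$ into $\Sigma(Q)$, contradicting that $Q$ is IA. Granting the merging lemma, the family of $(\cals,\rho_Q)$-invariant maps is directed under pointwise inclusion of values; since $\partial\calc(Q)$ is second countable, a standard exhaustion produces an essentially unique maximal map $\psi_0$, and the merging lemma is exactly what guarantees $\psi(x)\subset\psi_0(x)$ almost everywhere for every competitor $\psi$. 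Uniqueness is immediate from maximality.

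For assertion (ii), reducibility of $\calm$ is Lemma \ref{lem-nor-red}(ii) applied to $\cals\vartriangleleft\calm$. To see that $Q$ persists as a component for $\calm$, note that by Lemma \ref{lem-nor-red}(iii) the CRS of $\cals$ is contained in that of $\calm$; if some curve of the CRS of $\calm$ lay in $V(Q)$ it would furnish an $(\calm,\rho)$-, hence $(\cals,\rho)$-, invariant curve in $Q$, i.e.\ an $(\cals,\rho_Q)$-invariant map into $\Sigma(Q)$, again contradicting that $Q$ is IA for $\cals$; thus $Q$ is a component of $\calm$ and $\rho_Q$ is defined on $\calm$. The invariance of $\psi_0$ under $\calm$ is then a pushforward argument: for $g\in\calm$, normality $\cals\vartriangleleft\calm$ implies that the translate $x\mapsto\rho_Q(g)\,\psi_0(s(g))$ is again $(\cals,\rho_Q)$-invariant, so by the maximality in (i) it is contained in $\psi_0$; applying the same to $g^{-1}$ gives equality, which is precisely $(\calm,\rho_Q)$-invariance of $\psi_0$. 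Finally, $Q$ is an IA component of $\calm$: it cannot be a T component since $\rho_Q(\cals)\subset\rho_Q(\calm)$ is non-trivial (as $Q$ is IA for $\cals$), and it cannot be an IN component because the $(\calm,\rho_Q)$-invariant map $\psi_0$ into $\partial_2\calc(Q)$ is forbidden for IN components by Theorem \ref{thm-tiain}(4).

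I expect the main obstacle to be the merging lemma, i.e.\ promoting a positive-measure family of $\ge 3$ invariant boundary points to an invariant curve system in $\Sigma(Q)$. This is the step that genuinely uses the hyperbolicity of $\calc(Q)$ and the convergence-action structure of $\pmod(Q)\ca\partial\calc(Q)$ in a measurable form, combining the amenability built into the IA condition with the non-existence of reductions; the remainder of the argument is organizational.
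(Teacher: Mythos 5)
First, a point of reference: the paper does not prove this statement at all---it is imported verbatim from \cite[Theorem 3.15 and Lemma 3.16]{Ki06}---so there is no in-paper proof to compare against. Judged on its own terms, your reconstruction has the right architecture, and part (ii) is essentially complete: the persistence of $Q$ as a component of the CRS of $\calm$ via Lemma \ref{lem-nor-red} (iii), the push-forward-plus-maximality argument for the $(\calm,\rho_Q)$-invariance of $\psi_0$, and the exclusion of the T and IN alternatives via Theorem \ref{thm-tiain} are the intended steps. (For ``not T'' the precise reason is that a T component would make every $\alpha\in V(Q)$ an $(\cals,\rho)$-invariant pair, hence yield an $(\cals,\rho_Q)$-invariant map into $\Sigma(Q)$, contradicting Theorem \ref{thm-tiain} (3); the phrase ``$\rho_Q(\cals)$ is non-trivial'' is not by itself the negation of the T condition for groupoids.)

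The genuine gap is in the merging lemma, which you correctly isolate as the crux but then dispose of with the single clause that three or more invariant boundary points ``convert into an $(\cals,\rho_Q)$-invariant map from $B$ into $\Sigma(Q)$.'' That conversion is not a one-step consequence of convergence dynamics. What hyperbolicity of $\calc(Q)$ gives, via the equivariant quasi-center construction applied to subsets of $\partial\calc(Q)$ of cardinality at least $3$, is an $(\cals,\rho_Q)$-invariant map into the set of bounded subsets of $\calc(Q)$, i.e., into finite subsets of $V(Q)$ whose curves may pairwise intersect; such a set is in general \emph{not} an element of $\Sigma(Q)$. One must still argue that an invariant map into finite subsets of $V(Q)$ forces either an invariant map into $\Sigma(Q)$ (for instance by passing to the canonical reduction system of the setwise stabilizer when that stabilizer is infinite and reducible) or finiteness of $\cals$ on a non-negligible set (when the subset contains a filling pair, via Lemma \ref{lem-curve-stab} and the torsion-freeness of $\mod(S;3)$); each alternative contradicts the IA hypothesis together with nowhere-finiteness. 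This intermediate reduction is a separate, non-trivial lemma in \cite{Ki06}, and without it the merging lemma---and hence both the existence and the maximality of $\psi_0$---remains unproved.
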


Based on these general results from \cite{Ki06} on reducible subgroupoids, in the rest of this section, we prove several lemmas for later use.

\begin{lem}\label{lem-no-in}
Suppose that $\cals$ is reducible and amenable.
Let $\varphi_{in}\colon Y\to \Omega(S)$ be the IN system of $\cals$.
Then $\varphi_{in}(x)=\emptyset$ for almost every $x\in Y$.
\end{lem}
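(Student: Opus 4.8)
The plan is to argue by contradiction and to convert amenability into an invariant map on the Thurston boundary of an IN component, contradicting condition (4) of Theorem \ref{thm-tiain}.

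First I would reduce to the case of constant systems. Since $\varphi$, $\varphi_t$, $\varphi_{ia}$ and $\varphi_{in}$ take values in the countable sets $\Sigma(S)$ and $\Omega(S)$, the space $Y$ decomposes into countably many measurable pieces on which all four maps are constant. If the conclusion failed, then $\varphi_{in}$ would be non-empty on a non-negligible set, so one of these pieces $A\subset Y$ would be non-negligible with $\varphi_{in}|_A\equiv F$ for some non-empty $F\in \Omega(S)$. Passing to $\cals|_A$---which remains amenable and reducible, with systems equal to the restrictions of those of $\cals$---I would fix an IN component $Q\in F$. By condition (4) of Theorem \ref{thm-tiain}, for every non-negligible $B\subset A$ there is no $(\cals|_B, \rho_Q)$-invariant map into $\Sigma(Q)\cup \partial_2\calc(Q)$; the goal is to produce exactly such a map.

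Next I would invoke amenability. An IN component is not a pair of pants (those belong to $\varphi_t$ by condition (1)) and carries a non-elementary image under $\rho_Q$, so $Q$ has high enough complexity for the machinery of \cite{Ki06} to apply, and by Theorem \ref{thm-pure-t} the relevant image in $\pmod(Q)$ is torsion-free. The group $\pmod(Q)$ acts continuously on the compact metrizable Thurston boundary $\mathcal{PMF}(Q)$, and $\rho_Q$ is defined on $\cals|_A$ with values in $\pmod(Q)$. Since $\cals|_A$ is amenable, the amenability principle recalled in Subsection \ref{subsec-pre-res} gives an $(\cals|_A, \rho_Q)$-invariant map $\psi\colon A\to M(\mathcal{PMF}(Q))$ into probability measures. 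I would then split according to how each $\psi(x)$ distributes over the $\pmod(Q)$-invariant decomposition of $\mathcal{PMF}(Q)$ into minimal filling foliations and their complement. On the part where $\psi(x)$ charges the complement, normalizing on this invariant piece yields an invariant measure supported off $\mathcal{MIN}(Q)$, hence (by the reducibility criterion \cite[Theorem 3.6]{Ki06}, applied to $Q$) an $(\cals, \rho_Q)$-invariant map into $\Sigma(Q)$ on a non-negligible subset---already contradicting that $Q$ is IN. On the complementary part I would push $\psi(x)$ forward along the $\pmod(Q)$-equivariant Klarreich map onto the Gromov boundary $\partial\calc(Q)$ of the hyperbolic complex $\calc(Q)$, obtaining an invariant family of probability measures on $\partial\calc(Q)$.

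The hard part will be the last case: showing that an amenable groupoid carrying an invariant probability measure on $\partial\calc(Q)$ must concentrate it on at most two points, which then defines an invariant map into $\partial_2\calc(Q)$ and again contradicts the IN hypothesis. This is the groupoid counterpart of the statement that amenable actions on the boundary of a Gromov-hyperbolic space are elementary, and it is precisely where the hyperbolicity of $\calc(Q)$ and the description of $\partial\calc(Q)$ enter; rather than reprove it I would import it from the analysis underlying \cite[Proposition 4.1]{Ki06}. The remaining ingredients---the measurable reduction, the amenability principle, and the reducible alternative via \cite[Theorem 3.6]{Ki06}---are routine given the results already recalled.
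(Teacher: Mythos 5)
Your argument is correct and follows essentially the same route as the paper: reduce to constant systems, use amenability and compactness of $\mathcal{PMF}(Q)$ to produce an $(\cals,\rho_Q)$-invariant map into the probability measures on $\mathcal{PMF}(Q)$, and derive a contradiction with $Q$ being IN. The only difference is that the paper simply cites \cite[Theorem 3.15 (iii)]{Ki06}, which already states that an IN component admits no such invariant map, whereas you unpack that citation into the minimal/non-minimal decomposition, the Klarreich map, and the extraction of an invariant pair in $\partial_2\calc(Q)$ --- all of which is exactly the analysis behind the cited result.
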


\begin{proof}
We may assume that all of the CRS and the T, IA and IN systems of $\cals$ are constant.
Let $\sigma \in \Sigma(S)$ be the value of the CRS of $\cals$.
Let $Q$ be a component of $S_\sigma$ and $\mathcal{PMF}$ denote the Thurston boundary for $Q$.
As stated in \cite[Theorem 3.15 (iii)]{Ki06}, if $Q$ were IN for $\cals$, then there would exist no $(\cals, \rho_Q)$-invariant map from $Y$ into the space of probability measures on $\mathcal{PMF}$.
By amenability of $\cals$, there however exists such a map because $\mathcal{PMF}$ is compact.
It follows that $Q$ is not an IN component of $\cals$.
\end{proof}

The following lemma is an immediate consequence of \cite[Proposition 3.17]{Ki06}:

\begin{lem}\label{lem-ame}
Suppose that $\cals$ is reducible and nowhere amenable.
Let $\varphi_{in}\colon Y\to \Omega(S)$ be the IN system of $\cals$.
Then $\varphi_{in}(x)\neq \emptyset$ for almost every $x\in Y$.
\end{lem}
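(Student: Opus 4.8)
The plan is to argue by contraposition, showing that emptiness of the IN system on a non-negligible set forces $\cals$ to be amenable there, contradicting nowhere amenability. First I would invoke the standard reduction to the case where the CRS and the T, IA and IN systems of $\cals$ are all constant on $Y$; this is legitimate after passing to a non-negligible subset, since these are measurable maps into the countable sets $\Sigma(S)$ and $\Omega(S)$. Writing $\sigma$ for the constant value of the CRS, the $(\cals,\rho)$-invariance of this constant map means $\rho(g)\sigma=\sigma$ for almost every $g\in\cals$, so $\cals\subset\calg_\sigma$ and the homomorphism $\rho_\sigma=\theta_\sigma\circ\rho$ is defined on all of $\cals$. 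Now suppose for contradiction that $\varphi_{in}(x)=\emptyset$ on a non-negligible subset; restricting there, condition (1) of Theorem \ref{thm-tiain} gives $\omega_\sigma=\varphi_t(x)\cup\varphi_{ia}(x)$, i.e.\ every component $Q$ of $S_\sigma$ is either T or IA for $\cals$.

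Next I would assemble amenability of $\cals$ from amenable pieces along $\rho_\sigma$. The part $\calk$ of $\cals$ on which $\rho_\sigma$ is neutral has $\rho(\calk)\subset D_\sigma\cap\Gamma_\sigma\subset D_\sigma$, which is a finitely generated abelian group; hence $\calk$ is amenable, and it is normal in $\cals$. For the quotient I would examine the factors of $\prod_Q\pmod(Q)$ separately. If $Q$ is a T component, then by Remark \ref{rem-t-comp} the image $\rho_Q(\cals)$ is neutral after a countable partition of $Y$, so this factor contributes nothing. If $Q$ is an IA component, then by condition (3) of Theorem \ref{thm-tiain} there is an $(\cals,\rho_Q)$-invariant map from $Y$ into $\partial_2\calc(Q)$; since the stabilizer in $\pmod(Q)$ of a point of $\partial_2\calc(Q)$ (a set of at most two points of $\partial\calc(Q)$, namely the fixed-point set of a pseudo-Anosov) is virtually cyclic, hence amenable, this reduction witnesses amenability of the action on that factor. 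Applying the permanence of amenability for discrete measured groupoids under extensions and finite products, I would conclude that $\cals$ is amenable on this non-negligible subset, contradicting the assumption that $\cals$ is nowhere amenable.

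I expect the last step to be the main obstacle: deducing amenability of $\cals$ from amenability of the normal subgroupoid $\calk$ together with amenability of each quotient action requires the groupoid permanence properties (amenable-by-amenable is amenable, a finite product of amenable actions is amenable) and the precise fact that point-stabilizers in $\partial_2\calc(Q)$ are amenable. These ingredients are exactly what is encapsulated in \cite[Proposition 3.17]{Ki06}, so the most economical route—and the one I would ultimately adopt—is to perform the reduction above and then quote that proposition directly rather than re-deriving the permanence machinery. As a sanity check, note that Lemma \ref{lem-no-in} supplies the reverse implication, confirming that for reducible $\cals$ the emptiness of the IN system is genuinely equivalent to amenability.
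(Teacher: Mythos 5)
Your proposal is correct and, after the expository detour through the permanence properties of amenability, lands exactly where the paper does: the lemma is deduced by quoting \cite[Proposition 3.17]{Ki06}, which is the paper's entire proof. The sketch of why that proposition holds (amenable kernel over $D_\sigma$, neutral images on T components, invariant maps into $\partial_2\calc(Q)$ with amenable stabilizers on IA components) is a faithful reconstruction of the cited argument rather than a genuinely different route.
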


\begin{lem}\label{lem-red-group}
Suppose that $\Gamma$ is reducible and denote by $\sigma \in \Sigma(S)$ the CRS of $\Gamma$.
Then, all of the CRS and the T, IA and IN systems of $\calg =\Gamma \ltimes X$ are constant, and their values are equal to those of $\Gamma$, respectively.
\end{lem}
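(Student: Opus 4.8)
The plan is to prove that the constant map $x\mapsto \sigma$ is the CRS of $\calg$ and, likewise, that the constant maps recording the T, IA and IN components of $\Gamma$ are the T, IA and IN systems of $\calg$; since all of these objects are \emph{unique} (\cite[Theorem 3.6]{Ki06} and Theorem \ref{thm-tiain}), it suffices to check that the constant maps satisfy the defining conditions. The basic observation is that, as $\rho\colon\calg\to\Gamma$ is the projection, a map $\psi\colon A\to Z$ into a $\Gamma$-set $Z$ is $(\calg,\rho)$-invariant on a non-negligible $A$ exactly when it is a.e.\ $\Gamma$-equivariant there; moreover, if $\psi$ is constant with value $v$ on a non-negligible set $B$, then $\mu(B\cap\gamma^{-1}B)=0$ for every $\gamma$ outside the stabilizer $\Gamma_v$, so by measure preservation the cosets of $\Gamma_v$ index pairwise-disjoint translates of $B$, forcing $[\Gamma:\Gamma_v]<\infty$. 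In short, a curve (or simplex, or boundary pair) that is $(\calg,\rho)$-invariant on a non-negligible set necessarily has \emph{finite} $\Gamma$-orbit. Since $\Gamma$ fixes $\sigma$, the constant map $c_\sigma$ is $\Gamma$-equivariant, so $\calg$ is reducible.

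The crux is a strengthening of the group-theoretic CRS property: for $\alpha\in\sigma$ and any $\beta\in V(S)$ with $I(\alpha,\beta)\neq0$, the orbit $\Gamma\beta$ is \emph{infinite}. I would prove this by cutting along $\sigma$ and examining the components incident to $\alpha$, through which $\beta$ necessarily passes. If some such component $Q$ is IA or IN for $\Gamma$, then $\theta_Q(\Gamma)$ contains a pseudo-Anosov; choosing $g\in\Gamma$ with $\theta_Q(g)$ pseudo-Anosov, the arcs $\theta_Q(g)^n(\beta\cap Q)$ grow in complexity, so the $g^n\beta$ are pairwise distinct. If instead every component incident to $\alpha$ is T, then minimality of the CRS (\cite[Theorem 7.16]{Iv92}, the second description of $\sigma$) forbids deleting $\alpha$; this forces $\theta_{Q'}(\Gamma)$ to be infinite on the surface $Q'$ obtained by re-gluing across $\alpha$, and since this image lies in $D_\alpha\cong\Z$ it contains an element $g$ acting as a nonzero power of the Dehn twist about $\alpha$, whence $I(g^n\beta,\beta)\to\infty$ and again $\Gamma\beta$ is infinite. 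Granting this, $c_\sigma$ satisfies condition (1) of \cite[Theorem 3.6]{Ki06}: each $(\alpha,X)$ with $\alpha\in\sigma$ is $(\calg,\rho)$-invariant and \emph{purely} so, because any transverse $\beta$ has infinite orbit and hence no invariant pair. For condition (2) of the same theorem, if $(\delta,A)$ is purely invariant then $\delta$ has finite orbit; a $\delta$ crossing some $\alpha\in\sigma$ contradicts the crux, a $\delta$ inside an IA/IN component acquires an infinite orbit from a pseudo-Anosov, and a $\delta$ inside a T component admits a $\Gamma$-fixed transverse curve in that component, violating purity. Hence $\delta\in\sigma$, and by uniqueness the CRS of $\calg$ equals $c_\sigma$.

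For the T, IA and IN systems I would identify the group-theoretic type of each component $Q$ of $S_\sigma$ with its groupoid type and invoke uniqueness in Theorem \ref{thm-tiain}. If $\theta_Q(\Gamma)$ is trivial then every curve of $V(Q)$ is $\Gamma$-fixed and gives an invariant pair; as conditions (3) and (4) of Theorem \ref{thm-tiain} forbid invariant maps into $\Sigma(Q)$, the component must lie in $\varphi_t$. Conversely, if $Q\in\varphi_t$ then condition (2) of Theorem \ref{thm-tiain} makes every curve of $Q$ invariant, hence of finite orbit; an infinite $\theta_Q(\Gamma)$ would be irreducible and contain a pseudo-Anosov, contradicting this, so $\theta_Q(\Gamma)$ is finite and therefore trivial, being torsion-free by Theorem \ref{thm-pure-t}. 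For the non-T components, the finite-orbit observation combined with irreducibility (an infinite irreducible subgroup contains a pseudo-Anosov and fixes no simplex) rules out any $(\calg,\rho_Q)$-invariant map into the countable set $\Sigma(Q)$. An IA component supplies a pseudo-Anosov whose unordered fixed-point pair in $\partial\calc(Q)$ is fixed by the virtually cyclic group $\theta_Q(\Gamma)$, giving a constant invariant map into $\partial_2\calc(Q)$ and placing $Q$ in $\varphi_{ia}$. A group-IN component lands in $\varphi_{in}$: any $(\calg,\rho_Q)$-invariant map $X\to\partial_2\calc(Q)$ would push $\mu$ to a $\theta_Q(\Gamma)$-invariant probability measure there, and symmetrising to $\partial\calc(Q)$ and using the north--south dynamics of two independent pseudo-Anosovs with disjoint fixed-point sets forces this measure to vanish, a contradiction.

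I expect the two genuine obstacles to be precisely the non-elementary steps: establishing the infinite-orbit crux, in particular handling the purely-twisting case through minimality of the CRS and the identification of $\ker\theta$ with Dehn twists; and ruling out invariant maps into the uncountable space $\partial_2\calc(Q)$ for IN components, where the finite-orbit trick is unavailable and one must argue via invariant measures and the hyperbolic dynamics on the curve complex. Everything else is bookkeeping with the uniqueness statements in \cite{Ki06}.
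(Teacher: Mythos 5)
Your overall strategy is sound, but it diverges from the paper's proof in two places worth comparing. First, for the CRS assertion the paper simply cites \cite[Lemma 3.8]{Ki06}, whereas you reprove it from scratch via the ``infinite orbit'' crux. Your crux is correct (and your all-T case, re-gluing across $\alpha$ and landing in $D_\alpha$, does work), but there is a shorter route you could have taken: if $\Gamma\beta$ were finite then the finite-index stabilizer $\Gamma_\beta$ would have the same CRS $\sigma$ (the CRS of a subgroup of $\mod(S;3)$ is unchanged under passing to finite-index subgroups, by torsion-freeness and the minimality characterization), contradicting the defining property of a CRS applied to $\beta$ itself. Second, and more substantively, for the IN components the paper does \emph{not} use an invariant-measure argument: it takes two independent pseudo-Anosovs $\theta_Q(g_1),\theta_Q(g_2)$, observes that $Q$ is IA for each cyclic subgroupoid $\calg_i=\langle g_i\rangle\ltimes X$ by the already-established IA case, and then invokes the maximality of the canonical invariant map $\psi_i$ from Theorem \ref{thm-ia-max}~(i) to force any putative invariant map $\psi\colon Y\to\partial_2\calc(Q)$ to satisfy $\psi(x)\subset\{F_i^\pm\}$ for both $i$, contradicting independence.

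Your alternative for the IN step --- push $\mu$ forward to a $\theta_Q(\Gamma)$-invariant probability measure on $\partial_2\calc(Q)$ and kill it by north--south dynamics --- has two points that need shoring up. (a) Condition (4) of Theorem \ref{thm-tiain} quantifies over \emph{all} non-negligible subsets $A$ of $Y$; for a map defined only on $A$, the pushforward of $\mu|_A$ is not $\theta_Q(\Gamma)$-invariant, so you must first extend the $(\calg|_A,\rho_Q)$-invariant map to the saturation $\Gamma A$ by equivariance (this is standard but you do not mention it). (b) Unlike $\mathcal{PMF}$, the Gromov boundary $\partial\calc(Q)$ is not compact, so ``an invariant probability measure must concentrate on the fixed-point pair of a pseudo-Anosov'' requires tightness of Borel measures on $\partial\calc(Q)$ together with locally uniform convergence dynamics there; the paper's machinery (e.g.\ Lemma \ref{lem-no-in}) deliberately runs measure arguments on the compact space $\mathcal{PMF}$ and handles $\partial_2\calc(Q)$ only through the maximality statement, precisely to avoid this. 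Your argument can very likely be completed, but as written these are the two gaps; adopting the paper's use of Theorem \ref{thm-ia-max}~(i) closes both at no cost.
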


\begin{proof}
We suppose that all of the CRS and the T, IA and IN systems of $\calg |_Y$ are constant, and show the conclusion of the lemma holds for $\calg|_Y$.
This is enough for the lemma.
The assertion on the CRS is \cite[Lemma 3.8]{Ki06}.
Let $Q$ be a component of $S_\sigma$.
If $Q$ is T for $\Gamma$, then either $Q$ is a pair of pants or any element of $V(Q)$ is fixed by any element of $\theta_Q(\Gamma)$.
In the latter case, for any $\alpha \in V(Q)$, the pair $(\alpha, Y)$ is $(\calg, \rho_Q)$-invariant.
The component $Q$ is therefore T for $\calg|_Y$.

Suppose that $Q$ is IA for $\Gamma$.
There exists a $g\in \Gamma$ such that $\theta_Q(g)$ is pseudo-Anosov and generates a finite index subgroup of $\theta_Q(\Gamma)$.
Since any non-zero power of $\theta_Q(g)$ fixes no element of $V(Q)$, the component $Q$ is not T for $\calg|_Y$.
Since any non-zero power of $\theta_Q(g)$ fixes exactly two points of $\partial \calc(Q)$, the component $Q$ is IA for $\calg|_Y$.

Suppose that $Q$ is IN for $\Gamma$.
There exist $g_1, g_2\in \Gamma$ such that $\theta_Q(g_1)$ and $\theta_Q(g_2)$ are independent pseudo-Anosov elements.
Let $\{ F_1^\pm \}$ and $\{ F_2^\pm \}$ denote the fixed point sets of $\theta_Q(g_1)$ and $\theta_Q(g_2)$ in $\partial \calc(Q)$, respectively.
As in the previous paragraph, $Q$ is not T for $\calg|_Y$.
If $Q$ were IA for $\calg|_Y$, then there would exist a $(\calg, \rho_Q)$-invariant map $\psi \colon Y\to \partial_2\calc(Q)$.
For $i=1, 2$, we set $\Gamma_i=\langle g_i\rangle$ and set $\calg_i=(\Gamma_i\ltimes X)|_Y$.
By the assertion proved in the previous paragraph, $Q$ is IA for both $\calg_1$ and $\calg_2$.
For $i=1, 2$, let $\psi_i \colon Y\to \partial_2\calc(Q)$ be the $(\calg_i, \rho_Q)$-invariant map satisfying the maximality in Theorem \ref{thm-ia-max} (i).
This map is constant, and its value is $\{ F_i^\pm\}$ because that constant map is $(\calg_i, \rho_Q)$-invariant.
On the other hand, $\psi$ is $(\calg_i, \rho_Q)$-invariant, and the maximality of $\psi_i$ implies that the inclusion $\psi(x)\subset \psi_i(x)$ holds for almost every $x\in Y$.
This contradicts independence between $\theta_Q(g_1)$ and $\theta_Q(g_2)$.
We have shown that $Q$ is not IA for $\calg|_Y$, and $Q$ is therefore IN for $\calg|_Y$.
\end{proof}

\begin{lem}\label{lem-nor-in}
Let $\caln$ be a nowhere finite subgroupoid of $\cals$ with $\caln \vartriangleleft \cals$.
Suppose that $\cals$ is reducible and that all of the CRS's and the T, IA and IN systems of $\cals$ and $\caln$ are constant.
Let $Q$ be an IN component of $\cals$.
Then, either there exists a T component $R$ of $\caln$ with $Q\subset R$ or $Q$ is an IN component of $\caln$.
\end{lem}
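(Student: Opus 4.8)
The plan is to compare the canonical reduction systems of $\cals$ and $\caln$ and then run a three-way case analysis on the type, for $\caln$, of the piece of $S_{\mathrm{CRS}(\caln)}$ that contains $Q$. First I would observe that $\caln$ is automatically reducible, since the CRS of $\cals$ is an $(\cals,\rho)$-invariant map $Y\to\Sigma(S)$ and hence a fortiori $(\caln,\rho)$-invariant. Write $\sigma$ and $\tau$ for the (constant) CRS's of $\cals$ and $\caln$. Applying Lemma \ref{lem-nor-red}(iii) to $\caln\vartriangleleft\cals$ yields $\tau\subset\sigma$, so cutting along $\sigma$ refines cutting along $\tau$, and the component $Q$ of $S_\sigma$ is contained in a unique component $R$ of $S_\tau$. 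As $\caln$ is reducible with constant systems, $R$ is of exactly one of the types T, IA, IN for $\caln$; the assertion then amounts to excluding the IA case and to showing that the IN case forces $R=Q$ (the T case being precisely the first alternative).

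To exclude the IA case, I would apply Theorem \ref{thm-ia-max}(ii) with its $\cals$, $\calm$, $Q$ replaced by $\caln$, $\cals$, $R$ respectively: if $R$ were an IA component of $\caln$, then since $\caln\vartriangleleft\cals$ and both have constant systems, $R$ would be an IA component of $\cals$, in particular a component of $S_\sigma$. But $Q\subset R$ with $Q$ also a component of $S_\sigma$ forces $R=Q$, making $Q$ simultaneously IA and IN for $\cals$, a contradiction.

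For the IN case, suppose $R$ is an IN component of $\caln$ with $Q\subsetneq R$. Then at least one curve of $\sigma\setminus\tau$ lies essentially in $R$ (these curves cut $R$ into the $\sigma$-pieces, one of which is $Q$), so the curves of $\sigma$ contained in $R$ form a non-empty simplex $\sigma_R\in\Sigma(R)$. Since $\sigma$ is $(\cals,\rho)$- and hence $(\caln,\rho)$-invariant, and since elements of $\caln\subset\mod(S;3)$ stabilizing $\tau$ fix $R$ and each curve of $\sigma$ by Theorem \ref{thm-pure-t}, the constant map $x\mapsto\sigma_R$ is $(\caln,\rho_R)$-invariant. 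This contradicts Theorem \ref{thm-tiain}(4) for the IN component $R$ of $\caln$. Hence $R=Q$, so $Q$ is an IN component of $\caln$, which together with the T case establishes the dichotomy.

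The main difficulty I anticipate is the bookkeeping between the two decompositions $S_\sigma$ and $S_\tau$ and the matching of invariant maps under the different homomorphisms $\rho_R$ and $\rho_Q$: one must check that ``$R$ is IA for $\caln$'' legitimately supplies the hypotheses of Theorem \ref{thm-ia-max}(ii) (that $R$ is a genuine component of $S_\tau$ and that its maximal $(\caln,\rho_R)$-invariant map into $\partial_2\calc(R)$ is the object being propagated to $\cals$), and that $\sigma_R$ really defines a $(\caln,\rho_R)$-invariant simplex in $\Sigma(R)$ rather than merely a fixed simplex in $\Sigma(S)$.
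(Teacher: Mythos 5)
Your proof is correct and follows essentially the same route as the paper's: compare the two CRS's via Lemma \ref{lem-nor-red}(iii), rule out the IA case with Theorem \ref{thm-ia-max}(ii), and use the non-existence of $(\caln,\rho_R)$-invariant maps into $\Sigma(R)$ from Theorem \ref{thm-tiain} to force $Q=R$ in the remaining case. The only difference is cosmetic ordering: the paper first shows no curve of $\sigma$ lies in $R$ whenever $R$ is not T (so $Q=R$) and only then excludes IA, whereas you split the IA and IN cases up front.
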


\begin{proof}
Let $\sigma, \tau \in \Sigma(S)$ be the values of the CRS's of $\cals$ and $\caln$, respectively.
Since $\caln \vartriangleleft \cals$, we have $\tau \subset \sigma$ by Lemma \ref{lem-nor-red} (iii).
We thus have the component $R$ of $S_\tau$ with $Q\subset R$.
If $R$ is T for $\caln$, then the lemma follows.
Suppose that $R$ is not T for $\caln$.
For any $\alpha \in \sigma$, the pair $(\alpha, Y)$ is $(\caln, \rho)$-invariant, and thus $\alpha$ is not a curve in $R$.
The equation $Q=R$ follows.
If $Q$ were IA for $\caln$, then by Theorem \ref{thm-ia-max} (ii), the assumption $\caln \vartriangleleft \cals$ would imply that $Q$ is IA for $\cals$.
This is a contradiction.
It follows that $Q$ is IN for $\caln$.
\end{proof}

\begin{lem}\label{lem-in-comp}
Let $\alpha \in V(S)$ be a non-separating curve in $S$.
Suppose that $D_\alpha \cap \Gamma$ is trivial.
Let $\caln$ be a nowhere finite subgroupoid of $\cals$ with $\caln \vartriangleleft \cals$.
Suppose that $\cals$ is reducible and that all of the CRS's and the T, IA and IN systems of $\cals$ and $\caln$ are constant.
Then, the following conditions (1) and (2) are equivalent:
\begin{enumerate}
\item[(1)] The value of the CRS of $\cals$ is $\{ \alpha \}$, and the component $S_\alpha$ is IN for $\cals$.
\item[(2)] The value of the CRS of $\caln$ is $\{ \alpha \}$, and the component $S_\alpha$ is IN for $\caln$.
\end{enumerate}
\end{lem}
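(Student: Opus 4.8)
The plan is to prove the two implications separately, in each case using the inclusion of canonical reduction systems from Lemma \ref{lem-nor-red} (iii) and the dichotomy of Lemma \ref{lem-nor-in}, and exploiting the hypothesis $D_\alpha \cap \Gamma =\{ e\}$ through the fact that the kernel of $\theta_{S_\alpha}$ restricted to $\Gamma_\alpha$ equals $D_\alpha \cap \Gamma_\alpha$, which is therefore trivial. Since $\alpha$ is non-separating, $S_\alpha$ is connected and satisfies $3(g-1)+(k+2)-4=3g+k-5\geq 0$, so $\calc(S_\alpha)$ and $\partial_2\calc(S_\alpha)$ are defined and the results of Subsection \ref{subsec-pre-res} apply to $S_\alpha$. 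I would write $\sigma, \tau \in \Sigma(S)$ for the (constant) values of the CRS's of $\cals$ and $\caln$; since $\caln \vartriangleleft \cals$, Lemma \ref{lem-nor-red} (iii) gives $\tau \subset \sigma$. Throughout I would use that $\rho(\cals)\subset \Gamma_\alpha$ whenever $\alpha \in \sigma$, so that $\rho_{S_\alpha}=\theta_{S_\alpha}\circ \rho$ is defined on $\cals$ and on $\caln$, and that, by the triviality of $D_\alpha \cap \Gamma_\alpha$, the equality $\rho_{S_\alpha}(g)=e$ forces $\rho(g)=e$ for every $g$ in such a subgroupoid.

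For (1) $\Rightarrow$ (2), I would assume $\sigma =\{ \alpha\}$ and that $S_\alpha$ is IN for $\cals$. Then $\tau \subset \{ \alpha\}$, and since $\tau$ is a non-empty element of $\Sigma(S)$ we must have $\tau =\{ \alpha\}$; in particular $S_\alpha$ is the unique component of both $S_\sigma$ and $S_\tau$. Applying Lemma \ref{lem-nor-in} to the IN component $S_\alpha$ of $\cals$ shows that $S_\alpha$ is either a T component or an IN component of $\caln$, so it remains to exclude the first case. Suppose $S_\alpha$ were T for $\caln$. By Remark \ref{rem-t-comp} there is a partition $Y =\bigsqcup_n Y_n$ with $\rho_{S_\alpha}(\caln|_{Y_n})=\{ e\}$ for every $n$; by the injectivity noted above this yields $\rho(\caln|_{Y_n})=\{ e\}$, i.e.\ $\caln|_{Y_n}$ consists only of units. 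Consequently each $\caln$-orbit meets $Y_n$ in at most one point, so $Y_n$ is a partial transversal for $\caln$. The key point is now measure-theoretic: since $\caln$ is nowhere finite and measure preserving, a partial transversal is necessarily null—moving $Y_n$ along infinitely many pairwise disjoint partial isomorphisms of $\caln$ produces infinitely many disjoint subsets of $Y$ each of measure $\mu(Y_n)$—so $\mu(Y_n)=0$ for all $n$ and hence $\mu(Y)=0$, a contradiction. Therefore $S_\alpha$ is IN for $\caln$, which is (2).

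For (2) $\Rightarrow$ (1), I would assume $\tau =\{ \alpha\}$ and that $S_\alpha$ is IN for $\caln$; then $\alpha \in \sigma$. I would first show $\sigma =\{ \alpha\}$. If some $\beta \in \sigma$ were distinct from $\alpha$, then $\beta$ would be disjoint from $\alpha$, hence a curve of $S_\alpha$, and $(\beta, Y)$, being $(\cals, \rho)$-invariant, would also be $(\caln, \rho)$-invariant; the constant map $Y \to \Sigma(S_\alpha)$ with value $\{ \beta\}$ would then be $(\caln, \rho_{S_\alpha})$-invariant, contradicting condition (4) of Theorem \ref{thm-tiain} for the IN component $S_\alpha$ of $\caln$. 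Thus $\sigma =\{ \alpha\}$ and $S_\alpha$ is the unique component of $S_\sigma$, so it is T, IA or IN for $\cals$. It cannot be T: otherwise Remark \ref{rem-t-comp} would give a partition with $\rho_{S_\alpha}(\cals|_{Y_n})=\{ e\}$, whence $\rho_{S_\alpha}(\caln|_{Y_n})=\{ e\}$ and $S_\alpha$ would be T for $\caln$, contrary to hypothesis. It cannot be IA: otherwise Theorem \ref{thm-tiain} (3), via Theorem \ref{thm-ia-max}, would furnish a $(\cals, \rho_{S_\alpha})$-invariant map $Y \to \partial_2\calc(S_\alpha)$, which restricts to a $(\caln, \rho_{S_\alpha})$-invariant map and again contradicts condition (4) of Theorem \ref{thm-tiain} for $\caln$. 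Hence $S_\alpha$ is IN for $\cals$, which is (1).

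I expect the measure-theoretic step in (1) $\Rightarrow$ (2)—ruling out that $S_\alpha$ is a T component of $\caln$—to be the crux: it is precisely here that the hypotheses $D_\alpha \cap \Gamma =\{ e\}$ (to pass from $\rho_{S_\alpha}$ to $\rho$) and the nowhere finiteness of $\caln$ (to convert a trivial restriction into a null set) are both indispensable. The remaining steps are bookkeeping with the classification of subgroupoids recalled above; the only care needed is to verify that invariant curves, or points of $\partial_2\calc(S_\alpha)$, transfer correctly between $\cals$ and $\caln$ via the normality $\caln \vartriangleleft \cals$ and the equivariance of $\theta_{S_\alpha}$ on $V(S_\alpha)$.
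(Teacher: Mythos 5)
Your argument is correct and follows essentially the same route as the paper's proof: Lemma \ref{lem-nor-red} (iii) to transfer the CRS, Lemma \ref{lem-nor-in} to reduce to the T/IN dichotomy, the triviality of $D_\alpha\cap\Gamma$ to kill a putative T component of $\caln$ against nowhere finiteness, and the transfer of invariant maps into $\Sigma(S_\alpha)\cup\partial_2\calc(S_\alpha)$ from $\cals$ to $\caln$ for the converse. You merely make explicit two points the paper leaves implicit (the measure-theoretic reason a trivial restriction of a nowhere finite groupoid forces a null set, and the case analysis T/IA/IN in the converse direction), which is fine.
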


\begin{proof}
We first assume condition (1).
By Lemma \ref{lem-nor-red} (iii), the value of the CRS of $\caln$ is $\{ \alpha \}$.
By Lemma \ref{lem-nor-in}, the component $S_\alpha$ is either T for $\caln$ or IN for $\caln$.
If $S_\alpha$ were T for $\caln$, then there exists a non-negligible subset $A$ of $Y$ such that $\rho(\caln |_A)\subset \ker \theta_{S_\alpha}<D_\alpha$.
Since $D_\alpha \cap \Gamma$ is assumed to be trivial, $\caln |_A$ is trivial.
This contradicts that $\caln$ is nowhere finite.
We have shown that $S_\alpha$ is IN for $\caln$, and condition (2) follows.

We next assume condition (2).
Let $\sigma \in \Sigma(S)$ be the value of the CRS of $\cals$.
By Lemma \ref{lem-nor-red} (iii), $\sigma$ contains $\alpha$.
Pick $\beta \in \sigma$.
The pair $(\beta, Y)$ is $(\caln, \rho)$-invariant because $\caln <\cals$.
We have $\beta =\alpha$ because $S_\alpha$ is IN for $\caln$.
We therefore have $\sigma =\{ \alpha \}$.
The component $S_\alpha$ is IN for $\cals$ because it is IN for $\caln$.
Condition (1) follows.
\end{proof}

\begin{lem}\label{lem-s-chain}
Pick $\alpha \in V_s(S)$.
We suppose the following conditions (a)--(c):
\begin{enumerate}
\item[(a)] The CRS of $\Gamma_\alpha$ is $\{ \alpha \}$.
\item[(b)] One component of $S_\alpha$ is IN for $\Gamma_\alpha$, and another component of $S_\alpha$ is either T or IN for $\Gamma_\alpha$.
\item[(c)] If there is a T component of $\Gamma_\alpha$, denoted by $Q$, then for any $R\in W(Q)$, the group $\theta_R(\Gamma_R)$ is trivial.
\end{enumerate}
Let $\call$, $\calm_1$, $\calm_2$ and $\caln_1$ be subgroupoids of $\calg|_Y$ such that $\calm_2$ is nowhere amenable, $\caln_1$ is amenable and nowhere finite, and we have $\calg_\alpha |_Y < \call$, $\caln_1\vartriangleleft \calm_1$, $\calm_2\vartriangleleft \calm_1$ and $\calm_2 \vartriangleleft \call$.
Then $\calg_\alpha |_Y =\call$.
\end{lem}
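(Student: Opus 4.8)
The goal being an equality of subgroupoids and the inclusion $\calg_\alpha|_Y<\call$ being given, it suffices to prove the reverse inclusion $\call<\calg_\alpha|_Y$. For this it is enough to show that the constant map $Y\to V(S)$ with value $\alpha$ is $(\call,\rho)$-invariant: if $\rho(g)\alpha=\alpha$ for almost every $g\in\call$ then $\rho(\call)\subset\Gamma_\alpha$, whence $\call<(\Gamma_\alpha\ltimes X)|_Y=\calg_\alpha|_Y$. After partitioning $Y$ into countably many non-negligible pieces I may assume that the CRS's and the T, IA and IN systems of $\calm_1$, $\calm_2$ and $\call$ are all constant, and argue on each piece. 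Since $\alpha$ is separating, $S_\alpha$ has exactly two components $Q_1,Q_2$; as $\Gamma_\alpha$ is reducible with CRS $\{\alpha\}$ by (a), Lemma~\ref{lem-red-group} applied to $\Gamma_\alpha$ shows $\calg_\alpha|_Y$ is reducible with CRS $\{\alpha\}$, with one component (say $Q_1$) IN and, by (b), the other component $Q_2$ either T or IN.

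Next I feed the chain into the reducibility results of Section~\ref{sec-groupoids-mcg}. Since $\calm_2$ is nowhere amenable and $\calm_2\vartriangleleft\calm_1$, $\calm_2\vartriangleleft\call$, both $\calm_1$ and $\call$ are nowhere amenable; and since the nowhere finite groupoids $\caln_1$ and $\calg_\alpha|_Y$ are contained in $\calm_1$ and $\call$ respectively, both $\calm_1$ and $\call$ are nowhere finite. The precise purpose of the amenable piece $\caln_1$ is now visible: Lemma~\ref{lem-nor-red}(i) applied to $\caln_1\vartriangleleft\calm_1$ shows $\calm_1$ is reducible. The constant value of its CRS is a $(\calm_1,\rho)$-invariant, hence $(\calm_2,\rho)$-invariant, map into $\Sigma(S)$, so $\calm_2$ is reducible too; then Lemma~\ref{lem-nor-red}(ii),(iii) applied to $\calm_2\vartriangleleft\call$ shows $\call$ is reducible and that the CRS of $\calm_2$ is contained in the CRS of $\call$. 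Let $\sigma\in\Sigma(S)$ denote the constant value of the latter. Because $\call$ is reducible and nowhere amenable, Lemma~\ref{lem-ame} gives $\call$ a non-empty IN system. Finally, from $\calg_\alpha|_Y<\call$ every curve of $\sigma$ is $(\calg_\alpha|_Y,\rho)$-invariant, and by the structure of $\calg_\alpha|_Y$ just described (together with Remark~\ref{rem-t-comp}) the only such curves are $\alpha$ and, in case $Q_2$ is T, the curves contained in $Q_2$. Hence $\sigma\subset\{\alpha\}\cup V(Q_2)$.

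If $Q_2$ is IN this already finishes the argument: then $\sigma\subset\{\alpha\}$, and since $\sigma$ is non-empty we get $\sigma=\{\alpha\}$, so $(\alpha,Y)$ is $(\call,\rho)$-invariant and $\call<\calg_\alpha|_Y$. The remaining case, when $Q_2$ is T, is where condition (c) enters. Here I first observe that $\call$, and likewise $\calm_2$, can have no IA or IN component contained in $Q_2$: such a component $R\in W(Q_2)$ would produce $g$ with $\rho(g)\in\Gamma_R$ and $\theta_R(\rho(g))\neq e$, contradicting the triviality of $\theta_R(\Gamma_R)$ in (c). Combined with the non-empty IN system from Lemma~\ref{lem-ame}, this forces the IN component of $\call$ to be the component $P$ of $S_\sigma$ containing $Q_1$. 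If $\alpha\in\sigma$ then $P=Q_1$ and we conclude as before.

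The one scenario left to exclude---and the main obstacle---is that $\alpha\notin\sigma$, so that $P$ strictly contains $Q_1$ and swallows $\alpha$ in its interior while remaining IN for $\call$. To rule this out I intend to transport the IN structure back along $\calm_2\vartriangleleft\call$: by Lemma~\ref{lem-nor-in}, $P$ is either contained in a T component of $\calm_2$ or is IN for $\calm_2$, and since condition (c) forbids IN components of $\calm_2$ inside $Q_2$, the former option would leave $\calm_2$ with no IN component at all, contradicting Lemma~\ref{lem-ame}; hence $P$ is IN for $\calm_2$ with $\alpha$ interior. I then plan to derive a contradiction from the joint behaviour of the amenable normal subgroupoid $\caln_1$ and the non-amenable $\calm_2$ inside $\calm_1$, mirroring the group-level construction of Lemma~\ref{lem-group-chain}, where the amenable and non-amenable pieces are supported on opposite sides of the separating curve. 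Concretely, the expectation is that $\calm_2$ cannot act irreducibly across $\alpha$, so that $\alpha$ must in fact be fixed by $\calm_2$, contradicting that $P$ is IN for $\calm_2$. Establishing this incompatibility, using conditions (b) and (c) together with Theorems~\ref{thm-tiain} and~\ref{thm-ia-max}, is the crux; once $\alpha\in\sigma$ is forced, $(\alpha,Y)$ is $(\call,\rho)$-invariant and the equality $\calg_\alpha|_Y=\call$ follows.
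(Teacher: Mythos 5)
Your reduction is the right one and tracks the paper's proof closely up to the decisive point: you correctly argue that it suffices to show the constant CRS value $\sigma$ of $\call$ contains $\alpha$, you dispose of the case where both components of $S_\alpha$ are IN, and in the case where $Q_2$ is T you correctly locate the obstacle, namely the possibility that $\alpha\notin\sigma$, $\sigma\in\Sigma(Q_2)$, and the unique IN component $P$ of $\call$ (and hence of $\calm_2$) contains $\alpha$ in its interior. But at exactly this point the proof stops being a proof: you write that you ``intend'' and ``plan'' to derive a contradiction, and your proposed mechanism --- that $\calm_2$ ``cannot act irreducibly across $\alpha$'' so that $\alpha$ must be fixed by $\calm_2$ --- is never established and is not in fact how the contradiction arises. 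Nothing in the hypotheses forces $\calm_2$ to fix $\alpha$; the normal chain is there to transmit the IN component elsewhere, not to pin down $\alpha$ directly.

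The missing step, which is the actual content of the lemma, runs through $\calm_1$ and $\caln_1$ and uses the \emph{nowhere finite} hypothesis on $\caln_1$, which your endgame never touches. Concretely: once $P$ is IN for $\calm_2$, the inclusion $\calm_2\vartriangleleft\calm_1$ together with Lemma \ref{lem-nor-red}(iii) forces the CRS $\sigma_1$ of $\calm_1$ to avoid $V(P)$, so $P$ is a component of $S_{\sigma_1}$ and is IN for $\calm_1$ (an invariant map for $\calm_1$ would restrict to one for $\calm_2$). Then $\caln_1\vartriangleleft\calm_1$ and Lemma \ref{lem-nor-in} give that $P$ is either IN for $\caln_1$ --- impossible by Lemma \ref{lem-no-in} since $\caln_1$ is amenable --- or contained in a T component of $\caln_1$; in the latter case condition (c) forces \emph{every} component of $S_{\tau_1}$ to be T for $\caln_1$, where $\tau_1\in\Sigma(Q_2)$ is the CRS of $\caln_1$. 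Since $\theta_{Q_2}(\Gamma_\alpha)$ is trivial by (c), the group $D_{\tau_1}\cap\Gamma$ is trivial, so $\rho(\caln_1|_A)$ is trivial on some non-negligible $A$, contradicting that $\caln_1$ is nowhere finite. Without this (or an equivalent) argument the separating-curve case with a T component is not handled, and that is precisely the case the lemma exists to settle.
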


\begin{proof}
By Lemma \ref{lem-nor-red} (i) and (ii), all of $\call$, $\calm_1$, $\calm_2$ and $\caln_1$ are reducible.
We will show that the CRS of $\call$ is constant and its value contains $\alpha$.
This implies $\call <\calg_\alpha |_Y$, and the lemma follows.
We may therefore assume that all of the CRS's and the T, IA and IN systems of $\call$, $\calm_1$, $\calm_2$ and $\caln_1$ are constant.
Let $\sigma, \sigma_1, \sigma_2, \tau_1\in \Sigma(S)$ denote the values of the CRS's of $\call$, $\calm_1$, $\calm_2$ and $\caln_1$, respectively.
We set $\calm =\calg_\alpha |_Y$.
The CRS and the T, IA and IN systems of $\calm$ are constant, and their values are the same as those of $\Gamma_\alpha$ by Lemma \ref{lem-red-group}.

We show that $\sigma$ contains $\alpha$.
Assuming to the contrary that $\sigma$ does not contain $\alpha$, we deduce a contradiction.
Pick $\beta \in \sigma$.
The pair $(\beta, Y)$ is purely $(\call, \rho)$-invariant, and is $(\calm, \rho)$-invariant because $\calm <\call$.
We have $I(\alpha, \beta)=0$ because $(\alpha, Y)$ is purely $(\calm, \rho)$-invariant.
The curve $\alpha$ is separating in $S$, and $S_\alpha$ therefore consists of exactly two components.
The curve $\beta$ does not lie in any IN component of $\Gamma_\alpha$ because $(\beta, Y)$ is $(\calm, \rho)$-invariant.
By condition (b), one component of $S_\alpha$ is IN for $\Gamma_\alpha$, another component of $S_\alpha$, denoted by $Q$, is T for $\Gamma_\alpha$, and we have $\beta \in V(Q)$.
Since $\beta$ is an arbitrary element of $\sigma$, we have $\sigma \in \Sigma(Q)$.

By condition (c), any component of $S_\sigma$ contained in $Q$ is T for $\call$.
Let $R$ denote the component of $S_\sigma$ containing the IN component of $\Gamma_\alpha$.
This $R$ is the unique IN component of $\call$, and we have $\alpha \in V(R)$.

By Lemma \ref{lem-nor-in}, the inclusion $\calm_2\vartriangleleft \call$ implies that either there exists a T component of $\calm_2$ containing $R$ or $R$ is an IN component of $\calm_2$.
If the former were true, then by condition (c), any other component of $S_{\sigma_2}$ would also be T for $\calm_2$.
By Lemma \ref{lem-ame}, this contradicts that $\calm_2$ is nowhere amenable.
It follows that $R$ is an IN component of $\calm_2$.

By Lemma \ref{lem-nor-red} (iii), the inclusion $\calm_2\vartriangleleft \calm_1$ implies $\sigma_2\subset \sigma_1$.
No curve in $\sigma_1$ lies in $R$ because $R$ is IN for $\calm_2$.
It follows that $\sigma_1\in \Sigma(Q)$ and that $R$ is a component of $S_{\sigma_1}$ and is IN for $\calm_1$ because it is IN for $\calm_2$.

The inclusion $\caln_1\vartriangleleft \calm_1$ implies $\tau_1\subset \sigma_1$ and $\tau_1\in \Sigma(Q)$.
By Lemma \ref{lem-nor-in}, either there exists a T component of $\caln_1$ containing $R$ or $R$ is an IN component of $\caln_1$.
The latter does not hold because $\caln_1$ is amenable.
By condition (c), the former implies that any component of $S_{\tau_1}$ is T for $\caln_1$.
By condition (c) again, $\theta_Q(\Gamma_\alpha)$ is trivial, and $D_{\tau_1}\cap \Gamma$ is therefore trivial.
There exists a non-negligible subset $A$ of $Y$ such that $\rho(\caln_1|_A)$ consists of only the neutral element.
This contradicts that $\caln_1$ is nowhere finite.
\end{proof}

The following lemma for a BP is an analogue of Lemma \ref{lem-s-chain}.

\begin{lem}\label{lem-bp-chain}
Pick $b\in V_{bp}(S)$.
We suppose the following conditions (a)--(c):
\begin{enumerate}
\item[(a)] The CRS of $\Gamma_b$ is $b$, and $D_\beta \cap \Gamma$ is trivial for any $\beta \in b$.
\item[(b)] One component of $S_b$ is IN for $\Gamma_b$, and another component of $S_b$ is either T or IN for $\Gamma_b$.
\item[(c)] If there is a T component of $\Gamma_b$, denoted by $Q$, then for any $R\in W(Q)$, the group $\theta_R(\Gamma_R)$ is trivial.
\end{enumerate}
Let $\call$, $\calm_1$, $\calm_2$ and $\caln_1$ be subgroupoids of $\calg|_Y$ such that $\calm_2$ is nowhere amenable, $\caln_1$ is amenable and nowhere finite, and we have $\calg_b |_Y < \call$, $\caln_1\vartriangleleft \calm_1$, $\calm_2\vartriangleleft \calm_1$ and $\calm_2 \vartriangleleft \call$.
Then $\calg_b |_Y =\call$.
\end{lem}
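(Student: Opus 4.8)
The plan is to follow the proof of Lemma~\ref{lem-s-chain} essentially line by line, carrying the separating curve $\alpha$ over to the bounding pair $b=\{\beta_1,\beta_2\}$ and $S_\alpha$ over to $S_b$ (again exactly two components); the only genuinely new issue is that a curve of a canonical reduction system may now coincide with one of the two curves of $b$. By Lemma~\ref{lem-nor-red}(i),(ii) all four groupoids $\call,\calm_1,\calm_2,\caln_1$ are reducible, and it suffices to show that the CRS $\sigma$ of $\call$ is constant with $b\subset\sigma$: this forces $(\beta_1,Y)$ and $(\beta_2,Y)$ to be $(\call,\rho)$-invariant, hence $\call<\calg_b|_Y$, which together with the hypothesis $\calg_b|_Y<\call$ yields the desired equality. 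I first pass to a non-negligible subset of $Y$ on which all CRS's and all T, IA, IN systems of the four groupoids are constant, write $\sigma,\sigma_1,\sigma_2,\tau_1$ for the CRS values, and set $\calm=\calg_b|_Y$, whose CRS is $b$ and whose systems agree with those of $\Gamma_b$ by Lemma~\ref{lem-red-group}.

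Assume for contradiction that $b\not\subset\sigma$; after relabelling we may assume $\beta_2\notin\sigma$. As in Lemma~\ref{lem-s-chain}, every $\gamma\in\sigma$ gives a pair $(\gamma,Y)$ that is $(\calm,\rho)$-invariant, so pure invariance of $(\beta_1,Y)$ and $(\beta_2,Y)$ forces $I(\gamma,\beta_1)=I(\gamma,\beta_2)=0$, and $\gamma$ cannot lie in an IN component of $\Gamma_b$. Writing $P_1$ for the IN component of $\Gamma_b$ guaranteed by (b) and $P_2$ for the other component of $S_b$, it follows that each $\gamma\in\sigma$ either equals $\beta_1$ or lies in $P_2$, in which case (b) forces $P_2$ to be a T component $Q$ with $\gamma\in V(Q)$. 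Thus $\sigma\subset\{\beta_1\}\cup V(Q)$, with the convention that $Q$ is absent when $P_2$ is IN. By condition (c) every component of $S_\sigma$ contained in $Q$ is T for $\call$, so by Lemma~\ref{lem-ame} the component $R$ of $S_\sigma$ containing $P_1$ is the unique IN component of $\call$. The crucial point, replacing ``$\alpha\in V(R)$'' in the separating case, is that $\beta_2\in V(R)$: being absent from $\sigma$, the curve $\beta_2$ is not cut and survives as an essential curve in the interior of $R$.

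Now I would run the normal chain exactly as in Lemma~\ref{lem-s-chain}. Applying Lemma~\ref{lem-nor-in} to $\calm_2\vartriangleleft\call$ at the IN component $R$, the alternative in which a T component of $\calm_2$ contains $R$ would make every component of $S_{\sigma_2}$ a T component of $\calm_2$ (the ones inside $Q$ by condition (c), using $\sigma_2\subset\sigma$ from Lemma~\ref{lem-nor-red}(iii)), contradicting the nowhere amenability of $\calm_2$ via Lemma~\ref{lem-ame}; hence $R$ is IN for $\calm_2$. Since every curve of $\sigma_1$ is $(\calm_2,\rho)$-invariant and $R$ is IN for $\calm_2$, no curve of $\sigma_1$ lies in $V(R)$; in particular $\beta_2\notin\sigma_1$, so $\sigma_1\subset\{\beta_1\}\cup V(Q)$, $R$ is a component of $S_{\sigma_1}$, and $R$ is IN for $\calm_1$ (a $(\calm_1,\rho_R)$-invariant map into $\Sigma(R)\cup\partial_2\calc(R)$ would restrict to a forbidden $(\calm_2,\rho_R)$-invariant one). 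Applying Lemma~\ref{lem-nor-in} to $\caln_1\vartriangleleft\calm_1$ at $R$, amenability of $\caln_1$ rules out via Lemma~\ref{lem-no-in} that $R$ is IN for $\caln_1$, so a T component of $\caln_1$ contains $R$; with condition (c) this makes every component of $S_{\tau_1}$ a T component of $\caln_1$, where $\tau_1\subset\sigma_1\subset\{\beta_1\}\cup V(Q)$.

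For the final contradiction I would show $D_{\tau_1}\cap\Gamma$ is trivial, since then Remark~\ref{rem-t-comp} yields a non-negligible $A\subset Y$ with $\rho(\caln_1|_A)=\{e\}$, contradicting nowhere finiteness of $\caln_1$. This is the one place that departs from the separating case, and the step I expect to be the main obstacle. Given $g\in D_{\tau_1}\cap\Gamma$, write it as a multitwist about curves of $\tau_1\subset\{\beta_1\}\cup V(Q)$; as $g$ fixes $\beta_1$, $\beta_2$ and $Q$ it lies in $\Gamma_b$, so $\theta_Q(g)=e$ by T-ness of $Q$ for $\Gamma_b$, and since the Dehn twist about the boundary curve $\beta_1$ is trivial in $\pmod(Q)$ this leaves only the part of $g$ supported in $Q$, which must then be trivial because it twists about curves essential in $Q$. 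Hence $g$ is a power of the Dehn twist about $\beta_1$, i.e.\ $g\in D_{\beta_1}\cap\Gamma$, which is trivial by condition (a). The subtlety --- and precisely the reason condition (a) requires $D_\beta\cap\Gamma$ to be trivial for \emph{each} $\beta\in b$ separately --- is that $\tau_1$ might a priori contain a twist about one curve of $b$; it is the contradiction hypothesis $\beta_2\notin\sigma$, forcing $\beta_2\in V(R)$ and hence $\beta_2\notin\tau_1$, that keeps $\tau_1$ away from both curves of $b$ at once and so prevents a bounding-pair element such as $t_{\beta_1}t_{\beta_2}^{-1}$ (which may genuinely lie in $\Gamma$) from entering $D_{\tau_1}\cap\Gamma$ and destroying the argument.
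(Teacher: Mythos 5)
Your proof is correct and follows the paper's argument for Lemma \ref{lem-s-chain} in the same way the paper itself does, with the same key ingredients (Lemmas \ref{lem-nor-red}, \ref{lem-nor-in}, \ref{lem-ame}, \ref{lem-no-in}, Remark \ref{rem-t-comp}, and the triviality of $D_{\tau_1}\cap\Gamma$ via conditions (a) and (c)). The one structural difference is how the sub-case $\sigma\subset b$, $\sigma\neq b$ (i.e.\ $\sigma=\{\beta_1\}$) is handled: the paper isolates it and disposes of it by three applications of Lemma \ref{lem-in-comp}, concluding that $S_{\beta_1}$ would be IN for the amenable groupoid $\caln_1$, whereas you fold it into the main chain by observing that $\beta_2\in V(R)$ persists (here $R=S_{\beta_1}$ is the unique, hence IN, component), so that $\sigma_1=\tau_1=\{\beta_1\}$ and the final contradiction comes instead from $D_{\beta_1}\cap\Gamma=\{e\}$ forcing $\caln_1$ to be finite. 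Both routes use exactly the same two hypotheses on $\caln_1$ (amenable, nowhere finite) and the same part of condition (a), just in the opposite order, so your unification is valid and slightly shortens the case analysis; your identification of where the second half of condition (a) enters (keeping $\tau_1$ away from both curves of $b$ simultaneously) matches the paper's ``claim'' paragraph.
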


\begin{proof}
By Lemma \ref{lem-nor-red} (i) and (ii), all of $\call$, $\calm_1$, $\calm_2$ and $\caln_1$ are reducible.
We will show that the CRS of $\call$ is constant and its value contains $b$.
This implies $\call <\calg_b|_Y$, and the lemma follows.
We may therefore assume that all of the CRS's and the T, IA and IN systems of $\call$, $\calm_1$, $\calm_2$ and $\caln_1$ are constant.
Let $\sigma, \sigma_1, \sigma_2, \tau_1\in \Sigma(S)$ denote the values of the CRS's of $\call$, $\calm_1$, $\calm_2$ and $\caln_1$, respectively.
We set $\calm =\calg_b |_Y$.
The CRS and the T, IA and IN systems of $\calm$ are constant, and their values are the same as those of $\Gamma_b$ by Lemma \ref{lem-red-group}.

We show that $\sigma$ contains $b$.
Assuming to the contrary that $\sigma$ does not contain $b$, we deduce a contradiction.
Suppose that $\sigma$ consists of a single curve of $b$, denoted by $\alpha$.
By the latter condition in condition (a), we can apply Lemma \ref{lem-in-comp}.
Applying it three times, we see that all of $\sigma$, $\sigma_1$, $\sigma_2$ and $\tau_1$ are equal to $\{ \alpha \}$, and the component $S_\alpha$ is IN for any of $\call$, $\calm_1$, $\calm_2$ and $\caln_1$.
By Lemma \ref{lem-no-in}, this contradicts that $\caln_1$ is amenable.
It follows that $\sigma \setminus b$ is non-empty.

As in the second paragraph of the proof of Lemma \ref{lem-s-chain}, picking $\beta \in \sigma \setminus b$, we can show that $\beta$ does not lie in any IN component of $\Gamma_b$, one component of $S_b$ is IN for $\Gamma_b$, another component of $S_b$, denoted by $Q$, is T for $\Gamma_b$, and we have $\beta \in V(Q)$.
We thus have $\sigma \setminus b\in \Sigma(Q)$.

By condition (c), any component of $S_\sigma$ contained in $Q$ is T for $\call$.
Let $R$ denote the component of $S_\sigma$ containing the IN component of $\Gamma_b$.
This $R$ is the unique IN component of $\call$, and at least one curve of $b$ belongs to $V(R)$.

As in the fourth and fifth paragraphs of the proof of Lemma \ref{lem-s-chain}, we can show that $R$ is IN for $\calm_2$ and $\calm_1$ and that $\sigma_2\subset \sigma_1$ and $\sigma_1\setminus b\in \Sigma(Q)$.
At least one curve of $b$ does not belong to $\sigma_1$.
The inclusion $\caln_1\vartriangleleft \calm_1$ implies $\tau_1\subset \sigma_1$ and $\tau_1\setminus b\in \Sigma(Q)\cup \{ \emptyset \}$.
By Lemma \ref{lem-nor-in}, either there exists a T component of $\caln_1$ containing $R$ or $R$ is an IN component of $\caln_1$.
The latter does not hold because $\caln_1$ is amenable.
The former therefore holds.
By condition (c), any component of $S_{\tau_1}$ is T for $\caln_1$.

We claim that $D_{\tau_1}\cap \Gamma$ is trivial.
Condition (c) implies that $\theta_Q(\Gamma_b)$ is trivial.
We have $D_{\tau_1}\cap \Gamma <\Gamma_b<\ker \theta_Q$.
If $\tau_1$ contains no curve of $b$, then we have $\tau_1\in \Sigma(Q)$, and $D_{\tau_1}\cap \Gamma$ is trivial.
Otherwise, denoting by $\gamma$ the curve of $b\cap \sigma_1$, we have $D_{\tau_1}\cap \Gamma <D_\gamma$.
By the latter condition of condition (a), $D_{\tau_1}\cap \Gamma$ is trivial.
The claim was proved.

By the claim, $\ker \theta_{\tau_1}\cap \Gamma$ is trivial. 
Since any component of $S_{\tau_1}$ is T for $\caln_1$, there exists a non-negligible subset $A$ of $Y$ such that $\rho(\caln_1|_A)$ consists of only the neutral element.
This contradicts that $\caln_1$ is nowhere finite.
\end{proof}

\begin{lem}\label{lem-nor-twist}
Pick $\sigma \in \Sigma(S)$.
Suppose that the CRS of $\Gamma_\sigma$ is $\sigma$ and that any component of $S_\sigma$ is either T or IN for $\Gamma_\sigma$.
We set $T_\sigma =D_\sigma \cap \Gamma$ and $\calt_\sigma =T_\sigma \ltimes X$.
Let $\caln$ be an amenable and nowhere finite subgroupoid of $\calg_\sigma |_Y$ with $\caln \vartriangleleft \calg_\sigma |_Y$.
Then, there exists a partition $Y=\bigsqcup_n Y_n$ into countably many measurable subsets such that $\caln|_{Y_n}<\calt_\sigma |_{Y_n}$ for any $n$.
\end{lem}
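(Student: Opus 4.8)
The lemma asserts: given $\sigma \in \Sigma(S)$ with the CRS of $\Gamma_\sigma$ equal to $\sigma$ and every component of $S_\sigma$ being T or IN for $\Gamma_\sigma$, any amenable, nowhere finite normal subgroupoid $\caln \vartriangleleft \calg_\sigma|_Y$ is, after a countable partition, contained in $\calt_\sigma|_{Y_n}$, where $\calt_\sigma = T_\sigma \ltimes X$ and $T_\sigma = D_\sigma \cap \Gamma$. So I need to show the amenable normal $\caln$ lands (piecewise) in the Dehn-twist subgroupoid.

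**The key structural idea.** The Dehn-twist subgroup $D_\sigma \cap \Gamma$ is exactly $\ker \theta_\sigma \cap \Gamma_\sigma$ by the identification stated before Theorem~\ref{thm-pure-t} (the kernel of $\theta_\sigma$ equals $D_\sigma \cap \Gamma_\sigma$). Under the projection $\rho_\sigma = \theta_\sigma \circ \rho$, an element $g \in \caln$ satisfies $\rho(g) \in T_\sigma$ precisely when $\rho_\sigma(g)$ is trivial. Thus the statement reduces to showing that, after a countable partition of $Y$, the image $\rho_\sigma(\caln|_{Y_n})$ in $\prod_R \pmod(R)$ consists only of the neutral element. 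Since $\rho_\sigma$ factors through the product over components $R$ of $S_\sigma$, it suffices to handle each component separately: I want to show, for each component $R$, that $\rho_R(\caln|_A)$ is trivial after partitioning.

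**The case analysis on components.** Each component $R$ is either T or IN for $\Gamma_\sigma$. If $R$ is T for $\Gamma_\sigma$, then by Remark~\ref{rem-t-comp} (applied to $\calg_\sigma|_Y$, using Lemma~\ref{lem-red-group} to transfer the T-classification from $\Gamma_\sigma$ to $\calg_\sigma$) there is already a countable partition on which $\rho_R$ is trivial. The real work is the IN components. Since $\caln \vartriangleleft \calg_\sigma|_Y$ is amenable and nowhere finite, I would apply Lemma~\ref{lem-no-in}: the IN system of $\caln$ is empty. Now I apply Lemma~\ref{lem-nor-in} with $\cals = \calg_\sigma|_Y$ and the normal subgroupoid $\caln$: each IN component $R$ of $\calg_\sigma|_Y$ must either be contained in a T component of $\caln$ or itself be IN for $\caln$; the latter is excluded by Lemma~\ref{lem-no-in}. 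Hence $R$ sits inside a T component of $\caln$, which forces (via condition~(2) of Theorem~\ref{thm-tiain} and Remark~\ref{rem-t-comp} applied to $\caln$) that $\rho_R(\caln|_A)$ is neutral on a partition.

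**The main obstacle and assembling the partition.** The delicate point is the bookkeeping: Lemma~\ref{lem-no-in}, Lemma~\ref{lem-nor-in}, Remark~\ref{rem-t-comp} and Lemma~\ref{lem-red-group} all require the CRS and T/IA/IN systems of the relevant groupoids to be \emph{constant}. Since these systems are maps into the countable sets $\Sigma(S)$ and $\Omega(S)$, I first decompose $Y$ into countably many non-negligible pieces on which the systems of both $\calg_\sigma|_Y$ and $\caln$ are constant; it suffices to prove the conclusion on each such piece and then refine. On a piece where everything is constant, I combine the two cases above: over each T component of $\Gamma_\sigma$ I get triviality of $\rho_R$ from Remark~\ref{rem-t-comp}, and over each IN component I get triviality from the Lemma~\ref{lem-no-in}/Lemma~\ref{lem-nor-in} argument; taking the common refinement of all these (finitely many components, each contributing a countable partition) yields a countable partition $\{Y_n\}$ on which every $\rho_R(\caln|_{Y_n})$ is trivial, hence $\rho_\sigma(\caln|_{Y_n})$ is trivial, i.e. $\rho(\caln|_{Y_n}) \subset T_\sigma$ and therefore $\caln|_{Y_n} < \calt_\sigma|_{Y_n}$. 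The hardest part is verifying that the constancy reductions are compatible across $\calg_\sigma|_Y$ and $\caln$ simultaneously and that the IN components of $\calg_\sigma|_Y$ really coincide with those inherited from $\Gamma_\sigma$ via Lemma~\ref{lem-red-group}, so that Lemma~\ref{lem-nor-in} can be invoked with the correct component identifications.
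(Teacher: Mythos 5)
Your proof is correct, but it is organized around a different decomposition than the paper's. The paper's proof works with the CRS $\tau$ of $\caln$ itself: after making the systems of $\caln$ constant, it rules out IA components of $\caln$ by normality (Theorem \ref{thm-ia-max} (ii) pushes an IA component of $\caln$ up to an IA component of $\calg_\sigma|_Y$, contradicting via Lemma \ref{lem-red-group} the hypothesis that $\Gamma_\sigma$ has only T and IN components), rules out IN components by amenability, concludes that every component of $S_\tau$ is T for $\caln$, and hence that $\rho(\caln|_{Y_n})\subset \ker\theta_\tau$, which lies in $D_\tau\cap\Gamma\subset D_\sigma\cap\Gamma=T_\sigma$ since $\tau\subset\sigma$ by Lemma \ref{lem-nor-red} (iii). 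You instead decompose over the components $R$ of $S_\sigma$ and aim directly at triviality of $\rho_R(\caln|_{Y_n})$ for each: Remark \ref{rem-t-comp} applied to $\calg_\sigma|_Y$ handles the T components of $\Gamma_\sigma$, and for the IN components the combination of Lemma \ref{lem-nor-in} and Lemma \ref{lem-no-in} places $R$ inside a T component $R'$ of $\caln$. This completely bypasses the IA-exclusion step, which is the actual content of the paper's argument, at the cost of needing Lemma \ref{lem-nor-in} and of the extra (true but unstated) compatibility that triviality of $\theta_{R'}(g)$ forces triviality of $\theta_R(g)$ when $R\subset R'$ --- note that Lemma \ref{lem-nor-in} only gives containment $R\subset R'$, not equality, so this last step is genuinely needed; it follows because such a $g$ then fixes every curve in $V(R')\supset V(R)$, and Lemma \ref{lem-curve-stab} together with Theorem \ref{thm-pure-t} (IN components are never pairs of pants by Theorem \ref{thm-tiain} (1)) forces $\theta_R(g)=e$. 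Your endgame, $\rho(\caln|_{Y_n})\subset\ker\theta_\sigma\cap\Gamma=D_\sigma\cap\Gamma=T_\sigma$, is also cleaner in that it lands in $T_\sigma$ directly rather than through $D_\tau\subset D_\sigma$. Both routes are sound; the paper's is shorter, yours trades one normality lemma for another.
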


\begin{proof}
We may assume that all of the CRS and the T, IA and IN systems of $\caln$ are constant.
Let $\tau \in \Sigma(S)$ denote the value of the CRS of $\caln$.
By Lemma \ref{lem-nor-red} (iii), we have $\tau \subset \sigma$.
If there were an IA component of $\caln$, then it would be also IA for $\calg_\sigma |_Y$ by Theorem \ref{thm-ia-max} (ii).
By Lemma \ref{lem-red-group}, this contradicts that $\Gamma_\sigma$ has no IA component.
It follows that $\caln$ has only T components and that there exists a partition $Y=\bigsqcup_n Y_n$ into countably many measurable subsets such that $\rho(\caln|_{Y_n})\subset \ker \theta_{\tau}$ for any $n$.
The lemma was proved.
\end{proof}

\begin{lem}\label{lem-chain}
Suppose that $\Gamma$ is a subgroup of $\mod(S; 3)$ with $\Gamma \vartriangleleft \pmod(S)$.
Let $\calm$ and $\caln$ be subgroupoids of $\calg|_Y$ such that $\caln$ is amenable and nowhere finite, $\calm$ is nowhere amenable, and $\caln \vartriangleleft \calm$.
Note that $\calm$ and $\caln$ are reducible by Lemma \ref{lem-nor-red} (i).
Suppose that all of the CRS's and the T, IA and IN systems of $\calm$ and $\caln$ are constant and that there exists an IA component of $\caln$.
Let $\sigma \in \Sigma(S)$ denote the value of the CRS of $\calm$.
Then, the following assertions hold:
\begin{enumerate}
\item[(i)] There exist subgroupoids $\calm_1$, $\calm_2$ and $\caln_1$ of $\calg|_Y$ such that $\caln_1$ is amenable and nowhere finite, $\calm_2$ is nowhere amenable, and we have $\caln_1\vartriangleleft \calm_1$, $\calm_2\vartriangleleft \calm_1$ and $\calm_2\vartriangleleft \calg_\sigma$.
\item[(ii)] For any non-negligible subset $A$ of $Y$, we have $\calm |_A\neq \calg_\sigma |_A$.
\end{enumerate}
\end{lem}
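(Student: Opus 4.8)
The plan is to read part (i) as the groupoid counterpart of Lemma~\ref{lem-group-chain} and reduce it to that group-level statement, and to prove part (ii) by showing that the IA component furnished by $\caln$ becomes an IN component once one passes from $\calm$ to the full stabilizer $\calg_\sigma$, so that the two groupoids can never agree on a non-negligible set. To begin, I would locate two suitable components of $S_\sigma$. Let $\tau$ be the CRS of $\caln$; by Lemma~\ref{lem-nor-red}(iii) we have $\tau\subset\sigma$. Since $\caln$ is amenable it has no IN component (Lemma~\ref{lem-no-in}), whereas $\calm$, being nowhere amenable and reducible, has an IN component $R$ (Lemma~\ref{lem-ame}), and then $\theta_R(\Gamma_\sigma)\supset\rho_R(\calm)$ is non-trivial. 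Let $Q$ be an IA component of $\caln$, which exists by hypothesis. Because $\caln<\calm<\calg_\sigma$, every curve of $\sigma$ is fixed along $\caln$, so no curve of $\sigma$ can be essential in $Q$ (otherwise $\rho_Q(\caln)$ would fix it, against irreducibility of the IA action); combined with $\tau\subset\sigma$ this upgrades $Q$ from a component of $S_\tau$ to a genuine component of $S_\sigma$, with $\theta_Q(\Gamma_\sigma)\supset\rho_Q(\caln)$ non-trivial. Finally $Q\neq R$: by Lemma~\ref{lem-nor-in} the IN component $R$ of $\calm$ lies inside a T component of $\caln$ (the IN alternative is excluded since $\caln$ is amenable), so it cannot coincide with the IA component $Q$. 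Thus $\sigma$ admits two distinct components on which $\Gamma_\sigma$ acts non-trivially, which are exactly the hypotheses of Lemma~\ref{lem-group-chain}.

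Applying Lemma~\ref{lem-group-chain} yields subgroups $M_1,M_2,N_1<\Gamma$ with $N_1$ infinite amenable, $M_2$ non-amenable, and $N_1\vartriangleleft M_1$, $M_2\vartriangleleft M_1$, $M_2\vartriangleleft\Gamma_\sigma$. I then set $\calm_j=(M_j\ltimes X)|_Y$ and $\caln_1=(N_1\ltimes X)|_Y$; the three normality relations transfer by Lemma~\ref{lem-normal}. Amenability of $\caln_1$ is immediate as $N_1$ is cyclic, and $\caln_1$ is nowhere finite because $N_1$ is infinite and, by Poincar\'e recurrence applied to a generator, the fibre over almost every $y\in Y$ stays infinite after restriction to $Y$. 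For $\calm_2$ I would avoid invoking freeness of the action: the non-amenability produced by Lemma~\ref{lem-group-chain} comes from a non-elementary action of $M_2$ on the component $R$, so $R$ is an IN component of $\calm_2$ by Lemma~\ref{lem-red-group}, whence $\calm_2$ is nowhere amenable by (the contrapositive of) Lemma~\ref{lem-no-in} applied to every restriction. This gives (i), with $\calm_2\vartriangleleft\calg_\sigma$ understood as normality inside $\calg_\sigma|_Y$.

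For (ii) the point is a jump in the Nielsen--Thurston type of $Q$. Since $\caln\vartriangleleft\calm$ and $Q$ is IA for $\caln$, Theorem~\ref{thm-ia-max}(ii) shows $Q$ is IA for $\calm$, hence for $\calm|_A$ for every non-negligible $A$. On the other hand $Q$ is \emph{IN} for $\calg_\sigma$: because $\Gamma\vartriangleleft\pmod(S)$, the group $\theta_Q(\Gamma_\sigma)$ is a non-trivial normal subgroup of $\theta_Q(\pmod(S)_\sigma)$; it is infinite by Theorem~\ref{thm-pure-t}, and since $Q$ carries a pseudo-Anosov (coming from the IA action) $\pmod(Q)$ is non-elementary, so Theorem~\ref{thm-subgr} forces $\theta_Q(\Gamma_\sigma)$ to contain two independent pseudo-Anosov elements. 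Now suppose $\calm|_A=\calg_\sigma|_A$ for some non-negligible $A$; then $Q$ would be IA for $\calg_\sigma|_A$, carrying a maximal invariant map $\psi_0\colon A\to\partial_2\calc(Q)$ by Theorem~\ref{thm-ia-max}(i). Lifting the two independent pseudo-Anosov elements to $H_1,H_2\in\Gamma_\sigma$ and restricting to the cyclic subgroupoids $(\langle H_i\rangle\ltimes X)|_A$ (nowhere finite on $A$ again by Poincar\'e recurrence), for which, as in the proof of Lemma~\ref{lem-red-group}, the maximal invariant map at $Q$ is the constant pair of fixed points $\{F_i^\pm\}$, the maximality in Theorem~\ref{thm-ia-max}(i) forces $\psi_0(x)\subset\{F_1^\pm\}\cap\{F_2^\pm\}=\emptyset$, contradicting that $\psi_0(x)$ is non-empty. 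Hence $\calm|_A\neq\calg_\sigma|_A$ for every non-negligible $A$.

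The main obstacle I expect is precisely this last step: converting the purely group-theoretic presence of two independent pseudo-Anosov elements in $\theta_Q(\Gamma_\sigma)$ into the groupoid statement that $Q$ is IN for $\calg_\sigma$, i.e.\ that no $(\calg_\sigma,\rho_Q)$-invariant map into $\Sigma(Q)\cup\partial_2\calc(Q)$ exists. The device that bridges this gap, exactly as in the proof of Lemma~\ref{lem-red-group}, is the maximality of the invariant map in Theorem~\ref{thm-ia-max}(i) together with Poincar\'e recurrence keeping the auxiliary cyclic subgroupoids nowhere finite after restriction; verifying that $Q$ persists as a bona fide component of all the canonical reduction systems in play is the delicate bookkeeping underlying the argument.
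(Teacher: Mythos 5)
Your proposal is correct and follows essentially the same route as the paper's proof: for (i) you identify the IA component $Q$ of $\caln$ (hence of $\calm$, hence a component of $S_\sigma$) and an IN component $R$ of $\calm$ sitting in a T component of $\caln$, and feed the resulting non-triviality of $\theta_Q(\Gamma_\sigma)$ and $\theta_R(\Gamma_\sigma)$ into Lemma \ref{lem-group-chain}; for (ii) you show $\theta_Q(\Gamma_\sigma)$ is non-elementary via Theorems \ref{thm-pure-t} and \ref{thm-subgr}, so that $Q$ is IN for $\calg_\sigma$ but IA for $\calm$. The only cosmetic difference is that you inline the independent-pseudo-Anosov/maximal-invariant-map argument where the paper simply cites Lemma \ref{lem-red-group} to conclude that $Q$ is IN for $\calg_\sigma$.
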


\begin{proof}
By assumption, we have an IA component of $\caln$, denoted by $Q$.
By Theorem \ref{thm-ia-max} (ii), $Q$ is also IA for $\calm$ and in particular is a component of $S_\sigma$.
By Lemma \ref{lem-ame}, there exists an IN component $R$ of $\calm$.
By Lemma \ref{lem-nor-in}, $R$ is contained in a T component of $\caln$ and in particular is distinct from $Q$.
The groups $\theta_Q(\Gamma_\sigma)$ and $\theta_R(\Gamma_\sigma)$ are non-trivial because $Q$ and $R$ are not T for $\calm$.
By Lemma \ref{lem-group-chain}, there exist subgroups $M_1$, $M_2$ and $N_1$ of $\Gamma$ such that $N_1$ is infinite and amenable, $M_2$ is non-amenable, and we have $N_1\vartriangleleft M_1$, $M_2\vartriangleleft M_1$ and $M_2\vartriangleleft \Gamma_\sigma$.
Setting $\caln_1=(N_1\ltimes X)|_Y$, $\calm_1=(M_1\ltimes X)|_Y$ and $\calm_2=(M_2\ltimes X)|_Y$, we obtain assertion (i).

We prove assertion (ii).
The group $\theta_Q(\Gamma_\sigma)$ is a normal subgroup of some finite index subgroup of $\pmod(Q)$ because $\Gamma \vartriangleleft \mod(S; 3)$.
As is already shown, the group $\theta_Q(\Gamma_\sigma)$ is non-trivial and therefore a non-elementary subgroup of $\pmod(Q)$ by Theorems \ref{thm-pure-t} and \ref{thm-subgr}.
The component $Q$ is IN for $\Gamma_\sigma$ and also IN for $\calg_\sigma$ by Lemma \ref{lem-red-group}.
Assertion (ii) follows because $Q$ is IA for $\calm$.
\end{proof}

%%%%%%%%%%%%%%%%%%%%%%%%%%%%%%%%%%%%%%%%%%%%%%

\section{Tautness of surface braid groups}\label{sec-sbg}

Two important ingredients in establishing $OE$ rigidity results of surface braid groups are the measure equivalence coupling and the tautness of a group.

\begin{definition}\cite[0.5.E]{Gr93}
Let $\Gamma$ and $\Lambda$ be countable groups.
A \textit{$(\Gamma, \Lambda)$-coupling} is a standard measure space endowed with a $\sigma$-finite measure, $(\Sigma, m)$, on which $\Gamma \times \Lambda$ acts, preserving $m$, so that there exist measurable subsets $X, Y\subset\Sigma$ with $m(X)<\infty$, $m(Y)<\infty$, and
\[\Sigma =\bigsqcup_{\gamma \in \Gamma}(\gamma, e)Y=\bigsqcup_{\lambda \in \Lambda}(e, \lambda)X.\]
The subsets $X$, $Y$ are called \emph{fundamental domains} of the actions $\{ e\} \times \Lambda \ca \Sigma$ and $\Gamma \times \{ e\}\ca \Sigma$, respectively.
 A $(\Gamma, \Gamma)$-coupling is called a \textit{self-coupling} of $\Gamma$.

Two countable groups $\Gamma$, $\Lambda$ are called \textit{measure equivalent ($ME$)} if there exists a $(\Gamma, \Lambda)$-coupling.
\end{definition}

$ME$ defines an equivalence relation between countable groups (\cite[Section 2]{Fu99a}). We refer the reader to \cite{Fu99b} for the relationships between $ME$ and $OE$.

Tautness was introduced in \cite{BFS10, Ki09b} to study $ME$ and $OE$ rigidity aspects of a group from a general standpoint.
While in  \cite{Ki09b}, this property was named differently, the term ``taut" was coined in \cite{BFS10}.

\begin{definition}\label{defn-taut}
Let $G$ be a countable group together with $\Gamma$ a subgroup.
We say that $\Gamma$ is \textit{taut} relative to $G$ if the following conditions hold simultaneously:
\begin{enumerate}
\item[(1)] For any self-coupling $(\Sigma, m)$ of $\Gamma$, there exists a $(\Gamma \times \Gamma)$-equivariant measurable map from $\Sigma$ into $G$, where $\Gamma \times \Gamma$ acts on $G$ by the formula
\[(\gamma_1, \gamma_2)g =\gamma_1g\gamma_2^{-1}\quad \textrm{for}\ \gamma_1, \gamma_2\in \Gamma \ \textrm{and}\ g \in G;\]
\item[(2)] The Dirac measure on the neutral element is the only probability measure on $G$ invariant under conjugation by $\Gamma$.
\end{enumerate}
\end{definition}

Notice that the condition (2) implies the uniqueness of the equivariant map from the condition (1) (\cite[Proposition 4.4]{Fu99a}, \cite[Lemma 5.7]{Ki06}, \cite[Lemma 3.4]{Ki09b} and \cite[Section A.4]{BFS10}).

Let $S=S_{g, k}$ be a surface and denote by $\bar{S}$ the closed surface obtained by attaching a disk to each component of $\partial S$.
We have the homomorphism $\iota \colon \pmod(S)\to \mod(\bar{S})$ induced by the inclusion of $S$ into $\bar{S}$ and denote by $P(S)=\ker \iota$.
If $g\geq 2$ and $k\geq 2$, then $P(S)$ is naturally isomorphic to the pure braid group of $k$ strands on $\bar{S}$ (see \cite[Subsection 3.3]{CKP14} and references therein).
The aim of this section is to prove the following:

\begin{theorem}\label{thm-self}
Let $S=S_{g, k}$ be a surface with $g\geq 2$ and $k\geq 2$.
Then $P(S)$ is taut relative to $\mod^*(S)$.
\end{theorem}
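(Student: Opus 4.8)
The plan is to establish the two requirements of Definition \ref{defn-taut} separately, condition (2) being immediate and condition (1) carrying the entire weight. For condition (2), observe that the hypotheses $g\geq 2$ and $k\geq 2$ force $3g+k-4\geq 4>0$ and $(g,k)\neq(1,2),(2,0)$, while $P(S)=\ker(\iota\colon \pmod(S)\to \mod(\bar S))$ is an infinite normal subgroup of $\pmod(S)$, itself a finite-index subgroup of $\mod(S)$. Hence Lemma \ref{lem-icc} applies with $\Gamma=P(S)$ and gives at once that the Dirac mass at the neutral element is the unique $P(S)$-conjugation-invariant probability measure on $\mod^*(S)$. As noted after Definition \ref{defn-taut}, this already guarantees the uniqueness of any equivariant map produced in condition (1).

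For condition (1) I would first pass to $\Gamma\df P(S)\cap \mod(S;3)$. Since $\mod(S;3)$ is normal of finite index in $\mod(S)$, the group $\Gamma$ is a finite-index subgroup of $P(S)$ that is normal in $\pmod(S)$ and contained in $\mod(S;3)$; these are exactly the standing hypotheses under which the subgroupoid machinery of Section \ref{sec-groupoids-mcg} is available, and Lemma \ref{lem-icc} applies to $\Gamma$ as well. A self-coupling $(\Sigma,m)$ of $P(S)$ restricts to a self-coupling of $\Gamma$; choosing a fundamental domain $X$ for one factor turns it into a p.m.p.\ action $\Gamma\ca X$ together with the measure-equivalence cocycle $\rho\colon \calg\to \Gamma$, where $\calg=\Gamma\ltimes X$. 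I would construct the desired map at the level of $\Gamma$ and then upgrade: by uniqueness (condition (2) for $\Gamma$) together with the normality $\Gamma\vartriangleleft P(S)$, the unique $(\Gamma\times\Gamma)$-equivariant map $\Sigma\to \mod^*(S)$ is automatically $(P(S)\times P(S))$-equivariant.

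The construction itself follows \cite{Ki06, Ki09b}. The heart of the matter is to recognize the geometric subgroupoids intrinsically: for every hole-bounding curve $\alpha$ and every hole-bounding pair $b$, that is, for every vertex of the complex $\calcp(S)$ of \cite{KY10a}, the stabilizer subgroupoid $\calg_\alpha|_Y$, respectively $\calg_b|_Y$, is pinned down through the chain condition of Lemmas \ref{lem-s-chain} and \ref{lem-bp-chain}: whenever $\call\supset \calg_\alpha|_Y$ fits into a chain $\caln_1\vartriangleleft \calm_1$, $\calm_2\vartriangleleft \calm_1$, $\calm_2\vartriangleleft \call$ with $\caln_1$ amenable and nowhere finite and $\calm_2$ nowhere amenable, then necessarily $\call=\calg_\alpha|_Y$ (the group-theoretic input being Lemma \ref{lem-group-chain}, whose hypotheses (a)--(c) must be verified for $\Gamma$). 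This rigidity is intrinsic to $\calg$, hence preserved by the groupoid isomorphism encoded by $\rho$, so the cocycle carries such subgroupoids to such subgroupoids, yielding a $(\calg,\rho)$-equivariant assignment on the vertices of $\calcp(S)$; since disjointness of curves and pairs translates into a commuting (hence algebraic) condition on the corresponding subgroupoids, the assignment is simplicial, i.e.\ a measurable map $X\to \mathrm{Aut}(\calcp(S))$. Invoking the description of the simplicial automorphisms of $\calcp(S)$ in \cite{KY10a, KY10b}---each is induced by a unique element of $\mod^*(S)$---converts this into a $\Gamma$-equivariant map $X\to \mod^*(S)$, which reassembles into the required $(\Gamma\times\Gamma)$-equivariant map $\Sigma\to \mod^*(S)$.

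The main obstacle is the intrinsic recognition step: one must verify hypotheses (a)--(c) of Lemmas \ref{lem-s-chain} and \ref{lem-bp-chain} for $\Gamma$ and, crucially, rule out every configuration that is \emph{not} hole-bounding. Here the special role of $P(S)$ is decisive---because $\Gamma$ acts trivially on any subsurface disjoint from the marked points, precisely the hole-bounding curves and pairs are detected by the chain condition---so one has to argue that the algebraic data pin down exactly the vertex set of $\calcp(S)$, and therefore that the resulting combinatorial map is a genuine simplicial automorphism rather than a coarser or ambiguous correspondence.
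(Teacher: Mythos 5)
Your overall strategy coincides with the paper's: verify condition (2) via Lemma \ref{lem-icc}, pass to $\Gamma=P(S)\cap\mod(S;3)$, turn the self-coupling into a groupoid isomorphism over a common fundamental domain, recognize the stabilizer subgroupoids of hole-bounding configurations through the chain conditions of Lemmas \ref{lem-s-chain} and \ref{lem-bp-chain}, and convert the resulting combinatorial data into an element of $\mod^*(S)$ via the rigidity of $\calcp(S)$. The final upgrade from $(\Gamma\times\Gamma)$- to $(P(S)\times P(S))$-equivariance via uniqueness and normality is also exactly what the paper does.

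There is, however, a genuine gap in your combinatorial rigidity step. You assert that the chain condition pins down $\calg_v$ for \emph{every} vertex $v$ of $\calcp(S)$ and that the output is a simplicial \emph{automorphism} of $\calcp(S)$, to which you then want to apply automorphism rigidity. This fails: the recognition lemma for HBP-stabilizers (Lemma \ref{lem-p-stab}(iii)) is proved only for \emph{non-separating} HBPs, and the Remark following Lemma \ref{lem-bp-nor} exhibits explicit separating HBPs $b$ for which the chain condition is satisfied by a subgroupoid strictly larger than $\calg_b$, so the algebraic characterization genuinely breaks down there. Consequently one only obtains an equivariant assignment on $V_0=V_c(S)\cup V_{np}(S)$, with values a priori in the full vertex set $V_c(S)\cup V_p(S)$; what comes out is a superinjective map $\calcp_n(S)\to\calcp(S)$, not an automorphism, and the correct rigidity input is Theorem \ref{thm-si} (from \cite{KY10b}) rather than a description of $\mathrm{Aut}(\calcp(S))$. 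Relatedly, you only argue the ``disjoint $\Rightarrow$ commuting $\Rightarrow$ disjoint image'' direction, which gives simpliciality; to apply Theorem \ref{thm-si} one also needs the converse implication $I(v_1,v_2)\neq 0\Rightarrow I(w_1,w_2)\neq 0$, which the paper obtains by showing that intersecting twists generate a non-amenable (free) subgroup so that the join of the corresponding twist subgroupoids cannot be amenable. Without restricting the domain to $\calcp_n(S)$ and without the superinjectivity argument, the passage to an element of $\mod^*(S)$ does not go through.
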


The proof of this theorem will be postponed until Subsection \ref{subsec-taut-sbg} below and will follow in spirit the methods developed in \cite{Ki06}, where it is shown that $\mod^*(S)$ is taut relative to $\mod^*(S)$ for a non-exceptional surface $S$.

%%%%%%%%%%%%%%%%%%%%%%%%%%%%%%%%%%%%%%%%%%%%

\subsection{Complexes for surface braid groups}\label{subsec-pre-sbg}

A {\it holed sphere} is a surface of genus $0$ with non-empty boundary.
Let $S=S_{g, k}$ be a surface.
A curve $\alpha$ in $S$ is called a \textit{hole-bounding curve (HBC)} in $S$ if $\alpha$ is separating in $S$ and cuts off a holed sphere from $S$. 
A pair $\{ \beta, \gamma \}$ of curves in $S$ is called a \textit{hole-bounding pair (HBP)} in $S$ if the following are satisfied:
\begin{itemize}
\item $\beta$ and $\gamma$ are disjoint and non-isotopic;
\item either $\beta$ and $\gamma$ are non-separating in $S$ or $\beta$ and $\gamma$ are separating in $S$ and are not an HBC in $S$; and
\item $S\setminus (\beta \cup \gamma)$ is not connected and has a component of genus zero.
\end{itemize}
An HBP in $S$ is called {\it non-separating} in $S$ if both its curves are non-separating in $S$.
Otherwise it is called {\it separating} in $S$ (see Figure \ref{fig-hbchbp}).
%================================
\begin{figure}
\begin{center}
\includegraphics[width=8cm]{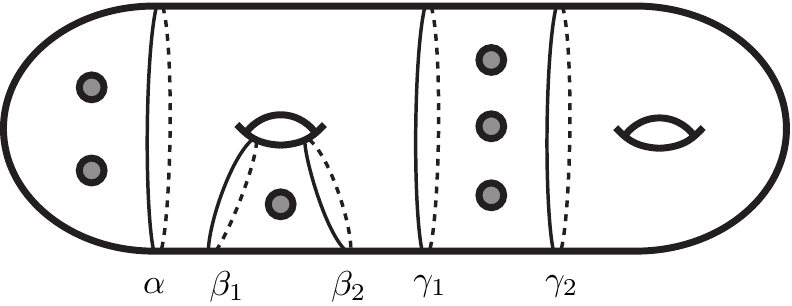}
\caption{The curve $\alpha$ is an HBC, $\{ \beta_1, \beta_2\}$ is a non-separating HBP, and $\{ \gamma_1, \gamma_2\}$ is a separating HBP.}\label{fig-hbchbp}
\end{center}
\end{figure}
%================================
If $g=0$, then any curve in $S$ is an HBC.
If $g=1$, then there is no separating HBP in $S$.

The following simplicial complex $\calcp(S)$ was introduced in \cite{KY10a}, inspired by the work of Irmak-Ivanov-McCarthy \cite{IIM03}, to compute virtual automorphisms of $P(S)$.

\medskip

\noindent \textbf{Complexes $\calcp(S)$ and $\calcp_n(S)$.} Let $V_c(S)$ denote the subset of $V(S)$ consisting of isotopy classes of HBCs in $S$.
Let $V_p(S)$ denote the subset of $\Sigma(S)$ consisting of isotopy classes of HBPs in $S$.
We define $\calcp(S)$ as the abstract simplicial complex such that the set of vertices is the disjoint union $V_c(S)\cup V_p(S)$, and a non-empty finite subset $\sigma$ of $V_c(S)\cup V_p(S)$ is a simplex of $\calcp(S)$ if and only if $I(u, v)=0$ for any $u, v\in \sigma$.

Let $V_{np}(S)$ denote the subset of $V_p(S)$ consisting of isotopy classes of non-separating HBPs in $S$.
We define $\calcp_n(S)$ as the full subcomplex spanned by $V_c(S)\cup V_{np}(S)$.

\medskip

Let $X$ and $Y$ be any of $\calcp(S)$ and $\calcp_n(S)$. 
Let $V(X)$ and $V(Y)$ denote the sets of vertices of $X$ and $Y$, respectively. 
Note that a map $\phi \colon V(X)\rightarrow V(Y)$ defines a simplicial map from $X$ into $Y$ if and only if $I(\phi(a), \phi(b))=0$ for any $a, b\in V(X)$ with $I(a, b)=0$. 
A {\it superinjective map} $\phi \colon X\rightarrow Y$ is a simplicial map $\phi \colon X\rightarrow Y$ satisfying $I(\phi(a), \phi(b))\neq 0$, for any $a, b\in V(X)$ with $I(a, b)\neq 0$.  One can show that any superinjective map from $X$ into $Y$ is injective, proceeding as in the proof of \cite[Lemma 3.1]{Ir04}, where any superinjective map from $\calc(S)$ into itself is shown to be injective.

The following theorem will be essential to show tautness of $P(S)$:

\begin{theorem}\label{thm-si}\cite[Corollary 8.15]{KY10b}
Let $S=S_{g, k}$ be a surface with $g\geq 2$ and $k\geq 2$.
Then any superinjective map $\phi \colon \calcp_n(S)\to \calcp(S)$ is induced by an element of $\mod^*(S)$; that is, there exists an $h \in \mod^*(S)$ such that $\phi(v)=h v$ for any vertex $v$ of $\calcp_n(S)$. Moreover, such an $h$ is unique.
\end{theorem}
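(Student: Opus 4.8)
The statement is an Ivanov-type combinatorial rigidity theorem, and the plan is to reconstruct a homeomorphism of $S$ from the purely combinatorial data recorded by $\phi$. The guiding principle is that a superinjective map preserves both the relation $I(\cdot,\cdot)=0$ (disjointness) and the relation $I(\cdot,\cdot)\neq 0$; consequently it preserves every configuration of vertices that can be characterized by these two relations alone, and in particular it is injective, as already noted in the excerpt. The bulk of the work is therefore to express the topological type of each vertex, and the way the cut-off holed spheres are nested, in such intersection-theoretic terms, and then to feed the resulting reconstruction into the known rigidity of the curve complex $\calc(S)$.

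First I would establish \emph{type recognition}: that $\phi$ carries each HBC to an HBC and each non-separating HBP to a non-separating HBP, ruling out that the image lands on a separating HBP of the target. The distinction among the three kinds of vertices of $\calcp(S)$ (HBCs, non-separating HBPs, separating HBPs) should be read off from the combinatorics of their links --- for instance from the maximal cardinality of a set of mutually disjoint vertices that can be adjoined, or from the presence of ``exchange'' neighbors sharing a common curve. One also recovers the number of holes cut off by each HBC, since this is reflected in how many disjoint HBCs can be nested inside, and likewise the genus and hole data of the two sides of an HBP. Because $\phi$ preserves disjointness and non-disjointness, these numerical invariants and the nesting/adjacency poset of the cut-off pieces are transported by $\phi$.

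Next, using the configurations just recognized, I would reconstruct the action of $\phi$ on individual simple closed curves of $S$, i.e.\ extend $\phi$ to a superinjective assignment on $V(S)$ and hence to a superinjective self-map of $\calc(S)$. The key is that every curve can be pinned down as the unique curve jointly determined by a suitable combinatorial pattern of HBCs and non-separating HBPs --- for example, a curve arises as the common boundary shared by two HBPs in a prescribed configuration, and this pattern is expressible through $I=0$ and $I\neq 0$ relations, hence preserved. Once such a superinjective map $\calc(S)\to\calc(S)$ is in hand, the Ivanov--Irmak--McCarthy rigidity of the curve complex provides a unique $h\in\mod^*(S)$ inducing it, and restricting $h$ to $\calcp_n(S)$ one checks directly that $\phi(v)=hv$ on every vertex $v$.

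For uniqueness, if $h_1,h_2\in\mod^*(S)$ both induce $\phi$, then $h_2^{-1}h_1$ fixes every HBC and every non-separating HBP; since these curves already determine every isotopy class in $V(S)$, the element $h_2^{-1}h_1$ fixes all of $V(S)$ and is therefore trivial by faithfulness of the $\mod^*(S)$-action on $\calc(S)$ for $g\geq 2$, $k\geq 2$. The main obstacle I anticipate is the type-recognition step together with the curve-by-curve reconstruction: the asymmetry between domain and target --- only non-separating HBPs are available in $\calcp_n(S)$, whereas $\calcp(S)$ also contains separating HBPs --- means one must combinatorially exclude ``wrong-type'' images and treat the low-genus and small-$k$ configurations by hand, and it is in assembling the local reconstructions into a single globally consistent superinjective map on $\calc(S)$ that the delicate case analysis will concentrate.
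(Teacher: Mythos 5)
The paper does not actually prove this statement: it is imported verbatim from \cite{KY10b} (Corollary 8.15 there), and the only argument the present paper supplies is the remark that uniqueness of $h$ follows from faithfulness of the $\mod^*(S)$-action on $\calcp_n(S)$, as in \cite[Lemma 2.2 (i)]{KY10a}. Your uniqueness paragraph is essentially that same observation, merely routed through $V(S)$ and the faithfulness of the action on $\calc(S)$, and that part is fine.

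The existence part of your proposal, however, is a plan rather than a proof: the two steps on which everything rests are asserted, not established, and they constitute essentially the entire content of the cited $46$-page paper. Concretely: (1) you give no actual link invariant that distinguishes HBCs from non-separating HBPs from separating HBPs, nor an argument that such an invariant is preserved by a map that is only superinjective (not surjective) --- surjectivity is what usually makes ``maximal cardinality of a disjoint family'' arguments work, and its absence is the standard sticking point in this genre; (2) the claim that every isotopy class in $V(S)$ is ``the unique curve jointly determined by'' an $I=0$/$I\neq 0$ pattern of HBCs and non-separating HBPs needs an explicit construction together with a uniqueness proof, and it is far from obvious for separating curves that are neither HBCs nor curves of any non-separating HBP (e.g.\ a curve splitting $S_{2,2}$ into two one-holed tori), precisely where the asymmetry between $\calcp_n(S)$ and $\calcp(S)$ bites; and (3) even granting an induced superinjective self-map of $\calc(S)$ and an $h$ from curve-complex rigidity, one still has to check that $h$ reproduces $\phi$ on HBP vertices (unordered pairs of curves), not merely on single curves. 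As written, the proposal reduces the theorem to a list of unproven claims of comparable difficulty to the theorem itself, so it cannot be accepted as a proof; the honest alternative, which is what the paper does, is to cite \cite{KY10b}.
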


The uniqueness of $h$ in the theorem follows by a similar argument as in \cite[Lemma 2.2 (i)]{KY10a} and is based on the fact that $\mod^*(S)$ acts faithfully on $\calcp_n(S)$.

For $\alpha \in V(S)$, let $T_\alpha$ denote the subgroup of $\pmod(S)$ generated by the Dehn twist $t_\alpha$ about $\alpha$.
For any BP $b=\{ \beta, \gamma \} \in V_{bp}(S)$, we denote by $T_b$ the subgroup of $\pmod(S)$ generated by $t_\beta t_\gamma^{-1}$.
If $\alpha \in V_c(S)$ and $b\in V_p(S)$, then $T_\alpha$ and $T_b$ are subgroups of $P(S)$.
The family of all groups $T_\alpha$ and $T_b$ with $\alpha \in V_c(S)$ and $b\in V_p(S)$ generates $P(S)$ (\cite[Section 4.1]{Bi74}).
The following result will also be used to show tautness of $P(S)$:

\begin{lem}\label{lem-twist}\cite[Lemma 2.3]{KY10a}
Let $S=S_{g, k}$ be a surface with $g\geq 1$ and $k\geq 0$.
Pick $\sigma \in \Sigma(S)$.
Then, the group $D_\sigma \cap P(S)$ is generated by all $T_\alpha$ and $T_b$ with $\alpha$ an HBC in $\sigma$ and $b$ an HBP of two curves in $\sigma$.
\end{lem}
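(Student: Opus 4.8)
The plan is to study the restriction of the capping homomorphism $\iota \colon \pmod(S)\to \mod(\bar S)$ to the abelian group $D_\sigma$. First I would recall the standard structural fact that, since the curves in $\sigma$ are disjoint and pairwise non-isotopic in $S$, the Dehn twists $\{t_\alpha\}_{\alpha\in\sigma}$ generate a free abelian group; thus $D_\sigma\cong \mathbb{Z}^\sigma$, with $t_\alpha$ corresponding to a basis vector $e_\alpha$. Under $\iota$ each twist is sent to $\iota(t_\alpha)=t_{\bar\alpha}$, where $\bar\alpha$ denotes the image of $\alpha$ in $\bar S$. Hence $D_\sigma\cap P(S)=D_\sigma\cap\ker\iota$ is identified with the sublattice of $\mathbb{Z}^\sigma$ consisting of those $(n_\alpha)$ with $\prod_\alpha t_{\bar\alpha}^{n_\alpha}=1$ in $\mod(\bar S)$, and everything reduces to computing this sublattice.

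The heart of the argument is a geometric dictionary relating the effect of capping to the HBC and HBP notions. A curve $\alpha\in\sigma$ satisfies $t_{\bar\alpha}=1$, i.e.\ $\bar\alpha$ bounds a disk in $\bar S$, exactly when $\alpha$ cuts off a subsurface that becomes a disk after capping, that is, a holed sphere; so $t_{\bar\alpha}=1$ iff $\alpha$ is an HBC in $S$. Likewise, two distinct $\beta,\gamma\in\sigma$ have essential and isotopic images $\bar\beta,\bar\gamma$ in $\bar S$ exactly when $\{\beta,\gamma\}$ is an HBP: disjoint non-isotopic curves become isotopic in $\bar S$ iff they cobound an annulus there, whose preimage in $S$ is a genus-zero component of $S\setminus(\beta\cup\gamma)$ containing at least one capped boundary component, which is precisely the HBP condition. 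Here I would also verify the internal case split of the HBP definition by noting that capping preserves the separating type of a curve whose image stays essential, so $\beta$ and $\gamma$ are simultaneously non-separating or simultaneously separating non-HBCs.

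With this dictionary I would make the kernel lattice explicit. Partition $\sigma=\sigma_0\sqcup\sigma_1\sqcup\cdots\sqcup\sigma_r$, where $\sigma_0$ is the set of HBCs (those $\alpha$ with $\bar\alpha$ nullhomotopic) and $\sigma_1,\dots,\sigma_r$ group the remaining curves by their essential isotopy class $c_j=\bar\alpha$ in $\bar S$. Since the $c_j$ are disjoint and pairwise non-isotopic, the twists $t_{c_1},\dots,t_{c_r}$ again generate a free abelian group, so $\prod_\alpha t_{\bar\alpha}^{n_\alpha}=\prod_j t_{c_j}^{\sum_{\alpha\in\sigma_j}n_\alpha}$ is trivial iff $\sum_{\alpha\in\sigma_j}n_\alpha=0$ for every $j\geq 1$. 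Therefore the kernel sublattice is $\bigoplus_{\alpha\in\sigma_0}\mathbb{Z}\,e_\alpha$ together with, for each $j\geq1$, the sum-zero sublattice of $\mathbb{Z}^{\sigma_j}$. The first summand is generated by the $t_\alpha$ with $\alpha$ an HBC in $\sigma$, i.e.\ by the groups $T_\alpha$; each sum-zero sublattice is generated by the differences $e_\beta-e_\gamma=t_\beta t_\gamma^{-1}$ with $\beta,\gamma\in\sigma_j$, and by the dictionary each such pair is an HBP of two curves in $\sigma$, so these generators are exactly the $T_b$. This yields the inclusion $D_\sigma\cap P(S)\subseteq\langle T_\alpha, T_b\rangle$; the reverse inclusion is immediate, since each $t_\alpha$ with $\alpha$ an HBC and each $t_\beta t_\gamma^{-1}$ with $\{\beta,\gamma\}$ an HBP lies in $\ker\iota$ by the same dictionary.

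I expect the main obstacle to be the geometric dictionary of the second step rather than the algebra: one must check that "isotopic and essential in $\bar S$" matches the precise HBP definition, including its case split on separating versus non-separating curves, and that the case where three or more curves of $\sigma$ fall into the same class $\sigma_j$ is correctly covered by pairwise differences. The algebraic content—freeness of the twist subgroups and the identification of the sum-zero lattice with its differences—is routine once the standard structure theorem for subgroups generated by twists about a multicurve is invoked, so all the care lies in the surface topology of capping.
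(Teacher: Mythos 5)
This lemma is stated in the paper as an imported result (it is quoted from [KY10a, Lemma 2.3]) and no proof is given here, so there is nothing in the paper itself to compare against. Your argument is correct and complete: the reduction to the kernel of $\iota$ on the free abelian lattice $D_\sigma\cong\mathbb{Z}^\sigma$, the dictionary identifying HBCs in $\sigma$ with curves whose image in $\bar S$ is inessential and HBPs of two curves of $\sigma$ with pairs whose images are essential and isotopic (including the check, via the non-HBC clause, that in the separating case the genus-zero complementary component is the one meeting both curves), and the description of the resulting sum-zero sublattices by pairwise differences together give exactly the stated generating set. This is the natural argument and, to the best of my knowledge, the same one carried out in the cited reference.
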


%%%%%%%%%%%%%%%%%%%%%%%%%%%%%%%%%

\subsection{Geometric subgroupoids}

Throughout this subsection, we fix the following notation:
Let $S=S_{g, k}$ be a surface with $g\geq 2$ and $k\geq 2$ and denote by $\Gamma =P(S)\cap \mod(S; 3)$.
Let $\Gamma \ca (X, \mu)$ be a p.m.p.\ action.
Consider $\calg =\Gamma \ltimes X$ and denote by $\rho \colon \calg \to \Gamma$ the canonical projection.
For a set $V$ on which $\Gamma$ acts (e.g., $V(S)$ and $\Sigma(S)$) and for $v \in V$, we denote by $\Gamma_v$ the stabilizer of $v$ in $\Gamma$ and denote by $\calg_v =\Gamma_v \ltimes X$.
For $v \in V_c(S)\cup V_p(S)$, we denote by $\calt_v =(T_v\cap \Gamma)\ltimes X$.

The aim of the following sequence of Lemmas \ref{lem-2-cases}--\ref{lem-p-stab} is to provide a more algebraic description of the subgroupoid $\calg_v$, where $v\in V_c(S)\cup V_p(S)$. Moreover, these lemmas along with Lemma \ref{lem-chain} will be essential for the next subsection.

\begin{lem}\label{lem-2-cases}
Let $Y$ be a non-negligible subset of $X$, $\calm$ a nowhere amenable subgroupoid of $\calg|_Y$, and $\caln$ an amenable and nowhere finite subgroupoid of $\calg|_Y$ with $\caln \vartriangleleft \calm$.
We suppose that the following condition ($\ast$) holds:
\begin{enumerate}
\item[($\ast$)] If $\calm_1$ and $\caln_1$ are subgroupoids of $\calg|_Y$ such that $\caln_1$ is amenable and nowhere finite and we have $\calm <\calm_1$ and $\caln_1\vartriangleleft \calm_1$, then $\calm =\calm_1$.
\end{enumerate} 
Then, there exists a partition $Y=\bigsqcup_nY_n$ into countably many measurable subsets such that for any $n$, one of the following cases (1) and (2) occurs:
\begin{enumerate}
\item[(1)] There exists an element  $v \in V_c(S)\cup V_p(S)$ with $\calm|_{Y_n}=\calg_v|_{Y_n}$.
\item[(2)] All of the CRS's and the T, IA and IN systems of $\calm|_{Y_n}$ and $\caln|_{Y_n}$ are constant, and letting $\sigma \in \Sigma(S)$ denote the value of the CRS of $\calm|_{Y_n}$, we have the following:
No curve in $\sigma$ is an HBC in $S$, no pair of two curves in $\sigma$ is an HBP in $S$, and there exists an IA component of $\caln|_{Y_n}$.
\end{enumerate}
\end{lem}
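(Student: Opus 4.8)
The plan is to reduce everything, by a standard countable partition of $Y$, to the case where all of the CRS's and the T, IA and IN systems of $\calm$ and $\caln$ are constant; the alternative then needs only to be produced on each piece, and I argue with $Y$ in place of $Y_n$. Since $\caln \vartriangleleft \calm$ with $\caln$ amenable and $\calm$ nowhere amenable, Lemma~\ref{lem-nor-red}(i) shows $\calm$ is reducible; as the constant CRS of $\calm$ is in particular an $(\caln, \rho)$-invariant map into $\Sigma(S)$, $\caln$ is reducible too. Write $\sigma$ and $\tau$ for the CRS's of $\calm$ and $\caln$; then $\tau \subseteq \sigma$ by Lemma~\ref{lem-nor-red}(iii), $\calm$ has an IN component by Lemma~\ref{lem-ame}, and $\caln$ has none by Lemma~\ref{lem-no-in}. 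Moreover $\rho(\calm)$ fixes $\sigma$, so $\calm < \calg_\sigma$. The argument now splits according to whether $\caln$ has an IA component.

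Suppose first that $\caln$ has only T components. After a further partition (Remark~\ref{rem-t-comp}), $\rho(\caln)$ is contained in $\ker \theta_\tau \cap \Gamma = D_\tau \cap \Gamma \subseteq D_\tau \cap P(S)$; since $\caln$ is nowhere finite, $\rho(\caln)$ is non-trivial, so $D_\tau \cap P(S) \neq \{ e\}$. By Lemma~\ref{lem-twist}, $\tau$---hence $\sigma$---then contains an HBC $\alpha$ or an HBP $b$; call this vertex $v \in V_c(S) \cup V_p(S)$. Because $\calm < \calg_\sigma < \calg_v$, and $\calg_v$ is nowhere amenable (it has an IN component by Lemma~\ref{lem-red-group}, namely the complementary subsurface of genus $\geq 2$) and contains the central twist subgroupoid $\calt_v$, which is amenable and nowhere finite, the maximality hypothesis $(\ast)$ applied with $\calm_1 = \calg_v$ and $\caln_1 = \calt_v$ forces $\calm = \calg_v$. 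This is case (1).

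Now suppose $\caln$ has an IA component; I claim we are in case (2), so the point is that $\sigma$ contains no HBC and no HBP. Assume, for a contradiction, that $\sigma$ contains an HBC $\alpha$ (the HBP case is identical). Exactly as in the previous paragraph $\calm < \calg_\alpha$, and $(\ast)$ forces $\calm = \calg_\alpha$. The key structural input is that $\calg_\alpha$ has no IA component: since $\Gamma \vartriangleleft \pmod(S)$, the stabilizer $\Gamma_\alpha$ is normal in $\pmod(S)_\alpha$, so on each complementary component $Q$ of its CRS carrying an infinite image, $\theta_Q(\Gamma_\alpha)$ is an infinite, irreducible normal subgroup of a finite-index subgroup of the non-elementary group $\pmod(Q)$, hence non-elementary by Theorems~\ref{thm-pure-t} and~\ref{thm-subgr}; thus $\Gamma_\alpha$ has only T and IN components, and by Lemma~\ref{lem-red-group} so does $\calg_\alpha$. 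Consequently $\calm = \calg_\alpha$ has no IA component, and then by Theorem~\ref{thm-ia-max}(ii) neither does $\caln$---contradicting the case assumption. Hence $\sigma$ contains no HBC, and symmetrically no HBP, which is case (2).

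The principal obstacle I anticipate is the identification $\calm = \calg_v$ via $(\ast)$: one must verify that $\calg_v$ is genuinely nowhere amenable and, crucially, that the twist subgroupoid $\calt_v$ is amenable and \emph{nowhere finite}, so as to be an admissible choice of $\caln_1$ in $(\ast)$. Amenability is clear (the twist group is abelian), but nowhere-finiteness amounts to the generating twist about $v$ acting aperiodically, which must be arranged (e.g.\ from essential freeness of the ambient action, or by passing to a suitable part of $Y$); some care is also needed when $\tau$ carries several HBCs/HBPs, in selecting the $v$ for which $\calt_v$ is nowhere finite. By contrast, the remaining ingredients---the twist decomposition of $D_\tau \cap P(S)$ (Lemma~\ref{lem-twist}) and the absence of IA components in $\Gamma_\alpha$ (Theorems~\ref{thm-pure-t},~\ref{thm-subgr} together with Lemma~\ref{lem-red-group})---are comparatively routine once $(\ast)$ has been leveraged.
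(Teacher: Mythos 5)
Your argument is correct and uses the same essential ingredients as the paper's proof---the reduction to constant CRS and T, IA, IN systems, Lemma \ref{lem-twist} together with Remark \ref{rem-t-comp} to relate the absence of HBCs and HBPs in the reduction system to triviality of the twist kernel, and condition ($\ast$) applied with $\calm_1=\calg_v|_Y$ and $\caln_1=\calt_v|_Y$---but you invert the case split. The paper asks whether $\sigma$ contains an HBC or an HBP: if so, ($\ast$) yields case (1) at once; if not, the twist-kernel argument forces an IA component of $\caln$, yielding case (2). You instead split on whether $\caln$ has an IA component. In your first branch this amounts to running the paper's triviality argument in reverse to recover an HBC or HBP in $\tau\subset\sigma$; in your second branch it costs you genuinely extra work, namely proving that $\calg_v|_Y$ has no IA component (via the normality argument through Theorems \ref{thm-pure-t} and \ref{thm-subgr}, material the paper only deploys later in Lemmas \ref{lem-c-stab} and \ref{lem-p-stab}, and where the separating-HBP case needs a little care) and then invoking Theorem \ref{thm-ia-max} (ii). That detour is sound but avoidable: since case (1) is itself an admissible conclusion of the lemma, once $\sigma$ contains an HBC or HBP you are already done, so one may simply assume from the outset that it does not. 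Finally, the obstacle you flag is not a real one: $T_v\cap\Gamma$ is infinite (a suitable power of $t_\alpha$, resp.\ of $t_\beta t_\gamma^{-1}$, lies in $\mod(S;3)$ and has infinite order), and Poincar\'e recurrence for its p.m.p.\ action shows that almost every point of $Y$ returns to $Y$ infinitely often, so $\calt_v|_Y$ is automatically nowhere finite; no freeness or aperiodicity of the action is required, and the same recurrence fact is used implicitly in the paper's own appeal to ($\ast$).
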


\begin{proof}
By Lemma \ref{lem-nor-red} (i), $\calm$ and $\caln$ are reducible.
Restricting $\calm$ to a non-negligible subset of $Y$, we may assume that all of the CRS's and the T, IA and IN systems of $\calm$ and $\caln$ are constant.
Let $\sigma \in \Sigma(S)$ be the value of the CRS of $\calm$.

If $\sigma$ contains an HBC $\alpha$ in $S$, then $\calt_\alpha \vartriangleleft \calg_\alpha$ and $\calm <\calg_\alpha |_Y$.
By condition ($\ast$), we have $\calm =\calg_\alpha |_Y$ which gives case (1).
If $\sigma$ contains two curves forming an HBP $b$ in $S$, then similarly we have $\calm =\calg_b|_Y$ which again gives case (1).

Suppose that no curve of $\sigma$ is an HBC in $S$ and no pair of two curves of $\sigma$ is an HBP in $S$.
Let $\tau \in \Sigma(S)$ be the value of the CRS of $\caln$.
Since $\caln \vartriangleleft \calm$, the inclusion $\tau \subset \sigma$ holds by Lemma \ref{lem-nor-red} (iii).
No curve of $\tau$ is an HBC in $S$, and no pair of two curves of $\tau$ is an HBP in $S$.
By Lemma \ref{lem-twist}, $\ker \theta_\tau \cap \Gamma$ is trivial.
If there were no IA component of $\caln$, then every component of $S_\tau$ would be T for $\caln$ by Lemma \ref{lem-no-in}, and $\caln$ would therefore be finite.
This is a contradiction.
It follows that there exists an IA component of $\caln$ and hence case (2) holds.
\end{proof}

\begin{lem}\label{lem-c-stab}
Pick $\alpha \in V_c(S)$.
Let $Y$ be a non-negligible subset of $X$ and set $\calm =\calg_\alpha |_Y$.
Then, the following assertions hold:
\begin{enumerate}
\item[(i)] The group $\Gamma_\alpha$ is non-amenable, and $\calm$ is therefore nowhere amenable.
\item[(ii)] The groupoid $\calm$ fulfills condition ($\ast$) in Lemma \ref{lem-2-cases}.
\item[(iii)] Let $\call$, $\calm_1$, $\calm_2$ and $\caln_1$ be subgroupoids of $\calg|_Y$ such that $\calm_2$ is nowhere amenable, $\caln_1$ is amenable and nowhere finite, and we have $\calm < \call$, $\caln_1\vartriangleleft \calm_1$, $\calm_2\vartriangleleft \calm_1$ and $\calm_2 \vartriangleleft \call$.
Then $\calm =\call$.
\item[(iv)] If $\caln$ is an amenable and nowhere finite subgroupoid of $\calm$ with $\caln \vartriangleleft \calm$, then there exists a partition $Y=\bigsqcup_nY_n$ into countably many measurable subsets such that $\caln|_{Y_n}<\calt_\alpha|_{Y_n}$ for any $n$.
\end{enumerate}
\end{lem}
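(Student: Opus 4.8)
The plan is to extract from the geometry of the HBC $\alpha$ a single package of facts about $\Gamma_\alpha$, and then feed it into the subgroupoid lemmas of this section. Cutting $S$ along $\alpha$ yields a holed sphere $P$ (carrying $\alpha$ together with some of the boundary components of $S$) and a complementary surface $Q$ into which all of the genus $g\geq 2$ goes. First I would verify, for $\Gamma_\alpha$, that its CRS is exactly $\{\alpha\}$; that $Q$ is an IN component; and that $P$ is either a pair of pants (so $\pmod(P)$ is trivial and $P$ is a T component) or a holed sphere with at least three interior strands (so $\theta_P(\Gamma_\alpha)$ is non-elementary and $P$ is IN). The non-elementarity of $\theta_Q(\Gamma_\alpha)$, and of $\theta_P(\Gamma_\alpha)$ in the second case, comes from exhibiting a non-trivial (in fact pseudo-Anosov) image via point-pushing of the strands, which is then upgraded to non-elementary by Theorems \ref{thm-pure-t} and \ref{thm-subgr}, using $\Gamma \vartriangleleft \pmod(S)$. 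The upshot, which I would record once and reuse throughout, is that $\Gamma_\alpha$ has no IA component, every component of $S_\alpha$ is T or IN, and---since an IN component fixes no curve and a pair of pants carries no essential curve---the only $\Gamma_\alpha$-invariant curve in $S$ is $\alpha$. These are precisely hypotheses (a)--(c) of Lemma \ref{lem-s-chain} and the hypotheses of Lemma \ref{lem-nor-twist} for $\alpha$.

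Assertion (i) is then immediate: $\theta_Q(\Gamma_\alpha)$ non-elementary contains a rank-two free group, so $\Gamma_\alpha$ is non-amenable, and since $\Gamma_\alpha \ca (X,\mu)$ is p.m.p., $\calm =\calg_\alpha|_Y$ is nowhere amenable (an amenable restriction $\calm|_A$ would force $\Gamma_\alpha$ to be amenable).

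For assertion (ii) the key observation is that the role of $\caln_1$ is only to force reducibility. Given $\calm <\calm_1$ with $\caln_1 \vartriangleleft \calm_1$ amenable and nowhere finite, the groupoid $\calm_1$ is nowhere amenable because it contains $\calm$ (any amenable restriction of $\calm_1$ would restrict $\calm$ to an amenable subgroupoid, contradicting (i)), so Lemma \ref{lem-nor-red}(i) makes $\calm_1$ reducible. Passing to a partition on which all of the CRS's and the T, IA and IN systems are constant, let $\sigma_1$ denote the CRS of $\calm_1$; it is non-empty. Each curve of $\sigma_1$ is $(\calm_1,\rho)$-invariant, hence $(\calm,\rho)$-invariant, and by Lemma \ref{lem-red-group} together with the first paragraph the only $(\calm,\rho)$-invariant curve is $\alpha$. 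Thus $\sigma_1 =\{\alpha\}$, so $\rho(\calm_1)\subset \Gamma_\alpha$ and $\calm_1 <\calg_\alpha|_Y =\calm$; combined with $\calm <\calm_1$ this yields $\calm =\calm_1$, which is condition ($\ast$).

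Finally, assertions (iii) and (iv) are direct applications. Since $\alpha \in V_c(S)\subset V_s(S)$ and (a)--(c) hold, assertion (iii) is literally the conclusion of Lemma \ref{lem-s-chain}. For (iv), the hypotheses of Lemma \ref{lem-nor-twist} hold with $\sigma =\{\alpha\}$ (its CRS is $\{\alpha\}$ and every component is T or IN), producing a partition $Y=\bigsqcup_n Y_n$ with $\caln|_{Y_n}<\calt_{\{\alpha\}}|_{Y_n}$; since $D_\alpha =T_\alpha =\langle t_\alpha\rangle$ we have $D_\alpha \cap \Gamma =T_\alpha \cap \Gamma$ and hence $\calt_{\{\alpha\}}=\calt_\alpha$, giving the claim. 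I expect the main obstacle to lie entirely in the first paragraph: establishing that $\theta_Q(\Gamma_\alpha)$, and $\theta_P(\Gamma_\alpha)$ when $P$ is larger than a pair of pants, are non-elementary, so that $\Gamma_\alpha$ fixes no curve other than $\alpha$ and has no IA component. Everything downstream is bookkeeping with the subgroupoid classification of Section \ref{sec-groupoids-mcg}.
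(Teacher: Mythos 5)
Your proposal is correct and, for the geometric setup and for parts (i), (iii), (iv), it is essentially the paper's proof: one verifies hypotheses (a)--(c) of Lemma \ref{lem-s-chain} by noting that the genus-$g$ side of $S_\alpha$ contains a non-separating HBP and the genus-$0$ side is either a pair of pants or contains an HBC, so that the relevant $\theta$-images of $\Gamma_\alpha$ are non-elementary by normality together with Theorems \ref{thm-pure-t} and \ref{thm-subgr}; then (iii) is Lemma \ref{lem-s-chain} verbatim and (iv) is Lemma \ref{lem-nor-twist}. (The paper exhibits the needed non-trivial elements as twists about HBCs and HBP twist-differences supported in the components rather than as point-pushes, but these generate the same subgroup of $P(S)$ and either choice works.) The one place you diverge is part (ii): the paper disposes of condition $(\ast)$ by the one-line trick of setting $\call=\calm_1=\calm_2$ in Lemma \ref{lem-s-chain}, whereas you reprove this special case directly -- $\calm_1$ is nowhere amenable and hence reducible by Lemma \ref{lem-nor-red}(i), and its CRS, being non-empty and consisting of $(\calm,\rho)$-invariant curves, must be the constant $\{\alpha\}$, forcing $\calm_1<\calg_\alpha|_Y$. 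Your argument is sound (the key point, which you supply, is that condition (4) of Theorem \ref{thm-tiain} rules out invariant curves in the IN components, and a pair of pants carries none), and it is arguably more transparent; the paper's substitution is shorter but relies on having already proved the stronger Lemma \ref{lem-s-chain}.
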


\begin{proof}
First we check that conditions (a)--(c) in Lemma \ref{lem-s-chain} hold.
The HBC $\alpha$ divides $S$ into a component of genus 0 and a component of genus $g$, denoted by $Q$ and $R$, respectively.
If $Q$ is a pair of pants, then $R$ contains a non-separating HBP in $S$, and $\theta_R(\Gamma_\alpha)$ is therefore non-elementary.
If $Q$ is not a pair of pants, then $Q$ contains an HBC in $S$, and $R$ contains a non-separating HBP in $S$.
Since both  $\theta_Q(\Gamma_\alpha)$ and $\theta_R(\Gamma_\alpha)$ are non-elementary then the conditions (a)--(c) in Lemma \ref{lem-s-chain} follow.

Assertion (i) holds because there exists an IN component of $\Gamma_\alpha$.
We prove assertion (ii).
To check condition ($\ast$) in Lemma \ref{lem-2-cases}, let $\calm_1$ and $\caln_1$ be subgroupoids of $\calg|_Y$ such that $\caln_1$ is an amenable and nowhere finite and we have $\calm <\calm_1$ and $\caln_1\vartriangleleft \calm_1$.
Putting $\call =\calm_1=\calm_2$ and applying Lemma \ref{lem-s-chain}, we obtain $\calm =\calm_1$ and hence assertion (ii) follows.
Finally, assertion (iii) follows directly from Lemma \ref{lem-s-chain}.
and assertion (iv) follows from Lemma \ref{lem-nor-twist}.
\end{proof}

\begin{lem}\label{lem-p-stab}
Pick $b\in V_{np}(S)$.
Let $Y$ be a non-negligible subset of $X$ and set $\calm =\calg_b |_Y$.
Then, the following assertions hold:
\begin{enumerate}
\item[(i)] The group $\Gamma_b$ is non-amenable, and hence $\calm$ is nowhere amenable.
\item[(ii)] The groupoid $\calm$ fulfills condition ($\ast$) in Lemma \ref{lem-2-cases}.
\item[(iii)] Let $\call$, $\calm_1$, $\calm_2$ and $\caln_1$ be subgroupoids of $\calg|_Y$ such that $\calm_2$ is nowhere amenable, $\caln_1$ is amenable and nowhere finite, and we have $\calm < \call$, $\caln_1\vartriangleleft \calm_1$, $\calm_2\vartriangleleft \calm_1$ and $\calm_2 \vartriangleleft \call$.
Then $\calm =\call$.
\end{enumerate}
\end{lem}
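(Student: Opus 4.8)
The plan is to prove all three assertions in exact parallel with Lemma \ref{lem-c-stab}, with the hole-bounding curve replaced by the non-separating HBP $b=\{\beta, \gamma\}$ and the separating-curve chain Lemma \ref{lem-s-chain} replaced by its bounding-pair analogue Lemma \ref{lem-bp-chain}. The entire task then reduces to checking that conditions (a)--(c) of Lemma \ref{lem-bp-chain} hold for $\Gamma_b$; granting this, assertion (i) follows from the existence of an IN component (so $\Gamma_b$ is non-amenable and $\calm=\calg_b|_Y$ is nowhere amenable, as in Lemma \ref{lem-c-stab} (i)), assertion (ii) follows by taking $\call=\calm_1=\calm_2$ (the ambient $\calm_1$ being nowhere amenable since it contains $\calm$) and invoking Lemma \ref{lem-bp-chain} to get $\calm=\calm_1$, and assertion (iii) is a direct instance of Lemma \ref{lem-bp-chain}.

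First I would record the cut surface: a non-separating HBP splits $S$ into a genus-zero piece $Q=S_{0, k_Q+2}$ and a piece $R=S_{g-1, k_R+2}$ of genus $g-1\geq 1$, where the $k$ boundary components of $S$ (the strands of the braid group) are distributed as $k_Q+k_R=k$ with $k_Q\geq 1$ (else $\beta$ and $\gamma$ cobound an annulus and are isotopic). The twist part of condition (a), that $D_\beta\cap\Gamma$ and $D_\gamma\cap\Gamma$ are trivial, is immediate from Lemma \ref{lem-twist}, since a single non-separating curve is neither an HBC nor a member of an HBP.

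The heart of the matter is the T/IA/IN classification of the two pieces, organized by where the strands sit. I would show that $Q$ is IN when $k_Q\geq 2$ and is a T pair of pants when $k_Q=1$, while $R$ is IN when $k_R\geq 1$ and is a strand-free T piece when $k_R=0$; since $k\geq 2$ and $k_Q\geq 1$, at least one piece is IN (if $k_Q=1$ then $k_R\geq 1$), which gives condition (b). For condition (c), any T piece is either the pair of pants $Q=S_{0,3}$ or the strand-free piece $R=S_{g-1,2}$, and in the latter case every $R'\in W(R)$ is also strand-free, so $\theta_{R'}(\Gamma_{R'})$ is trivial because $\Gamma=P(S)\cap\mod(S; 3)$ maps trivially into $\mod(\bar{S})$ and the restriction to $R'$ is unaffected by filling the strands lying elsewhere. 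Irreducibility of the image on each non-T piece, combined with $\beta$ and $\gamma$ being fixed, then forces the CRS of $\Gamma_b$ to equal $b$, completing condition (a).

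The main obstacle is establishing the non-elementarity underlying this classification: that $\theta_Q(\Gamma_b)$ (resp. $\theta_R(\Gamma_b)$) contains two independent pseudo-Anosov elements whenever the piece carries enough strands, and in particular that the IA alternative never occurs. The honest mechanism is point-pushing, namely that pushing a strand of a piece along a filling loop in that piece produces a pseudo-Anosov element of the corresponding $\pmod$, and such maps fix $b$ and lie in $P(S)\cap\mod(S; 3)=\Gamma$, hence in $\Gamma_b$. The delicate case is $Q=S_{0,4}$ (where $k_Q=2$): here the Dehn twists about HBCs of $S$ lying in $Q$ are insufficient on their own, since the only HBC there is the curve enclosing both strands and it yields a cyclic, reducible group, so one must genuinely exploit point-pushing of the two strands around loops passing near $\beta$ and $\gamma$ to manufacture twists about non-separating curves of $S$ and thereby the required independent pseudo-Anosovs.
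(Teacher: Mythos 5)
Your skeleton is exactly the paper's: reduce everything to verifying conditions (a)--(c) of Lemma \ref{lem-bp-chain} for $\Gamma_b$, then read off (i) from the existence of an IN component, (ii) by taking $\call=\calm_1=\calm_2$, and (iii) as a direct instance of Lemma \ref{lem-bp-chain}, in parallel with Lemma \ref{lem-c-stab}. The bookkeeping of the cut pieces ($Q$ of genus $0$, $R$ of genus $g-1$, at least one strand in $Q$), the triviality of $D_\beta\cap\Gamma$ (the paper gets this from $\beta$ remaining essential in $\bar S$, but Lemma \ref{lem-twist} gives it too), and the treatment of the strand-free $R$ via the embedding $V(R_1)\hookrightarrow V(\bar S)$ all match the paper.

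The gap is in the one step you yourself identify as the heart of the matter. The paper never constructs independent pseudo-Anosov elements directly: it only exhibits a \emph{single non-neutral} element of $\theta_Q(\Gamma_b)$ (the twist about an HBC of $S$ contained in $Q$) resp.\ of $\theta_R(\Gamma_b)$ (a twist-difference $t_\delta t_\beta^{-1}$ where $\delta\subset R$ forms an HBP with a curve of $b$), and then uses that $\theta_Q(\Gamma_b)$ is normal in $\theta_Q(\mod(S;3)_b)$, a finite-index subgroup of $\pmod(Q)$, so that Theorems \ref{thm-pure-t} and \ref{thm-subgr} upgrade ``nontrivial'' to ``non-elementary'' --- exactly the mechanism of Lemma \ref{lem-group-chain}. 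This makes your ``delicate case'' $Q=S_{0,4}$ a non-issue: the cyclic group generated by the one HBC twist need not itself be non-elementary, because normality does the rest; your premise that ``Dehn twists about HBCs \dots are insufficient'' misreads what has to be produced. Your substitute mechanism is only sketched precisely where you claim it is needed, and it has two defects as stated: point-pushing maps do \emph{not} in general lie in $\mod(S;3)$ (a push is $t_{\ell_1}t_{\ell_2}^{-1}$ with $[\ell_1]-[\ell_2]$ equal to a boundary class, which acts nontrivially on $H_1(S,\mathbb{Z}/3\mathbb{Z})$), so the assertion that they ``lie in $P(S)\cap\mod(S;3)=\Gamma$'' is false and you would have to pass to powers; and certifying that a push is pseudo-Anosov requires Kra's theorem, which is nowhere in the paper's toolkit. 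Finally, for the pieces $R$ carrying strands you give no concrete source of nontrivial elements other than this unproved pushing claim, whereas the paper's HBP-with-a-curve-of-$b$ construction supplies them explicitly.
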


\begin{proof}
First we check that conditions (a)--(c) in Lemma \ref{lem-bp-chain} hold.
Let $\bar{S}$ denote the closed surface obtained by filling a disk to each component of $\partial S$.
For any $\beta \in b$, the group $D_\beta \cap \Gamma$ is trivial because $\beta$ is also essential in $\bar{S}$ through the inclusion of $S$ into $\bar{S}$.

The HBP $b$ divides $S$ into a component of genus 0 and a component of genus $g-1$, denoted by $Q$ and $R$, respectively.
If $Q$ is a pair of pants, then $R$ contains a component of $\partial S$ and contains a curve in $S$ forming an HBP in $S$ together with a curve of $b$.
Thus we have that $\theta_R(\Gamma_b)$ is non-elementary and hence the conditions (a)--(c) in Lemma \ref{lem-bp-chain} follow.

Suppose that $Q$ is not a pair of pants.
There is an HBC in $S$ which belongs to $V(Q)$, and $\theta_Q(\Gamma_b)$ is thus non-elementary.
If $R$ contains a component of $\partial S$, then $R$ contains a curve in $S$ forming an HBP in $S$ together with a curve of $b$, and $\theta_R(\Gamma_b)$ is non-elementary.
Conditions (a)--(c) in Lemma \ref{lem-bp-chain} follow.

Suppose that $R$ contains no component of $\partial S$.
The set $V(R)$ is injectively embedded into $V(\bar{S})$ through the inclusion of $S$ into $\bar{S}$.
For any $R_1\in W(R)$, the set $V(R_1)$ is also injectively embedded into $V(\bar{S})$, and the group $\theta_{R_1}(\Gamma_{R_1})$ acts on $V(R_1)$ trivially because any element of $\Gamma$ acts on $V(\bar{S})$ trivially.
By Theorem \ref{thm-pure-t} and Lemma \ref{lem-curve-stab}, the group $\theta_{R_1}(\Gamma_{R_1})$ is trivial.
Conditions (a)--(c) in Lemma \ref{lem-bp-chain} follow.

Assertion (i) of the lemma holds because there exists an IN component of $\Gamma_b$.
Assertions (ii) and (iii) follow from Lemma \ref{lem-bp-chain}, proceeding in a similar manner as in the proofs of parts (ii) and (iii) of Lemma \ref{lem-c-stab} above.
\end{proof}

The non-separability assumption on the HBP $b$ in Lemma \ref{lem-p-stab} was necessary because of the use of Lemma \ref{lem-bp-chain} in its proof.
However, the following lemma holds for any HBP in $S$ and it follows from Lemma \ref{lem-nor-twist}.

\begin{lem}\label{lem-bp-nor}
Pick $b\in V_p(S)$.
Let $Y$ be a non-negligible subset of $X$ and set $\calm =\calg_b |_Y$.
If $\caln$ is an amenable and nowhere finite subgroupoid of $\calm$ with $\caln \vartriangleleft \calm$, then there exists a partition $Y=\bigsqcup_nY_n$ into countably many measurable subsets such that $\caln|_{Y_n}<\calt_b|_{Y_n}$ for any $n$.
\end{lem}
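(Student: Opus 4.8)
The plan is to reduce the statement to Lemma~\ref{lem-nor-twist} applied with $\sigma=b$. Concretely, I would verify that $b$, viewed as an element of $\Sigma(S)$, satisfies the two standing hypotheses of that lemma for the group $\Gamma_b$ --- namely that the CRS of $\Gamma_b$ equals $b$ and that every component of $S_b$ is either T or IN for $\Gamma_b$ --- and then identify the twist subgroupoid $\calt_\sigma$ produced by the lemma with $\calt_b$. Note at the outset that, unlike in Lemma~\ref{lem-p-stab}, no appeal to Lemma~\ref{lem-bp-chain} is required, so the separability of $b$ never enters; this is precisely why the conclusion survives for an arbitrary HBP.

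For the hypotheses, I would argue as follows. Since $P(S)=\ker \iota$ and $\mod(S; 3)$ are both normal in $\pmod(S)$, so is $\Gamma =P(S)\cap \mod(S;3)$, and hence $\Gamma_b \vartriangleleft \pmod(S)_b$. Fix a component $Q$ of $S_b$ and pass to the subgroup of $\pmod(S)_b$ preserving $Q$: the Dehn twists about curves interior to $Q$ lie in this subgroup, so its image under $\theta_Q$ is non-elementary unless $Q$ is a pair of pants. Now $\theta_Q(\Gamma_b)$ is normal in this image and is torsion-free by Theorem~\ref{thm-pure-t}, so it is either trivial (then $Q$ is T) or infinite; in the latter case Theorem~\ref{thm-subgr} forces it to be non-elementary (then $Q$ is IN), while pairs of pants are T. In particular $S_b$ has no IA component, which gives that the CRS of $\Gamma_b$ is contained in $b$. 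For the reverse inclusion, each curve of $b$ is fixed by $\Gamma_b$ (Theorem~\ref{thm-pure-t}), and on removing a curve $\alpha \in b$ the two components of $S_b$ adjacent to $\alpha$ --- one of which is always the genus-zero piece cut off by $b$ --- merge into a connected subsurface of $S$ of positive genus and positive complexity (at least $S_{1,2}$). Such a subsurface carries an HBC or a non-separating HBP of $S$ disjoint from $\alpha$; a suitable power of the associated Dehn twist, respectively bounding-pair map, then lies in $\Gamma_b$, fixes $\alpha$, and has infinite order. Thus the image of $\Gamma_b$ on the merged component is infinite and reducible (it fixes $\alpha$), hence neither T nor IN, so $\alpha$ must lie in the CRS. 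This yields CRS${}=b$.

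It then remains to identify the groups: since $b$ contains no HBC and the only HBP among its curves is $b$ itself, Lemma~\ref{lem-twist} gives $D_b\cap P(S)=T_b$, whence $D_b\cap \Gamma =T_b\cap \Gamma$ and $\calt_\sigma =\calt_b$. Applying Lemma~\ref{lem-nor-twist} to the amenable, nowhere finite $\caln \vartriangleleft \calg_b|_Y=\calg_\sigma|_Y$ produces the partition $Y=\bigsqcup_n Y_n$ with $\caln|_{Y_n}<\calt_\sigma|_{Y_n}=\calt_b|_{Y_n}$, as desired. The main obstacle I anticipate is the geometric input feeding the CRS computation: controlling the homeomorphism types of the components of $S_b$ for a \emph{separating} HBP (the degenerate one-holed-torus and pair-of-pants cases in particular) and checking, case by case, that the relevant merged subsurface indeed contains an HBC or HBP of $S$ disjoint from the removed curve, so that $\Gamma_b$ acts on it with infinite image.
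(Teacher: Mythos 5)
Your overall strategy is exactly the one the paper intends: the text introduces this lemma with the remark that, unlike Lemma \ref{lem-p-stab}, it does not go through Lemma \ref{lem-bp-chain} but ``follows from Lemma \ref{lem-nor-twist},'' so the content of a proof is precisely the verification you set out to do, namely that the CRS of $\Gamma_b$ is $b$, that every component of $S_b$ is T or IN, and that $D_b\cap \Gamma =T_b\cap \Gamma$ (the last point via Lemma \ref{lem-twist}, as you do). Your argument that each component $Q$ of $S_b$ is T or IN (normality of $\theta_Q(\Gamma_b)$ in a non-elementary twist group, torsion-freeness from Theorem \ref{thm-pure-t}, and Theorem \ref{thm-subgr}) is sound, and it gives the containment of the CRS in $b$ since T and IN components are trivial or infinite irreducible.

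The gap is in the reverse inclusion. Your mechanism for showing that a curve $\alpha\in b$ must lie in the CRS is to produce an HBC or a non-separating HBP of $S$ \emph{contained in the merged component and disjoint from $\alpha$}; this geometric claim is false for some separating HBPs. Take $S=S_{2,2}$ decomposed as $Q_1\cup_\beta Q_2\cup_\gamma Q_3$ with $Q_1=S_{1,2}$ (one component of $\partial S$ plus $\beta$), $Q_2=S_{0,3}$ (boundary $\beta$, $\gamma$ and the other component of $\partial S$) and $Q_3=S_{1,1}$ (boundary $\gamma$). Then $b=\{ \beta,\gamma \}$ is a separating HBP, and the merged component obtained by deleting $\gamma$ is $R=Q_2\cup_\gamma Q_3\cong S_{1,2}$. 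Every essential curve of $R$ disjoint from $\gamma$ lies, up to isotopy, in the pair of pants $Q_2$ (hence is isotopic to $\beta$, $\gamma$ or a boundary component) or in $Q_3=S_{1,1}$, where any two disjoint essential curves are isotopic and every essential curve is non-separating; so $R$ contains no HBC of $S$ and no non-separating HBP of $S$ disjoint from $\gamma$, and your construction produces no non-trivial element of $\theta_R(\Gamma_b)$. The statement you need is nevertheless true, and the correct witness is the bounding-pair map itself: $t_\beta t_\gamma^{-1}$ generates $T_b<P(S)$, a suitable power lies in $\Gamma_b=P(S)_b\cap \mod(S;3)$, and its image under $\theta_R$ (which kills only $D_\beta$) is a non-trivial power of $t_\gamma$, so $\theta_R(\Gamma_b)$ is infinite and reducible (it fixes $\gamma$), whence $\{ \beta \}$ cannot be the CRS; the symmetric argument handles $\{ \gamma \}$, giving $\mathrm{CRS}(\Gamma_b)=b$ in all cases. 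With that repair the application of Lemma \ref{lem-nor-twist} goes through as you describe.
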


\begin{Remark}
For some separating HBP in $S$, Lemma \ref{lem-p-stab} (iii) does not hold.
To see this, let $b=\{ \beta, \gamma \}$ be a separating HBP in $S$ such that at least one of the two components of $S_b$ of positive genus contains a component of $\partial S$.
Let $Q_1$, $Q_2$ and $Q_3$ denote the components of $S_b$ so that $Q_1$ and $Q_3$ are of positive genus, and $Q_1$ contains a component of $\partial S$.
We may assume that $\beta$ is a component of $\partial Q_1$.
Then, there exist subgroups $L$, $M_1$, $M_2$ and $N_1$ of $\Gamma$ such that $M_2$ is non-amenable, $N_1$ is infinite and amenable, we have $\Gamma_b<L$, $N_1\vartriangleleft M_1$, $M_2\vartriangleleft M_1$ and $M_2\vartriangleleft L$, and we have $[L: \Gamma_b]=\infty$.
In fact, letting $R$ be the component of $S_\beta$ containing $Q_2$ and $Q_3$ and setting
\[L=\Gamma_\beta,\quad M_2=\ker \theta_R\cap \Gamma, \quad N_1=T_b\cap \Gamma \quad \textrm{and}\quad M_1=M_2\vee N_1,\]
we can check that these groups satisfy the desired properties.
\end{Remark}

%%%%%%%%%%%%%%%%%%%%%%%%%%%%%%%%%%%%%%%%

\subsection{Proof of tautness}\label{subsec-taut-sbg}

Throughout this subsection, except for the proof of Theorem \ref{thm-self}, we fix the following notation:
Let $S=S_{g, k}$ be a surface with $g\geq 2$ and $k\geq 2$.
We set $\Gamma =P(S)\cap \mod(S; 3)$.
We also set $V=V_c(S)\cup V_p(S)$ and $V_0=V_c(S)\cup V_{np}(S)$.

Let $\Gamma \ca (X, \mu)$ and $\Gamma \ca (Y, \nu)$ be p.m.p.\ actions, and denote by $\calg =\Gamma \ltimes X$ and $\calh =\Gamma \ltimes Y$.
Suppose that we have non-negligible subsets $A\subset X$ and $B\subset Y$ with $\Gamma A=X$ and $\Gamma B=Y$, and have an isomorphism $f\colon \calg|_A\to \calh|_B$.

As before, for any set $L$ on which $\Gamma$ acts and for any element $l\in L$, we denote by $\Gamma_l$ the stabilizer of $l$ in $\Gamma$ and we set $\calg_l =\Gamma_l\ltimes X$ and $\calh_l=\Gamma_l\ltimes Y$. For $v\in V$, we denote by $\calt_v=(T_v\cap \Gamma)\ltimes X$ and $\calu_v=(T_v\cap \Gamma)\ltimes Y$.

\begin{lem}\label{lem-stab-pre}
For any $v\in V_0$, there exist a countable set $N$, a partition $A=\bigsqcup_{n\in N}A_n$ into measurable subsets and $w_n\in V$ indexed by $n\in N$ with $f(\calg_v|_{A_n})=\calh_{w_n}|_{f(A_n)}$ for any $n\in N$. 
\end{lem}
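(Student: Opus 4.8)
The plan is to characterise $\calg_v$ purely in groupoid-theoretic terms and then transport that characterisation through $f$. Fix $v\in V_0$ and set $\calm=\calg_v|_A$ together with its twist subgroupoid $\caln=\calt_v|_A$, where $\calt_v=(T_v\cap\Gamma)\ltimes X$. Since $T_v\cap\Gamma$ is a nontrivial subgroup of the infinite cyclic group $T_v$, it is infinite cyclic and in particular abelian, so $\caln$ is amenable; it is nowhere finite because $T_v\cap\Gamma$ is infinite and the action is measure preserving (Poincar\'e recurrence makes almost every point of $A$ return to $A$ infinitely often). As $T_v\cap\Gamma$ is central in $\Gamma_v$, Lemma \ref{lem-normal} gives $\caln\vartriangleleft \calm$. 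Whether $v\in V_c(S)$ or $v\in V_{np}(S)$, Lemma \ref{lem-c-stab} respectively Lemma \ref{lem-p-stab} shows that $\calm$ is nowhere amenable (so in particular nowhere finite) and that $\calm$ satisfies condition $(\ast)$ of Lemma \ref{lem-2-cases}.

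Next I would push this picture across $f$. Because $f\colon\calg|_A\to\calh|_B$ is an isomorphism of the ambient groupoids, it carries subgroupoids to subgroupoids preserving amenability, nowhere finiteness, nowhere amenability, normality, and inclusions; consequently $f(\caln)\vartriangleleft f(\calm)$ with $f(\caln)$ amenable and nowhere finite and $f(\calm)$ nowhere amenable, and $f(\calm)$ again satisfies $(\ast)$, since any competitor pair in $\calh|_B$ pulls back through $f^{-1}$ to a competitor pair in $\calg|_A$. Applying Lemma \ref{lem-2-cases} to $f(\calm)$ and $f(\caln)$ inside $\calh|_B$ yields a measurable partition $B=\bigsqcup_m B_m$ such that on each piece either (1) $f(\calm)|_{B_m}=\calh_{w_m}|_{B_m}$ for some $w_m\in V$, or (2) the CRS's and the T, IA and IN systems of $f(\calm)|_{B_m}$ and $f(\caln)|_{B_m}$ are constant, the CRS $\sigma$ of $f(\calm)|_{B_m}$ contains no HBC and no HBP, and $f(\caln)|_{B_m}$ has an IA component. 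Setting $A_n=f^{-1}(B_n)$, the pieces on which (1) holds already give the asserted identity $f(\calg_v|_{A_n})=\calh_{w_n}|_{f(A_n)}$, so it remains to exclude case (2).

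The main obstacle is ruling out case (2), which I would do by playing the maximality in Lemma \ref{lem-c-stab}(iii) / Lemma \ref{lem-p-stab}(iii) against Lemma \ref{lem-chain}. Suppose (2) occurs on some $B_m$ of positive measure and write $\calm'=f(\calm)|_{B_m}$ and $\caln'=f(\caln)|_{B_m}$. Since $\Gamma=P(S)\cap\mod(S;3)$ is normal in $\pmod(S)$ (being an intersection of two normal subgroups) and $\caln'\vartriangleleft\calm'$ has an IA component, Lemma \ref{lem-chain} applies to $\calm',\caln'$ inside $\calh|_{B_m}$: part (i) produces subgroupoids $\calm_1,\calm_2,\caln_1$ with $\caln_1$ amenable and nowhere finite, $\calm_2$ nowhere amenable, $\caln_1\vartriangleleft\calm_1$, $\calm_2\vartriangleleft\calm_1$ and $\calm_2\vartriangleleft\calh_\sigma|_{B_m}$, while part (ii) gives $\calm'\neq\calh_\sigma|_{B_m}$. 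Pulling the whole chain back through $f^{-1}$, and noting that the constant CRS $\sigma$ forces $\calg_v|_{A_m}=f^{-1}(\calm')<f^{-1}(\calh_\sigma|_{B_m})$, I can feed $\call=f^{-1}(\calh_\sigma|_{B_m})$ together with $f^{-1}(\calm_1),f^{-1}(\calm_2),f^{-1}(\caln_1)$ into Lemma \ref{lem-c-stab}(iii) (when $v\in V_c(S)$) or Lemma \ref{lem-p-stab}(iii) (when $v\in V_{np}(S)$), whose hypotheses are then met verbatim. It concludes $\calg_v|_{A_m}=f^{-1}(\calh_\sigma|_{B_m})$, i.e.\ $\calm'=\calh_\sigma|_{B_m}$, contradicting Lemma \ref{lem-chain}(ii). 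Hence case (2) never occurs, and the partition $A=\bigsqcup_n A_n$ with $A_n=f^{-1}(B_n)$ furnishes the elements $w_n\in V$ as required. The delicate points to get right are the verbatim transfer of condition $(\ast)$ and of the chain inclusions across $f$ and $f^{-1}$, and the fact that the non-separating hypothesis $v\in V_0$ is exactly what makes Lemma \ref{lem-p-stab}(iii) available, since it fails for certain separating HBPs, as the Remark after Lemma \ref{lem-bp-nor} shows.
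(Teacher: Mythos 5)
Your proposal is correct and follows essentially the same route as the paper: establish condition $(\ast)$ for $\calg_v|_A$ via Lemma \ref{lem-c-stab}(i)--(ii) or Lemma \ref{lem-p-stab}(i)--(ii), transport $\calt_v\vartriangleleft\calg_v$ through $f$, apply Lemma \ref{lem-2-cases} to the images in $\calh|_B$, and kill case (2) by pulling the chain from Lemma \ref{lem-chain} back through $f^{-1}$ and invoking Lemma \ref{lem-c-stab}(iii) / Lemma \ref{lem-p-stab}(iii) against Lemma \ref{lem-chain}(ii). The only difference is that you make explicit the verifications (amenability and nowhere finiteness of $\calt_v|_A$, normality, normality of $\Gamma$ in $\pmod(S)$) that the paper leaves implicit, all of which are correct.
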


\begin{proof}
Fix $v\in V_0$.
By Lemma \ref{lem-c-stab} (i) and Lemma \ref{lem-p-stab} (i), $\calg_v$ is nowhere amenable.
By Lemma \ref{lem-c-stab} (ii) and Lemma \ref{lem-p-stab} (ii), $\calg_v|_A$ satisfies condition ($\ast$) in Lemma \ref{lem-2-cases}, and so does the image $f(\calg_v)|_B$.
We set $\calm =f(\calg_v)|_B$ and $\caln =f(\calt_v)|_B$.
Applying Lemma \ref{lem-2-cases} to $\caln \vartriangleleft \calm$, we have a partition $B=\bigsqcup_n B_n$ into countably many measurable subsets such that for any $n$, one of the following cases occurs:
\begin{enumerate}
\item[(1)] There exists a $w \in V$ with $\calm|_{B_n}=\calh_w|_{B_n}$.
\item[(2)] All of the CRS's and the T, IA and IN systems of $\calm|_{B_n}$ and $\caln|_{B_n}$ are constant, and letting $\sigma \in \Sigma(S)$ denote the value of the CRS of $\calm|_{B_n}$, we have the following:
No curve in $\sigma$ is an HBC in $S$, no pair of two curves in $\sigma$ is an HBP in $S$, and there exists an IA component of $\caln|_{B_n}$.
\end{enumerate}
The lemma follows if for any $n$, case (2) never occurs. 
Assume that there is an $n$ for which case (2) occurs.
By Lemma \ref{lem-chain}, we have subgroupoids $\call$, $\calm_1$, $\calm_2$ and $\caln_1$ of $\calh|_{B_n}$ such that $\calm_2$ is nowhere amenable, $\caln_1$ is amenable and nowhere finite, and we have $\calm < \call$, $\caln_1\vartriangleleft \calm_1$, $\calm_2\vartriangleleft \calm_1$ and $\calm_2 \vartriangleleft \call$, and for any non-negligible subset $C$ of $B_n$, we have $\calm|_C\neq \call|_C$.
The image of $\call$, $\calm_1$, $\calm_2$ and $\caln_1$ under $f^{-1}$ also has the same property.
Applying Lemma \ref{lem-c-stab} (iii) and Lemma \ref{lem-p-stab} (iii) to the groupoid $\calg_v|_{f^{-1}(B_n)}=f^{-1}(\calm |_{B_n})$ and the images of $\call$, $\calm_1$, $\calm_2$ and $\caln_1$ under $f^{-1}$, we obtain the equation $\calg_v|_{f^{-1}(B_n)}=f^{-1}(\call)|_{f^{-1}(B_n)}$.
This is a contradiction.
\end{proof}

\begin{lem}\label{lem-twist-pre}
For any $v\in V_0$, there exist a countable set $N$, a partition $A=\bigsqcup_{n\in N}A_n$ into measurable subsets and $w_n\in V$ indexed by $n\in N$ with $f(\calt_v|_{A_n})=\calu_{w_n}|_{f(A_n)}$ for any $n\in N$. 
\end{lem}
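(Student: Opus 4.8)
The plan is to deduce this from the stabilizer statement already established in Lemma \ref{lem-stab-pre}, combined with the twist-containment results of Lemma \ref{lem-c-stab} (iv) and Lemma \ref{lem-bp-nor}, applied on both sides of the isomorphism $f$. First I would apply Lemma \ref{lem-stab-pre} to $v\in V_0$ to obtain a partition $A=\bigsqcup_{n}A_n$ and elements $w_n\in V$ with $f(\calg_v|_{A_n})=\calh_{w_n}|_{f(A_n)}$. Fixing $n$, note that $\calt_v$ is amenable, because $T_v\cap \Gamma$ is abelian, and nowhere finite, because $T_v\cap \Gamma$ is infinite (it is of finite index in the infinite cyclic group $T_v$, since $T_v<P(S)$ and $\mod(S;3)$ has finite index in $\mod(S)$, so the range fibers of $\calt_v$ all have the cardinality of $T_v\cap\Gamma$). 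Moreover $T_v\cap \Gamma$ is central in $\Gamma_v$: any $h\in \Gamma_v$ fixes each curve of $v$ by Theorem \ref{thm-pure-t} and preserves orientation, so it commutes with the generating twist; hence $\calt_v\vartriangleleft \calg_v$ by Lemma \ref{lem-normal} (i). Restricting by Lemma \ref{lem-normal} (ii) and transporting by $f$, the groupoid $f(\calt_v|_{A_n})$ is an amenable, nowhere finite, normal subgroupoid of $\calh_{w_n}|_{f(A_n)}$.

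Next I would establish the inclusion $f(\calt_v|_{A_n})\subset \calu_{w_n}$. For this I apply, on the $\calh$-side, the analogue of Lemma \ref{lem-c-stab} (iv) when $w_n\in V_c(S)$ and Lemma \ref{lem-bp-nor} when $w_n\in V_p(S)$; these statements hold verbatim for $\Gamma \ca (Y,\nu)$ as they do for $\Gamma \ca (X,\mu)$. It is essential here that Lemma \ref{lem-bp-nor} is valid for \emph{every} HBP, in particular for the separating ones, since $w_n$ may well be a separating HBP and Lemma \ref{lem-p-stab} (iii) would not be available for it. This yields a partition of $f(A_n)$ along which $f(\calt_v)$ is contained in $\calu_{w_n}$. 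For the reverse inclusion I would run the symmetric argument through $f^{-1}$: the subgroupoid $\calu_{w_n}|_{f(A_n)}$ is amenable, nowhere finite and normal in $\calh_{w_n}|_{f(A_n)}=f(\calg_v|_{A_n})$, so $f^{-1}(\calu_{w_n}|_{f(A_n)})$ is amenable, nowhere finite and normal in $\calg_v|_{A_n}$. Since $v\in V_0$, I may now apply Lemma \ref{lem-c-stab} (iv) (if $v\in V_c(S)$) or Lemma \ref{lem-bp-nor} (if $v\in V_{np}(S)$) on the $\calg$-side to get, along a further partition, $f^{-1}(\calu_{w_n})\subset \calt_v$, that is $\calu_{w_n}\subset f(\calt_v)$.

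Finally, passing to a common refinement of the two partitions and re-indexing, the two inclusions combine into the desired equality $f(\calt_v|_{A_n})=\calu_{w_n}|_{f(A_n)}$ on each piece, which proves the lemma. The delicate point is that one must force \emph{equality} rather than a single inclusion, and this is precisely why the argument has to be run symmetrically in $f$ and $f^{-1}$: the $X$-side application hinges on $v\in V_0$ (so that Lemma \ref{lem-c-stab} (iv) or Lemma \ref{lem-bp-nor} applies), while the $Y$-side application hinges on the fact that Lemma \ref{lem-bp-nor} accommodates an arbitrary $w_n\in V_p(S)$. I expect the main technical burden to lie not in any new geometric input but in the careful two-sided bookkeeping of the countable measurable partitions produced at each step.
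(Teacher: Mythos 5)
Your proposal is correct and follows essentially the same route as the paper: start from Lemma \ref{lem-stab-pre}, use $\calt_v\vartriangleleft\calg_v$ to transport an amenable, nowhere finite normal subgroupoid across $f$, apply Lemma \ref{lem-c-stab} (iv) and Lemma \ref{lem-bp-nor} on the $Y$-side for one inclusion and to $f^{-1}(\calu_{w_n})|_{A_n}$ on the $X$-side for the other, then refine the partitions. The extra details you supply (centrality of $T_v\cap\Gamma$ in $\Gamma_v$, and the remark that Lemma \ref{lem-bp-nor} covers separating HBPs so that arbitrary $w_n\in V_p(S)$ is handled) are accurate and are exactly the points the paper leaves implicit.
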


\begin{proof}
Fix $v\in V_0$. By Lemma \ref{lem-stab-pre}, we have a countable set $N$, a partition $A=\bigsqcup_{n\in N}A_n$ into measurable subsets and $w_n\in V$ indexed by $n\in N$ with $f(\calg_v|_{A_n})=\calh_{w_n}|_{f(A_n)}$ for any $n\in N$.
We have $\calt_v\vartriangleleft \calg_v$ and thus $f(\calt_v|_{A_n})\vartriangleleft \calh_{w_n}|_{f(A_n)}$ for any $n$.
By Lemma \ref{lem-c-stab} (iv) and Lemma \ref{lem-bp-nor}, taking a finer partition of $A$, we may suppose that for any $n$, the inclusion $f(\calt_v|_{A_n})<\calu_{w_n}|_{f(A_n)}$ holds.
Applying Lemma \ref{lem-c-stab} (iv) and Lemma \ref{lem-bp-nor} to $f^{-1}(\calu_{w_n})|_{A_n}$, we can find a finer partition of $A$ so that for any $n$, the equation $f(\calt_v|_{A_n})=\calu_{w_n}|_{f(A_n)}$ holds.
\end{proof}

Note that for any non-negligible subset $B_1\subset B$ and for any $w_1, w_2\in V$, the equation $\calu_{w_1}|_B=\calu_{w_2}|_B$ implies $w_1=w_2$.
This follows from that for any distinct $v_1, v_2\in V$, the intersection $T_{v_1}\cap T_{v_2}$ is trivial.

We define a map $\varphi_0\colon A\times V_0\to V$ as follows:
Pick $v\in V_0$.
By Lemma \ref{lem-twist-pre}, we have a partition $A=\bigsqcup_n A_n$ into measurable subsets and $w_n\in V$ with $f(\calt_v|_{A_n})=\calu_{w_n}|_{f(A_n)}$ for any $n$.
We set $\varphi_0(x, v)=w_n$ for $x\in A_n$.
This definition is independent of the choice of the partition of $A$ because of the assertion in the previous paragraph.

\begin{lem}
For almost every $x\in A$, the map $\varphi_0(x, \cdot)\colon V_0\to V$ defines a superinjective map from $\calcp_n(S)$ into $\calcp(S)$.
\end{lem}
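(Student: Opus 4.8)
The plan is to reduce the statement to a pairwise assertion and then use countability. Since $V_0$ is countable, it suffices to show that for every \emph{ordered} pair $u,v\in V_0$ the set of $x\in A$ at which $I(\varphi_0(x,u),\varphi_0(x,v))=0$ holds exactly when $I(u,v)=0$ is conull, and afterwards to intersect these conull sets over all pairs. So I would first fix $u,v\in V_0$ and refine the partitions furnished by Lemma~\ref{lem-stab-pre} and Lemma~\ref{lem-twist-pre} to a common one: this gives a measurable partition $A=\bigsqcup_nA_n$ and vertices $p_n,q_n\in V$ with $\varphi_0(x,u)=p_n$, $\varphi_0(x,v)=q_n$ for $x\in A_n$, and with the four identifications $f(\calg_u|_{A_n})=\calh_{p_n}|_{f(A_n)}$, $f(\calt_u|_{A_n})=\calu_{p_n}|_{f(A_n)}$, $f(\calg_v|_{A_n})=\calh_{q_n}|_{f(A_n)}$ and $f(\calt_v|_{A_n})=\calu_{q_n}|_{f(A_n)}$ holding simultaneously. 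The key bookkeeping point is that the stabiliser image and the twist image of $u$ carry the \emph{same} index $p_n$, which is exactly what the proof of Lemma~\ref{lem-twist-pre} establishes (it starts from the $w_n$ of Lemma~\ref{lem-stab-pre}).

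The geometric input I would use is the elementary incidence fact: for $a,b\in V$ one has $I(a,b)=0$ if and only if the cyclic group $T_b\cap\Gamma$ fixes $a$, i.e. $T_b\cap\Gamma\subset\Gamma_a$, and when $I(a,b)\neq0$ no non-trivial power of the Dehn twist or bounding-pair map generating $T_b$ fixes $a$. Granting this, the assumption $I(u,v)=0$ yields $\calt_v\subset\calg_u$ and hence $\calt_v|_{A_n}\subset\calg_u|_{A_n}$; applying the groupoid isomorphism $f$ and the identifications above gives $\calu_{q_n}|_{f(A_n)}\subset\calh_{p_n}|_{f(A_n)}$.

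The step I expect to be the main obstacle is that restricting to the small piece $f(A_n)$ could a priori erase group-level information, and I would overcome this by \emph{Poincar\'e recurrence}. Write $T_{q_n}\cap\Gamma=\langle s\rangle$, an infinite cyclic group, and note that $s$ acts on $(Y,\nu)$ by a measure-preserving transformation. Since $f(A_n)$ is non-negligible, for almost every $y\in f(A_n)$ there is an integer $m\neq0$ with $s^my\in f(A_n)$, so $(s^m,y)\in\calu_{q_n}|_{f(A_n)}\subset\calh_{p_n}|_{f(A_n)}=(\Gamma_{p_n}\ltimes Y)|_{f(A_n)}$. Because $\calh=\Gamma\ltimes Y$ is the literal product groupoid, membership forces the group coordinate $s^m$ to lie in $\Gamma_{p_n}$; thus a non-trivial power of the generator of $T_{q_n}$ fixes $p_n$, which by the incidence fact is possible only if $I(p_n,q_n)=0$. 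Running the symmetric argument through $f^{-1}$ (from $I(p_n,q_n)=0$ one gets $\calu_{q_n}\subset\calh_{p_n}$, hence $\calt_v|_{A_n}\subset\calg_u|_{A_n}$, and recurrence for $T_v\cap\Gamma$ on $A_n$ then produces a non-trivial power of the generator of $T_v$ fixing $u$, whence $I(u,v)=0$) gives the reverse implication. Hence on each $A_n$ we obtain $I(p_n,q_n)=0\iff I(u,v)=0$, i.e. the desired equivalence for every $x\in A_n$.

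Finally I would intersect the resulting conull sets over the countably many pairs $(u,v)\in V_0\times V_0$, producing a conull subset of $A$ on which $\varphi_0(x,\cdot)$ sends disjoint vertices to disjoint vertices and intersecting vertices to intersecting vertices. This says precisely that $\varphi_0(x,\cdot)\colon V_0\to V$ defines a simplicial map $\calcp_n(S)\to\calcp(S)$ satisfying the superinjectivity condition $I(a,b)\neq0\Rightarrow I(\varphi_0(x,a),\varphi_0(x,b))\neq0$, as required. The whole argument is genuinely symmetric in $f$ and $f^{-1}$, the only non-formal ingredients being the incidence fact above (uniform over the HBC and HBP cases) and the recurrence mechanism that keeps the restriction to $f(A_n)$ from trivialising the inclusion.
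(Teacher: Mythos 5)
Your proposal is correct, but it runs on a genuinely different engine than the paper's. The paper only uses the twist identifications $f(\calt_{v_i}|_Z)=\calu_{w_i}|_{f(Z)}$ and argues by amenability: when $I(v_1,v_2)=0$ the two twist groups commute, so $(\calt_{v_1}|_Z)\vee(\calt_{v_2}|_Z)$ is amenable and hence so is its image $(\calu_{w_1}|_{f(Z)})\vee(\calu_{w_2}|_{f(Z)})$; if $I(w_1,w_2)\neq 0$, sufficiently high powers of $t_{w_1}$ and $t_{w_2}$ generate a free group of rank $2$ (via \cite[Theorem 4.3]{Iv92}), and \cite[Lemma 3.20]{Ki06} then makes that join non-amenable on any non-negligible set --- a contradiction; the converse is verbatim through $f^{-1}$. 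You instead bring in the stabilizer identifications $f(\calg_u|_{A_n})=\calh_{p_n}|_{f(A_n)}$ of Lemma \ref{lem-stab-pre} (which the paper's proof of this lemma does not touch), push the containment $\calt_v\subset\calg_u$ through $f$, and use Poincar\'e recurrence to extract an actual group element: since the identifications hold up to null sets, for any $g\in T_{q_n}\cap\Gamma$ with $g\notin\Gamma_{p_n}$ the return set $\{y\in f(A_n):gy\in f(A_n)\}$ is null, while recurrence forces some power $s^m$ ($m\neq 0$) of the generator to have a non-null return set, whence $s^m\in\Gamma_{p_n}$ and $I(p_n,q_n)=0$ by the twist-fixing criterion; the converse is symmetric through $f^{-1}$. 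This trades the analytic input (groupoid amenability and the free-subgroup/ping-pong fact) for Lemma \ref{lem-stab-pre} --- which is freely available at this point since Lemma \ref{lem-twist-pre} is built on it, and your bookkeeping that the stabilizer and twist pieces carry the same index $p_n$ is indeed what that derivation gives --- plus two elementary facts you should make explicit: $T_{q_n}\cap\Gamma$ is a finite-index, hence non-trivial, subgroup of $T_{q_n}\cong\mathbb{Z}$, and the "incidence fact" in the HBP case needs the multitwist inequality $I(t_\beta^nt_\gamma^{-n}(a),a)\geq |n|\left(I(a,\beta)^2+I(a,\gamma)^2\right)$ (together with Theorem \ref{thm-pure-t} when $a$ is itself a pair), since the generator is $t_\beta t_\gamma^{-1}$ rather than a single Dehn twist. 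Both routes are sound; yours is arguably more elementary on the operator-algebraic side, while the paper's is shorter given that \cite[Lemma 3.20]{Ki06} is already in its toolkit.
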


\begin{proof}
Pick $v_1, v_2\in V_0$ and a non-negligible subset $Z$ of $A$ such that for any $i=1, 2$, we have $f(\calt_{v_i}|_Z)=\calu_{w_i}|_{f(Z)}$ with some $w_i\in V$.
Suppose first that $I(v_1, v_2)=0$.
The groupoid $(\calt_{v_1}|_Z)\vee (\calt_{v_2}|_Z)$ is amenable, and so is its image under $f$, $(\calu_{w_1}|_{f(Z)})\vee (\calu_{w_2}|_{f(Z)})$.
If we had $I(w_1, w_2)\neq 0$, then for any sufficiently large $n, m\in \mathbb{N}$, $t_{w_1}^n$ and $t_{w_2}^m$ would generate a free group of rank $2$.
This is proved similarly to \cite[Lemma 5.3]{Ki06} through \cite[Theorem 4.3]{Iv92}.
By \cite[Lemma 3.20]{Ki06}, $(\calu_{w_1}|_{f(Z)})\vee (\calu_{w_2}|_{f(Z)})$ is non-amenable.
This is a contradiction.
We therefore have $I(w_1, w_2)=0$.
A verbatim argument shows that if $I(w_1, w_2)=0$, then $I(v_1, v_2)=0$.
Thus the statement follows.
\end{proof}

By Theorem \ref{thm-si}, for almost every $x\in A$, there exists a unique $h \in \mod^*(S)$ such that for any $v\in V_0$, the equation $\varphi_0(x, v)=h v$ holds.
We define $\varphi(x)\in \mod^*(S)$ to be this $h$ and obtain the map $\varphi \colon A\to \mod^*(S)$, which is measurable.

\begin{lem}\label{lem-cocycle}
Let $\eta \colon \calg|_A\to \Gamma$ denote the composition of the isomorphism $f\colon \calg|_A\to \calh|_B$ with the projection from $\calh$ onto $\Gamma$.
Then, for any $\gamma \in \Gamma$ and almost every $x\in A\cap \gamma^{-1}A$, we have
\[\eta(\gamma, x)=\varphi(\gamma x)\gamma \varphi(x)^{-1}.\]
\end{lem}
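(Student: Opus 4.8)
The plan is to show that the measurable map $\varphi\colon A\to \mod^*(S)$ conjugates the cocycle $\eta$ into the inclusion $\Gamma\hookrightarrow \mod^*(S)$, and the heart of the matter is the following equivariance of the auxiliary map $\varphi_0$: for each $\gamma\in \Gamma$, each $v\in V_0$ and almost every $x\in A\cap \gamma^{-1}A$,
\[\varphi_0(\gamma x, \gamma v)=\eta(\gamma, x)\,\varphi_0(x, v).\]
Since $\gamma\in \Gamma\subset \mod(S)$ preserves the topological types of HBCs and of non-separating HBPs, we have $\gamma v\in V_0$, so the left-hand side is defined. Granting this identity for the moment, I would conclude as follows. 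By the definition of $\varphi$ via Theorem \ref{thm-si}, the identity reads $\varphi(\gamma x)(\gamma v)=\eta(\gamma, x)\varphi(x)(v)$ for every $v\in V_0$; hence the two elements $\varphi(\gamma x)\gamma$ and $\eta(\gamma, x)\varphi(x)$ of $\mod^*(S)$ act identically on the vertex set $V_0$ of $\calcp_n(S)$. As $\mod^*(S)$ acts faithfully on $\calcp_n(S)$, an element fixing every vertex is trivial, so $\varphi(\gamma x)\gamma=\eta(\gamma, x)\varphi(x)$, which rearranges to the asserted formula.

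It remains to establish the equivariance. I would start from the groupoid conjugation identity
\[(\gamma\tau\gamma^{-1},\gamma x)=(\gamma,\tau x)(\tau, x)(\gamma, x)^{-1},\]
valid for $\tau\in T_v\cap \Gamma$, together with its companion $\gamma(T_v\cap \Gamma)\gamma^{-1}=T_{\gamma v}\cap \Gamma$. The projection $\calh\to \Gamma$ is a groupoid homomorphism, so $\eta$ is multiplicative on composable arrows, and applying it to the displayed product gives
\[\eta(\gamma\tau\gamma^{-1},\gamma x)=\eta(\gamma,\tau x)\,\eta(\tau, x)\,\eta(\gamma, x)^{-1}.\]
The expected main obstacle is the term $\eta(\gamma,\tau x)$, which a priori differs from $\eta(\gamma, x)$, since $\eta(\gamma,\cdot)$ need not be constant along the twist orbit. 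To neutralize it, I would partition $A\cap \gamma^{-1}A$ into countably many positive-measure pieces $E$ on which the three measurable functions $x\mapsto \eta(\gamma, x)$, $x\mapsto \varphi_0(x, v)$ and $x\mapsto \varphi_0(\gamma x,\gamma v)$ are all constant, say equal to $\lambda$, $w$ and $w'$ respectively. Fixing such an $E$ and a generator $s_v$ of the infinite cyclic group $T_v\cap \Gamma$, Poincar\'e recurrence for the $\mu$-preserving transformation $s_v$ provides, for almost every $x\in E$, a power $\tau=s_v^n$ with $n\geq 1$ and $\tau x\in E$; then $\eta(\gamma,\tau x)=\lambda=\eta(\gamma, x)$, so the formula collapses to $\eta(\gamma\tau\gamma^{-1},\gamma x)=\lambda\,\eta(\tau, x)\,\lambda^{-1}$.

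Finally I would read off $w'=\lambda w$. On $E$ one has $f(\calt_v|_E)=\calu_w|_{f(E)}$ by Lemma \ref{lem-twist-pre}, and $(\tau, x)\in \calt_v|_E$ is not a unit (as $\tau\neq e$), so injectivity of $f$ forces $\eta(\tau, x)\in (T_w\cap \Gamma)\setminus\{e\}$; conjugating by $\lambda\in \Gamma$ places $\lambda\eta(\tau, x)\lambda^{-1}$ in $(T_{\lambda w}\cap \Gamma)\setminus\{e\}$. On the other hand $(\gamma\tau\gamma^{-1},\gamma x)\in \calt_{\gamma v}|_{\gamma E}$ and $f(\calt_{\gamma v}|_{\gamma E})=\calu_{w'}|_{f(\gamma E)}$, whence $\eta(\gamma\tau\gamma^{-1},\gamma x)\in T_{w'}\cap \Gamma$. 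Thus $T_{w'}$ and $T_{\lambda w}$ share a nontrivial element, and since $T_{u_1}\cap T_{u_2}$ is trivial for distinct $u_1,u_2\in V$, we obtain $w'=\lambda w$, that is $\varphi_0(\gamma x,\gamma v)=\eta(\gamma, x)\varphi_0(x, v)$, as required.
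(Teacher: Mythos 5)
Your proof is correct and follows essentially the same route as the paper's: both reduce the claim to the pointwise identity $\eta(\gamma,x)\varphi(x)v=\varphi(\gamma x)\gamma v$ for $v\in V_0$, establish it by conjugating the twist subgroupoid $\calt_v$ by $\gamma$ and transporting through $f$ on a piece where $\eta(\gamma,\cdot)$ and the relevant values of $\varphi_0$ are constant, and conclude via faithfulness of the $\mod^*(S)$-action on $\calcp_n(S)$ together with the triviality of $T_{v_1}\cap T_{v_2}$ for distinct vertices. The only (cosmetic) difference is that the paper applies $\mathrm{Ad}\,\gamma$ to the whole restricted groupoid $\calt_v|_Z$, so that both endpoints of every arrow automatically lie in $Z$, whereas you achieve the same effect for a single arrow via Poincar\'e recurrence.
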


\begin{proof}
We proceed in a similar manner as in the proof of \cite[Lemma 5.5]{Ki06}. Pick $\gamma \in \Gamma$ and $v\in V_0$. It suffices to show that the equation
\[\eta(\gamma, x)\varphi(x)v=\varphi(\gamma x)\gamma v\]
holds, for almost every $x\in A\cap \gamma^{-1}A$.
Consider a non-negligible subset $Z\subset A$ satisfying the following: $\gamma Z\subset A$; there exists $\delta \in \Gamma$ with $\eta(\gamma, x)=\delta$ for any $x\in Z$; the map $\varphi$ is constant on $Z$ and on $\gamma Z$ and denote by $s, t\in \mod^*(S)$ its constant values, respectively; and we have $f(\calt_v|_Z)=\calu_{sv}|_{f(Z)}$ and $f(\calt_{\gamma v}|_{\gamma Z})=\calu_{t\gamma v}|_{f(\gamma Z)}$.
The set $A\cap \gamma^{-1}A$ is covered by countably many such subsets as $Z$.

For $h\in \Gamma$, let $\textrm{Ad}h$ be the inner automorphism of $\calg$ defined by
\[\textrm{Ad}h(l, x)=(h, lx)(l, x)(h, x)^{-1}=(hlh^{-1}, hx)\quad \textrm{for}\ (l, x)\in \calg.\]
For any $\alpha \in V(S)$ and $h\in \mod^*(S)$, the equation $ht_\alpha h^{-1}=t_{h\alpha}^\varepsilon$ holds, where $\varepsilon =1$ if $h\in \mod(S)$, and $\varepsilon =-1$ otherwise (\cite[Lemma 4.1.C]{Iv02}).
It follows that $\textrm{Ad}\gamma (\calt_v|_Z)=\calt_{\gamma v}|_{\gamma Z}$.
Applying $f$ to this equation, we get that $\textrm{Ad}\delta (\calu_{sv}|_{f(Z)})=\calu_{t\gamma v}|_{f(\gamma Z)}$.
The left hand side equals $\calu_{\delta sv}|_{\delta f(Z)}$. Hence $\delta sv=t\gamma v$ holds which in turn gives the desired equation.
\end{proof}

\begin{proof}[Proof of Theorem \ref{thm-self}]
The proof uses the connection between ME couplings and stable orbit equivalence, revealed in \cite{Fu99b}.
Our aim is to show that $P(S)$ is taut relative to $\mod^*(S)$.
We check conditions (1) and (2) in Definition \ref{defn-taut}.
Condition (2) follows from Lemma \ref{lem-icc}.
To check condition (1), let $(\Sigma, m)$ be a self-coupling of $P(S)$.
We set $\Gamma =P(S)\cap \mod(S; 3)$.
The space $(\Sigma, m)$ is also a self-coupling of $\Gamma$.
To distinguish two $\Gamma$'s, we put $\Lambda =\Gamma$ and regard $(\Sigma, m)$ as a $(\Gamma, \Lambda)$-coupling. 
Pick measurable fundamental domains $X, Y\subset \Sigma$ for the actions $\{ e\} \times \Lambda \ca \Sigma$ and $\Gamma \times \{ e\} \ca \Sigma$, respectively.
Let $\eta \colon \Gamma \times X\to \Lambda$ be the cocycle defined so that $(\gamma, \eta(\gamma, x))x\in X$ for $\gamma \in \Gamma$ and $x\in X$.
The map $(\gamma, x)\mapsto (\gamma, \eta(\gamma, x))x$ defines the natural action of $\Gamma$ on $X$, which is p.m.p.\ with respect to the restriction of $m$ to $X$.
To distinguish this action from the original action of $\Gamma \times \{ e\}$ on $\Sigma$, we denote $(\gamma, \eta(\gamma, x))x$ by $\gamma \cdot x$, using a dot.
We also have the natural p.m.p.\ action of $\Lambda$ on $Y$.
We set $\calg =\Gamma \ltimes X$ and $\calh =\Lambda \ltimes Y$.

By \cite[Lemma 2.27]{Ki09a}, we can choose $X$ and $Y$ so that, putting $Z=X\cap Y$, we have $\Gamma \cdot Z=X$ and $\Lambda \cdot Z=Y$.
By \cite[Proposition 2.29]{Ki09a}, we have the isomorphism $f\colon \calg|_Z\to \calh|_Z$ defined by $f(\gamma, x)=(\eta(\gamma, x), x)$ for $(\gamma, x)\in \calg|_Z$.
By Lemma \ref{lem-cocycle}, there exists a measurable map $\varphi \colon Z\to \mod^*(S)$ satisfying the equation \begin{equation}\label{eqeta}\eta(\gamma, x)=\varphi(\gamma x)\gamma \varphi(x)^{-1}\quad \textrm{ for almost every } (\gamma, x)\in \calg|_Z.\end{equation}

We define a map $\Phi \colon \Sigma \to \mod^*(S)$ by
\[\Phi((\gamma, \lambda)x)=\gamma \varphi(x)^{-1}\lambda^{-1}\quad \textrm{for}\ \gamma \in \Gamma,\ \lambda \in \Lambda \ \textrm{and}\ x\in Z.\]
Using the same arguments as in the proofs of \cite[Theorem 5.6]{Ki06} and \cite[Theorem 4.4]{Ki09b}, the formula (\ref{eqeta}) above shows that $\Phi$ is well-defined.
By definition, the map $\Phi$ is $(\Gamma \times \Lambda)$-equivariant.

The group $\Gamma =\Lambda$ is a finite index subgroup of $P(S)$.
As $\Gamma =\Lambda$ is a normal subgroup of the finite index subgroup $\mod(S; 3)$ of $\mod(S)$, by Lemma \ref{lem-icc}, the Dirac measure on the neutral element is the only probability measure on $\mod^*(S)$ invariant under conjugation by $\Gamma =\Lambda$.
By these two facts, \cite[Lemma 5.8]{Ki06} is applicable to $\Phi$, and it implies that $\Phi$ is $(P(S)\times P(S))$-equivariant. Hence, condition (1) in Definition \ref{defn-taut} holds.
\end{proof}

%%%%%%%%%%%%%%%%%%%%%%%%%%%%%%%%%%%%%%%%

\section{Tautness of the Torelli group and the Johnson kernel}\label{sec-t-j}

Let $S=S_{g, k}$ be a surface. As in Subsection \ref{subsec-pre-sbg} above, for $\alpha \in V(S)$, let $t_\alpha \in \pmod(S)$ denote the Dehn twist about $\alpha$ and $T_\alpha$ denote the group generated by $t_\alpha$.
For a BP $b=\{ \beta, \gamma \}$ in $S$, let $T_b$ denote the group generated by $t_\beta t_\gamma^{-1}$.
We define the \textit{Torelli group} $\cali(S)$ as the group generated by all $T_\alpha$ and $T_b$ with $\alpha \in V_s(S)$ and $b\in V_{bp}(S)$.
We define the \textit{Johnson kernel} $\calk(S)$ as the group generated by all $T_\alpha$ with $\alpha \in V_s(S)$.
We refer to \cite[Chapter 6]{FM11} for background of these groups.
The main result of this section is the following:

\begin{theorem}\label{thm-taut-t-j}
Let $S=S_{g, k}$ be a surface.
Then, the following assertions hold:
\begin{enumerate}
\item[(i)] If either $g=1$ and $k\geq 3$; $g=2$ and $k\geq 1$; or $g\geq 3$ and $k\geq 0$, then $\cali(S)$ is taut relative to $\mod^*(S)$.
\item[(ii)] If either $g=1$ and $k\geq 3$; $g=2$ and $k\geq 2$; or $g\geq 3$ and $k\geq 0$, then $\calk(S)$ is taut relative to $\mod^*(S)$.
\end{enumerate}
\end{theorem}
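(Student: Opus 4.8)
The plan is to follow the architecture of the proof of Theorem \ref{thm-self} line by line, replacing $P(S)$ by $\cali(S)$ (resp.\ $\calk(S)$), the twist generators by the separating twists and bounding-pair maps (resp.\ the separating twists alone), and the pair $\calcp_n(S)\subset\calcp(S)$ together with its superinjective rigidity Theorem \ref{thm-si} by the corresponding Torelli and separating-curve complexes, whose superinjective rigidity is supplied by \cite{BM04, BM08, FI05, Ki09c, KY10c}. Throughout I set $\Gamma=\cali(S)$ (resp.\ $\calk(S)$); since every Torelli element acts trivially on $H_1(S,\Z)$ one has $\Gamma<\mod(S;3)$, so $\Gamma$ is torsion-free, and $\Gamma\vartriangleleft\pmod(S)$, whence the results of Sections \ref{sec-pre} and \ref{sec-groupoids-mcg} apply directly. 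Condition (2) of Definition \ref{defn-taut} is then immediate from Lemma \ref{lem-icc}: each hypothesis of Theorem \ref{thm-taut-t-j} forces $3g+k-4>0$ and $(g,k)\neq(1,2),(2,0)$, and $\cali(S)$, $\calk(S)$ are infinite normal subgroups of $\mod(S)$, so the Dirac mass at the identity is the unique conjugation-invariant probability measure on $\mod^*(S)$.

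For condition (1) I would run the coupling-to-groupoid reduction exactly as in the proof of Theorem \ref{thm-self}: from a self-coupling $(\Sigma,m)$ of $\Gamma$ one extracts, via \cite[Lemma 2.27, Proposition 2.29]{Ki09a}, p.m.p.\ actions $\Gamma\ca X$ and $\Lambda\ca Y$ with $\Lambda=\Gamma$ and an isomorphism $f\colon\calg|_Z\to\calh|_Z$. The core of the work is the algebraic recognition of the stabilizer subgroupoids $\calg_v$ for $v$ a vertex of the relevant complex—$v\in V_s(S)\cup V_{bp}(S)$ for $\cali(S)$ and $v\in V_s(S)$ for $\calk(S)$. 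For this I would establish the precise analogues of Lemmas \ref{lem-c-stab} and \ref{lem-p-stab}; their proofs reduce to verifying hypotheses (a)--(c) of the chain Lemmas \ref{lem-s-chain} and \ref{lem-bp-chain}, which are already formulated for an arbitrary separating curve and an arbitrary bounding pair and so apply without modification once those hypotheses are checked. Normalization of the twist subgroupoids uses Lemma \ref{lem-nor-twist}, and one needs the Torelli/Johnson counterpart of Lemma \ref{lem-twist}, namely that $D_\sigma\cap\Gamma$ is generated by the separating twists (and bounding-pair maps) carried by curves of $\sigma$.

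As in the braid case, the stronger recognition statement—the analogue of Lemma \ref{lem-p-stab}(iii)—will hold only for a subclass of vertices, so one works with an appropriate full subcomplex (playing the role of $\calcp_n(S)$) mapping into the whole complex. The preservation statements under $f$ (analogues of Lemmas \ref{lem-stab-pre} and \ref{lem-twist-pre}) then follow from Lemmas \ref{lem-2-cases} and \ref{lem-chain}, producing a measurable field $x\mapsto\varphi_0(x,\cdot)$ of superinjective maps of the complex. Applying the relevant superinjective-rigidity theorem in place of Theorem \ref{thm-si} yields a measurable $\varphi\colon Z\to\mod^*(S)$ inducing these maps; the cocycle identity $\eta(\gamma,x)=\varphi(\gamma x)\gamma\varphi(x)^{-1}$ is verified as in Lemma \ref{lem-cocycle}, and the $(\Gamma\times\Lambda)$-equivariant map $\Phi\colon\Sigma\to\mod^*(S)$ is assembled and promoted from $\Gamma$-equivariance to $\cali(S)$- (resp.\ $\calk(S)$-)equivariance through \cite[Lemma 5.8]{Ki06} and Lemma \ref{lem-icc}, giving condition (1).

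The principal obstacle is the verification of hypotheses (a)--(c) of Lemmas \ref{lem-s-chain} and \ref{lem-bp-chain} for separating curves and bounding pairs relative to $\cali(S)$ and $\calk(S)$, together with the identification of the correct superinjective subcomplex. A separating curve or BP cuts $S$ into subsurfaces on which $\Gamma_v$ acts through the Torelli (resp.\ Johnson) group of each piece, and the chain arguments require that on the appropriate components this image be non-elementary (IN) rather than trivial or virtually cyclic; this is exactly where the sharp genus and boundary bounds of Theorem \ref{thm-taut-t-j} are forced, since small subsurfaces carry only trivial or cyclic Torelli actions. The triviality of the relevant $D_\sigma\cap\Gamma$ needed in condition (c) is more delicate for $\calk(S)$, where bounding-pair maps are unavailable and only separating twists generate, and I expect the genus-one and low-genus boundary cases to be the ones demanding the refined complexes and rigidity theorems of \cite{BM08, Ki09c, KY10c}.
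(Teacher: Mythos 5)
Your proposal follows the paper's proof essentially verbatim: condition (2) of tautness via Lemma \ref{lem-icc}, condition (1) by running the self-coupling/groupoid argument of Theorem \ref{thm-self} with the chain Lemmas \ref{lem-s-chain} and \ref{lem-bp-chain}, Theorem \ref{thm-t-j-twist} in place of Lemma \ref{lem-twist}, and the rigidity of $\calt(S)$ and $\calc_s(S)$ in place of Theorem \ref{thm-si}. The one place your expectations diverge from the paper is that no proper subcomplex is needed here: the recognition lemma (Lemma \ref{lem-i-stab}) holds for \emph{every} $v\in V_s(S)\cup V_{bp}(S)$, the key geometric input being that a handle cut off by a separating curve carries a trivial Torelli action (Lemma \ref{lem-handle}), so every component of $S_v$ is a pair of pants, a handle, or has non-elementary image, and the full automorphism rigidity Theorems \ref{thm-t-aut} and \ref{thm-cs-aut} suffice.
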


The proof of this theorem follows the same outline as the proof of Theorem \ref{thm-self} above and therefore many details will be omitted. We will only state several keys lemmas in Subsections \ref{subsec-t} and \ref{subsec-j} which, in the same spirit as in the previous sections, provide an algebraic description of various geometric subgroupoids.

\begin{Remark}\label{identification}
We briefly discuss the cases not covered by Theorem \ref{thm-taut-t-j}.
Suppose $g=0$.
We have $V_c(S)=V_s(S)=V(S)$ and $P(S)=\calk(S)=\cali(S)=\pmod(S)$.
If $k\geq 5$, then  by \cite[Corollary 5.9]{Ki06} $\pmod(S)$ is taut with respect to $\mod^*(S)$.
If $k=4$, then $\pmod(S)$ is commensurable with a free group of rank $2$.
If $k\leq 3$, then $\pmod(S)$ is trivial.

Suppose $g=1$.
We have $V_c(S)=V_s(S)$, $V_p(S)=V_{bp}(S)$ and $P(S)=\cali(S)$.
If $k\leq 1$, then both $\cali(S)$ and $\calk(S)$ are trivial.
If $k=2$, then $\cali(S)$ and $\calk(S)$ are isomorphic to the free groups of rank $2$ and $\infty$, respectively, by Birman's exact sequence.

Suppose $g=2$.
If $k=0$, then $V_{bp}(S)$ is empty, we have $\calk(S)=\cali(S)$, and the group $\cali(S)$ is isomorphic to the free group of rank $\infty$ (\cite{Me92, BBM10}).
If $k=1$, then $\calk(S)$ is not a free group because there exist two separating curves in $S$ which are disjoint and non-isotopic.
In this case we do not know whether $\calk(S)$ is taut relative to $\mod^*(S)$. The main difficulty here stems from the fact that the complex of separating curves, $\calc_s(S)$, defined in Subsection \ref{subsec-cpx-t-j}, has simplicial automorphisms which are not induced by an element of $\mod^*(S)$ (see \cite[Remark 1.3]{Ki09c}).
\end{Remark}

%%%%%%%%%%%%%%%%%%%%%%%%%%%%%%%%%%%%%%%%%%%%%%%%

\subsection{Complexes for the Torelli group and the Johnson kernel}\label{subsec-cpx-t-j}

We consider the following versions of curves complexes:

\medskip

\noindent \textbf{Complexes $\calt(S)$ and $\calc_s(S)$.}
We define $\calt(S)$ as the abstract simplicial complex so that the set of vertices is the disjoint union $V_s(S)\cup V_{bp}(S)$, and a non-empty finite subset $\sigma$ of $V_s(S)\cup V_{bp}(S)$ is a simplex of $\calt(S)$ if and only if $I(u, v)=0$ for any $u, v\in \sigma$.
The complex $\calt(S)$ is called the \textit{Torelli complex} of $S$.

We define $\calc_s(S)$ as the full subcomplex of $\calc(S)$ spanned by $V_s(S)$.
This is identified with the full subcomplex of $\calt(S)$ spanned by $V_s(S)$.
The complex $\calc_s(S)$ is called the \textit{complex of separating curves} of $S$.

\medskip

The Torelli complex was examined by Farb-Ivanov \cite{FI05}, McCarthy-Vautaw \cite{MV03} and Brendle-Margalit \cite{BM04, BM08} to compute virtual automorphisms of the Torelli group.
They have dealt mostly with closed surfaces, and subsequently Yamagata and the second author have generalized their results to the following (\cite[Theorem 1.1]{Ki09c}, \cite[Theorem 1.2]{KY10c}):

\begin{theorem}\label{thm-t-aut}
Let $S=S_{g, k}$ be a surface and suppose either $g=1$ and $k\geq 3$; $g=2$ and $k\geq 1$; or $g\geq 3$ and $k\geq 0$.
Then, any simplicial automorphism $\phi$ of $\calt(S)$ is induced by an element of $\mod^*(S)$, that is, there exists an $h\in \mod^*(S)$ such that $\phi(v)=h v$ for any vertex $v$ of $\calt(S)$.
Moreover, such an $h$ is unique.
\end{theorem}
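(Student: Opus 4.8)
The statement is an instance of the \emph{Ivanov program}: for a simplicial complex built from isotopy classes of curves, one identifies its automorphism group with the extended mapping class group. Following the treatment of closed surfaces in \cite{FI05, BM04, BM08} and its generalizations \cite{Ki09c, KY10c}, the plan is to recover $h$ from $\phi$ in two stages -- first recognizing the relevant topological data purely combinatorially, then transferring the problem to the curve complex $\calc(S)$, whose automorphisms are understood after Ivanov \cite{Iv97}.

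First I would seek adjacency-theoretic characterizations of the topological types of the vertices of $\calt(S)$, so as to force $\phi$ to preserve them. A vertex is either a separating curve in $V_s(S)$ or a bounding pair in $V_{bp}(S)$, and its finer type -- the genera and numbers of boundary components of the pieces it cuts off -- should be detectable from invariants of its link in $\calt(S)$, such as the dimension and connectivity of the link and the presence or absence of prescribed sub-configurations. Establishing such characterizations yields $\phi(V_s(S))=V_s(S)$ and $\phi(V_{bp}(S))=V_{bp}(S)$, with each topological type respected.

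The crucial step is to upgrade $\phi$ to a simplicial automorphism of $\calc(S)$. Each separating curve is already a vertex of $\calc(S)$, while a bounding pair determines an unordered pair of non-separating curves; using the preserved disjointness relation one detects combinatorially when two vertices share an underlying curve or cobound a subsurface, and thereby reconstructs the intersection-zero pattern among all underlying curves. This produces a bijection of $V(S)$ preserving disjointness, i.e.\ a simplicial automorphism of $\calc(S)$, which by Ivanov's theorem and its extensions to bounded and low-complexity surfaces -- valid exactly under the stated genus and boundary hypotheses -- is induced by a unique $h\in \mod^*(S)$. One then checks that this same $h$ induces $\phi$ on all of $\calt(S)$, since the action on $V_s(S)$ and $V_{bp}(S)$ is determined by the action on $V(S)$.

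Uniqueness of $h$ is the easy endpoint: under the stated hypotheses $\mod^*(S)$ acts faithfully on the vertex set of $\calt(S)$, because an element fixing every separating curve and bounding pair fixes a filling system of curves and is therefore trivial (cf.\ Lemma \ref{lem-curve-stab}). The main obstacle will be the reconstruction in the third step: the Torelli complex remembers non-separating curves only through bounding pairs, so recovering enough of the combinatorics of $\calc(S)$ is delicate and genuinely requires the bounding-pair vertices. Indeed, as noted in Remark \ref{identification}, the subcomplex $\calc_s(S)$ of separating curves alone already admits non-geometric automorphisms for $S_{2,1}$; it is precisely the interplay with $V_{bp}(S)$ that restores rigidity, and the low-genus cases allowed by the hypotheses are where this interplay must be handled most carefully.
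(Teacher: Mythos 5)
The paper does not prove Theorem \ref{thm-t-aut} at all: it is quoted as a black box from \cite{Ki09c} and \cite{KY10c}, which in turn build on \cite{FI05, MV03, BM04, BM08}. So there is no in-paper argument to compare yours against; the only thing the paper adds around the statement is the observation (Remark \ref{identification}) that the analogous rigidity fails for $\calc_s(S_{2,1})$, which is why $\calk(S_{2,1})$ is excluded elsewhere.

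As a proof, your proposal is a program rather than an argument. The three steps you list -- (1) characterize the topological type of each vertex of $\calt(S)$ by adjacency-theoretic invariants of its link, (2) reconstruct from the preserved disjointness relation on $V_s(S)\cup V_{bp}(S)$ enough of the combinatorics of $\calc(S)$ to produce an automorphism of the curve complex, and (3) invoke Ivanov-type rigidity for $\calc(S)$ -- are indeed the skeleton of the proofs in the cited papers, but each step is asserted, not carried out, and steps (1) and (2) are precisely where the length and difficulty of \cite{Ki09c, KY10c} lie (in particular, recovering individual non-separating curves from bounding-pair vertices, and doing so in the boundary cases $g=1,k=3$ and $g=2,k=1$ permitted by the hypotheses). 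Two further points need repair even at the level of the outline: the exceptional surfaces for curve-complex rigidity ($S_{1,2}$, $S_{2,0}$, and the extra automorphism of $\calc(S_{0,4})$-type phenomena) must be seen to be excluded by the stated hypotheses before step (3) can be invoked; and your uniqueness argument is too quick -- Lemma \ref{lem-curve-stab} only gives that the stabilizer of a filling pair in $\mod^*(S)$ is \emph{finite}, not trivial, so faithfulness of the $\mod^*(S)$-action on the vertex set of $\calt(S)$ (ruling out hyperelliptic-type elements acting trivially) still has to be established separately, as is done in the cited references.
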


The complex of separating curves was examined by Farb-Ivanov \cite{FI05} and Brendle-Margalit \cite{BM04, BM08} to compute virtual automorphisms of the Johnson kernel.
They have dealt mostly with closed surfaces, and subsequently the second author generalized their results to the following (\cite[Theorem 1.2]{Ki09c}):

\begin{theorem}\label{thm-cs-aut}
Let $S=S_{g, k}$ be a surface and suppose either $g=1$ and $k\geq 3$; $g=2$ and $k\geq 2$; or $g\geq 3$ and $k\geq 0$.
Then, any simplicial automorphism $\phi$ of $\calc_s(S)$ is induced by an element of $\mod^*(S)$, that is, there exists an $h \in \mod^*(S)$ such that $\phi(v)=h v$ for any vertex $v$ of $\calc_s(S)$.
Moreover, such an $h$ is unique.
\end{theorem}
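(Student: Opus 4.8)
The plan is to prove Theorem \ref{thm-cs-aut} by the Ivanov-type strategy pioneered by Farb--Ivanov \cite{FI05} and Brendle--Margalit \cite{BM04, BM08} for closed surfaces, adapted to surfaces with boundary. There are three pillars. First, I would show that $\phi$ preserves the topological type of each separating curve. Second, I would recover the \emph{non}-separating curves of $S$ combinatorially from $\calc_s(S)$, so as to extend $\phi$ to a simplicial automorphism $\hat\phi$ of the full complex of curves $\calc(S)$. Third, I would feed $\hat\phi$ into Ivanov's theorem on $\calc(S)$ to produce the element $h\in\mod^*(S)$, and deduce uniqueness from faithfulness. The hard part will be the second pillar.

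For the first pillar, the key observation is that the link of a vertex records the decomposition it induces. If $\alpha\in V_s(S)$ cuts $S$ into complementary subsurfaces $R_1$ and $R_2$, then a separating curve of $S$ disjoint from $\alpha$ lies in one of the $R_i$ and is separating in $S$ if and only if it is separating in the side containing it; hence $\mathrm{Lk}(\alpha)$ is naturally the simplicial join $\calc_s(R_1)\ast\calc_s(R_2)$. Since joins, and the combinatorial types of their factors (dimension, connectivity, further join decompositions), are invariants of the abstract complex, I would characterize the unordered pair $\{R_1,R_2\}$ of homeomorphism types purely in terms of $\mathrm{Lk}(\alpha)$, arguing by induction on the complexity $3g+k$. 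This shows $\phi$ preserves topological type; in particular it permutes the curves cutting off a one-holed torus $S_{1,1}$, whose side contributes an empty join factor since $S_{1,1}$ carries no separating curve.

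The second pillar is the crux and the main obstacle. A non-separating curve $\gamma$ is not a vertex of $\calc_s(S)$, so it must be encoded by separating data. Following the \emph{sharing pair} device of Brendle--Margalit, I would attach to $\gamma$ pairs $\{a,b\}$ of handle-bounding separating curves whose genus-one sides overlap in the handle containing $\gamma$, arranged so that $\gamma$ is the unique non-separating curve common to both handles. The technical heart is to characterize such sharing pairs, and the relation ``$\{a,b\}$ and $\{a',b'\}$ determine the same non-separating curve'', entirely in terms of disjointness together with the topological-type information already secured; granting this, $\phi$ carries sharing pairs to sharing pairs and induces a bijection on non-separating curves. One then checks that the resulting map $\hat\phi$ on all of $V(S)$ sends disjoint curves to disjoint curves and intersecting curves to intersecting curves. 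I expect this reconstruction to require an explicit, case-sensitive analysis of subsurface configurations, and it is exactly here that the hypotheses on $(g,k)$ are forced: the excluded surface $S_{2,1}$ in the Remark admits non-geometric automorphisms precisely because too few separating curves are available, and the genus-one case demands separate treatment since every separating curve there is hole-bounding.

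Finally, with a simplicial automorphism $\hat\phi$ of $\calc(S)$ extending $\phi$ in hand, I would apply Ivanov's theorem \cite{Iv97} (together with its low-complexity refinements) that every automorphism of $\calc(S)$ is induced by a unique $h\in\mod^*(S)$; then $h$ induces $\phi$ on $V_s(S)$ by construction. For uniqueness, if $h$ fixes every separating curve, then the $\mod^*$-equivariant reconstruction of the non-separating curves forces $h$ to fix every curve, so $h$ acts trivially on $\calc(S)$; since this action is faithful under our hypotheses on $(g,k)$, we get $h=e$. The excluded sporadic surfaces, such as the closed genus-two surface where the hyperelliptic involution acts trivially on every curve, are exactly those for which this faithfulness fails.
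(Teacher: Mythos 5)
First, a point of reference: the paper contains no proof of Theorem \ref{thm-cs-aut}. It is imported verbatim from \cite[Theorem 1.2]{Ki09c}, which in turn builds on the closed-surface results of \cite{FI05, BM04, BM08}, so there is no in-paper argument to compare yours against. Measured against the strategy of those references, your outline identifies it correctly: preservation of topological type via link/join decompositions, reconstruction of nonseparating curves via Brendle--Margalit sharing pairs, extension of the automorphism to the full complex of curves, an appeal to Ivanov's theorem \cite{Iv97} and its low-complexity refinements, and uniqueness from faithfulness of the action.

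That said, as a proof the proposal has a genuine gap, and it sits exactly where you flag it. Both load-bearing steps are asserted rather than carried out: in pillar one you say you ``would characterize'' the unordered pair of complementary homeomorphism types from the abstract link, and in pillar two the characterization of sharing pairs, and of the relation ``two sharing pairs determine the same nonseparating curve,'' is introduced with ``granting this.'' These characterizations are not routine bookkeeping; they are the entire content of \cite{BM04, BM08, Ki09c}, they occupy most of those papers, and they are precisely where the hypotheses on $(g,k)$ enter. The surface $S_{2,1}$ --- excluded from the theorem and discussed in Remark \ref{identification} exactly because $\calc_s(S_{2,1})$ admits simplicial automorphisms not induced by $\mod^*(S_{2,1})$ --- shows that the deferred steps can genuinely fail when the supply of separating curves is too thin. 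A completed argument must therefore make visible why the reconstruction succeeds for $S_{2,2}$ and for $S_{1,k}$ with $k\geq 3$ (where, as you note, every separating curve bounds a holed sphere on one side, so the standard genus-one sharing-pair device must be reworked) yet breaks for $S_{2,1}$. Until those combinatorial characterizations are actually established, what you have is a correct road map for the proof given in \cite{Ki09c}, not a proof.
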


Assertions (i) and (ii) of the next theorem were obtained in \cite{Vau02} and \cite{BBM10}, respectively, in the context of for closed surfaces. Relying on those assertions, the second author obtained the following generalization:

\begin{theorem}\label{thm-t-j-twist}\cite[Theorem 6.1]{Ki09c}
Let $S=S_{g, k}$ be a surface satisfying $3g+k-4>0$ and fix $\sigma \in \Sigma(S)$.
Then, the following assertions hold:
\begin{enumerate}
\item[(i)] The group $D_\sigma \cap \cali(S)$ is generated by all $T_\alpha$ and $T_b$, where $\alpha \in \sigma \cap V_s(S)$, $b\in V_{bp}(S)$ and $b\subset \sigma$.
\item[(ii)] The group $D_\sigma \cap \calk(S)$ is generated by all $T_\alpha$, where $\alpha \in \sigma \cap V_s(S)$.
\end{enumerate}
\end{theorem}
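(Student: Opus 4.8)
The plan is to prove the two nontrivial inclusions; the easy directions are immediate, since each listed generator visibly lies in the stated intersection. For (i), a twist $t_\alpha$ about a separating curve $\alpha$ lies in $\cali(S)$ because $[\alpha]=0$ in $H_1(S;\mathbb{Z})$, and a bounding-pair map $t_\beta t_\gamma^{-1}$ with $\{\beta,\gamma\}$ a BP lies in $\cali(S)$ because $[\beta]=[\gamma]$; both lie in $D_\sigma$ when the curves belong to $\sigma$. Likewise the separating twists lie in $D_\sigma\cap\calk(S)$. Since the curves of $\sigma$ are pairwise disjoint, $D_\sigma$ is free abelian on $\{t_\alpha:\alpha\in\sigma\}$, so every element is a product $g=\prod_{\alpha\in\sigma}t_\alpha^{n_\alpha}$, and the whole problem is to decide, in terms of the exponents $n_\alpha$, when $g$ lies in $\cali(S)$ (respectively $\calk(S)$).

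First I would record the homological mechanism that singles out the expected generators. Each $t_\alpha$ acts on $H_1(S;\mathbb{Z})$ by the transvection $x\mapsto x+\langle x,[\alpha]\rangle[\alpha]$, and since disjoint curves satisfy $\langle[\alpha],[\beta]\rangle=0$ these transvections commute, so $g$ acts by $x\mapsto x+\sum_\alpha n_\alpha\langle x,[\alpha]\rangle[\alpha]$. Hence $g\in\cali(S)$ if and only if the endomorphism $\Phi=\sum_\alpha n_\alpha\langle\,\cdot\,,[\alpha]\rangle[\alpha]$ of $H_1(S;\mathbb{Z})$ vanishes. The separating curves of $\sigma$ contribute $0$ to $\Phi$ and split off freely. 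Grouping the non-separating curves by homology class---and recalling that two disjoint, non-isotopic, non-separating curves are homologous precisely when they form a BP---one sees that a product over a fixed class with zero total exponent is exactly a product of bounding-pair maps $T_b$ with $b\subset\sigma$. Thus (i) reduces to the claim that $\Phi=0$ forces the exponent sum over each homology class to vanish. For (ii) I would argue inside $\cali(S)$: as $\calk(S)$ is the kernel of the Johnson homomorphism and the separating twists already lie in $\calk(S)$, it remains to see that no nontrivial product of the $T_b$ ($b\subset\sigma$) lands in $\calk(S)$, i.e.\ that these bounding-pair maps have independent image under the Johnson homomorphism.

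The plan for the decisive step is to reduce both independence statements to the closed-surface case, which is granted: the exponent-detection statement for (i) is the theorem of Vautaw \cite{Vau02}, and the independence of bounding-pair maps under the Johnson homomorphism for (ii) is the content of Brendle--Margalit \cite{BBM10}. To transfer these to $S=S_{g,k}$ with $k\ge1$, I would use the capping homomorphism $\iota\colon\pmod(S)\to\mod(\bar S)$ introduced in Section \ref{sec-sbg}, induced by the inclusion $S\hookrightarrow\bar S$; being induced by an inclusion, it carries $\cali(S)$ into $\cali(\bar S)$ and $\calk(S)$ into $\calk(\bar S)$, sends each $t_\alpha$ to the twist about $\iota(\alpha)$, and is compatible with the homology and Johnson-homomorphism computations. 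Applying the closed-surface assertions to the image curves $\iota(\alpha)$ and pulling the resulting relations back along $\iota$ would yield the vanishing of the exponent sums in (i) and the triviality of the bounding-pair contribution in (ii); when the genus is too small for capping all boundary components to land within the range of the cited results, I would instead cap successively, retaining a marked point, so as to stay in that range.

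The hard part will be controlling this reduction, because $\iota$ has nontrivial kernel $P(S)$ (the disk-pushing subgroup), and under capping the curves of $\sigma$ may become inessential, become isotopic to one another, or change their separating type. One must verify that the listed generators map to the corresponding generators on $\bar S$ and that $P(S)\cap\cali(S)$ neither absorbs nor spuriously creates elements of $D_\sigma\cap\cali(S)$. Equivalently, the genuine content is the homological independence built into the configuration of $\sigma$ on $S$ itself: that distinct homology classes of a disjoint, pairwise non-homologous non-separating family force the exponent sums to vanish. This is geometric input rather than pure linear algebra---squares of pairwise non-proportional linear forms can be dependent, as $(u+v)^2+(u-v)^2=2u^2+2v^2$ shows for the four forms $u,v,u+v,u-v$---so it is exactly here that the closed-surface theorems, together with careful bookkeeping of how $\sigma$ sits relative to $\bar S$, must be brought to bear.
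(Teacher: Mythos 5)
First, a point of orientation: the paper does not prove this statement at all. It is quoted from \cite[Theorem 6.1]{Ki09c}, and the surrounding text only records that assertions (i) and (ii) were obtained for closed surfaces in \cite{Vau02} and \cite{BBM10} respectively and then generalized by the second author. So your proposal is being compared against a citation rather than an argument; the route you sketch (reduce to the closed case by capping, invoke Vautaw's exponent criterion for (i) and the independence of bounding-pair maps under the Johnson homomorphism for (ii)) is indeed the expected shape of the proof in that reference, and you correctly flag that the capping step is where the work lies.

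However, the sketch contains a concrete false assertion at its foundation. You claim a bounding-pair map $t_\beta t_\gamma^{-1}$ lies in $\cali(S)$ ``because $[\beta]=[\gamma]$'' in $H_1(S;\mathbb{Z})$, and you build the whole reduction on the criterion $g\in\cali(S)\iff\Phi=0$ on $H_1(S;\mathbb{Z})$. When $S$ has at least two boundary components this fails: if $\{\beta,\gamma\}$ is a BP whose two complementary components each contain a component of $\partial S$, then with boundary orientations one has $[\beta]+[\gamma]=-r$ for a \emph{nonzero} sum $r$ of boundary classes, and a direct computation gives that $t_\beta t_\gamma^{-1}$ acts on $H_1(S;\mathbb{Z})$ by $x\mapsto x-\langle x,[\beta]\rangle r$, which is not the identity since $\beta$ is non-separating. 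Thus even your ``easy inclusion'' is wrong for the homology group you chose; the argument must be run on $H_1(\bar S;\mathbb{Z})$ (equivalently, modulo the radical of the intersection form), after which the kernel of the homology action is strictly larger than $\cali(S)$ (it contains all of $P(S)$), curves of $\sigma$ can become isotopic or inessential, and disjoint curves non-homologous in $S$ can become homologous in $\bar S$. A second slippage: in this paper $\cali(S)$ and $\calk(S)$ are \emph{defined} as the subgroups generated by separating twists and BP maps (resp.\ separating twists), not as kernels of the homology action or of a Johnson homomorphism; for your containments only the harmless direction is needed (the generators act trivially on the appropriate target), but the ``if and only if'' statements should not be asserted, and for $\calk(S)$ the identification with $\ker$ of the Johnson homomorphism is a deep theorem valid only for closed surfaces of genus at least $3$, well outside the range $3g+k-4>0$ considered here. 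Finally, the decisive step --- that after capping, the exponent sums over each $\bar S$-homology class, and the bounding-pair contributions to the Johnson homomorphism, must vanish and that the resulting relations pull back exactly to the generators supported in $\sigma$ --- is deferred rather than carried out. As it stands the proposal is a plan pointing at the correct references, but with one outright incorrect computation that must be repaired before the reduction can even begin.
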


The following lemma will be used in Subsection \ref{subsec-t}:

\begin{lem}\label{lem-handle}
Let $S=S_{g, k}$ be a surface with $3g+k-4>0$.
Let $\alpha \in V(S)$ be a separating curve in $S$ cutting off a handle $Q$ from $S$.
Then $\theta_Q(\cali(S)_\alpha)$ is trivial.
\end{lem}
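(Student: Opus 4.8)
The plan is to reduce the statement to the fact that the mapping class group of a handle is faithfully detected by its action on first homology, and then to observe that Torelli elements act trivially on that homology. Since $Q$ is homeomorphic to $S_{1,1}$, its boundary curve is inessential in $Q$, so under the present convention (isotopies may move boundary points) the boundary Dehn twist is isotopic to the identity and $\pmod(Q)=\mod(Q)$. The latter is isomorphic to $SL_2(\mathbb{Z})$ via the symplectic representation $\mod(Q)\to \mathrm{Aut}(H_1(Q;\mathbb{Z}))$, which is an isomorphism and in particular faithful (see \cite{FM11}). Thus it suffices to prove that every $h\in \cali(S)_\alpha$ acts trivially on $H_1(Q;\mathbb{Z})$ through $\theta_Q$.

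First I would check that $\theta_Q$ is defined on $\cali(S)_\alpha$. Every $h\in \cali(S)$ acts trivially on $H_1(S;\mathbb{Z})$, hence also on $H_1(S;\mathbb{Z}/3\mathbb{Z})$, so $\cali(S)\subset \mod(S;3)$. Applying Theorem \ref{thm-pure-t} with $\sigma =\{\alpha\}$ to such an $h$ fixing $\alpha$, we obtain that $h$ preserves $Q$ and each component of $\partial Q$; in particular $\cali(S)_\alpha<\mod(S;3)_Q$ and $\theta_Q(h)\in \pmod(Q)$.

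Next I would set up the homological comparison. Because $\alpha$ is separating and null-homologous in each of the two components $Q$ and $R$ of $S_\alpha$, a Mayer--Vietoris argument yields a decomposition $H_1(S;\mathbb{Z})\cong H_1(Q;\mathbb{Z})\oplus H_1(R;\mathbb{Z})$ in which the inclusion $Q\hookrightarrow S$ realizes $H_1(Q;\mathbb{Z})$ as a direct summand. Choosing a representing homeomorphism $\phi$ of $h$ with $\phi(Q)=Q$, the induced map $\phi_*$ preserves this summand and restricts on it to $(\phi|_Q)_*=\theta_Q(h)_*$. Since $h$ lies in the Torelli group, $\phi_*=\mathrm{id}$ on $H_1(S;\mathbb{Z})$, so $\theta_Q(h)$ acts trivially on $H_1(Q;\mathbb{Z})$; faithfulness then forces $\theta_Q(h)=1$, which is the claim.

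The computation is otherwise routine, and the only delicate point --- where I expect the (minor) obstacle to lie --- is the equivariance assertion that the action of $\theta_Q(h)$ on $H_1(Q)$ coincides with the restriction of $h_*$ to the summand $H_1(Q)\subset H_1(S)$. This requires choosing the representative $\phi$ to preserve $Q$ (legitimate by Theorem \ref{thm-pure-t}) and checking that the splitting is independent of the representative, both of which follow from $\alpha$ being separating and null-homologous in $Q$.
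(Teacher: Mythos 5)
Your overall strategy coincides with the paper's (restrict the homological action to the handle $Q$ and use that $\mod(Q)\cong SL_2(\mathbb{Z})$ acts faithfully on $H_1(Q;\mathbb{Z})$), but the step ``every $h\in\cali(S)$ acts trivially on $H_1(S;\mathbb{Z})$'' is false when $\partial S\neq\emptyset$, and this is a genuine gap. With the paper's definition, $\cali(S)$ is generated by twists about separating curves and by BP maps $t_\beta t_\gamma^{-1}$. A BP map acts on $H_1(S;\mathbb{Z})$ by $x\mapsto x+\langle x,[\beta]\rangle\,([\beta]-[\gamma])$, and $[\beta]-[\gamma]$ equals, up to sign, the sum of the classes of the components of $\partial S$ lying on one side of $\beta\cup\gamma$; this is a nonzero element of $H_1(S;\mathbb{Z})$ whenever that side contains some but not all components of $\partial S$ (for instance a BP in $S_{2,2}$ separating the two boundary components). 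Such an element of $\cali(S)$ therefore does not act trivially on $H_1(S;\mathbb{Z})$, and for the same reason your auxiliary claim $\cali(S)\subset\mod(S;3)$ also fails in general --- which is why the paper works with $\Gamma=\cali(S)\cap\mod(S;3)$ elsewhere.

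The repair is exactly the move the paper makes: replace $S$ by the closed surface $\bar{S}$ obtained by capping off the boundary. Every generator of $\cali(S)$ does act trivially on $H_1(\bar{S};\mathbb{Z})$ (a curve separating $S$ remains separating or becomes inessential in $\bar{S}$, and $[\beta]=[\gamma]$ in $H_1(\bar{S};\mathbb{Z})$ for a BP), and since $\alpha=\partial Q$ is still separating in $\bar{S}$, the inclusion $Q\hookrightarrow\bar{S}$ embeds $H_1(Q;\mathbb{Z})$ as a subgroup (indeed a direct summand) of $H_1(\bar{S};\mathbb{Z})$. Restricting to this subgroup gives that $\theta_Q(h)$ acts trivially on $H_1(Q;\mathbb{Z})$, and faithfulness finishes the proof as in your last step. (Alternatively, one can salvage the computation in $H_1(S;\mathbb{Z})$ by observing that the deviation of $h_*$ from the identity always takes values in the span of the boundary classes of $S$, which meets the summand $H_1(Q;\mathbb{Z})$ trivially; but this requires an argument your write-up does not supply.)
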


\begin{proof}
Let $\bar{S}$ be the closed surface obtained by attaching a disk to each component of $\partial S$.
The group $\cali(S)$ acts on $H_1(\bar{S}, \mathbb{Z})$ trivially.
The group $\theta_Q(\cali(S)_\alpha)$ acts on $H_1(Q, \mathbb{Z})$ trivially because $H_1(Q, \mathbb{Z})$ is a subgroup of $H_1(\bar{S}, \mathbb{Z})$ through the inclusion of $Q$ into $\bar{S}$. Then the lemma follows because $\mod(Q)$ acts on $H_1(Q, \mathbb{Z})$ faithfully.
\end{proof}

%%%%%%%%%%%%%%%%%%%%%%%%%%%%%%%%%%%%%%%%

\subsection{Geometric subgroupoids for the Torelli group}\label{subsec-t}

Throughout this subsection, we fix the following notation:
Let $S=S_{g, k}$ be a surface with either $g=1$ and $k\geq 3$; $g=2$ and $k\geq 1$; or $g\geq 3$ and $k\geq 0$.
Denote by $\Gamma =\cali(S)\cap \mod(S; 3)$ and let $\Gamma \ca (X, \mu)$ be a p.m.p.\ action.
Define $\calg =\Gamma \ltimes X$ and denote by $\rho \colon \calg \to \Gamma$ the canonical projection.
For any set $V$ on which $\Gamma$ acts and for any element $v \in V$, we denote by $\Gamma_v$ the stabilizer of $v$ in $\Gamma$ and we let $\calg_v =\Gamma_v \ltimes X$.
For every $v \in V_s(S)\cup V_{bp}(S)$ we set $\calt_v =(T_v\cap \Gamma)\ltimes X$.

\begin{lem}\label{lem-2-cases-t}
Let $Y$ be a non-negligible subset of $X$, $\calm$ a nowhere amenable subgroupoid of $\calg|_Y$, and $\caln$ an amenable and nowhere finite subgroupoid with $\caln \vartriangleleft \calm$.
Suppose the following condition holds:
\begin{enumerate}
\item[($\ast$)] If $\calm_1$ and $\caln_1$ are subgroupoids of $\calg|_Y$ such that $\caln_1$ is amenable and nowhere finite and we have $\calm <\calm_1$ and $\caln_1\vartriangleleft \calm_1$, then $\calm =\calm_1$.
\end{enumerate} 
Then, there exists a partition $Y=\bigsqcup_nY_n$ into countably many measurable subsets such that for any $n$, one of the following cases occurs:
\begin{enumerate}
\item[(1)] There exists a $v \in V_s(S)\cup V_{bp}(S)$ with $\calm|_{Y_n}=\calg_v|_{Y_n}$.
\item[(2)] All of the CRS's and the T, IA and IN systems of $\calm|_{Y_n}$ and $\caln|_{Y_n}$ are constant, and letting $\sigma \in \Sigma(S)$ to be the value of the CRS of $\calm|_{Y_n}$, we have the following:
No curve in $\sigma$ is separating in $S$, no pair of two curves in $\sigma$ is a BP in $S$, and there exists an IA component of $\caln|_{Y_n}$.
\end{enumerate}
\end{lem}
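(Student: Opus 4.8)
The plan is to run, essentially verbatim, the argument used to prove Lemma \ref{lem-2-cases} for $P(S)$, substituting the Torelli counterparts of each surface-braid ingredient: separating curves and bounding pairs play the role of HBCs and HBPs, and the twist-generation statement Theorem \ref{thm-t-j-twist}(i) replaces Lemma \ref{lem-twist}. First I would apply Lemma \ref{lem-nor-red}(i) to the pair $\caln \vartriangleleft \calm$ to conclude that $\calm$ is reducible; since $\caln <\calm$, any $(\calm, \rho)$-invariant map into $\Sigma(S)$ is also $(\caln, \rho)$-invariant, so $\caln$ is reducible as well. Restricting $\calm$ to a non-negligible subset of $Y$, I may assume that all of the CRS's and the T, IA and IN systems of $\calm$ and $\caln$ are constant; let $\sigma \in \Sigma(S)$ denote the value of the CRS of $\calm$. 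The whole proof then splits according to whether $\sigma$ carries a separating curve or a bounding pair.

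For the two subcases giving case (1): suppose $\sigma$ contains a separating curve $\alpha \in V_s(S)$. Since $t_\alpha$ is central in $\Gamma_\alpha$ (for $h\in \Gamma_\alpha \subset \mod(S)$ one has $ht_\alpha h^{-1}=t_{h\alpha}=t_\alpha$), the group $T_\alpha \cap \Gamma$ is a finite-index, hence infinite cyclic, normal subgroup of $\Gamma_\alpha$; thus $\calt_\alpha$ is amenable and nowhere finite with $\calt_\alpha \vartriangleleft \calg_\alpha$ by Lemma \ref{lem-normal}(i). Because $\alpha$ lies in the CRS of $\calm$, the pair $(\alpha, Y)$ is $(\calm, \rho)$-invariant, so $\calm <\calg_\alpha|_Y$. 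Applying condition ($\ast$) with $\calm_1=\calg_\alpha|_Y$ and $\caln_1=\calt_\alpha|_Y$ forces $\calm =\calg_\alpha|_Y$, which is case (1) with $v=\alpha$. If instead $\sigma$ contains two curves forming a BP $b\in V_{bp}(S)$, the same argument with $\calt_b \vartriangleleft \calg_b$ (now $T_b$ is normal in $\Gamma_b$ up to inversion) yields $\calm =\calg_b|_Y$, again case (1).

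For case (2): assume no curve of $\sigma$ is separating and no pair of curves of $\sigma$ is a BP. Let $\tau \in \Sigma(S)$ be the CRS of $\caln$; since $\caln \vartriangleleft \calm$, Lemma \ref{lem-nor-red}(iii) gives $\tau \subset \sigma$, so $\tau$ also contains no separating curve and no BP. By Theorem \ref{thm-t-j-twist}(i), the generators of $D_\tau \cap \cali(S)$ are the $T_\alpha$ and $T_b$ with $\alpha \in \tau \cap V_s(S)$ and $b\subset \tau$ a BP; there are none, so $D_\tau \cap \cali(S)$ is trivial. Using $\ker \theta_\tau =D_\tau \cap \Gamma_\tau$ (from Subsection \ref{subsec-curve}), this shows $\ker \theta_\tau \cap \Gamma$ is trivial. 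If $\caln$ had no IA component, then, being amenable and reducible, it would have no IN component by Lemma \ref{lem-no-in}, so every component of $S_\tau$ would be T for $\caln$; on a suitable countable partition $\rho(\caln)$ would then lie in $\ker \theta_\tau \cap \Gamma =\{e\}$, making $\caln$ finite and contradicting nowhere finiteness. Hence $\caln$ has an IA component, giving case (2).

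The only genuinely new input over the formal framework is the triviality of $\ker \theta_\tau \cap \Gamma$ for a curve system $\tau$ carrying no separating curve and no BP, and this is exactly what Theorem \ref{thm-t-j-twist}(i) supplies; I expect this to be the step deserving the most care, together with the routine verification that $T_\alpha \cap \Gamma$ and $T_b \cap \Gamma$ remain infinite cyclic inside $\Gamma =\cali(S)\cap \mod(S; 3)$ (which holds because the generating twist and bounding-pair map have infinite order and $\mod(S; 3)$ is a finite-index, torsion-free subgroup of $\mod(S)$). Everything else transcribes directly from the proof of Lemma \ref{lem-2-cases}.
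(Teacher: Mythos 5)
Your proposal is correct and matches the paper's intended argument: the paper itself gives no independent proof of Lemma \ref{lem-2-cases-t}, stating only that it follows verbatim from the proof of Lemma \ref{lem-2-cases} with Theorem \ref{thm-t-j-twist}(i) substituted for Lemma \ref{lem-twist}, which is precisely the transcription you carry out (reducibility via Lemma \ref{lem-nor-red}, the two subcases invoking ($\ast$) with $\calt_v \vartriangleleft \calg_v$, and the triviality of $\ker\theta_\tau \cap \Gamma$ forcing an IA component of $\caln$). The extra details you supply about $T_\alpha\cap\Gamma$ and $T_b\cap\Gamma$ being infinite cyclic and normal in the respective stabilizers are accurate and consistent with the framework already set up in Subsections \ref{subsec-pre-sbg} and \ref{subsec-t}.
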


\begin{proof}
The proof will be omitted as it follows verbatim as the proof of Lemma \ref{lem-2-cases} above. We only remark that in the course of the proof, one has to use Theorem \ref{thm-t-j-twist} (i) in place of Lemma \ref{lem-twist}.
\end{proof}

\begin{lem}\label{lem-i-stab}
Fix $v\in V_s(S)\cup V_{bp}(S)$.
Let $Y$ be a non-negligible subset of $X$ and set $\calm =\calg_v |_Y$.
Then, the following assertions hold:
\begin{enumerate}
\item[(i)] The group $\Gamma_v$ is non-amenable, and $\calm$ is therefore nowhere amenable.
\item[(ii)] The groupoid $\calm$ satisfies condition ($\ast$) in Lemma \ref{lem-2-cases-t}.
\item[(iii)] Let $\call$, $\calm_1$, $\calm_2$ and $\caln_1$ be subgroupoids of $\calg|_Y$ such that $\calm_2$ is nowhere amenable, $\caln_1$ is amenable and nowhere finite, and we have $\calm < \call$, $\caln_1\vartriangleleft \calm_1$, $\calm_2\vartriangleleft \calm_1$ and $\calm_2 \vartriangleleft \call$.
Then $\calm =\call$.
\item[(iv)] If $\caln$ is an amenable and nowhere finite subgroupoid of $\calm$ with $\caln \vartriangleleft \calm$, then there exists a partition $Y=\bigsqcup_nY_n$ into countably many measurable subsets such that $\caln|_{Y_n}<\calt_v|_{Y_n}$ for any $n$.
\end{enumerate}
\end{lem}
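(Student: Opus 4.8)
The plan is to mirror the proofs of Lemmas \ref{lem-c-stab} and \ref{lem-p-stab}, treating separately the cases $v=\alpha \in V_s(S)$ and $v=b\in V_{bp}(S)$, and in each case reducing everything to the chain lemmas \ref{lem-s-chain} and \ref{lem-bp-chain} together with Lemma \ref{lem-nor-twist}. Granting the hypotheses (a)--(c) of the appropriate chain lemma for $\Gamma_v$, the four assertions fall out exactly as before: assertion (i) from the existence of an IN component of $\Gamma_v$; assertion (ii) by putting $\call =\calm_1=\calm_2$ and letting the chain lemma force $\calm =\calm_1$; assertion (iii) as the direct output of the chain lemma; and assertion (iv) from Lemma \ref{lem-nor-twist}, once we know the CRS of $\Gamma_v$ is $\{\alpha\}$ (resp.\ $b$) with every complementary component of type T or IN. Thus the whole proof reduces to the geometric verification of (a)--(c).

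First I would treat a separating curve $\alpha \in V_s(S)$, which splits $S$ into two subsurfaces $Q$ and $R$, each carrying $\alpha$ as a new boundary component. Since $\alpha$ is separating, $H_1(\bar{S};\mathbb{Z})$ is the direct sum of the homologies of the two capped-off pieces, so $\theta_Q(\Gamma_\alpha)$ and $\theta_R(\Gamma_\alpha)$ land in the respective partial Torelli groups, and each such restriction moreover contains the image of the full Torelli group of the piece extended by the identity across $\alpha$. The decisive point is that a complementary piece is of type T precisely when it is a pair of pants or a handle---for the handle this is exactly Lemma \ref{lem-handle}, and for the pair of pants the relevant stabilizer is trivial---while any larger piece carries a non-trivial, hence non-elementary, Torelli restriction and is therefore IN, so no IA component arises for $\Gamma_\alpha$. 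This dichotomy immediately yields (a) (each component restriction is trivial or irreducible, so the CRS is $\{\alpha\}$) and (b) (the complexity hypotheses on $(g,k)$ guarantee at least one side is neither a handle nor a pair of pants); condition (c), concerning a T side, reduces again to Lemma \ref{lem-handle} and the faithfulness of the action on homology, applied to every $R\in W(Q)$.

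For a bounding pair $b=\{\beta,\gamma\}\in V_{bp}(S)$, the new input to condition (a) is that $D_\beta \cap \Gamma$ and $D_\gamma \cap \Gamma$ are trivial: as $\beta$ and $\gamma$ are non-separating, every nonzero power of $t_\beta$ (resp.\ $t_\gamma$) acts as a nontrivial transvection on $H_1(\bar{S};\mathbb{Z})$ and so cannot lie in $\cali(S)$. With this in hand the component analysis proceeds as in Lemma \ref{lem-p-stab}, splitting according to the genera of the two pieces of $S_b$ (which sum to $g-1$) and to whether either piece is a handle or a pair of pants. The step I expect to be the main obstacle is the uniform verification of condition (b)---the existence of an IN component of $\Gamma_v$---across all topological configurations of $v$, and in particular in the borderline ranges $g=1,\,k=3$ and $g=2,\,k=1$, where the surface is only barely large enough. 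There one must confirm that cutting along $v$ still leaves a complementary piece supporting two independent pseudo-Anosov Torelli mapping classes, rather than degenerating to a trivial or a virtually cyclic restriction; this is precisely the point at which the exclusion of the small, unstable surfaces in the statement of Theorem \ref{thm-taut-t-j} is used.
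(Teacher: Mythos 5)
Your proposal matches the paper's proof: both reduce to verifying conditions (a)--(c) of Lemmas \ref{lem-s-chain} and \ref{lem-bp-chain} via the trichotomy that each component of $S_v$ is a pair of pants, a handle (handled by Lemma \ref{lem-handle}), or has non-elementary image of $\Gamma_v$, and then invoke Lemma \ref{lem-nor-twist} for (iv). The only cosmetic difference is that you justify the triviality of $D_\beta\cap\Gamma$ for a curve $\beta$ of a BP by the transvection action on $H_1(\bar{S},\mathbb{Z})$, whereas the paper cites Theorem \ref{thm-t-j-twist} (i); both are valid.
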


\begin{proof}
Let $Q$ be a component of $S_v$.
Either $Q$ is a pair of pants, $Q$ is a handle, or $\theta_Q(\Gamma_v)$ is non-elementary.
If $Q$ is a handle, then $v$ is a separating curve in $S$ cutting off $Q$ from $S$, and for any $R\in W(Q)$, we have $\partial Q\subset \partial R$.
It follows that for any $R\in W(Q)$, we have $\Gamma_R<\Gamma_v$.
By Lemma \ref{lem-handle}, $\theta_Q(\Gamma_v)$ is trivial, and so is $\theta_R(\Gamma_R)$.

It follows that if $v\in V_s(S)$, then conditions (a)--(c) in Lemma \ref{lem-s-chain} hold.
If $v\in V_{bp}(S)$, then conditions (a)--(c) in Lemma \ref{lem-bp-chain} hold, where we use Theorem \ref{thm-t-j-twist} (i) to check that $T_\beta \cap \Gamma$ is trivial for any curve $\beta$ of $v$.
The lemma is proved similarly to Lemma \ref{lem-c-stab}.
\end{proof}

Using Theorem \ref{thm-t-aut} and the above two lemmas, one can show Theorem \ref{thm-taut-t-j} (i) following the same line of proof as in Theorem \ref{thm-self}.
We leave the details to the reader.

%%%%%%%%%%%%%%%%%%%%%%%%%%%%%%%%%%%%%%%%%%%%%%%

\subsection{Geometric subgroupoids for the Johnson kernel}\label{subsec-j}

Throughout this subsection, we fix the following notation:
Let $S=S_{g, k}$ be a surface with either $g=1$ and $k\geq 3$; $g=2$ and $k\geq 2$; or $g\geq 3$ and $k\geq 0$.
Denote by $\Gamma =\calk(S)\cap \mod(S; 3)$ and let $\Gamma \ca (X, \mu)$ be a p.m.p.\ action.
Denote by $\calg =\Gamma \ltimes X$ and let $\rho \colon \calg \to \Gamma$ be the canonical projection.
For any set $V$ on which $\Gamma$ acts and for every element $v \in V$, we denote by $\Gamma_v$ the stabilizer of $v$ in $\Gamma$ and set $\calg_v =\Gamma_v \ltimes X$.
For every element $v \in V_s(S)$, we denote by $\calt_v =(T_v\cap \Gamma) \ltimes X$.

\begin{lem}\label{lem-2-cases-j}
Let $Y$ be a non-negligible subset of $X$, $\calm$ a nowhere amenable subgroupoid of $\calg|_Y$, and $\caln$ an amenable and nowhere finite subgroupoid with $\caln \vartriangleleft \calm$.
Suppose the following condition holds:
\begin{enumerate}
\item[($\ast$)] If $\calm_1$ and $\caln_1$ are subgroupoids of $\calg|_Y$ such that $\caln_1$ is amenable and nowhere finite and we have $\calm <\calm_1$ and $\caln_1\vartriangleleft \calm_1$, then $\calm =\calm_1$.
\end{enumerate} 
Then, there exists a partition $Y=\bigsqcup_nY_n$ into countably many measurable subsets such that for any $n$, one of the following cases occurs:
\begin{enumerate}
\item[(1)] There exists an $\alpha \in V_s(S)$ with $\calm|_{Y_n}=\calg_\alpha|_{Y_n}$.
\item[(2)] All of the CRS's and the T, IA and IN systems of $\calm|_{Y_n}$ and $\caln|_{Y_n}$ are constant, and letting $\sigma \in \Sigma(S)$ to be the value of the CRS of $\calm|_{Y_n}$, we have the following:
No curve in $\sigma$ is separating in $S$, and there exists an IA component of $\caln|_{Y_n}$.
\end{enumerate}
\end{lem}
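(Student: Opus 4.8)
The plan is to follow the proof of Lemma \ref{lem-2-cases} almost verbatim, substituting the twist-generation result Theorem \ref{thm-t-j-twist} (ii) for Lemma \ref{lem-twist} and replacing the vertex family $V_c(S)\cup V_p(S)$ by $V_s(S)$ throughout. The conceptual point is that for the Johnson kernel the only available twists are those about separating curves: the bounding-pair maps $t_\beta t_\gamma^{-1}$ lie in $\cali(S)$ but not in $\calk(S)$. This is exactly why case (1) produces only a separating curve $\alpha \in V_s(S)$ and why case (2) imposes no condition on bounding pairs, in contrast with Lemma \ref{lem-2-cases-t}.

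First I would note that $\calm$ is reducible by Lemma \ref{lem-nor-red} (i), and that $\caln$ is reducible as well, since the $(\calm, \rho)$-invariant map witnessing reducibility of $\calm$ is automatically $(\caln, \rho)$-invariant because $\caln <\calm$. After discarding a null set and passing to a countable partition of $Y$, I may assume that all of the CRS's and the T, IA and IN systems of $\calm$ and $\caln$ are constant; let $\sigma \in \Sigma(S)$ be the value of the CRS of $\calm$. The argument then branches on whether $\sigma$ meets $V_s(S)$. Suppose $\sigma$ contains a separating curve $\alpha$. Since $\Gamma <\mod(S)$, every element of $\Gamma_\alpha$ conjugates $t_\alpha$ to itself, so $T_\alpha \cap \Gamma \vartriangleleft \Gamma_\alpha$ and hence $\calt_\alpha \vartriangleleft \calg_\alpha$ by Lemma \ref{lem-normal} (i); moreover $(\alpha, Y)$ is $(\calm, \rho)$-invariant, giving $\calm <\calg_\alpha|_Y$. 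As $T_\alpha \cap \Gamma$ is infinite cyclic, $\calt_\alpha|_Y$ is amenable and nowhere finite, so applying condition $(\ast)$ with $\calm_1 =\calg_\alpha|_Y$ and $\caln_1 =\calt_\alpha|_Y$ forces $\calm =\calg_\alpha|_Y$, which is case (1). Suppose instead that no curve of $\sigma$ is separating. Let $\tau$ be the value of the CRS of $\caln$; then $\tau \subset \sigma$ by Lemma \ref{lem-nor-red} (iii), so $\tau$ contains no separating curve either. By Theorem \ref{thm-t-j-twist} (ii), the group $D_\tau \cap \calk(S)$ is generated by the twists $T_\alpha$ with $\alpha \in \tau \cap V_s(S)=\emptyset$, hence is trivial, and therefore $\ker \theta_\tau \cap \Gamma$ is trivial. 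If $\caln$ had no IA component, then by Lemma \ref{lem-no-in} it would have no IN component either, so every component of $S_\tau$ would be T for $\caln$; by Remark \ref{rem-t-comp} this would place $\rho(\caln)$ inside $\ker \theta_\tau \cap \Gamma$ along a countable partition of $Y$, making $\caln$ finite and contradicting that it is nowhere finite. Hence $\caln$ has an IA component, which is case (2).

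The step demanding the most care is the non-separating branch, where everything rests on the computation of $D_\tau \cap \calk(S)$ via Theorem \ref{thm-t-j-twist} (ii). This is the only place where the Johnson-kernel hypotheses genuinely enter, both the surface restrictions guaranteeing that theorem and the fact that $\calk(S)$ contains no bounding-pair maps, and it is precisely what lets me conclude the triviality of $\ker \theta_\tau \cap \Gamma$ and thereby manufacture the IA component of $\caln$. The remaining verifications are a routine transcription of the proof of Lemma \ref{lem-2-cases}.
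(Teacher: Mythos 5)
Your proposal is correct and follows exactly the route the paper intends: the authors omit the proof, stating only that it is verbatim the proof of Lemma \ref{lem-2-cases} with Theorem \ref{thm-t-j-twist} (ii) substituted for Lemma \ref{lem-twist}, and your write-up carries out precisely that substitution (separating curves in place of HBCs/HBPs, triviality of $D_\tau\cap\calk(S)$ when $\tau\cap V_s(S)=\emptyset$, and the resulting contradiction with $\caln$ being nowhere finite). No gaps.
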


\begin{proof}
The proof will be omitted as it follows verbatim as the proof of Lemma \ref{lem-2-cases} above. We only remark that in the course of the proof, one has to use Theorem \ref{thm-t-j-twist} (ii) in place of Lemma \ref{lem-twist}.
\end{proof}

\begin{lem}\label{lem-k-stab}
Pick $\alpha \in V_s(S)$.
Let $Y$ be a non-negligible subset of $X$ and set $\calm =\calg_\alpha |_Y$.
Then, the following assertions hold:
\begin{enumerate}
\item[(i)] The group $\Gamma_\alpha$ is non-amenable, and $\calm$ is therefore nowhere amenable.
\item[(ii)] The groupoid $\calm$ satisfies condition ($\ast$) in Lemma \ref{lem-2-cases-j}.
\item[(iii)] Let $\call$, $\calm_1$, $\calm_2$ and $\caln_1$ be subgroupoids of $\calg|_Y$ such that $\calm_2$ is nowhere amenable, $\caln_1$ is amenable and nowhere finite, and we have $\calm < \call$, $\caln_1\vartriangleleft \calm_1$, $\calm_2\vartriangleleft \calm_1$ and $\calm_2 \vartriangleleft \call$.
Then $\calm =\call$.
\item[(iv)] If $\caln$ is an amenable and nowhere finite subgroupoid of $\calm$ with $\caln \vartriangleleft \calm$, then there exists a partition $Y=\bigsqcup_nY_n$ into countably many measurable subsets such that $\caln|_{Y_n}<\calt_\alpha|_{Y_n}$ for any $n$.
\end{enumerate}
\end{lem}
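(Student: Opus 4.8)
The plan is to imitate the proof of Lemma~\ref{lem-c-stab} almost line for line, since the statement is formally identical with $\alpha\in V_s(S)$ in place of an HBC. Everything reduces to checking that the separating curve $\alpha$ and its stabilizer $\Gamma_\alpha$ satisfy conditions (a)--(c) of Lemma~\ref{lem-s-chain}. Granting these, the four assertions come essentially for free: assertion (i) records the existence of an IN component of $\Gamma_\alpha$; assertion (ii) follows by setting $\call=\calm_1=\calm_2$ in Lemma~\ref{lem-s-chain} (note $\calm_1\supseteq\calm$ is nowhere amenable because $\calm$ is), which forces $\calm=\calm_1$ and hence verifies condition~($\ast$) of Lemma~\ref{lem-2-cases-j}; assertion (iii) is literally the conclusion of Lemma~\ref{lem-s-chain}; and assertion (iv) is Lemma~\ref{lem-nor-twist} applied to $\sigma=\{\alpha\}$. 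So the entire task is the geometric verification of (a)--(c).

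First I would decompose $S_\alpha$ into its two components $Q$ and $R$ and establish the trichotomy that each component is either a pair of pants, a handle, or an IN component for $\Gamma_\alpha$. A pair of pants carries trivial $\pmod$ and is therefore T, with $\theta_{R'}(\Gamma_{R'})$ trivial for every $R'\in W(Q)$. If a component is a handle, then Lemma~\ref{lem-handle} together with the inclusion $\calk(S)\subset\cali(S)$ shows $\theta_Q(\Gamma_\alpha)$ is trivial, so the component is again T; moreover, exactly as in the proof of Lemma~\ref{lem-i-stab}, every $R'\in W(Q)$ satisfies $\Gamma_{R'}<\Gamma_\alpha$, and the same lemma gives $\theta_{R'}(\Gamma_{R'})$ trivial. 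This is precisely condition (c). Since every component is thus T or irreducible while $\Gamma_\alpha$ fixes $\alpha$, the reduction system $\{\alpha\}$ is the CRS of $\Gamma_\alpha$, which is condition (a).

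The substance is the IN clause, and with it condition (b). Here I would use that a curve in the interior of a component $Q$ is separating in $S$ if and only if it is separating in $Q$; hence, whenever $Q$ is neither a pair of pants nor a handle it contains a curve $\delta$ that is separating in $S$, so the twist $t_\delta$ lies in $\calk(S)_\alpha$ and projects to the nontrivial element $t_\delta\in\pmod(Q)$. A suitable power of $t_\delta$ lies in $\Gamma$, so $\theta_Q(\Gamma_\alpha)$ is infinite. Since $\calk(S)$ is characteristic in $\mod(S)$ one has $\Gamma\vartriangleleft\pmod(S)$, whence $\theta_Q(\Gamma_\alpha)$ is an infinite normal subgroup of a finite-index subgroup of $\pmod(Q)$; by Theorem~\ref{thm-subgr} it must be non-elementary, so $Q$ is IN. Finally, a short surface count shows the standing hypotheses forbid both components from being pairs of pants or handles simultaneously (this would force $(g,k)\in\{(0,4),(2,0),(1,2)\}$, all excluded), so at least one component is IN, giving condition (b).

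The hard part will be the non-elementary clause in the exceptional low-complexity cases. Unlike the Torelli group, the Johnson kernel is generated only by separating twists, so for a small component such as $S_{0,4}$ or $S_{1,2}$ one must confirm both that genuine separating curves of $S$ lie inside it and that the resulting normal image in $\pmod(Q)$ is non-elementary rather than merely infinite cyclic. The normality $\Gamma\vartriangleleft\pmod(S)$ and Theorem~\ref{thm-subgr} are exactly what rule out the virtually cyclic (IA) alternative, and checking that these inputs remain valid uniformly across all the boundary configurations permitted by the hypotheses is the most delicate point of the argument.
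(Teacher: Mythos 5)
Your proposal is correct and follows the paper's own route: the paper likewise reduces everything to the trichotomy that each component of $S_\alpha$ is a pair of pants, a handle, or has non-elementary image of $\Gamma_\alpha$, and then invokes Lemma \ref{lem-s-chain} exactly as in the proofs of Lemmas \ref{lem-i-stab} and \ref{lem-c-stab}. Your extra details (separating twists supported in a non-exceptional component, normality in a finite-index subgroup of $\pmod(Q)$ plus Theorem \ref{thm-subgr} to upgrade ``infinite'' to ``non-elementary'', and the exclusion of $(g,k)\in\{(0,4),(1,2),(2,0)\}$ to guarantee an IN component) are precisely the verifications the paper leaves implicit, and they check out.
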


\begin{proof}
For any component $Q$ of $S_\alpha$, either $Q$ is a pair of pants, $Q$ is a handle, or $\theta_Q(\Gamma_\alpha)$ is non-elementary.
Applying Lemma \ref{lem-s-chain} as in the proof of Lemma \ref{lem-i-stab}, we obtain the lemma.
\end{proof}

Using Theorem \ref{thm-cs-aut} and the above two lemmas, one can show Theorem \ref{thm-taut-t-j} (ii) following the same line of proof as in Theorem \ref{thm-self}.
Again, we leave the details to the reader.

%%%%%%%%%%%%%%%%%%%%%%%%%%%%%%%%%%%%%%%%%%%%%%%

%%%%%%%%%%%%%%%%%%%%%%%%%%%%%%%%%%%%%%%%%%%%%%%

\section{Applications to $ME$ and $OE$ superrigidity}\label{sec-me-oe}

In this section we summarize several applications of the tautness results proved in the Sections \ref{sec-sbg}-\ref{sec-t-j}.
Deriving measure equivalence ($ME$) and orbit equivalence ($OE$) rigidity results from the tautness property is a method which traces back to the influential work of Furman \cite{Fu99a, Fu99b, Fu01}. Since then, this strategy was successfully applied in many subsequent developments, e.g., \cite{BFS10, Ki06, Ki08, Ki09b, Ki10, MS04, Sa09}.
Recycling these methods we obtain several new $ME$ and $OE$ rigidity results for large classes of surface braid groups, Torelli groups, and Johnson kernels. For the proofs of these results we will refer the reader to the relevant previous papers. For instance, applying the method of deriving ME-rigidity from tautness shown in \cite[Theorem 3.5]{Ki09b} on the basis of \cite{Fu99a}, we obtain the following:

\begin{theorem}\label{thm-mer}
Let $S=S_{g, k}$ be a surface and denote by $G=\mod^*(S)$. Let $\Gamma$ be a group in one of the following classes:
\begin{itemize}
\item $\Gamma =P(S)$, where $g\geq 2$, $k\geq 2$. 
\item $\Gamma =\cali(S)$, where either $g=1$, $k\geq 3$; $g=2$, $k\geq 1$; or $g\geq 3$, $k\geq 0$.
\item $\Gamma =\calk(S)$, where either $g=1$, $k\geq 3$; $g=2$, $k\geq 2$; or $g\geq 3$, $k\geq 0$.
\end{itemize}
Then for every countable group $\Lambda$ that is measure equivalent to $\Gamma$ there exists a homomorphism $\phi \colon \Lambda \to G$ such that $\ker \phi$ is finite and $\phi(\Lambda)$ is commensurable with $\Gamma$.
\end{theorem}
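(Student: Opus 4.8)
The plan is to deduce the statement from the tautness of $\Gamma$ relative to $G=\mod^*(S)$, which has already been established: for $\Gamma=P(S)$ this is Theorem~\ref{thm-self}, while for $\Gamma=\cali(S)$ or $\Gamma=\calk(S)$ it is Theorem~\ref{thm-taut-t-j}. Once tautness is available, the conclusion is obtained by the general procedure of Furman \cite{Fu99a}, in the form adapted to the taut setting in \cite[Theorem~3.5]{Ki09b}; I therefore only indicate how the two conditions of Definition~\ref{defn-taut} enter that machinery.

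Let $\Lambda$ be a countable group measure equivalent to $\Gamma$ and fix a $(\Gamma,\Lambda)$-coupling $(\Sigma,m)$. First I would form the composed self-coupling $\Omega=\Sigma\times_\Lambda\check\Sigma$ of $\Gamma$, where $\check\Sigma$ is the $(\Lambda,\Gamma)$-coupling obtained from $\Sigma$ by interchanging the two factors and the fibered product is taken over the diagonal $\Lambda$-action; the residual $\Gamma\times\Gamma$-action turns $\Omega$ into a self-coupling of $\Gamma$. Condition~(1) of tautness, applied to $\Omega$, yields a $(\Gamma\times\Gamma)$-equivariant measurable map $\Omega\to G$. Disintegrating this map along the $\Lambda$-orbits in the middle factor and using the uniqueness of the equivariant map guaranteed by condition~(2) of tautness, one descends to a measurable map $\Phi\colon\Sigma\to G$ that is equivariant for the left $\Gamma$-action, that is, $\Phi(\gamma x)=\gamma\Phi(x)$ for every $\gamma\in\Gamma$.

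Next I would use that the $\Lambda$-action on $\Sigma$ commutes with the $\Gamma$-action. For each $\lambda\in\Lambda$ the map $x\mapsto\Phi(\lambda x)$ is again left $\Gamma$-equivariant, so the same uniqueness forces it to differ from $\Phi$ by right translation by a single element of $G$; this element commensurates $\Gamma$, and the assignment defines a homomorphism $\phi\colon\Lambda\to\comm_G(\Gamma)\subseteq G$ satisfying $\Phi(\lambda x)=\Phi(x)\phi(\lambda)^{-1}$. Since the fundamental domains of the coupling have finite measure and both actions are essentially free, a standard covolume comparison shows that $\ker\phi$ is finite and that $\phi(\Lambda)$ is commensurable with $\Gamma$, which is exactly the assertion.

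The genuine content of the theorem lies in the tautness of $\Gamma$, proved in the preceding sections; the argument above is formal once tautness is in hand. The one step that requires care is the descent and uniqueness in the second paragraph, where it is precisely condition~(2) of Definition~\ref{defn-taut} --- supplied by Lemma~\ref{lem-icc} --- that makes the equivariant map out of the self-coupling unique and hence compatible with the commuting $\Lambda$-action. I expect this to be the main, if by now routine, technical point, and for the complete details I would refer the reader to \cite[Theorem~3.5]{Ki09b} and \cite{Fu99a}.
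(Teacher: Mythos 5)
Your proposal is correct and follows exactly the route the paper takes: the paper's proof of this theorem consists precisely of invoking the tautness results (Theorem \ref{thm-self} and Theorem \ref{thm-taut-t-j}) together with the general method of deriving measure equivalence rigidity from tautness in \cite[Theorem 3.5]{Ki09b}, built on \cite{Fu99a}. Your sketch of the Furman machinery (composed self-coupling, equivariant map, uniqueness via condition (2) of tautness, descent to $\phi\colon\Lambda\to G$) is a faithful unpacking of that citation.
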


Also, combining our tautness results with the results in \cite[Section 2]{Fu01} we derive the following:

\begin{theorem}\label{thm-lat}
Denote by $S$, $G$ and $\Gamma$ as in Theorem \ref{thm-mer}. Let $\Gamma_1$ be a finite index subgroup of $\Gamma$, $H$ be a second countable, locally compact group, and $\iota \colon \Gamma_1 \to H$ be an injective homomorphism such that $\iota(\Gamma_1)$ a lattice of $H$.
Then, there exists a continuous homomorphism $\Phi \colon H\to G$ such that $\ker \Phi$ is compact, the image $\Phi(H)$ is commensurable with $\Gamma$, and for any $\gamma \in \Gamma_1$, the equation $\Phi(\iota(\gamma))=\gamma$ holds.
In particular, $H_1=\Phi^{-1}(\Gamma_1)$ is a finite index subgroup of $H$ that admits the following semi-direct decomposition $H_1=\iota(\Gamma_1)\ltimes \ker \Phi$.
\end{theorem}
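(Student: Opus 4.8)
The plan is to turn the lattice embedding $\iota \colon \Gamma_1 \to H$ into a self-coupling of $\Gamma_1$, feed this coupling into the tautness property to obtain an essentially unique equivariant measurable map into $G=\mod^*(S)$, and then promote that map to a genuine continuous homomorphism by exploiting the extra translation symmetry of $H$, following Furman's representation machinery \cite[Section 2]{Fu01} (this is the same route used in \cite{Ki09b} to describe lattice embeddings from tautness). The structural statements about $\ker \Phi$ and $H_1$ will then drop out formally.

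First I would record that tautness descends from $\Gamma$ to the finite index subgroup $\Gamma_1$. By Theorems \ref{thm-self} and \ref{thm-taut-t-j} the group $\Gamma$ is taut relative to $G$. Condition (2) of Definition \ref{defn-taut} for $\Gamma_1$ follows by the argument of Lemma \ref{lem-icc}: being finite index in $\Gamma$, the subgroup $\Gamma_1$ still contains a power of every pseudo-Anosov of $\Gamma$, so the fixed points of pseudo-Anosov elements of $\Gamma_1$ remain dense in $\mathcal{PMF}$, and the fixed-point argument of Lemma \ref{lem-icc} applies verbatim with $\Gamma_1$ in place of $\Gamma$. Condition (1) for $\Gamma_1$ follows by induction of couplings: any self-coupling of $\Gamma_1$ induces a self-coupling of $\Gamma$, to which tautness of $\Gamma$ applies, and the resulting $(\Gamma\times\Gamma)$-equivariant map restricts to a $(\Gamma_1\times\Gamma_1)$-equivariant one (as in \cite{BFS10,Ki09b}). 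Next I would form the coupling itself: since $\iota(\Gamma_1)$ is a lattice, $H$ is unimodular, and $(\Sigma,m)=(H,\text{Haar})$ is a self-coupling of $\Gamma_1$ under $(\gamma_1,\gamma_2)\cdot h=\iota(\gamma_1)h\iota(\gamma_2)^{-1}$, the left and right translation actions of $\iota(\Gamma_1)$ each admitting a finite-measure fundamental domain. Tautness then supplies an essentially unique measurable $\Psi \colon H\to G$ with $\Psi(\iota(\gamma_1)h\iota(\gamma_2)^{-1})=\gamma_1\Psi(h)\gamma_2^{-1}$, which after normalization satisfies $\Psi(\iota(\gamma))=\gamma$.

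The heart of the argument, and the step I expect to be the main obstacle, is upgrading $\Psi$ to a continuous homomorphism $\Phi$. The key observation is that right translation by $H$ on $\Sigma=H$ commutes with the first factor, so $c(h_0,h)=\Psi(h)^{-1}\Psi(hh_0)$ is left $\iota(\Gamma_1)$-invariant and descends to a measurable $G$-valued cocycle over the finite-measure space $\iota(\Gamma_1)\backslash H$ for the right $H$-action, satisfying $c(h_0h_0',h)=c(h_0,h)\,c(h_0',hh_0)$. Using that the only $\Gamma_1$-conjugation-invariant probability measure on $G$ is the Dirac mass at the identity (condition (2)), Furman's machinery straightens this cocycle: it is cohomologous to a homomorphism, and absorbing the coboundary into $\Psi$ yields a measurable homomorphism $\Phi \colon H\to G$ with $\Phi \circ \iota=\mathrm{id}_{\Gamma_1}$. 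Because $G$ is countable and discrete, this measurable homomorphism is automatically continuous. Verifying that the straightening genuinely produces a homomorphism defined on all of $H$ (rather than merely a measurable cocycle) and that the normalization $\Phi\circ\iota=\mathrm{id}$ is preserved is the delicate point, and it is exactly where \cite[Section 2]{Fu01} is indispensable.

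Finally I would deduce the structural conclusions formally. Since $G$ is discrete, $\Phi(H)$ is a discrete group in which $\Gamma_1=\Phi(\iota(\Gamma_1))$ is a lattice, hence of finite index, so $\Phi(H)$ is commensurable with $\Gamma$. Setting $H_1=\Phi^{-1}(\Gamma_1)$, we get $[H:H_1]=[\Phi(H):\Gamma_1]<\infty$; the section $\iota$ splits $\Phi|_{H_1}\colon H_1\to\Gamma_1$, and $\ker \Phi \cap \iota(\Gamma_1)=\{e\}$ gives the semidirect decomposition $H_1=\iota(\Gamma_1)\ltimes \ker \Phi$. In particular $\ker \Phi$ is a Borel fundamental domain for the lattice $\iota(\Gamma_1)$ acting on $H_1$, so it has finite Haar measure; as a locally compact group of finite Haar measure is compact, $\ker \Phi$ is compact, completing the proof. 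Everything outside the third paragraph is routine once the tautening map is shown to be a continuous homomorphism.
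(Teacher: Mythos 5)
Your proposal is correct and follows exactly the route the paper takes: the paper derives Theorem \ref{thm-lat} by combining the tautness of $\Gamma$ (Theorems \ref{thm-self} and \ref{thm-taut-t-j}, with condition (2) supplied by Lemma \ref{lem-icc}) with Furman's lattice-embedding machinery from \cite[Section 2]{Fu01}, viewing $(H,\mathrm{Haar})$ as a self-coupling and upgrading the unique tautening map to a continuous homomorphism. Your additional details (descent of tautness to the finite-index subgroup $\Gamma_1$ via induction of couplings, and the formal deduction of compactness of $\ker\Phi$ and the semidirect decomposition) are the standard supporting steps and are sound.
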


To state properly the orbit equivalence rigidity results we recall some definitions:

\begin{definition}
Two ergodic p.m.p.\ actions $\Gamma \ca (X, \mu)$ and $\Lambda \ca (Y, \nu)$ are called
\begin{enumerate}
\item \textit{conjugate} if there exist a probability space isomorphism $\psi\colon (X, \mu)\to (Y, \nu)$ and a group isomorphism $\delta \colon \Gamma \to \Lambda$ such that $\psi(\gamma x)=\delta(\gamma)\psi(x)$, for any $\g\in\Gamma$ and almost every $x\in X$.
\item \textit{stably conjugate} if there exist finite index subgroups $\Gamma_0<\Gamma$, $\Lambda_0<\Lambda$, and finite normal subgroups $N\vartriangleleft \Gamma_0$, $M\vartriangleleft \Lambda_0$ such that
\begin{itemize}
\item the action $\Gamma \ca (X, \mu)$ is induced from some p.m.p.\ action $\Gamma_0\ca (X_0, \mu_0)$.
\item the action $\Lambda\ca (Y, \nu)$ is induced from some p.m.p.\ action $\Lambda_0\ca (Y_0, \nu_0)$.
\item the actions $\Gamma_0/N\ca (X_0, \mu_0)/N$ and $\Lambda_0/M\ca (Y_0, \nu_0)/M$ are conjugate.
\end{itemize} 
\item \textit{orbit equivalent ($OE$)} if there exists a probability space isomorphism $\psi\colon (X, \mu)\to (Y, \nu)$ such that $\psi(\Gamma x)=\Lambda \psi(x)$, for almost every $x\in X$.
\item \textit{stably orbit equivalent (stably $OE$)} if there exist non-negligible subsets $A\subset X$, $B\subset Y$ and a probability space isomorphism $\psi\colon (A, \mu(A)^{-1}\mu|_A)\to (B, \nu(B)^{-1}\nu|_B)$ such that $\psi(\Gamma  x\cap A)=\Lambda \psi(x)\cap B$, for almost every $x\in A$.
\end{enumerate} 
Here, we say that an ergodic p.m.p.\ action $\Gamma \ca (X,\mu)$ is \textit{induced} from a p.m.p.\ action $\Gamma_0\ca (X_0, \mu_0)$ of a finite index subgroup $\Gamma_0$ of $\Gamma$ if $X_0$ is a $\Gamma_0$-invariant and non-negligible subset of $X$ such that $\mu(\gamma X_0\cap X_0)=0$ for any $\gamma \in \Gamma \setminus \Gamma_0$.

Finally, we say that a p.m.p.\ action $\Gamma \ca (X,\mu)$ is \textit{aperiodic} if any finite index subgroup of $\Gamma$ acts on $(X, \mu)$ ergodically.

It is clear from the definitions that (stable) conjugacy implies (stable) OE for actions. The reversed implications are false in general and whenever they hold are labeled as \emph{$OE$-rigidity} phenomena. A free, ergodic, p.m.p.\ action $\G \ca (X, \mu)$ is called
\begin{enumerate}
\item \emph{$OE$-superrigid} if whenever $\La \ca (Y, \nu)$ is a free, ergodic, p.m.p.\ action which is $OE$ to $\Gamma \ca (X, \mu)$, the actions $\Lambda \ca (Y, \nu)$ and $\Gamma \ca (X, \mu)$ are conjugate.
\item \emph{stably $OE$-superrigid} if whenever $\La \ca (Y, \nu)$ is a free, ergodic, p.m.p.\ action which is stably $OE$ to $\Gamma \ca (X, \mu)$, the actions $\Lambda \ca (Y, \nu)$ and $\Gamma \ca (X, \mu)$ are stably conjugate.
\end{enumerate}
\end{definition}

Our tautness results in combination with the technique from \cite{Fu99a, Fu99b} provide many new examples of $OE$-superrigid actions.  Also, as its by-product we obtain new examples of countable p.m.p.\ equivalence relations with trivial fundamental group.

\begin{theorem}\label{oesuperrigidity}
Let $\Gamma$ be the group as in Theorem \ref{thm-mer}. Then any free, ergodic, p.m.p.\ action $\G\ca (X,\mu)$ is stably $OE$-superrigid.
If the action is further aperiodic, then it is $OE$-superrigid.
\end{theorem}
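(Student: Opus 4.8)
The plan is to obtain stable $OE$-superrigidity as a formal consequence of the tautness of $\Gamma$ relative to $G=\mod^*(S)$ established in Theorems \ref{thm-self} and \ref{thm-taut-t-j}, by running the general machinery of Furman \cite{Fu99a, Fu99b} in the form refined by the second author in \cite{Ki09b}. Concretely, suppose $\Lambda \ca (Y, \nu)$ is a free, ergodic, p.m.p.\ action that is stably orbit equivalent to $\Gamma \ca (X, \mu)$. By \cite{Fu99b} this stable orbit equivalence is encoded by a $(\Gamma, \Lambda)$-coupling $(\Sigma, m)$ together with the associated orbit equivalence cocycle $\alpha \colon \Gamma \times X \to \Lambda$, and the goal is to show that $\alpha$ is cohomologous to a virtual isomorphism intertwining the two actions.

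The first step is to bring $\Lambda$ into $G$. Since $\Sigma$ witnesses that $\Lambda$ is measure equivalent to $\Gamma$, Theorem \ref{thm-mer} supplies a homomorphism $\phi \colon \Lambda \to G$ with finite kernel whose image is commensurable with $\Gamma$. Transporting the $\Lambda$-action on $\Sigma$ through $\phi$, one may regard $(\Sigma, m)$, up to passing to finite index subgroups and quotients by finite normal subgroups, as a self-coupling of $\Gamma$. Condition (1) of tautness (Definition \ref{defn-taut}) then yields a $(\Gamma \times \Gamma)$-equivariant measurable map $\Phi \colon \Sigma \to G$, and condition (2)---which for the groups at hand is exactly Lemma \ref{lem-icc}---guarantees that $\Phi$ is the unique such map and is therefore compatible with the full $(\Gamma \times \Lambda)$-structure of $\Sigma$.

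The second step is to read off cocycle rigidity from $\Phi$. Restricting to a common fundamental domain $Z = X \cap Y$ exactly as in the proof of Theorem \ref{thm-self}, the map $\Phi$ descends to a measurable $\varphi \colon X \to G$ satisfying $\phi(\alpha(\gamma, x)) = \varphi(\gamma x)\,\gamma\,\varphi(x)^{-1}$ for almost every $(\gamma, x)$. Because $\ker \phi$ is finite and $\phi(\Lambda)$ is commensurable with $\Gamma$, the standard ergodicity and freeness arguments of \cite{Fu99b} and \cite[Section 4]{Ki09b} upgrade this relation to a virtual isomorphism between $\Gamma$ and $\Lambda$ that conjugates the actions; the resulting finite index subgroups $\Gamma_0 < \Gamma$, $\Lambda_0 < \Lambda$ and finite normal subgroups realize precisely the stable conjugacy demanded by the definition. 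If the action $\Gamma \ca X$ is aperiodic, every finite index subgroup of $\Gamma$ acts ergodically, so one may take $\Gamma_0 = \Gamma$, while the ICC property of $\Gamma$ (again a consequence of Lemma \ref{lem-icc}) forces the finite normal subgroups to be trivial; the stable conjugacy then collapses to an honest conjugacy, giving $OE$-superrigidity.

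The main obstacle lies in the passage from the abstract equivariant map $\Phi$ into $G$ to genuine cohomology data for $\alpha$, together with the simultaneous control of the commensurator needed to place $\phi(\Lambda)$ and $\Gamma$ in a common lattice-like position inside $G$. This is exactly where condition (2) of tautness does the decisive work: Lemma \ref{lem-icc} rules out any nontrivial conjugation-invariant probability measure on $\mod^*(S)$, which forces both the uniqueness of $\Phi$ and the rigidity of the straightening $\varphi$. Once this is in hand, the remaining bookkeeping of finite index and finite normal corrections is routine and proceeds along the templates of \cite[Theorem 3.5]{Ki09b} and \cite{Fu99b}, so I would simply invoke those.
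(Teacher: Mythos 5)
Your proposal is correct and follows essentially the same route as the paper, which derives Theorem \ref{oesuperrigidity} from the tautness results (Theorems \ref{thm-self} and \ref{thm-taut-t-j}) by invoking the Furman machinery as packaged in \cite{Fu99a, Fu99b} and \cite[Theorem 3.5]{Ki09b}, referring the reader to those references for the details. Your sketch of the intermediate steps (passing to a self-coupling, applying conditions (1) and (2) of tautness, straightening the cocycle, and using aperiodicity together with Lemma \ref{lem-icc} to upgrade stable conjugacy to conjugacy) matches the intended argument.
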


\begin{theorem}\label{trivial}
Let $\Gamma$ be the group as in Theorem \ref{thm-mer}. Then the equivalence relation arising from any free, ergodic, p.m.p.\ action $\G\ca (X,\mu)$ has the trivial fundamental group.
\end{theorem}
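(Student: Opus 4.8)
The plan is to deduce the triviality of the fundamental group directly from the tautness results of Sections~\ref{sec-sbg}--\ref{sec-t-j} (equivalently, from the stable $OE$-superrigidity of Theorem~\ref{oesuperrigidity}), the decisive structural input being that each of $P(S)$, $\cali(S)$ and $\calk(S)$ is a \emph{normal} subgroup of $G=\mod^*(S)$. Recall that the fundamental group of $\mathcal{R}=\mathcal{R}_{\Gamma\ca X}$ is the group of ratios $\mu(A)/\mu(B)$ over non-negligible $A,B\subset X$ with $\mathcal{R}|_A\cong\mathcal{R}|_B$; by \cite{Fu99b} each such element $t$ is realized as the coupling index of an ergodic self-coupling of $\Gamma$. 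Hence it suffices to show that every ergodic self-coupling of $\Gamma$ has coupling index $1$.

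First I would fix an ergodic self-coupling $(\Sigma,m)$ of $\Gamma$ and apply tautness (Theorem~\ref{thm-self} or Theorem~\ref{thm-taut-t-j}) to obtain the $(\Gamma\times\Gamma)$-equivariant map $\Phi\colon\Sigma\to G$, where $(\gamma_1,\gamma_2)$ acts on $G$ by $g\mapsto\gamma_1 g\gamma_2^{-1}$. By ergodicity the pushforward $\Phi_*m$ is concentrated on a single $(\Gamma\times\Gamma)$-orbit, that is, on one double coset $\Gamma g_0\Gamma$ in $G$. This reduces the computation of the coupling index to an orbit--stabilizer analysis on that double coset, following \cite{Fu99a, Ki06, Ki09b}.

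The heart of the matter is to establish and exploit $\Gamma\vartriangleleft G$. For $P(S)$ this holds because $P(S)=\ker\hat\iota$, where $\hat\iota\colon\mod^*(S)\to\mod^*(\bar S)$ is induced by capping each boundary component of $S$ with a disk (any self-homeomorphism of $S$ extends to $\bar S$, so $\ker\hat\iota$ is normal and lies in $\pmod(S)$, whence it equals $P(S)$); for $\cali(S)$ and $\calk(S)$ it holds since these are characteristic subgroups of $\mod(S)$ preserved by every element of $\mod^*(S)$. Granting normality, the double coset collapses, $\Gamma g_0\Gamma=g_0\Gamma=\Gamma g_0$, and the stabilizer of $g_0$ in $\Gamma\times\Gamma$ is the graph $\Delta_{g_0}=\{(g_0\gamma g_0^{-1},\gamma):\gamma\in\Gamma\}$ of the automorphism $c_{g_0}=g_0(\,\cdot\,)g_0^{-1}$ of $\Gamma$. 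Thus $(\Sigma,m)$ is measurably a coupling with core $(\Gamma\times\Gamma)/\Delta_{g_0}$; both coordinate projections $\Delta_{g_0}\to\Gamma$ are isomorphisms onto $\Gamma$, so the coupling index equals $1$. As $\Gamma$ is torsion-free (it is contained in $\mod(S;3)$ for $\cali(S)$ and $\calk(S)$, and is a pure braid group of a closed surface of genus $\ge 2$ for $P(S)$), no finite normal subgroups enter the bookkeeping, and $\mathcal{F}(\mathcal{R})=\{1\}$ follows.

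The main obstacle I anticipate is exactly this normality: it is what turns $\Gamma g_0\Gamma$ into a single coset and the stabilizer into the graph of an automorphism, forcing the index to be $1$. Without it one would only get the modular ratio $[\Gamma:\Gamma\cap g_0\Gamma g_0^{-1}]/[\Gamma:\Gamma\cap g_0^{-1}\Gamma g_0]$, which in general need not be trivial; showing that this ratio is $1$---through the normality above, or equivalently through the commensurator rigidity packaged in Theorems~\ref{thm-si}, \ref{thm-t-aut} and~\ref{thm-cs-aut}---is the crux. As an alternative route, one may instead feed a self-stable-$OE$ of compression $t$ into Theorem~\ref{oesuperrigidity}, reducing it to a stable conjugacy whose compression is a ratio $[\Gamma:\Gamma_1]/[\Gamma:\Gamma_0]$ of indices of finite-index subgroups $\Gamma_0,\Gamma_1$ related by a virtual automorphism $\delta$; realizing $\delta$ by $G$-conjugation and using $\Gamma\vartriangleleft G$ then forces $[\Gamma:\Gamma_0]=[\Gamma:\Gamma_1]$ and $t=1$.
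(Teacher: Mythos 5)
Your argument is correct and is exactly the Furman--Kida machinery that the paper invokes for Theorem \ref{trivial} (the paper gives no separate proof, deferring to \cite{Fu99a, Fu99b, Ki06, Ki09b}): tautness produces the equivariant map $\Phi\colon\Sigma\to\mod^*(S)$, ergodicity concentrates $\Phi_*m$ on a single double coset, and normality of $\Gamma$ in $\mod^*(S)$ collapses that double coset to a single coset whose stabilizer is the graph of an automorphism, forcing coupling index $1$. One cosmetic point: $\ker\hat\iota$ need not lie in $\pmod(S)$ (an element of the kernel may permute boundary components while its extension to $\bar S$ is isotopic to the identity), but $P(S)=\ker\hat\iota\cap\pmod(S)$ is an intersection of two normal subgroups of $\mod^*(S)$, so the normality your proof needs still holds.
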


In the remaining part of the section we show the tautness property for direct products of groups in Theorem \ref{thm-mer}.
Recall that Monod-Shalom's class $\mathcal{C}$ consists of countable groups $\Gamma$ which admit a mixing unitary representation $\pi$ on a Hilbert space such that the second bounded, $\pi$-valued cohomology group $H_b^2(\Gamma, \pi)$ does not vanish \cite{MS04}.
%Any torsion-free group in $\mathcal{C}$ is ICC (\cite[Proposition 7.11]{MS04}).

First we review a few general facts on tautness. Let $\Gamma$ be a countable group, and denote by $\comm(\Gamma)$ the abstract commensurator of $\Gamma$. There exists a natural homomorphism from $\Gamma$ into $\comm(\Gamma)$ which is injective if and only if $\Gamma$ is ICC, \cite[Lemma 3.8]{Ki09b}.
If there exists a countable group $G$ such that $\Gamma <G$ and $\Gamma$ is taut relative to $G$ then it follows from \cite[Lemma 3.9]{Ki09b} that $G$ contains $\comm(\Gamma)$ and $\Gamma$ is taut relative to $\comm(\Gamma)$.

\begin{theorem}
Suppose that $\Gamma_1,\ldots, \Gamma_n$ are countable ICC groups in Monod-Shalom's class $\mathcal{C}$ and that each $\Gamma_i$ is taut relative to $\comm(\Gamma_i)$.
Then the direct product $\Gamma_1\times \cdots \times \Gamma_n$ is taut relative to $\comm(\Gamma_1\times \cdots \times \Gamma_n)$.
\end{theorem}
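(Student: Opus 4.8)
The plan is to verify conditions (1) and (2) of Definition \ref{defn-taut} for the inclusion $\Gamma \df \Gamma_1\times\cdots\times\Gamma_n$ inside $G\df\comm(\Gamma)$, with the product rigidity theory of Monod--Shalom as the main engine. Since a finite product of ICC groups is again ICC, the natural map $\Gamma\to\comm(\Gamma)$ is injective by \cite[Lemma 3.8]{Ki09b}, so the inclusion $\Gamma<G$ is meaningful. First I would record the structural description $\comm(\Gamma)\cong(\comm(\Gamma_1)\times\cdots\times\comm(\Gamma_n))\rtimes P$, where $P$ is the finite group of permutations of the factors carrying each $\Gamma_i$ to a commensurable one; this rests on the fact that each $\Gamma_i$, being ICC and in $\mathcal{C}$, admits no $ME$-splitting across distinct factors (Monod--Shalom \cite{MS04}), so that commensurations of $\Gamma$ preserve the factor decomposition up to such a permutation. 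Condition (2) then reduces to the factors: the image in the finite quotient $P$ of a $\Gamma$-conjugation-invariant measure on $G$ must concentrate at $e$ by a finiteness argument (a nontrivial permutation produces infinite twisted-conjugation orbits), after which the product action $\prod_i(\Gamma_i\ca\comm(\Gamma_i))$ on the base forces the measure to be $\delta_e$, using that $\delta_e$ is the unique $\Gamma_i$-invariant measure on $\comm(\Gamma_i)$, which is exactly condition (2) of the tautness of each $\Gamma_i$.

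The substance lies in condition (1). Given a self-coupling $(\Sigma,m)$ of $\Gamma$, I would pass to an ergodic component and apply measure equivalence rigidity for products of class-$\mathcal{C}$ groups \cite{MS04} to the two commuting copies of $\Gamma$ acting on $\Sigma$. This should yield a measurable, $(\Gamma\times\Gamma)$-equivariant permutation $\tau\in P$ matching the factors of the left copy to those of the right, together with, for each $i$, a reduction of $\Sigma$ to a $(\Gamma_i,\Gamma_{\tau(i)})$-coupling $\Sigma_i$; the commutation relations and the ICC hypothesis force the off-diagonal interactions between $\Gamma_i$ and $\Gamma_j$ for $j\ne\tau(i)$ to be trivial, so that $\Sigma$ is, measurably, the product of the $\Sigma_i$ twisted by $\tau$. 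Whenever the matching is nontrivial the factors $\Gamma_i$ and $\Gamma_{\tau(i)}$ are isomorphic, and the element of $P$ recording $\tau$ supplies a canonical such isomorphism, so each $\Sigma_i$ may be regarded as a self-coupling of $\Gamma_i$.

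I would then invoke the hypothesis that each $\Gamma_i$ is taut relative to $\comm(\Gamma_i)$: it produces a unique $(\Gamma_i\times\Gamma_i)$-equivariant measurable map $\Phi_i\colon\Sigma_i\to\comm(\Gamma_i)$ for the action $(\gamma,\gamma')c=\gamma c\gamma'^{-1}$. Assembling the $\Phi_i$ coordinatewise and appending the permutation $\tau$ defines a measurable map $\Phi\colon\Sigma\to(\prod_i\comm(\Gamma_i))\rtimes P=G$, and a direct computation, combining the equivariance of the $\Phi_i$ with the equivariance of the factor matching, shows that $\Phi$ is $(\Gamma\times\Gamma)$-equivariant for the action $(\gamma_1,\gamma_2)g=\gamma_1 g\gamma_2^{-1}$. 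This verifies condition (1); together with condition (2) and the remark following Definition \ref{defn-taut} (which upgrades condition (2) to uniqueness of $\Phi$), the tautness of $\Gamma$ relative to $\comm(\Gamma)$ follows.

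The principal obstacle is the factor-splitting step: one must upgrade the Monod--Shalom classification, which is an abstract $ME$-statement, to a genuinely measurable and $(\Gamma\times\Gamma)$-equivariant decomposition of $\Sigma$, producing $\tau$ as a measurable equivariant function and showing that the off-diagonal couplings vanish so that the $\Sigma_i$ are honest single-factor couplings. Equally delicate is the semidirect-product bookkeeping: the identification of $\Gamma_{\tau(i)}$ with $\Gamma_i$ used to view $\Sigma_i$ as a self-coupling must be precisely the one induced by the copy of $P$ sitting inside $\comm(\Gamma)$, so that the assembled $\Phi$ lands in $G$ and remains equivariant. Getting these normalizations consistent is where the bulk of the careful work resides.
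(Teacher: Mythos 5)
Your proposal follows essentially the same route as the paper: decompose the self-coupling according to a factor-matching permutation via Monod--Shalom's product rigidity, reduce to a self-coupling of each $\Gamma_i$ (in the paper, realized as the space of ergodic components for the action of the complementary factors), apply the tautness of each factor, and assemble the resulting maps $\Phi_i$ together with the permutation element into a $(\Gamma\times\Gamma)$-equivariant map to $G$. The one caveat is that you neither need nor should claim that $\Sigma$ splits as a twisted product of the $\Sigma_i$: the Monod--Shalom machinery only yields equivariant quotient maps $\pi_i\colon \Sigma\to\overline{\Sigma}_i$ onto single-factor couplings, and pulling the maps $\overline{\Phi}_i$ back through these quotients is all the assembly requires.
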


\begin{proof}
The proof relies heavily upon the methods used \cite{MS04} and it is essentially given in \cite[Section 7]{Ki06}, where direct products of mapping class groups are treated.
Let  $\Gamma =\Gamma_1\times \cdots \times \Gamma_n$ and $G_i=\comm(\Gamma_i)$.
We set
\[I=\{ \, (i, j)\in \{ 1,\ldots, n\}^2\mid \Gamma_i\ \textrm{and}\ \Gamma_j\ \textrm{are\ commensurable.}\, \}.\]
Denote by $\eta_i^i\colon \Gamma_i\to \Gamma_i$ the identity map.
For $(i, j)\in I$ with $i<j$, we fix an isomorphism $\eta_i^j\colon \Gamma_i^0\to \Gamma_j^0$ between finite index subgroups $\Gamma_i^0<\Gamma_i$ and $\Gamma_j^0<\Gamma_j$.
For $(i, j)\in I$ with $i>j$, we define $\eta_i^j$ to be the inverse of $\eta_j^i$.
Let $T$ be the set of bijections $t\colon \{ 1,\ldots, n\} \to \{ 1,\ldots, n\}$ such that $(i, t(i))\in I$, for all $i$.

The group $G_1\times \cdots \times G_n$ is naturally a subgroup of $\comm(\Gamma)$.
For any bijection $t\in T$ there exists an element $\eta_t\in\comm(\Gamma)$ induced by the isomorphism
\[(\gamma_1,\ldots, \gamma_n)\mapsto (\eta_{t(1)}^1(\gamma_{t(1)}),\ldots, \eta_{t(n)}^n(\gamma_{t(n)})).\]
Let $G$ be the subgroup of $\comm(\Gamma)$ generated by $G_1\times \cdots \times G_n$ and the elements $\eta_t$.
The group $G$ is a finite extension of $G_1\times \cdots \times G_n$ and does not depend on the choice of $\eta_i^j$. Next we will show that $\Gamma$ is taut relative to $G$. Before doing so we briefly notice that in particular this implies the equality $G=\comm(\Gamma)$ which is not necessary for the proof of the theorem.

Let $(\Sigma, m)$ be a self-coupling of $\Gamma$. To distinguish the two actions of $\Gamma$, we set $\Lambda_i=\Gamma_i$ and $\Lambda =\Gamma$, and we view $(\Sigma, m)$ as a $(\Gamma, \Lambda)$-coupling.
We will identify $\Gamma_i$ with the subgroup $\{ e\} \times \cdots \times \{ e\} \times \Gamma_i\times \{ e\} \times \cdots \times \{ e\}$ of $\Gamma$, and $\Gamma$ with the subgroup $\Gamma \times \{ e\}$ of $\Gamma \times \Lambda$. Similarly, the subgroups $\Lambda_i$ and $\Lambda$ will be identified with subgroups of $\Gamma \times \Lambda$.
proceeding as in the proof of \cite[Proposition 5.1 and Theorem 1.16]{MS04}, there exists a decomposition $\Sigma =\bigsqcup_{t\in T}\Sigma_t$ into $(\Gamma \times \Lambda)$-invariant measurable subsets such that for each $t\in T$, there exist fundamental domains $X_t, Y_t\subset \Sigma_t$ for the actions $\Lambda \ca \Sigma_t$ and $\Gamma \ca \Sigma_t$, respectively, satisfying $\Gamma_iX_t\subset \Lambda_{t(i)}X_t$ and $\Lambda_{t(i)}Y_t\subset \Gamma_iY_t$, for all $i$.
We may assume that $\Sigma =\Sigma_t$ for some $t\in T$, and set $X=X_t$ and $Y=Y_t$.

We set $\Gamma_i'=\prod_{j\neq i}\Gamma_j$ and $\Lambda_i'=\prod_{j\neq i}\Lambda_j$.
Let $\overline{\Sigma}_i$ be the space of ergodic components for the action $\Gamma_i'\times \Lambda_{t(i)}'\ca (\Sigma, m)$, and denote by $\pi_i\colon \Sigma \to \overline{\Sigma}_i$ the projection.
We define a measure $\mu_i$ on $\overline{\Sigma}_i$ by projecting the restriction of $m$ to $\Gamma_iY$ and a measure $\nu_i$ on $\overline{\Sigma}_i$ by projecting the restriction of $m$ to $\Lambda_{t(i)}X$. Notice that $\mu_i$ and $\nu_i$ are both $(\Gamma_i\times \Lambda_{t(i)})$-invariant and are equivalent.
It follows that there exists a measure $m_i$ equivalent to $\mu_i$ and with respect to which $\overline{\Sigma}_i$ is a $(\Gamma_i, \Lambda_{t(i)})$-coupling.
For a precise proof, the reader is referred to the proof of \cite[Theorem 1.16]{MS04}.

The space $(\overline{\Sigma}_i, m_i)$ is a self-coupling of $\Gamma_i$ given by the formula $(\gamma, \lambda)x=(\gamma, \eta_i^{t(i)}(\lambda))x$, for $\gamma, \lambda \in \Gamma_i$ and $x\in \overline{\Sigma}_i$.
By the tautness assumption, there exists a measurable, $(\Gamma_i\times \Gamma_i)$-equivariant map $\overline{\Phi}_i\colon \overline{\Sigma}_i\to G_i$. Denote by $\Phi_i=\overline{\Phi}_i\circ \pi_i\colon \Sigma \to G_i$, $s=t^{-1}$, and define a map $\Phi \colon \Sigma \to G$ by letting
\begin{align*}
\Phi((\lambda_1,\ldots, \lambda_n)x)&=(\Phi_1(x),\ldots, \Phi_n(x))(\eta_{s(1)}^1(\lambda_{s(1)}),\ldots, \eta_{s(n)}^n(\lambda_{s(n)}))^{-1}\eta_s\\
&=(\Phi_1(x),\ldots, \Phi_n(x))\eta_s(\lambda_1,\ldots, \lambda_n)^{-1}
\end{align*}
for all $\lambda_i \in \Lambda_i$ and $x\in X$.
Finally, one can check that this is well-defined and $(\Gamma \times \Lambda)$-equivariant.
\end{proof}

By \cite[Corollary B]{Ha08}, all groups $\mod(S_{g, k})$, with $3g+k-4>0$ belong to $\mathcal{C}$.
Moreover, by \cite[Proposition 7.4]{MS04}, the class $\mathcal{C}$ is also closed under taking a normal subgroup. Thus, all groups described in Theorem \ref{thm-mer} belong to $\mathcal{C}$ as well.
Consequently, we have the following:

\begin{cor}
Let $\Gamma_1,\ldots, \Gamma_n$ be any groups as in Theorem \ref{thm-mer}.
Then $\Gamma_1\times \cdots \times \Gamma_n$ is taut relative to $\comm(\Gamma_1\times \cdots \times \Gamma_n)$.
\end{cor}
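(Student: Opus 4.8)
The plan is to deduce the corollary directly from the preceding theorem by verifying, for each factor $\Gamma_i$, the three standing hypotheses: that $\Gamma_i$ is countable and ICC, that $\Gamma_i$ lies in Monod--Shalom's class $\mathcal{C}$, and that $\Gamma_i$ is taut relative to $\comm(\Gamma_i)$. Two of these are essentially already in hand. Membership in $\mathcal{C}$ is recorded in the paragraph preceding the theorem: each $\Gamma_i$ is one of $P(S)$, $\cali(S)$, $\calk(S)$, hence a normal subgroup of a finite index subgroup of $\mod(S)$, and $\mathcal{C}$ is closed under passage to normal subgroups by \cite[Proposition 7.4]{MS04} together with $\mod(S)\in \mathcal{C}$ from \cite[Corollary B]{Ha08}. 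Countability is immediate.

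For tautness relative to $\comm(\Gamma_i)$, I would invoke the tautness results already proved: Theorem \ref{thm-self} gives that $P(S)$ is taut relative to $\mod^*(S)$, and Theorem \ref{thm-taut-t-j} gives the same for $\cali(S)$ and $\calk(S)$, in precisely the ranges of genus and number of boundary components appearing in Theorem \ref{thm-mer}. Since $\mod^*(S)$ is a countable group containing $\Gamma_i$, the transfer principle recalled above from \cite[Lemma 3.9]{Ki09b} upgrades tautness relative to $\mod^*(S)$ to tautness relative to $\comm(\Gamma_i)$. This step is formal once ICC is known, because the embedding $\Gamma_i \hookrightarrow \comm(\Gamma_i)$ underlying that principle exists exactly when $\Gamma_i$ is ICC (\cite[Lemma 3.8]{Ki09b}).

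Thus the one hypothesis requiring a genuine argument is that each $\Gamma_i$ is ICC, and this is where I would focus; I expect to obtain it cleanly from Lemma \ref{lem-icc}. In each case $S=S_{g,k}$ satisfies $3g+k-4>0$ and $(g,k)\neq (1,2),(2,0)$, and $\Gamma_i$ is an infinite normal subgroup of a finite index subgroup of $\mod(S)$ (indeed $P(S)\vartriangleleft \pmod(S)$ and $\cali(S),\calk(S)\vartriangleleft \mod(S)$), so Lemma \ref{lem-icc} applies. The argument is by contradiction: if some $\gamma \in \Gamma_i\setminus \{ e\}$ had a finite $\Gamma_i$-conjugacy class $C$, then $C$ would be a finite subset of $\mod^*(S)$ not containing $e$, and the uniform probability measure on $C$ would be invariant under conjugation by $\Gamma_i$ yet distinct from the Dirac measure at $e$, contradicting Lemma \ref{lem-icc}. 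Hence every nontrivial conjugacy class in $\Gamma_i$ is infinite.

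With all three hypotheses verified, the preceding theorem applies to $\Gamma_1,\ldots ,\Gamma_n$ and yields that $\Gamma_1\times \cdots \times \Gamma_n$ is taut relative to $\comm(\Gamma_1\times \cdots \times \Gamma_n)$, which is the assertion. The only remaining care is to confirm that the constraints on $(g,k)$ in Theorem \ref{thm-mer} are simultaneously compatible with the constraints of Lemma \ref{lem-icc} and of the tautness theorems; I would check these range conditions case by case, which is a routine verification.
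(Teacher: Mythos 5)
Your proposal is correct and follows exactly the route the paper intends: the corollary is stated as an immediate consequence of the preceding product theorem together with the facts recorded just before it ($\mod(S)\in\mathcal{C}$ by Hamenst\"adt, closure of $\mathcal{C}$ under normal subgroups, tautness from Theorems \ref{thm-self} and \ref{thm-taut-t-j} upgraded via the $\comm$ transfer principle), and your verification of ICC via Lemma \ref{lem-icc} and of the $(g,k)$ range compatibility supplies precisely the details the paper leaves implicit.
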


\begin{Remark}\label{oerig-prod}As mentioned earlier, this tautness property implies that Theorems \ref{thm-mer}, \ref{thm-lat}, \ref{oesuperrigidity} and \ref{trivial} also hold for products groups $\G=\Gamma_1\times \cdots \times \Gamma_n$, where $\G_i$ are any groups as in Theorem \ref{thm-mer}.
\end{Remark}

%%%%%%%%%%%%%%%%%%%%%%%%%%%%%%%%%%%%%%%%%%%%%%%%%

\section{Applications to $W^*$-superrigidity}\label{sec-w}

In this section we will explain how the orbit equivalence superrigidity results obtained in the previous section can be used in combination with the uniqueness of Cartan subalgebra results proved in \cite{CIK13} to produce new examples of (stably) $W^*$-superrigid actions. This will include large classes of actions by many of the normal subgroups of mapping class groups considered in the previous sections. To be able to state these results we will first recall some terminology and will provide some context from \cite{CIK13}. 

\begin{definition}  Denote by $\mathcal C_{rss}$ the collection of groups consisting of all non-elementary hyperbolic groups and all non-amenable, non-trivial free products of groups.  By definition we let  ${\bf Quot_{1}}(\mathcal C_{rss}):=\mathcal C_{rss}$. For a given integer $n\geq 2$, we denote by ${\bf Quot_n}(\Cal C_{rss})$ the collection of all groups $\G$ satisfying the following properties: 
\begin{itemize}
\item  there exists a family of groups  $\Gamma_k$, $1\leq k\leq n$, such that $\G=\G_n$, $\G_1\in \Cal C_{rss}$, and 
\item  there exists a family of surjective homomorphisms $ \pi_k\colon \G_{k}\rightarrow \G_{k-1}$ such that $\ker(\pi_k)\in \Cal C_{rss}$, for all $2\leq k\leq n$.
\end{itemize}
We denote by ${\bf Quot}(\mathcal C_{rss}):=\bigcup_{n\in\mathbb N} {\bf Quot_n}(\mathcal C_{rss})$, the class of all \emph{finite-step extensions of groups in $\mathcal C_{rss}$}.
\end{definition}

Using recent striking classification results for normalizers of amenable subalgebras in finite von Neumann algebras due to Popa and Vaes \cite{PV11,PV12} and to Ioana \cite{Io12}, the following dichotomy was proved in \cite[Theorem 3.1]{CIK13}: Let $\G \in {\bf Quot}(\mathcal C_{rss})$, let $\G\ca (B,\tau)$ be a trace-preserving action on a finite von Neumann algebra $(B,\tau)$ and denote by $P=B\rtimes \G$ the corresponding crossed product von Neumann algebra. Then for any masa $A\subset P$ one of the following holds: 1) a corner of $A$ can be intertwined into $B$ inside $P$ in the sense of Popa \cite[Theorem 2.1, Corollary 2.3]{Po03} or  2) the normalizing algebra  $\Cal N_{P}(A)''$ has infinite Pimsner-Popa index inside $P$ \cite[Theorem 2.2]{PP86}. For similar results regarding actions by group extensions the reader may also consult S. Vaes and  P. Verraedt's more recent work \cite{VV14}. In particular, our dichotomy can be used to produce many new examples of free, ergodic, p.m.p.\ group actions on probability spaces which give rise to von Neumann algebras with unique Cartan subalgebra.

\begin{theorem}\label{cartan1}\cite[Corollary 3.2]{CIK13} If $\G\in {\bf Quot}(\mathcal C_{rss})$ then any free, ergodic, p.m.p.\ action $\G \ca (X,\mu)$ is $\mathcal C$-superrigid.  
\end{theorem}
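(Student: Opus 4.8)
The plan is to deduce the statement directly from the dichotomy recalled above (\cite[Theorem 3.1]{CIK13}), which is the deep input here and which we may invoke as a black box. Write $B=L^\infty(X)$ and $M=B\rtimes \G$; since $\G \in {\bf Quot}(\mathcal C_{rss})$ is infinite and non-amenable and the action $\G\ca(X,\mu)$ is free and ergodic, $M$ is a ${\rm II}_1$ factor and $B$ is a Cartan subalgebra of $M$. To establish $\mathcal C$-superrigidity I would fix an arbitrary group measure space Cartan subalgebra $A\subset M$ and prove that $A$ is unitarily conjugate to $B$. The starting observation is that $A$ is in particular a masa in $M=B\rtimes \G$, so the dichotomy applies to the pair $A\subset M$: either (1) a corner of $A$ intertwines into $B$ inside $M$, i.e.\ $A\prec_M B$ in the sense of Popa \cite{Po03}, or (2) the normalizing algebra $\Cal N_M(A)''$ has infinite Pimsner--Popa index in $M$.

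Next I would rule out alternative (2). Since $A$ is a Cartan subalgebra, its normalizer generates all of $M$, that is $\Cal N_M(A)''=M$; hence $\Cal N_M(A)''$ has Pimsner--Popa index $1$ in $M$, which is finite. Thus (2) cannot occur, and we are necessarily in case (1): a corner of $A$ embeds into $B$, so $A\prec_M B$.

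The remaining, and genuinely delicate, step is to upgrade this one-sided corner intertwining into a global unitary conjugacy. Here one uses that both $A$ and $B$ are \emph{Cartan} (in particular regular masas): by the standard conjugacy result for Cartan subalgebras (Popa; cf.\ the uniqueness-of-Cartan literature), whenever $A$ and $B$ are Cartan subalgebras of a ${\rm II}_1$ factor $M$ with $A\prec_M B$, there is a unitary $u\in M$ with $uAu^*=B$. Concretely, because $A$ is a masa the intertwining automatically promotes to its stable version $A\prec_M^s B$, and the regularity of $A$ together with that of $B$ lets one spread the partial matching of corners across the whole factor through the normalizing unitaries, producing the single conjugating unitary $u$. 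I expect this passage from intertwining to conjugacy to be the main obstacle of the deduction, the dichotomy itself being quoted rather than reproved. Once $uAu^*=B=L^\infty(X)$ is established, $L^\infty(X)$ is the unique group measure space Cartan subalgebra of $M$ up to unitary conjugacy, which is precisely the assertion that $\G\ca(X,\mu)$ is $\mathcal C$-superrigid.
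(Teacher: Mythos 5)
Your proposal is correct and follows essentially the same route as the intended one: the paper imports this statement verbatim from \cite{CIK13}, where it is deduced from the dichotomy of \cite[Theorem 3.1]{CIK13} exactly as you describe — a Cartan subalgebra $A$ satisfies $\mathcal N_M(A)''=M$, which has finite Pimsner--Popa index, so the second alternative is excluded and $A\preceq_M L^\infty(X)$. The upgrade from intertwining to unitary conjugacy of Cartan subalgebras is Popa's criterion (\cite[Appendix 1]{Po01}), which this paper itself invokes for the same purpose at the end of the proof of Theorem \ref{c-sr1}.
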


The class ${\bf Quot}(\mathcal C_{rss})$ is fairly large and includes many natural families of groups intensively studied in other areas of mathematics, such as all non-elementary groups that are hyperbolic relative to finite families of residually finite, infinite, proper subgroups \cite{Os06,DGO11};  all mapping class groups associated to punctured surfaces $S_{g,k}$ where either $g=0$, $k\geq 4$; $g=1$, $k\geq 1$; or $ g=2$, $k\geq 0$.  For a more comprehensive list of groups in ${\bf Quot}(\mathcal C_{rss})$ we refer the reader to \cite[Section 4.3]{CIK13} and \cite[Section 3]{CKP14} together with all references therein. In particular, \cite[Theorem 3.7 and Theorem 3.9]{CKP14} show that most surface braid groups along with most Torelli groups and Johnson kernels associated with surfaces of low genus are natural examples of groups in ${\bf Quot}(\mathcal C_{rss})$; moreover, by \cite[Lemma 2.10]{CIK13} the same holds for any direct product of finitely many such groups.  Hence, Theorem \ref{cartan1} is applicable to all these groups and by further combining it with Theorem \ref{oesuperrigidity} and Remark \ref{oerig-prod} we obtain new examples of $W^*$-superrigid actions. These add to the previous examples found in \cite{Pe09,PV09,FV10,CP10,HPV10,Io10,CS11,CSU11,PV11,PV12,Bo12,CIK13}.

\begin{cor}\label{wsuperrigid} Let $\G=\G_1\times \cdots \times \G_n$ be a product group such that each factor $\G_i$ is in one of the following classes:
\begin{itemize}
\item [(i)] $P(S_{g,k})$, where $g\geq 2$, $k\geq 2$.
\item [(ii)] $\Cal I (S_{g,k})$, where either $g =1$, $k\geq 3$; or $g =2$, $k\geq 1$.
\item [(iii)] $\Cal K (S_{g,k})$ where either $g= 1$, $k\geq 3$; or $g =2$, $k\geq 2$.
\end{itemize}
Then any free, ergodic, p.m.p.\ action $\G\ca (X,\mu)$ is stably $W^*$-superrigid. If the action is further aperiodic, then it is $W^*$-superrigid. 
\end{cor}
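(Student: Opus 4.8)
The plan is to assemble stable $W^*$-superrigidity from its two constituent halves, both of which are already available: uniqueness of the group measure space Cartan subalgebra ($\mathcal{C}$-superrigidity) and stable $OE$-superrigidity. First I would verify the hypothesis of Theorem \ref{cartan1}, namely that $\Gamma =\Gamma_1\times \cdots \times \Gamma_n$ lies in $\mathbf{Quot}(\mathcal{C}_{rss})$. Each factor $\Gamma_i$ appearing in classes (i)--(iii) is a low-genus surface braid group, Torelli group, or Johnson kernel, and for exactly the genus and boundary ranges listed these are shown to belong to $\mathbf{Quot}(\mathcal{C}_{rss})$ by \cite[Theorems 3.7 and 3.9]{CKP14}; since this class is closed under finite direct products by \cite[Lemma 2.10]{CIK13}, the product $\Gamma$ again lies in $\mathbf{Quot}(\mathcal{C}_{rss})$. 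Theorem \ref{cartan1} then shows that every free, ergodic, p.m.p.\ action $\Gamma \ca (X,\mu)$ is $\mathcal{C}$-superrigid.

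Second, I would invoke the orbit equivalence superrigidity already established for these products. By Theorem \ref{oesuperrigidity} together with Remark \ref{oerig-prod}, any free, ergodic, p.m.p.\ action of such a $\Gamma$ is stably $OE$-superrigid, and is $OE$-superrigid whenever it is aperiodic; this rests on the tautness of the factors proved in Sections \ref{sec-sbg}--\ref{sec-t-j} and the abstract commensurator argument for products.

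Third, I would splice the two properties together using Singer's correspondence \cite{Si55} between orbit equivalence and Cartan-preserving $*$-isomorphisms. Suppose $\Lambda \ca (Y,\nu)$ is any free, ergodic, p.m.p.\ action and $\psi \colon L^\infty(X)\rtimes \Gamma \to L^\infty(Y)\rtimes \Lambda$ is a $*$-isomorphism. Since $\Lambda \ca Y$ is free and ergodic, $\psi^{-1}(L^\infty(Y))$ is a group measure space Cartan subalgebra of $L^\infty(X)\rtimes \Gamma$, so by $\mathcal{C}$-superrigidity it is unitarily conjugate to $L^\infty(X)$. Composing $\psi$ with the associated inner automorphism yields a $*$-isomorphism carrying $L^\infty(X)$ onto $L^\infty(Y)$, whence by Singer's theorem the actions $\Gamma \ca X$ and $\Lambda \ca Y$ are orbit equivalent, and in particular stably orbit equivalent. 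Stable $OE$-superrigidity then forces them to be stably conjugate, which is exactly stable $W^*$-superrigidity. If moreover $\Gamma \ca X$ is aperiodic, the genuine orbit equivalence together with the second clause of Theorem \ref{oesuperrigidity} upgrades this to honest conjugacy, giving $W^*$-superrigidity.

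The substantive content has already been carried out elsewhere---the tautness results of Sections \ref{sec-sbg}--\ref{sec-t-j} (yielding stable $OE$-superrigidity via Theorem \ref{oesuperrigidity}) and the uniqueness-of-Cartan dichotomy of \cite{CIK13} (yielding $\mathcal{C}$-superrigidity via Theorem \ref{cartan1})---so the present argument is largely formal. The only points demanding attention are confirming that each admissible factor genuinely lies in $\mathbf{Quot}(\mathcal{C}_{rss})$, which is precisely where the genus and boundary restrictions in (i)--(iii) enter, since these are exactly the ranges in which \cite{CKP14} supplies the required finite-step extension structure, and keeping the ``stable'' bookkeeping straight so that the unitary conjugacy furnished by $\mathcal{C}$-superrigidity is recognized as producing honest (not merely stable) orbit equivalence, after which the stable and aperiodic clauses of Theorem \ref{oesuperrigidity} can be applied without further loss.
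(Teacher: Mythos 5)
Your proposal is correct and follows essentially the same route as the paper: membership of each factor in ${\bf Quot}(\mathcal C_{rss})$ via \cite[Theorems 3.7 and 3.9]{CKP14}, closure under products via \cite[Lemma 2.10]{CIK13}, $\mathcal C$-superrigidity from Theorem \ref{cartan1}, and stable $OE$-superrigidity from Theorem \ref{oesuperrigidity} together with Remark \ref{oerig-prod}, glued by Singer's correspondence. The paper treats this gluing as immediate (it is spelled out in the introduction), so your only addition is making that formal step explicit.
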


Since by Remark \ref{identification} the central quotient $P(S_{1,k})$ coincides with $\Cal I (S_{1,k})$ for all $k\geq 3$ the parts (i) and (ii) in the previous corollary together with   \cite[Theorem A]{CIK13} settle completely the $W^*$-superrigidity question for all free, aperiodic actions by (direct products of finitely many) central quotients of most surface braid groups. 

We conjecture that this $W^*$-superrigidity result also holds for the remaining cases, in particular, for all free, ergodic, p.m.p.\ actions of Torelli groups and Johnson kernels $\G= \Cal I (S_{g,k}),\Cal K (S_{g,k})$ with $g\geq 3$ and $k\geq 0$. While we have already seen in the previous section that the OE-superrigidity holds, establishing the $\Cal C$-superrigidity property seems a more challenging problem, even for particular actions such as profinite or Bernoulli, as it requires a completely new technological approach.

Finally, we point out that the previous results can be exploited to produce new examples of II$_1$ factors with trivial fundamental group.

\begin{cor}If $\G$ is the group as in Corollary \ref{wsuperrigid} and $\G \ca(X, \mu)$ is a free, ergodic, p.m.p.\ action then the corresponding II$_1$ factor $P = L^\infty(X)\rtimes \Gamma$ has the trivial fundamental group.
\end{cor}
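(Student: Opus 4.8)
The plan is to combine the Cartan-uniqueness furnished by the dichotomy of \cite[Theorem 3.1]{CIK13} with the vanishing of the fundamental group of the orbit equivalence relation established in Theorem \ref{trivial}. Write $P=L^\infty(X)\rtimes \G$ and $A=L^\infty(X)$, and let $\mathcal R$ denote the orbit equivalence relation of $\G\ca (X,\mu)$, so that $(P,A)=(L(\mathcal R),L^\infty(X))$. As recalled in this section, each factor of $\G$ lies in ${\bf Quot}(\mathcal C_{rss})$, and by \cite[Lemma 2.10]{CIK13} the product $\G$ does as well; hence the dichotomy of \cite[Theorem 3.1]{CIK13} applies to every masa of $P$.

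First I would upgrade $\mathcal C$-superrigidity to full Cartan uniqueness. Let $B\subset P$ be an arbitrary Cartan subalgebra. Since $B$ is regular, $\mathcal N_P(B)''=P$ has Pimsner--Popa index one in $P$, so the second alternative of the dichotomy cannot occur; thus a corner of $B$ intertwines into $A$ in the sense of \cite[Theorem 2.1]{Po03}, and the argument behind \cite[Corollary 3.2]{CIK13} then shows that $B$ is unitarily conjugate to $A$ in $P$. Running this for \emph{all} Cartan subalgebras, rather than only the group measure space ones, is the crucial point: the reduction of $\mathcal R$ to a subset is not visibly of group measure space type, and it is precisely such a Cartan that surfaces in the amplification argument below.

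Next I would compute $\mathcal F(P)$. As $\mathcal F(P)$ is a group, it suffices to show that $t\in \mathcal F(P)$ with $0<t\le 1$ forces $t=1$. Choose a projection $p\in A$ with $\tau(p)=t$ and an isomorphism $\theta\colon pPp\to P$. Then $pAp$ is a Cartan subalgebra of $pPp$, so $\theta(pAp)$ is a Cartan subalgebra of $P$; by the previous step it is unitarily conjugate to $A$, and after composing $\theta$ with the corresponding inner automorphism we may assume $\theta(pAp)=A$. Writing $Y\subset X$ for the support of $p$, so that $pPp=L(\mathcal R|_Y)$ with Cartan $pAp=L^\infty(Y)$, the isomorphism $\theta$ now carries the pair $(L(\mathcal R|_Y),L^\infty(Y))$ onto $(L(\mathcal R),L^\infty(X))$ preserving Cartan subalgebras. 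By the standard dictionary relating Cartan-preserving $*$-isomorphisms to (stable) orbit equivalences of the underlying relations \cite{Si55}, this produces an isomorphism $\mathcal R|_Y\simeq \mathcal R$, that is, $t\in \mathcal F(\mathcal R)$.

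Finally, Theorem \ref{trivial} together with Remark \ref{oerig-prod} gives $\mathcal F(\mathcal R)=\{1\}$, whence $t=1$ and $\mathcal F(P)=\{1\}$. The only genuinely delicate point is the second paragraph: one must have Cartan uniqueness in the strong form valid for every Cartan subalgebra, since the Cartan $\theta(pAp)$ arising from the amplification is not a priori group measure space. This is exactly what the finite-index clause in the dichotomy of \cite[Theorem 3.1]{CIK13} secures, and it is what lets the $\mathcal C$-superrigidity of Theorem \ref{cartan1} be promoted to a statement strong enough to pin down the fundamental group.
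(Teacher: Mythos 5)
Your proposal is correct and follows essentially the same route as the paper: the paper's proof likewise invokes Theorem \ref{cartan1} to get that $L^\infty(X)$ is the unique Cartan subalgebra of $P$, deduces that the fundamental groups of $P$ and of the orbit equivalence relation $\mathcal R$ coincide, and concludes by Theorem \ref{trivial}. You merely spell out the amplification/Singer-dictionary step and the point that the uniqueness must hold for arbitrary (not just group measure space) Cartan subalgebras, both of which the paper leaves implicit.
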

\begin{proof} Let $\mathcal R = \{ \, (x, y) \in X \times X\mid \G x = \G y\, \}$ be the equivalence relation arising from the action $\G \ca (X, \mu)$.
By Theorem \ref{cartan1} $L^\infty(X)$ is the unique Cartan subalgebra of $P$, and hence the fundamental groups of $P$ and $\mathcal{R}$ are isomorphic. By Theorem \ref{trivial} the conclusion of the corollary follows.
\end{proof}

\section{$\mathcal C$-superrigidity for actions by residually hyperbolic groups}

In this section we use techniques similar with the ones developed in \cite{CIK13} to show that free, mixing, p.m.p.\ actions by many finitely presented, residually free groups are $\mathcal C$-superrigid (see Theorem \ref{c-sr1} and Corollary \ref{c-sr} below). Before proceeding to the proofs of these results  we record three elementary group theory propositions which will be used later. We thank the referee for simplifying our original proof of the following proposition.

 \begin{prop}\label{abelianreduction}Let $\G$, $H$ and $A$ be countable groups such that $\G$ is ICC and $A$ is virtually abelian. Assume there exists $\Phi \colon \G \ra H\times A$ an injective group homomorphism such that $\pi_A\circ \Phi(\G)=A$; here $\pi_A\colon H\times A\ra A$ is the canonical projection. Then there exists a finite index subgroup $\G_0< \G$ and an injective group homomorphism $\rho\colon\G_0\ra H$.
 \end{prop}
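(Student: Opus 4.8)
The plan is to take $\rho$ to be the restriction of $\pi_H\circ\Phi$ and to show that, after passing to a suitable finite-index subgroup, this map is injective; in fact I expect to be able to take $\Gamma_0=\Gamma$. The only obstruction to injectivity of $\pi_H\circ\Phi$ is its kernel $K:=\ker(\pi_H\circ\Phi)\vartriangleleft\Gamma$, so the whole argument reduces to proving that $K$ is trivial. Since $\Gamma$ is ICC, it suffices to prove that $K$ is \emph{finite}: a finite normal subgroup of an ICC group meets every nontrivial conjugacy class in an infinite set, which is impossible, so such a subgroup must be trivial.

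First I would record the two features of $K$ coming from the hypotheses. Writing $L:=\ker(\pi_A\circ\Phi)\vartriangleleft\Gamma$, injectivity of $\Phi$ gives $K\cap L=\ker\Phi=\{e\}$, and $\pi_A\circ\Phi$ restricts to an injection of $K$ into $A$, so $K$ is virtually abelian. The decisive extra structure is that $K$ and $L$ \emph{commute elementwise}: for $k\in K$ and $l\in L$ one has $\Phi(k)\in\{e\}\times A$ and $\Phi(l)\in H\times\{e\}$, and these commute in the direct product $H\times A$, whence $\Phi([k,l])=e$ and therefore $[k,l]=e$ by injectivity of $\Phi$.

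Next I would pass to finite index in order to upgrade ``virtually abelian'' to ``abelian'' in a controlled way. Choose a finite-index abelian subgroup $A_1\leq A$ and set $\Gamma_1:=(\pi_A\circ\Phi)^{-1}(A_1)$, a finite-index subgroup of $\Gamma$ containing $L$ with $\Gamma_1/L\cong A_1$ abelian, so that $[\Gamma_1,\Gamma_1]\subseteq L$. Put $K_1:=K\cap\Gamma_1$; then $K_1$ has finite index in $K$, is normal in $\Gamma_1$ (since $K\vartriangleleft\Gamma$), and is abelian because $\pi_A\circ\Phi$ embeds it into $A_1$. For $x\in\Gamma_1$ and $k\in K_1$ the commutator $[x,k]$ lies in $K_1$ by normality and simultaneously in $[\Gamma_1,\Gamma_1]\subseteq L$; since $K_1\cap L\subseteq K\cap L=\{e\}$, this forces $[x,k]=e$, that is, $K_1\leq Z(\Gamma_1)$.

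Finally I would invoke the ICC hypothesis. Any $z\in Z(\Gamma_1)$ is centralized by the finite-index subgroup $\Gamma_1$, so its $\Gamma$-conjugacy class has at most $[\Gamma:\Gamma_1]<\infty$ elements; as $\Gamma$ is ICC this gives $z=e$, hence $Z(\Gamma_1)=\{e\}$ and $K_1=\{e\}$. Because $K_1$ has finite index in $K$, the group $K$ is finite, and then $K=\{e\}$ by the ICC remark from the first paragraph; thus $\rho=\pi_H\circ\Phi$ is injective and one may take $\Gamma_0=\Gamma$. I expect the main obstacle to lie in the second and third steps rather than the last, for the following reason: a virtually abelian normal subgroup of an ICC group need not be finite in general (for example, the base subgroup of a lamplighter group is infinite, abelian, and normal in an ICC group), so the conclusion genuinely requires the direct-product structure through the commutation $[K,L]=\{e\}$ together with $[\Gamma_1,\Gamma_1]\subseteq L$; the passage to $\Gamma_1$ is essential precisely to make $K_1$ abelian, which is what allows the commutator computation to land in $K_1\cap L$.
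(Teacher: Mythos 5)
Your argument is correct and is essentially the paper's own proof: both pass to a finite-index subgroup on which the $A$-coordinate is abelian, observe that the kernel of $\pi_H\circ\Phi$ is carried by $\Phi$ into $\{e\}\times A$ and hence is central (by injectivity of $\Phi$), and then kill it with the ICC hypothesis. The only (harmless) difference is that you run the centrality computation inside $\Gamma_1$ and then bootstrap to conclude the kernel is trivial in all of $\Gamma$, so you may take $\Gamma_0=\Gamma$, whereas the paper is content to conclude injectivity on the finite-index subgroup.
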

 
\begin{proof} Notice that the ICC condition is preserved under passing to finite index subgroups. Hence, by passing to a finite index subgroup of $\G$, we can assume that $A$ is abelian. 
Denote by $\rho =\pi_H \circ \Phi$ the composition homomorphism $\G \ra H\times A\ra H$.
Let $Z$ be the kernel of $\rho$.
Then $\Phi(Z)\subset \{ e\} \times A$.
Since $\Phi$ is injective and $\{ e\}\times A$ is in the center of $H\times A$ it follows that $Z$ is in the center of $\G$.
But $\G$ is ICC, so its center is trivial.
This implies $Z=\{ e\}$ and hence $\rho$ is the desired injective homomorphism.  
\end{proof}

A subgroup $\Sigma <\G$ is called \emph{malnormal} in $\G$ if for any $\g\in \G\setminus \Sigma$ we have $\g\Sigma \g^{-1} \cap \Sigma=\{ e\}$. 
 
\begin{prop} \label{malnorm}Let $\theta\colon\G\ra \La$ be a group homomorphism and let $L <\La$ be a malnormal subgroup. Then for every $h,k\in \G\setminus \theta^{-1}(L)$ one of the following holds: \begin{enumerate}
\item $h\theta^{-1}(L) k^{-1}\cap \theta^{-1}(L)=\emptyset$;
\item there exists $\omega\in \theta^{-1}(L)$ such that $h\theta^{-1}(L) k^{-1}\cap \theta^{-1}(L)=\omega \ker(\theta)$.
\end{enumerate} 
 \end{prop}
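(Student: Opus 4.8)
The plan is to set $K=\theta^{-1}(L)$ and $N=\ker(\theta)$, and to note three structural facts used throughout: $K$ is a subgroup of $\G$ (a preimage of a subgroup), $N\vartriangleleft \G$, and $N\subset K$. The entire statement is then a claim about the double-coset intersection $hKk^{-1}\cap K$: I will show it is either empty, which is alternative (1), or a single left coset $\omega N$ with $\omega\in K$, which is alternative (2). The engine of the argument is a uniqueness statement extracted from malnormality of $L$ at the level of $\La$.

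First I would record the key lemma. Write $u=\theta(h)$ and $v=\theta(k)$; since $h,k\notin K$ we have $u,v\notin L$. I claim there is at most one $\ell\in L$ with $u\ell v^{-1}\in L$. Indeed, suppose $u\ell_1v^{-1}=m_1$ and $u\ell_2v^{-1}=m_2$ with $\ell_1,\ell_2,m_1,m_2\in L$. Then $m_1m_2^{-1}=u(\ell_1\ell_2^{-1})u^{-1}$ lies in $L\cap uLu^{-1}$, and since $u\notin L$, malnormality of $L$ forces $L\cap uLu^{-1}=\{e\}$; hence $m_1=m_2$ and then $\ell_1=\ell_2$.

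Next I would transfer this to $\G$. Any element of $hKk^{-1}\cap K$ has the form $x=hak^{-1}$ with $a\in K$, and applying $\theta$ gives $\theta(a)\in L$ together with $u\theta(a)v^{-1}=\theta(x)\in L$; so $\theta(a)$ is a solution $\ell$ of the lemma. By uniqueness, every $a$ arising this way satisfies $\theta(a)=\ell$ for one fixed $\ell\in L$. If no such $x$ exists the intersection is empty and alternative (1) holds. Otherwise, given two elements $x_i=ha_ik^{-1}$ ($i=1,2$) of the intersection, we have $\theta(a_1)=\theta(a_2)=\ell$, so $a_1a_2^{-1}\in N$, and then $x_1x_2^{-1}=h(a_1a_2^{-1})h^{-1}\in N$ by normality of $N$; thus all elements of the intersection lie in a single coset of $N$.

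Finally I would check the reverse inclusion to pin down the coset exactly. Fix $x=hak^{-1}$ in the (nonempty) intersection and any $z\in N$. Using $k^{-1}zk\in N\subset K$ and that $K$ is a subgroup, $a(k^{-1}zk)\in K$, so $xz=h\big(a(k^{-1}zk)\big)k^{-1}\in hKk^{-1}$; moreover $\theta(xz)=\theta(x)\in L$, whence $xz\in K$. Thus $xz$ again lies in the intersection, and combined with the previous paragraph this gives $hKk^{-1}\cap K=\omega N$ for $\omega=x\in K=\theta^{-1}(L)$, which is alternative (2). The only genuine subtlety, and the step that really uses the hypotheses, is the malnormality-driven uniqueness together with the observation that the $\theta$-fiber over the forced value $\ell$ is a single $N$-coset; everything else is bookkeeping with the normality of $N=\ker(\theta)$.
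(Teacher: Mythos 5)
Your proof is correct. The paper's own argument is organized differently: it picks one element $\omega=h\sigma k^{-1}$ of the (assumed nonempty) intersection, substitutes $h=\omega k\sigma^{-1}$ to rewrite $h\theta^{-1}(L)k^{-1}\cap\theta^{-1}(L)$ as $\omega\,\bigl(k\theta^{-1}(L)k^{-1}\cap\theta^{-1}(L)\bigr)$, and then computes that last intersection exactly: it contains $\ker(\theta)$ because $\ker(\theta)$ is normal and lies in $\theta^{-1}(L)$, and it is contained in $\theta^{-1}\bigl(\theta(k)L\theta(k)^{-1}\cap L\bigr)=\ker(\theta)$ by malnormality applied to $\theta(k)\notin L$. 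Your version replaces this translation trick by a uniqueness lemma downstairs (at most one $\ell\in L$ with $u\ell v^{-1}\in L$ when $u,v\notin L$) together with the observation that the $\theta$-fiber over that $\ell$ inside the double coset is a single $\ker(\theta)$-coset, and you then check both inclusions by hand. Both arguments extract exactly the same consequence of malnormality, namely $L\cap uLu^{-1}=\{e\}$ for $u\notin L$, so the difference is one of bookkeeping: the paper's substitution is marginally slicker because it reduces everything to the self-conjugation intersection $k\theta^{-1}(L)k^{-1}\cap\theta^{-1}(L)=\ker(\theta)$ and thereby absorbs the reverse-inclusion check, whereas your fiberwise argument is a bit longer but is equally rigorous and more explicit about where each hypothesis (malnormality, normality of $\ker(\theta)$, $h,k\notin\theta^{-1}(L)$) enters.
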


\begin{proof} Assume (1) does not hold; there exists $\omega\in h\theta^{-1}(L) k^{-1}\cap \theta^{-1}(L)$. Thus one can find $\sigma\in \theta^{-1}(L)$ so that $h \sigma k^{-1}=\omega$ and hence $h=\omega k\sigma^{-1}$. Using this we get $h\theta^{-1}(L) k^{-1}\cap \theta^{-1}(L)=\omega k\sigma^{-1}\theta^{-1}(L) k^{-1}\cap \theta^{-1}(L)=\omega (k\theta^{-1}(L) k^{-1}\cap \theta^{-1}(L))$. 

On the other hand since $k\in \G\setminus \theta^{-1}(L)$ we have $\ker(\theta)< k \theta^{-1}(L)k^{-1}\cap \theta^{-1}(L)< \theta^{-1}(\theta(k)L\theta(k)^{-1}\cap L)$ and using $\theta(k)L\theta(k)^{-1}\cap L=\{ e\}$ we obtain that $k\theta^{-1}(L) k^{-1}\cap \theta^{-1}(L)=\ker(\theta)$. This, together with the previous paragraph give (2). \end{proof}

The next result classifies amenable subgroups of HNN extensions and amalgamated free products over malnormal subgroups.

\begin{lem}\label{lem-tree}
The following assertions hold:
\begin{enumerate}
\item[(i)] Let $G=\La \star_A B$ be an amalgamated free product such that $A$ is proper in $\La$ and $B$, and is malnormal in $\La$.
Then any amenable subgroup of $G$ is either infinite cyclic or contained in a conjugate of $\La$ or $B$ in $G$.
\item[(ii)] Let $\La$ be a group, $A$ a malnormal subgroup of $\La$, and $\phi \colon A\to \La$ an injective homomorphism.
Suppose that $\la A\la^{-1}\cap \phi(A)$ is trivial for any $\la \in \La$.
We define $G=\langle \, \La, t\mid tat^{-1}=\phi(a),\ a\in A \, \rangle$ as the HNN extension.
Then any amenable subgroup of $G$ is either infinite cyclic or contained in a conjugate of $\La$ in $G$.
\end{enumerate}
\end{lem}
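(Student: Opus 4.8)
The plan is to exploit Bass--Serre theory: both an amalgamated free product and an HNN extension act on an associated Bass--Serre tree $T$, and I would classify amenable subgroups by how they act on $T$. The key dichotomy from the theory of group actions on trees is that any subgroup $H<G$ acting on $T$ either fixes a point of $T$ (or its ideal boundary) or contains a hyperbolic element, and in the latter case the combination with amenability forces strong constraints. Concretely, for an amenable subgroup $H$ acting on a tree, either $H$ fixes a vertex, $H$ fixes an end or a pair of ends (and hence stabilizes a line, giving a homomorphism to the infinite dihedral or cyclic group), or $H$ acts with a global fixed point on $T\cup\partial T$. So the first step is to set up the Bass--Serre tree and recall this structural trichotomy for amenable subgroups.

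\textbf{The edge-stabilizer computation.} The crucial use of malnormality enters when analyzing the case where $H$ contains a hyperbolic element $h$ but does not fix a vertex. A hyperbolic element has an axis, and I would show that malnormality of $A$ in $\La$ (in case (i)) forces edge stabilizers along any axis to be severely restricted. The point is that in $G=\La\star_A B$, an edge stabilizer is a conjugate of $A$, and if two distinct edges share a vertex $v\in \La$-type, their stabilizers are $A$ and $\la A\la^{-1}$ for some $\la\in\La\setminus A$; malnormality gives $A\cap \la A\la^{-1}=\{e\}$, so consecutive edge-stabilizers along a path intersect trivially. This is what rules out an amenable subgroup stabilizing a segment of length $\geq 2$ without being cyclic. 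For case (ii), the analogous input is precisely the hypothesis $\la A\la^{-1}\cap \phi(A)=\{e\}$ together with malnormality of $A$, which controls the stabilizers of edges emanating from a single vertex in the HNN tree. I would then conclude: if $H$ fixes a vertex it lies in a conjugate of $\La$ or $B$ (resp.\ $\La$); if $H$ does not fix a vertex, the trivial-intersection property of edge stabilizers forces $H$ to act freely enough on its minimal subtree (a line) that $H$ is infinite cyclic.

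\textbf{Main obstacle.} The hard part will be handling the intermediate case cleanly---an amenable subgroup that neither fixes a vertex nor is obviously cyclic, for instance one fixing a single end of $T$ and thus admitting a nontrivial homomorphism onto $\mathbb{Z}$ with kernel contained in an edge stabilizer. Here I must verify that malnormality forces that kernel to be trivial, so that the subgroup injects into $\mathbb{Z}$ and is therefore infinite cyclic. Carefully tracking which conjugates of $A$ appear as stabilizers of nested half-lines, and invoking malnormality to collapse their intersections, is the delicate point; the trivial-intersection hypotheses in the statement are tailored exactly to make this collapse work in both the amalgam and the HNN settings. Once that case is dispatched, assembling the trichotomy into the stated conclusion is routine.
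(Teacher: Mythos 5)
Your proposal follows essentially the same route as the paper: act on the Bass--Serre tree, use malnormality (resp.\ the trivial-intersection hypothesis in the HNN case) to show that only the neutral element fixes a segment of length more than $2$, split into the all-elliptic case (vertex fixed, hence conjugate of $\La$ or $B$) and the case with a hyperbolic element (amenability forces a common axis via a ping-pong argument, and the translation-length homomorphism to $\mathbb{Z}$ is injective because its kernel fixes the axis pointwise). The delicate points you flag are exactly the ones the paper resolves with its fact $(\star)$ and its claim that hyperbolic elements sharing one boundary fixed point share both.
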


\begin{proof}
We prove assertion (i).
Let $T$ denote the Bass-Serre tree associated with the decomposition $G=\La \star_A B$.
%We mean by a \textit{geodesic segment} in $T$ a geodesic path in $T$ starting and terminating at vertices of $T$.
Since $A$ is malnormal in $\La$, no non-neutral element of $G$ fixes a geodesic segment in $T$ of length more than $2$.
Let us refer this fact as $(\star)$.

Let $g$ be a non-neutral element of $G$.
We say that $g$ is \textit{elliptic} if $g$ fixes some vertex of $T$.
We say that $g$ is \textit{hyperbolic} if $g$ fixes a bi-infinite geodesic in $T$, called the \textit{axis} of $g$, and acts on it by translation.
It follows from \cite[Proposition I.4.11]{DD89} that any non-neutral element of $G$ is either elliptic or hyperbolic.
Let $\partial T$ be the boundary of $T$.
For a hyperbolic element $g\in G$, let $g^+$ and $g^-$ denote the two fixed points of $g$ in $\partial T$ such that for any point $x$ of $T$, we have $g^nx\to g^+$ and $g^{-n}x\to g^-$ as $n\to +\infty$.

\begin{claim}\label{claim-tree}
Let $g, h\in G$ be hyperbolic elements.
Suppose that there exists a point $x\in \partial T$ fixed by $g$ and $h$.
Then $g^p=h^q$ for some non-zero integers $p$, $q$, and thus $\{ g^\pm \}=\{ h^\pm \}$.
\end{claim}

\noindent \emph{Proof of Claim \ref{claim-tree}.}
We may assume $x=g^+=h^+$ by exchanging $h$ with $h^{-1}$ if necessary.
There exists a geodesic ray $l$ starting at some vertex of $T$ and toward $x$ such that $gl\subset l$ and $hl\subset l$.
We then have positive integers $p$, $q$ such that $g^{-p}h^q$ fixes any vertex in $l$.
By fact $(\star)$, the equality $g^p=g^q$ holds. $\hfill\blacksquare$
\vskip 0.05in

Let $M$ be an amenable subgroup of $G$.
If any non-neutral element of $M$ is elliptic, then by \cite[Theorem I.4.12]{DD89} and fact $(\star)$, the group $M$ fixes a vertex of $T$, and is hence contained in a conjugate of $\La$ or $B$.

Suppose otherwise.
We have a hyperbolic element $g\in M$.
We show that any element $h\in M$ fixes $g^+$ and $g^-$.
Assuming $hg^+\not \in \{ g^\pm \}$, we deduce a contradiction.
If $hg^-\in \{ g^\pm \}$, then applying Claim \ref{claim-tree} to $hgh^{-1}$ and $g$, we have $\{ hg^\pm \} =\{ g^\pm \}$. 
This contradicts our assumption.
We therefore have $hg^-\not \in \{ g^\pm \}$.
By the ping-pong argument, we can show that the group generated by $g$ and $hgh^{-1}$ contains $\mathbb{F}_2$.
This contradicts that $M$ is amenable.
It follows that $hg^+\in \{ g^\pm \}$.
By Claim \ref{claim-tree}, we have $\{ hg^\pm \} =\{ g^\pm\}$.
Since any element of $G$ fixing an edge of $T$ also fixes its two vertices, we have $hg^\pm =g^\pm$.

We have shown that any element of $M$ fixes the axis of $g$.
By measuring the translation distance, we obtain a homomorphism $\delta \colon M\to \mathbb{Z}$. 
The kernel of $\delta$ fixes any vertex of the axis, and is therefore trivial by fact $(\star)$.
It follows that $M$ is isomorphic to $\mathbb{Z}$.
Assertion (i) was proved.

Assertion (ii) can also be proved along similar argument.
The assumption implies that in the associated Bass-Serre tree, for any geodesic segment of length more than $2$, its stabilizer in $G$ is trivial.
\end{proof}

\subsection{Popa's intertwining-by-bimodules technique} Popa introduced in  \cite [Theorem 2.1 and Corollary 2.3]{Po03} a powerful  technical tool to find intertwiners between subalgebras of a tracial von Neumann algebra called the {\it intertwining-by-bimodules technique}. On a von Neumann algebra $(M,\tau)$ endowed with a faithful, normal, tracial state $\tau$ we have a Hilbert norm given by $\|x\|_2=\tau(x^*x)^{1/2}$, for all $x\in M$.

\begin {theorem} \cite [Theorem 2.1 and Corollary 2.3]{Po03}\label{corner} Let $(M,\tau)$ be a separable tracial von Neumann algebra and let $P,Q\subset M$ be (not necessarily unital) von Neumann subalgebras. Then the following are equivalent:

\begin{enumerate}

\item There exist  non-zero projections $p\in P$, $q\in Q$, a $*$-homomorphism $\phi\colon pPp\rightarrow qQq$  and a non-zero partial isometry $v\in qMp$ such that $\phi(x)v=vx$, for all $x\in pPp$.

\item There is no sequence $u_n\in\mathcal U(P)$ satisfying $\|E_Q(xu_ny)\|_2\rightarrow 0$, for all $x,y\in M$.
\end{enumerate}
\end{theorem}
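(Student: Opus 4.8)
The plan is to place both conditions inside the basic construction and to deduce their equivalence from a single convexity dichotomy. First I would introduce the Jones projection $e_Q\colon L^2(M,\tau)\to L^2(Q)$, the basic construction $\langle M,e_Q\rangle=(JQJ)'$ acting on $L^2(M)$, and its canonical semifinite, normal, faithful trace $\mathrm{Tr}$ normalized by $\mathrm{Tr}(xe_Qy)=\tau(xy)$ for $x,y\in M$. The starting computation is the identity $e_Qae_Q=E_Q(a)e_Q$ for $a\in M$, from which one reads off, for $x,y\in M$ and $u\in\mathcal U(P)$, an expression for $\|E_Q(xuy)\|_2$ in terms of $\mathrm{Tr}$ applied to products of $ue_Qu^*$ with the $\mathrm{Tr}$-finite operators $xe_Qy$. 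The upshot I would record as a lemma: the existence of a sequence $u_n\in\mathcal U(P)$ with $\|E_Q(xu_ny)\|_2\to0$ for all $x,y\in M$ is equivalent to $u_ne_Qu_n^*\to0$ ultraweakly, i.e.\ $\mathrm{Tr}(u_ne_Qu_n^*\,T)\to0$ for every $T$ in the $\mathrm{Tr}$-finite ideal of $\langle M,e_Q\rangle$. Thus the negation of (2) becomes a purely Hilbert-space statement about the orbit $\{ue_Qu^*:u\in\mathcal U(P)\}$.

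Next I would run the minimal-norm dichotomy. Each $ue_Qu^*$ is a projection with $\mathrm{Tr}(ue_Qu^*)=1$, hence lies in $L^2(\langle M,e_Q\rangle,\mathrm{Tr})$; let $\mathcal{K}$ be the $\|\cdot\|_{2,\mathrm{Tr}}$-closed convex hull of this orbit. The set $\mathcal{K}$ is convex, closed, consists of positive elements of trace at most $1$, and is invariant under conjugation by $\mathcal U(P)$. By closed convexity in a Hilbert space there is a unique element $a\in\mathcal{K}$ of minimal $\|\cdot\|_{2,\mathrm{Tr}}$-norm; uniqueness together with $\mathcal U(P)$-invariance forces $uau^*=a$ for every $u\in\mathcal U(P)$, so $a\in P'\cap\langle M,e_Q\rangle$, $a\ge0$, and $\mathrm{Tr}(a)<\infty$. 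The dichotomy is then: either $0\in\mathcal{K}$, in which case a diagonal extraction (here the separability hypothesis is used to replace convex combinations by an honest sequence) produces $u_n$ with $u_ne_Qu_n^*\to0$, giving the negation of (2); or $0\notin\mathcal{K}$, so $a\neq0$ and a suitable spectral projection $f$ of $a$ is a nonzero projection in $P'\cap\langle M,e_Q\rangle$ with $\mathrm{Tr}(f)<\infty$. Conversely a Hahn--Banach/Mazur argument shows that any sequence as in the negation of (2) returns $0$ to $\mathcal{K}$, so that the negation of (2) holds if and only if $0\in\mathcal{K}$.

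It then remains to identify the geometric condition $0\notin\mathcal{K}$ with condition (1). For the implication toward (1), I would extract the intertwiner from the nonzero $\mathrm{Tr}$-finite projection $f\in P'\cap\langle M,e_Q\rangle$: its range $f(L^2(M))$ is a right $Q$-submodule of $L^2(M)$ that is finitely generated as a right $Q$-module (because $\mathrm{Tr}(f)<\infty$) and invariant under left multiplication by $P$ (because $f$ commutes with $P$). An orthonormalization of a finite right-$Q$-generating set, followed by polar decomposition of an appropriate element, then produces nonzero projections $p\in P$, $q\in Q$, a normal $*$-homomorphism $\phi\colon pPp\to qQq$, and a nonzero partial isometry $v\in qMp$ with $\phi(x)v=vx$ for $x\in pPp$. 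For the reverse implication I would turn the data $(p,q,\phi,v)$ into the nonzero $\mathrm{Tr}$-finite element $f_0=ve_Qv^*\in\langle M,e_Q\rangle$ and use the intertwining relation to show that the pairing $\langle b,f_0\rangle_{\mathrm{Tr}}$ is bounded below by a positive constant uniformly over $b\in\mathcal{K}$; by Cauchy--Schwarz this keeps $\mathcal{K}$ at positive distance from $0$, i.e.\ $0\notin\mathcal{K}$. Chaining the equivalences gives $(1)\iff 0\notin\mathcal{K}\iff(2)$.

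The main obstacle I expect is the bimodule-to-intertwiner extraction in the last paragraph: passing from an abstract $P$-central, $\mathrm{Tr}$-finite projection in $\langle M,e_Q\rangle$ to the concrete algebraic data $(p,q,\phi,v)$ inside $M$. This step relies on the structure theory of the basic construction---the correspondence between $\mathrm{Tr}$-finite projections in $\langle M,e_Q\rangle$ and finitely generated right $Q$-submodules of $L^2(M)$---and on a careful orthonormalization and polar-decomposition argument both to build $\phi$ and to guarantee its normality together with $v\neq0$. A secondary technical point is the diagonal extraction in the dichotomy, where the separability of $M$ is genuinely needed to pass from nets, or from approximating convex combinations, to a single sequence $u_n$.
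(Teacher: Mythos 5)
First, a point of reference: the paper does not prove this statement at all --- it is quoted verbatim from Popa \cite{Po03} and used as a black box --- so your proposal can only be measured against Popa's original argument. Your overall architecture (basic construction with its semifinite trace, a convexity/minimal-norm dichotomy, extraction of $(p,q,\phi,v)$ from a $\mathrm{Tr}$-finite $P$-central projection, separability to replace nets by sequences) is the right one, and the two technical points you flag at the end are indeed where the real work lies.

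However, the central dichotomy as you state it contains a genuine gap: the claimed equivalence $(1)\Leftrightarrow 0\notin\mathcal{K}$, with $\mathcal{K}$ the $\|\cdot\|_{2,\mathrm{Tr}}$-closed convex hull of the single orbit $\{ue_Qu^*: u\in\mathcal{U}(P)\}$, is false. Every element of $\mathcal{K}$ is supported on the $P$-$Q$ sub-bimodule of $L^2(M)$ generated by $\widehat{1}$, whereas the bimodule witnessing condition (1) may be orthogonal to it. Concretely, take $M=L(\mathbb{F}_2)=L(\langle a,b\rangle)$, $Q=L(\langle a\rangle)$ and $P=L(\langle bab^{-1}\rangle)$. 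Condition (1) holds with $p=q=1$, $v=u_{b^{-1}}$ and $\phi(u_{(bab^{-1})^n})=u_{a^n}$, and condition (2) holds as well, since $u_{b^{-1}}\mathcal{U}(P)u_b\subset\mathcal{U}(Q)$ forces $\|E_Q(u_{b^{-1}}wu_b)\|_2=1$ for every $w\in\mathcal{U}(P)$. Nevertheless $0\in\mathcal{K}$: since $(bab^{-1})^k\notin\langle a\rangle$ for $k\neq 0$, one computes $\bigl\|\tfrac{1}{N}\sum_{n=1}^{N}u_{(bab^{-1})^n}e_Qu_{(bab^{-1})^{-n}}\bigr\|_{2,\mathrm{Tr}}^2=\tfrac{1}{N}\to 0$; moreover $\langle b, ve_Qv^*\rangle_{\mathrm{Tr}}=0$ for every $b\in\mathcal{K}$, so the uniform lower bound you invoke in your last paragraph fails too. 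Thus both links of your chain $(1)\Leftrightarrow 0\notin\mathcal{K}\Leftrightarrow(2)$ break. The repair is the one in Popa's proof: the negation of ``there is a sequence as in (2)'' supplies a finite set $F\subset M$ and $\varepsilon>0$ with $\sum_{x,y\in F}\|E_Q(xuy)\|_2^2\geq\varepsilon^2$ for all $u\in\mathcal{U}(P)$, which reads $\mathrm{Tr}(S\,uS'u^*)\geq\varepsilon^2$ with $S=\sum_{x\in F}x^*e_Qx$ and $S'=\sum_{y\in F}ye_Qy^*$; one then runs the minimal-norm argument on the orbit of $S'$ rather than of $e_Q$, obtaining a nonzero positive $\mathrm{Tr}$-finite element of $P'\cap\langle M,e_Q\rangle$, and conversely uses positivity of each term $\mathrm{Tr}(S\,u_iS'u_i^*)$ in a small convex combination to recover a single unitary. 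With that modification the rest of your outline goes through.
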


If one of the equivalent conditions of Theorem \ref{corner} holds true,  then we say that {\it a corner of $P$ embeds into $Q$ inside $M$} and write $P\preceq_{M}Q$. If $Pp'\preceq_{M}Q$, for any non-zero projection $p'\in P'\cap 1_PM1_P$, then we write $P\preceq^{s}_{M}Q$.

Now we establish a several facts which are essential in the proof of the main result. The first result provides a precise ``location'' of normalizers of various subalgebras in crossed products arising from actions by malnormal subgroups. The result is largely inspired  from \cite[Theorem 6.1]{Po81} and \cite[Theorem 3.1]{Po03} and a proof is included only for the sake of completeness.
  
\begin{prop}\label{int}Let $\theta\colon\G\ra \La$ be a group homomorphism and let $L <\La$ be a malnormal subgroup. Let $\G\ca A$ be a trace preserving action on a finite von Neumann algebra, consider the crossed products $M=A\rtimes \G$, $N=A\rtimes \theta^{-1}(\La)$, $Q=A\rtimes \ker(\theta)$ and notice that $Q\subset N\subset M$. Let $p\in N$ be a projection and let $R\subset pNp$ be a von Neumann subalgebra such that $R\npreceq_N Q$. Then $\mathcal N_{pMp}(R)''\subset pNp$.\end{prop}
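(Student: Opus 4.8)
The goal is to show that $\mathcal N_{pMp}(R)'' \subset pNp$, where $R \subset pNp$ with $R \npreceq_N Q$, and $Q = A \rtimes \ker(\theta) \subset N = A \rtimes \theta^{-1}(L) \subset M = A \rtimes \Gamma$. The strategy is the classical Popa-type approach of decomposing $M$ as an $N$-bimodule and analyzing how the normalizer $\mathcal N_{pMp}(R)$ interacts with the coset structure of $\Gamma$ relative to $\theta^{-1}(L)$. The key group-theoretic input available is Proposition \ref{malnorm}, which tells us that for $h, k \in \Gamma \setminus \theta^{-1}(L)$, the double-coset intersection $h\theta^{-1}(L)k^{-1} \cap \theta^{-1}(L)$ is either empty or a single coset $\omega\ker(\theta)$. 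This "small intersection" property is exactly what converts malnormality of $L$ in $\Lambda$ into the kind of coset-control needed for a bimodule argument.

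\textbf{First I would} set up the $N$-$N$ bimodule decomposition $L^2(M) = L^2(N) \oplus \bigoplus_{c} \mathcal H_c$, where $c$ ranges over the nontrivial double cosets $\theta^{-1}(L)\,g\,\theta^{-1}(L)$ with $g \in \Gamma \setminus \theta^{-1}(L)$, and $\mathcal H_c$ is the closed span of $\{a u_\gamma : \gamma \in c,\ a \in A\}$. \textbf{Then I would} argue, using Proposition \ref{malnorm} together with $R \npreceq_N Q$, that each off-diagonal bimodule $\mathcal H_c$ is "small" as an $R$-$N$ bimodule: concretely, that $R \npreceq_N Q$ forces $R$ to have no almost-invariant vectors inside any $\mathcal H_c$ with $c$ nontrivial. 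The mechanism is that a hyperbolic/malnormal coset intersection $h\theta^{-1}(L)k^{-1} \cap \theta^{-1}(L) = \omega\ker(\theta)$ means that the relevant piece of the bimodule is, as a $Q$-$Q$ (hence $R$-controlled) object, finitely generated over a single coset of $\ker(\theta)$; this is precisely the configuration that would yield $R \preceq_N Q$ if such almost-invariant vectors existed. This is the step where the hypothesis $R \npreceq_N Q$ is consumed.

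\textbf{Next}, given a unitary $u \in \mathcal N_{pMp}(R)$, I would expand $u = \sum_{c} u_c$ along the bimodule decomposition and use the relation $u R u^* = R$ to show, via the preceding smallness, that the off-diagonal components $u_c$ (for $c$ nontrivial) must vanish, so that $u \in pNp$. The standard way to organize this is to test $\|E_N(x u y)\|_2$ against the control coming from $R \npreceq_N Q$, exploiting that $R u^* \subset u^* R$ and that $R$ spreads mass across $N$ in a way incompatible with concentration on the finite-coset pieces $\mathcal H_c$. Summing over a generating set of normalizing unitaries then yields $\mathcal N_{pMp}(R)'' \subset pNp$.

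\textbf{The hardest step} will be the second one: converting the purely combinatorial coset statement of Proposition \ref{malnorm} into a genuine von Neumann algebraic non-intertwining estimate, i.e., showing rigorously that $R \npreceq_N Q$ prevents $R$ from admitting a nonzero $R$-central (or almost-invariant) vector in the off-diagonal bimodules $\mathcal H_c$. This requires the careful Popa-style argument from \cite[Theorem 3.1]{Po03} and \cite[Theorem 6.1]{Po81}, tracking partial isometries and the intertwining characterization of Theorem \ref{corner}; the malnormality ensures the stabilizers degenerate to $\ker(\theta)$, which is exactly the subgroup whose crossed product is $Q$, so that any surviving intertwiner would have to land in $Q$, contradicting $R \npreceq_N Q$. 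Once that estimate is in place, the remaining bookkeeping is routine.
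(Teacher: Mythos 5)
Your proposal is correct and follows essentially the same route as the paper's proof: the paper takes $u\in\mathcal N_{pMp}(R)$, sets $v=u-E_N(u)$, approximates $v$ by a finitely supported element with Fourier coefficients over $\Gamma\setminus\theta^{-1}(L)$, and uses Proposition \ref{malnorm} to identify $\|E_N(u_\gamma x_n u_{\lambda^{-1}})\|_2$ with $\|E_Q(u_{\omega^{-1}}x_n)\|_2$, which vanishes along the sequence of unitaries $(x_n)\subset\mathcal U(R)$ furnished by $R\npreceq_N Q$. Your bimodule-decomposition language over double cosets of $\theta^{-1}(L)$ is just a repackaging of this same Popa-style estimate, and the step you flag as hardest is exactly the computation the paper carries out.
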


\begin{proof} To get our conclusion it suffices to show that $\mathcal N_{pMp}(R)\subset pNp$. Fix $u\in \mathcal N_{pMp}(R)$ and let $\theta\colon R\ra R$ be a $*$-isomorphism such that $ux=\theta(x)u$, for all $x\in R$. Since $R\subset pNp$, this further implies that for all $x\in R$ we have \begin{equation}\label{2001}
vx=\theta(x)v, 
\end{equation}
where $v=u-E_N(u)$.

Since $R\npreceq_N Q$, by Theorem \ref{corner} one can find a sequence of unitaries $(x_n)_n\subset \mathcal U(R)$ such that \begin{equation}\label{2002}
\lim_{n\ra\infty}\|E_Q(ax_nb)\|_2=0, \text{ for all }a,b\in N.
\end{equation}
Fix $\ve>0$. By the Kaplansky Density Theorem there exist $v_\ve\in M$ and a finite subset $F_\ve\subset \G\setminus \theta^{-1}(L)$ such that $\|v_\ve\|_\infty\leq 2$, $\|v-v_\ve\|_2\leq \ve$, and $v_\ve$ belongs to the linear span of $\{ \, a_\g u_\g \mid a_\g\in A, \g\in F_\ve \, \}$. These estimates combined with (\ref{2001}) and Cauchy-Schwarz inequality lead to  
\begin{equation}\label{2003}
\begin{split}
\|v\|^2_2&=\tau( \theta(x^*_n) v x_n v^*)\leq 4\ve + \tau( \theta(x^*_n) v_\ve x_n v^*_\ve)\\
& \leq 4\ve +\|E_N(v_\ve x_n v^*_\ve)\|_2\leq 4\ve +4\sum_{\g,\la \in F_\ve} \|E_N(u_\g x_n u_{\la^{-1}})\|_2.
\end{split}
\end{equation}

Using the Fourier expansion of $x_n$ and Proposition \ref{malnorm}, basic calculations show that for every $\g,\la \in F_\ve$ there exists $\omega \in \theta^{-1}(L)$ such that \begin{equation*}\begin{split}
|E_N(u_\g x_n u_{\la^{-1}})\|^2_2&=\sum_{g\in \theta^{-1}(L)\cap \g^{-1} \theta^{-1}(L)\la} \|E_A(x_n u_{g^{-1}})\|^2_2 \\& =\sum_{g\in \omega \ker(\theta)} \|E_A(x_n u_{g^{-1}})\|^2_2=\|E_Q(u_{\omega^{-1}}x_n)\|^2_2.\end{split}
\end{equation*}
This further implies the existence of a finite subset $K_\ve\subset \theta^{-1}(L)$ and a positive integer $C_\ve\geq 1$ such that $\sum_{\g,\la \in F_\ve} \|E_N(u_\g x_n u_{\la^{-1}})\|_2\leq C_\ve \sum_{\omega \in K_\ve}\|E_Q(u_{\omega^{-1}}x_n)\|_2$. Notice that $C_\ve$ can be taken the highest number of repetitions of the same $\omega$ for different $\g$ and $\la$ as defined in the previous paragraph. Combining the previous inequality with (\ref{2003}) we obtain $\|v\|^2_2\leq 4\ve+4C_\ve \sum_{\omega \in K_\ve}\|E_Q(u_{\omega^{-1}}x_n)\|_2$. Since $K_\ve$ is finite then taking the limit over $n$ and using (\ref{2002}) we get $\|v\|^2_2\leq 4\ve$. Since $\ve>0$ was arbitrary we conclude that $v=0$ and hence $u=E_N(u)\in pNp$.\end{proof}

\begin{notation}\label{comu} Assume that $\G$ and $\La$ are countable groups and let $\delta\colon\G\ra \La$ be a group homomorphism. Let $\G\ca^\sigma (A,\tau)$ be a trace preserving action on a tracial von Neumann algebra $(A,\tau)$ and denote by $M=A\rtimes\G$ the corresponding crossed product von Neumann algebra.  We denote by $\{u_\g\}_{\g\in\Gamma}\subset L\Gamma$ and $\{v_\la\}_{\la\in\Lambda}\subset L\Lambda$ the canonical unitaries.
Consider the $*$-homomorphism $\Delta\colon M\rightarrow M\bar{\otimes}L\Lambda$ defined by 
 \begin{equation*}
\Delta(au_\g)=au_\g\otimes v_{\delta(\g)}\quad \text{for all $a\in A$ and $\g\in\Gamma$}.
\end{equation*} 
\end{notation}

Next we show an intertwining result for subalgebras of the form $\Delta(P)$ where $P\subset M$ is a subalgebra. Many of these are straightforward generalizations  of results from \cite{CIK13} and some proofs will be included only for the reader's convenience. 

\begin{prop} \label{intpreim}Assume we are in the setting from Notation \ref{comu}. Let $p\in M$ be a projection, let $P\subset pMp$ be a von Neumann subalgebra,  and let $\Sigma < \Lambda$ be a subgroup. 
If $\Delta(P)\preceq_{M\bar{\otimes}L\Lambda}M\bar{\otimes}L\Sigma$, then there exists $\la \in \La$ such that $P\preceq_{M}A\rtimes\delta^{-1}(\la \Sigma\la^{-1})$.\end{prop}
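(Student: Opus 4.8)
The plan is to argue through Popa's criterion (Theorem \ref{corner}) by turning the single intertwiner for $\Delta(P)$ into an explicit uniform lower bound, and then localizing. Suppose $\Delta(P)\preceq_{M\bar{\otimes}L\Lambda}M\bar{\otimes}L\Sigma$. Since $\Delta$ is trace preserving, hence $\|\cdot\|_2$-isometric and injective, it restricts to a $*$-isomorphism onto $\Delta(P)$ and so $\mathcal{U}(\Delta(P))=\Delta(\mathcal{U}(P))$. The failure of condition (2) in Theorem \ref{corner} therefore provides finitely many elements $a_1,b_1,\dots,a_k,b_k\in M\bar{\otimes}L\Lambda$ and a constant $C>0$ with $\sum_{i=1}^k\|E_{M\bar{\otimes}L\Sigma}(a_i\Delta(w)b_i)\|_2^2\geq C$ for every $w\in\mathcal{U}(P)$. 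Approximating the $a_i,b_i$ in $\|\cdot\|_2$ by $\|\cdot\|_\infty$-bounded elements of the algebraic span of $\{au_\gamma\otimes v_\mu: a\in A,\ \gamma\in\Gamma,\ \mu\in\Lambda\}$ and expanding the resulting sums, I may assume (after enlarging $k$ and replacing $C$ by $C/2$) that each $a_i=a_i^0u_{\gamma_i}\otimes v_{\mu_i}$ and $b_i=b_i^0u_{\gamma_i'}\otimes v_{\nu_i}$ is a single monomial with $a_i^0,b_i^0\in A$.

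The heart of the argument is a direct computation. For $w=\sum_\gamma w_\gamma u_\gamma\in\mathcal{U}(P)$, expanding $a_i\Delta(w)b_i$ shows that its $(M\bar{\otimes}L\Sigma)$-component is a sum, over those $\gamma$ with $\delta(\gamma)\in\mu_i^{-1}\Sigma\nu_i^{-1}$, of mutually orthogonal vectors whose $A$-coefficients have $\|\cdot\|_2$-norm at most $\|a_i^0\|_\infty\|b_i^0\|_\infty\|w_\gamma\|_2$ (orthogonality holds because distinct $\gamma$ yield distinct group elements $\gamma_i\gamma\gamma_i'$ in the $M$-leg). If $\{\gamma:\delta(\gamma)\in\mu_i^{-1}\Sigma\nu_i^{-1}\}$ is empty the $i$-th term vanishes; otherwise, fixing one $\gamma_i''$ with $\delta(\gamma_i'')\in\mu_i^{-1}\Sigma\nu_i^{-1}$, a short coset manipulation identifies this set with $\delta^{-1}(\mu_i^{-1}\Sigma\mu_i)\gamma_i''$. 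Writing $\lambda_i=\mu_i^{-1}$ and $Q_{\lambda_i}=A\rtimes\delta^{-1}(\lambda_i\Sigma\lambda_i^{-1})$, and using $\sum_{\gamma\in\delta^{-1}(\lambda_i\Sigma\lambda_i^{-1})\gamma_i''}\|w_\gamma\|_2^2=\|E_{Q_{\lambda_i}}(w\,u_{(\gamma_i'')^{-1}})\|_2^2$, this gives $\|E_{M\bar{\otimes}L\Sigma}(a_i\Delta(w)b_i)\|_2\leq\|a_i^0\|_\infty\|b_i^0\|_\infty\|E_{Q_{\lambda_i}}(w\,u_{(\gamma_i'')^{-1}})\|_2$. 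Summing over $i$, I obtain a bound of the form $\sum_{i=1}^k c_i\,\|E_{Q_{\lambda_i}}(w\,u_{(\gamma_i'')^{-1}})\|_2^2\geq C/2$ valid for all $w\in\mathcal{U}(P)$, with positive constants $c_i$, involving only the finitely many subalgebras $Q_{\lambda_1},\dots,Q_{\lambda_k}$.

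It remains to pass from this joint lower bound to intertwining into a single $Q_{\lambda_i}$. This is exactly the statement that intertwining into a finite family of subalgebras localizes to one of its members: if $P\npreceq_M Q_{\lambda_i}$ held for every $i$, then for each $i$ there would be unitaries in $P$ driving $\|E_{Q_{\lambda_i}}(x\,\cdot\,y)\|_2$ to zero for all $x,y\in M$, and a standard diagonal argument built on Theorem \ref{corner} (using separability of $M$) would produce a single sequence $w_n\in\mathcal{U}(P)$ with $\sum_i c_i\|E_{Q_{\lambda_i}}(w_n u_{(\gamma_i'')^{-1}})\|_2^2\to 0$, contradicting the bound just obtained. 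Hence $P\preceq_M A\rtimes\delta^{-1}(\lambda_i\Sigma\lambda_i^{-1})$ for some $i$, which is the assertion with $\lambda=\lambda_i$. I expect this last localization to be the main technical point: the single intertwiner for $\Delta(P)$ only controls a \emph{sum} of $E_{Q_{\lambda_i}}$-quantities, so extracting intertwining into one fixed $Q_{\lambda_i}$ genuinely requires the finite-family intertwining lemma rather than a naive per-$\lambda$ argument. The other place demanding care is the coset book-keeping of the second paragraph that pins down the correct conjugate $\lambda_i\Sigma\lambda_i^{-1}$ and the twisting element $\gamma_i''$.
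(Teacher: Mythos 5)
Your proof is correct and follows essentially the same route as the paper: the same Fourier expansion of $a\,\Delta(w)\,b$, the same identification of $\{\gamma:\lambda_1\delta(\gamma)\lambda_2\in\Sigma\}$ with a coset $\delta^{-1}(\lambda_1\Sigma\lambda_1^{-1})\gamma_0$, and the same bound by $\|E_{A\rtimes\delta^{-1}(\lambda_1\Sigma\lambda_1^{-1})}(w\,u_{\gamma_0^{-1}})\|_2$; the paper merely runs the argument contrapositively, assuming $P\npreceq_M A\rtimes\delta^{-1}(\lambda\Sigma\lambda^{-1})$ for every $\lambda$ and producing a single sequence $(x_n)\subset\mathcal U(P)$ killing all of these expectations simultaneously, which makes your final localization step unnecessary there. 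One caveat: the finite-family localization you invoke at the end is a true and standard fact, but it is not obtained by a diagonal argument over the separate sequences witnessing $P\npreceq_M Q_{\lambda_i}$ (those sequences need not be compatible); the usual proof passes to the direct sum $\bigoplus_i M\supset\bigoplus_i Q_{\lambda_i}$ and applies Theorem \ref{corner} there, and the same fact in its countable form is exactly what the paper tacitly uses when it extracts its single sequence $(x_n)$.
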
 

\begin{proof} We proceed by contradiction; so assume that $P\npreceq_{M}A\rtimes\delta^{-1}(\la \Sigma\la^{-1})$, for all $\la \in \La$. Thus there exists a sequence of unitaries $(x_n)_n\subset \mathcal U(P)$ such that for all $x,y\in M$, $\la \in \La$ we have   
\begin{equation}\label{1}\lim_{n\ra \infty}\|E_{A\rtimes\delta^{-1}(\la \Sigma\la^{-1})}(xx_n y)\|_2=0.
\end{equation}
In the remaining part we will show that (\ref{1}) implies that for all $z,t\in M\bar \otimes L\La$ we have    
\begin{equation}\label{2}
\lim_{n\ra \infty}\|E_{M\bar \otimes L\Sigma}(z\Delta(x_n) t)\|_2=0,
\end{equation}
which, by Theorem \ref{corner}, further gives that $\Delta(P)\npreceq_{M\bar{\otimes}L\Lambda}M\bar{\otimes}L\Sigma$, a contradiction.

Using basic approximations it suffices to show (\ref{2}) only for elements of the form $z=au_{\g_1}\otimes v_{\la_1}$ and $t= b u_{\g_2} \otimes v_{\la_2} $, where $a,b \in A$, $\g_1,\g_2\in \G$, and $\la_1,\la_2\in\Lambda$.

If we consider the Fourier  expansion $x_n= \sum_\g x^n_\g u_\g$ where $a^n_\g\in A$, for all $\g\in \G$ and $n\in \mathbb N$ then a basic calculation shows  that 
\begin{equation}\label{3}\begin{split}
\|E_{M\bar \otimes L\Sigma}(z\Delta(x_n) t)\|^2_2 & =\|E_{M\bar \otimes L\Sigma}\left( \sum_{\g\in \G} au_{\g_1} x^n_\g bu_{\g_2} \otimes v_{\la_1} v_{\delta(\g)}v_{\la_2}\right) \|^2_2\\
&= \sum_{\substack{\g \in \G \\\la_1\delta(\g)\la_2\in \Sigma}   }\| a\sigma_{\g_1}(x^n_\g b) \|^2_2=\sum_{\substack{\g \in \G \\\la_1\delta(\g)\la_2\in \Sigma}   }\|\sigma_{\g^{-1}_1} (a) x^n_\g b \|^2_2.
\end{split}\end{equation} 

Let $F=\{ \, \g\in \G \mid \la_1 \delta(\g)\la_2\in \Sigma \, \}$. If $F=\emptyset$ then we already get (\ref{2}), so assume there exists $\g_0 \in F$. A straightforward calculation then shows that for every $\g\in F$ we have $\la_1 \delta(\g\g^{-1}_0)\la^{-1}_1=\la_1 \delta(\g)\delta(\g^{-1}_0)\la^{-1}_1 =\la_1 \delta(\g)\la_2 \la^{-1}_2(\delta(\g_0))^{-1}\la^{-1}_1$ $ =\la_1 \delta(\g)\la_2(\la_1 \delta(\g_0)\la_2)^{-1}\in \Sigma$, and hence $\g \in \delta^{-1}(\la_1 \Sigma \la^{-1}_1)\g_0$. In conclusion, $F\subset \delta^{-1}(\la_1 \Sigma \la^{-1}_1)\g_0$ and the last quantity in (\ref{3}) is smaller than
\begin{equation*}\label{5}\begin{split}
& \leq \sum_{\g \in \delta^{-1}(\la_1 \Sigma \la^{-1}_1)\g_0 }\|\sigma_{\g^{-1}_1} (a) x^n_\g b \|^2_2\\
&\leq \|a\|_\infty \|b\|_\infty \sum_{\g \in \delta^{-1}(\la_1 \Sigma \la^{-1}_1)\g_0 }\| x^n_{\g}\|^2_2\\
&\leq \|a\|_\infty \|b\|_\infty \|E_{A\rtimes \delta^{-1}(\la_1 \Sigma \la^{-1}_1)}( x_n u_{\g^{-1}_0})\|^2_2
\end{split}\end{equation*}     

Letting $x=1$ and $y= u_{\g^{-1}_0}$ in (\ref{1}) we see that the last quantity above converges to $0$ as $n\ra \infty$ and as a consequence we get (\ref{2}).
\end{proof}

\begin{definition}\label{ramgr} Let $C$ be a countable group and let $A,B< C$ be subgroups. Consider the quasi-regular representation $\lambda_B \colon C\ra \mathcal U(\ell^2(C/B))$. We say that \emph{$A$ is amenable relative to $B$ inside $C$} if there exists a sequence of unit vectors $(\xi_n)_n \subset \ell^2(C/B)$ such that $\lim_{n\ra\infty} \|(\la_B)_a(\xi_n)-\xi_n\|_2=0$ for all $a\in A$. Here $\|\cdot\|_2$ is the $2$-norm of the Hilbert space $\ell^2(C/B)$.
\end{definition}

For further reference we also notice the following proposition whose proof will be omitted since it is very similar to the proof of \cite[Proposition 3.5]{CIK13}.

\begin{prop}\label{rel-am}Assume we are in the setting from Notation \ref{comu}. Let $p\in M$ be a projection  and let $\Sigma < \Lambda$ be a subgroup. 
If $\Delta(pMp)$ is amenable relative to $M\bar{\otimes}L\Sigma$ inside  $M\bar{\otimes}L\Lambda$, in the sense of \cite[Definition 2.2]{OP07}, then the group $\delta(\G)$ is amenable relative to $\Sigma$ inside $\La$ as in Definition \ref{ramgr}. \end{prop}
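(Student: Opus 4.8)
The plan is to transfer the von Neumann algebraic relative amenability to the group level by realizing the Jones basic construction appearing in \cite[Definition 2.2]{OP07} in a concrete form. Writing $\tilde M=M\bar{\otimes}L\Lambda$ and $Q=M\bar{\otimes}L\Sigma$, I would begin by noting that the basic construction of the inclusion $Q\subset \tilde M$ splits as $\langle \tilde M,e_Q\rangle \cong M\bar{\otimes}\langle L\Lambda,e_{L\Sigma}\rangle$, and that, by the standard spatial computation for group inclusions, $\langle L\Lambda,e_{L\Sigma}\rangle \cong B(\ell^2(\Lambda/\Sigma))\bar{\otimes}L\Sigma$. Under this identification the canonical unitary $v_\mu\in L\Lambda$ becomes an operator $W_\mu\in B(\ell^2(\Lambda/\Sigma))\bar{\otimes}L\Sigma$ whose component on the first tensor factor is the quasi-regular representation $\lambda_{\Lambda/\Sigma}(\mu)$ and whose component on the second factor is an $L\Sigma$-valued cocycle. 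The crucial point is that on the diagonal subalgebra $\ell^\infty(\Lambda/\Sigma)\otimes 1$ the cocycle is absorbed: a direct computation with coset representatives gives $W_\mu(f\otimes 1)W_\mu^*=\mathrm{Ad}(\lambda_{\Lambda/\Sigma}(\mu))(f)\otimes 1$ for every $f\in \ell^\infty(\Lambda/\Sigma)$, i.e. conjugation by $W_\mu$ restricts to the genuine permutation action of $\mu$ on $\Lambda/\Sigma$.

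Granting the relative amenability hypothesis, \cite[Definition 2.2]{OP07} supplies a $\Delta(pMp)$-central state $\Phi$ on the corner $\Delta(p)\langle \tilde M,e_Q\rangle\Delta(p)$ whose restriction to $\Delta(p)\tilde M\Delta(p)$ is the normalized trace. Assuming for the moment that $p=1$, I would define a state $\psi$ on $B(\ell^2(\Lambda/\Sigma))$ by $\psi(T)=\Phi(1\otimes T\otimes 1)$ and restrict it to the diagonal $\ell^\infty(\Lambda/\Sigma)$. To see that this restriction is invariant under the action of $\delta(\Gamma)$, recall that $\Delta(u_\gamma)=u_\gamma\otimes v_{\delta(\gamma)}$ becomes $u_\gamma\otimes W_{\delta(\gamma)}$, so that for $f$ in the diagonal the previous computation yields $\Delta(u_\gamma)(1\otimes f\otimes 1)\Delta(u_\gamma)^*=1\otimes \mathrm{Ad}(\lambda_{\Lambda/\Sigma}(\delta(\gamma)))(f)\otimes 1$. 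Since $\Delta(u_\gamma)\in \Delta(M)$ is a unitary and $\Phi$ is $\Delta(M)$-central, applying $\Phi$ to both sides gives $\psi(\mathrm{Ad}(\lambda_{\Lambda/\Sigma}(\delta(\gamma)))f)=\psi(f)$ for all $\gamma\in\Gamma$.

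Thus $\psi$ restricts to a $\delta(\Gamma)$-invariant mean on $\ell^\infty(\Lambda/\Sigma)$, where $\delta(\Gamma)$ acts through the permutation action on $\Lambda/\Sigma$. By the classical equivalence between the existence of an invariant mean on $\ell^\infty(\Lambda/\Sigma)$ and the existence of almost invariant unit vectors in $\ell^2(\Lambda/\Sigma)$ (the Reiter--F\o{}lner / Hulanicki condition for the amenable action $\delta(\Gamma)\ca \Lambda/\Sigma$), this is precisely the statement that $\delta(\Gamma)$ is amenable relative to $\Sigma$ inside $\Lambda$ in the sense of Definition \ref{ramgr}.

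Finally, to remove the simplifying assumption $p=1$, I would invoke the standard stability of relative amenability under passage between a corner with full central support and the whole algebra, which reduces the general projection $p$ to the unital case; alternatively one normalizes $\psi$ by $\tau(p)^{-1}$ and compresses throughout by $\Delta(p)$, using that $\Delta$ is trace preserving so that $\Phi(\Delta(p))=\tau(p)\neq 0$. I expect the main technical point to be the cocycle-absorption computation ensuring that conjugation by $\Delta(u_\gamma)$ induces exactly the honest $\Lambda/\Sigma$ permutation on the diagonal: it is this cancellation of the $L\Sigma$-cocycle that makes the von Neumann algebraic centrality descend to genuine group invariance, while the handling of the projection $p$ is comparatively routine.
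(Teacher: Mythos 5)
Your argument is essentially the intended one: the paper omits the proof, deferring to \cite[Proposition 3.5]{CIK13}, and that proof proceeds exactly as you do --- identify $\langle M\bar{\otimes}L\Lambda, e_{M\bar{\otimes}L\Sigma}\rangle$ with $M\bar{\otimes}\bigl(B(\ell^2(\Lambda/\Sigma))\bar{\otimes}L\Sigma\bigr)$, observe that conjugation by $1\otimes v_\mu$ acts on the diagonal copy of $\ell^\infty(\Lambda/\Sigma)$ by the honest translation action (your cocycle-absorption computation), restrict the relatively amenable state to that diagonal to get a $\delta(\Gamma)$-invariant mean, and convert it into almost invariant $\ell^2(\Lambda/\Sigma)$-vectors via the Day/Mazur-map argument. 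The one soft spot is the corner: your second suggested fix (``normalize by $\tau(p)^{-1}$ and compress throughout by $\Delta(p)$'') does not work as written, because $\Phi$ is only $\Delta(pMp)$-central while $\Delta(u_\gamma)$ is not a unitary of the corner $\Delta(p)(M\bar{\otimes}L\Lambda)\Delta(p)$, so the identity $\psi(\delta(\gamma)\cdot f)=\psi(f)$ cannot be derived directly for non-central $p$. Your first fix is the correct one: by the standard permanence of relative amenability under compression by a projection of full central support, one replaces $p$ by its central support $z$ in $M$; since $z$ is central, $u_\gamma z$ \emph{is} a unitary of $zMz$, $\Delta(u_\gamma z)$ normalizes the corner, and the centrality computation then goes through verbatim. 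With that substitution the proof is complete.
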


A group $H$ is called \textit{CSA} if any maximal abelian subgroup of $H$ is malnormal in $H$ (\cite[Section 5]{MR96}). To properly state our main result we need to introduce a notation.

%\begin{notation}\label{not2} Let $\La$ be a non-elementary hyperbolic group. Let $H_1,\ldots, H_s$ be CSA groups such that for every $k=1,\ldots, s$, there exists a finite sequence of groups, $\La =\La^k_0,\ \La^k_1, \ldots , \La^k_{n_k}=H_k$, satisfying the following:
%For every $i=1,\ldots, n_k$, the group $\La^k_i$ is either
%\begin{itemize}
%\item an amalgamated free product  $\La^k_{i-1} \star_{A^k_i} B^k_i$, where we assume that $B^k_i$ is an abelian group, and that $A^k_i$ is a common proper subgroup of  $\La^k_{i-1}$ and $B^k_i$ and is malnormal in $\La^k_{i-1}$, or
%\item an HNN extension $\langle \, \La^k_{i-1}, t\mid tat^{-1}=\phi^k_i(a),\ a\in A^k_i\, \rangle$, where we assume that $A^k_i$ is an abelian, malnormal subgroup of $\La^k_{i-1}$, that $\phi^k_i \colon A^k_i\to \La^k_{i-1}$ is an injective homomorphism, and that $\la A^k_i\la^{-1}\cap \phi^k_i(A^k_i)$ is trivial for %any $\la \in \La^k_{i-1}$.
%\end{itemize}
%Assume that $\G_0$ is a group which embeds into the direct product $H_1\times \cdots \times H_s$. 
%\end{notation}

\begin{notation}\label{not2}Let $\G$ be a group, $\La$ be a non-elementary hyperbolic group and $H_1,\ldots, H_m$ be CSA groups. Assume for every $1\leq k\leq m$ the following exist:
\begin{enumerate}
\item[a)] A finite sequence of groups, $\La =\La^k_0,\ \La^k_1, \ldots , \La^k_{n_k}=H_k$, such that for every $i=1,\ldots, n_k$, the group $\La^k_i$ is either
\begin{itemize}
\item an amalgamated free product  $\La^k_{i-1} \star_{A^k_i} B^k_i$, where we assume that $B^k_i$ is an abelian group, and that $A^k_i$ is a common proper subgroup of  $\La^k_{i-1}$ and $B^k_i$ and is malnormal in $\La^k_{i-1}$, or
\item an HNN extension $\La^k_{i-1} \star_{A^k_i}=\langle \, \La^k_{i-1}, t\mid tat^{-1}=\phi^k_i(a),\ a\in A^k_i\, \rangle$, where we assume that $A^k_i$ is an abelian, malnormal subgroup of $\La^k_{i-1}$, that $\phi^k_i \colon A^k_i\to \La^k_{i-1}$ is an injective homomorphism, and that $\la A^k_i\la^{-1}\cap \phi^k_i(A^k_i)$ is trivial for any $\la \in \La^k_{i-1}$;
\end{itemize}
\item[b)] A group homomorphism  $\delta_k\colon \G \ra H_k$ such that $\bigcap^m_{l=1} \ker(\delta_l)=\{ e\}$. 
\end{enumerate}\end{notation}
\begin{Remark}\label{vab}
For any $i=1,\ldots, n_k$, Lemma \ref{lem-tree} is applicable to $\La^k_i=\La^k_{i-1} \star_{A^k_i} B^k_i$ or $\La^k_i=\La^k_{i-1} \star_{A^k_i}$. Notice that any amenable subgroup of $\La$ is virtually abelian.
By induction on $i$, it follows that any amenable subgroup of $H_k$ is virtually abelian.
\end{Remark}
  
\begin{lem}\label{dicho} Let $\G$ be an ICC group as in Notation \ref{not2}. Assume that $\G\ca (X,\mu)$ is  free, measure preserving action on a non-atomic probability space whose restriction to any non-amenable subgroup of $\G$ is ergodic. Denote by $M=L^{\infty}(X)\rtimes \G$ the corresponding group measure space von Neumann algebra and let $p\in M$ be a non-zero projection. Let $C\subset pMp$ be a masa such that $\mathcal N_{pMp}(C)''\subset pMp$ has finite index. Then for each $1\leq k\leq m$ one of the following holds:
\begin{enumerate}
\item [c)] $C\preceq^s_M L^\infty(X)\rtimes \ker(\delta_k)$;
\item [d)] $\delta_k(\G)$ is virtually abelian.
\end{enumerate} 
\end{lem}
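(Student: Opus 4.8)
The plan is to fix $k$, write $\delta=\delta_k$ and $H=H_k$, and exploit Popa's comultiplication $\Delta\colon M\to\tilde M:=M\bar\otimes LH$ from Notation~\ref{comu} associated with $\delta$, noting that $\Delta(L^\infty(X)\rtimes\ker\delta)\subseteq M\bar\otimes 1$ because $v_{\delta(\gamma)}=1$ whenever $\gamma\in\ker\delta$. The whole statement reduces to a single dichotomy for the amenable subalgebra $\Delta(C)\subset\tilde M$: I would show that either $\Delta(C)\preceq_{\tilde M}M\bar\otimes 1$, or $\Delta(pMp)$ is amenable relative to $M\bar\otimes 1$ inside $\tilde M$. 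Indeed, by Proposition~\ref{intpreim} (with $\Sigma=\{e\}$) the first alternative gives $C\preceq_M L^\infty(X)\rtimes\ker\delta$, which is conclusion (c) (the strong version $\preceq^s$ following by rerunning the argument on each corner $Cp'$ for a nonzero projection $p'\in C'\cap pMp$); while by Proposition~\ref{rel-am} (with $\Sigma=\{e\}$) together with Remark~\ref{vab} the second alternative forces $\delta(\Gamma)$ to be amenable, hence virtually abelian, which is conclusion (d).

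The core of the proof is a descent along the tower $\Lambda=\Lambda^k_0<\Lambda^k_1<\cdots<\Lambda^k_{n_k}=H$. Since $C$ is a masa it is abelian, so $\Delta(C)$ is amenable, and $\Delta(\mathcal N_{pMp}(C)'')$ normalizes $\Delta(C)$ with finite index in $\Delta(pMp)$. I would prove, by downward induction on $i$, that either (d) already holds or (after a unitary conjugation and passage to a corner) $\Delta(C)$ together with its normalizer is localized into $M\bar\otimes L\Lambda^k_i$; this is automatic for $i=n_k$. For the inductive step at a level $\Lambda^k_i=\Lambda^k_{i-1}\star_{A^k_i}B^k_i$ (resp. the HNN extension), I would work inside $M\bar\otimes L\Lambda^k_i\cong M\rtimes_{\mathrm{triv}}\Lambda^k_i$ over the base $M$ and invoke the Popa--Vaes/Ioana dichotomy for amalgamated free products and HNN extensions from \cite{PV12,Io12,Va13}: the amenable subalgebra $\Delta(C)$ either $(\alpha)$ intertwines into the edge algebra $M\bar\otimes LA^k_i$, or $(\beta)$ its normalizer is amenable relative to $M\bar\otimes L\Lambda^k_{i-1}$ or to $M\bar\otimes LB^k_i$.

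Closing the alternatives is where the malnormality and abelianness hypotheses are spent. In case $(\alpha)$ the abelian edge group $A^k_i$ is malnormal, so Proposition~\ref{int} (and the coset computation of Proposition~\ref{malnorm}) localizes the entire normalizer, hence its finite-index overgroup $\Delta(pMp)$, relative to $M\bar\otimes LA^k_i$; as $A^k_i$ is abelian, Proposition~\ref{rel-am} yields that $\delta(\Gamma)$ is amenable relative to an abelian subgroup, so amenable, and Remark~\ref{vab} gives (d). In case $(\beta)$ with the abelian vertex $B^k_i$ the same reasoning gives (d). In case $(\beta)$ with the vertex $\Lambda^k_{i-1}$ the inductive hypothesis is reproduced one step lower and the descent continues. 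After $n_k$ steps the descent terminates at the base $\Lambda=\Lambda^k_0$, a non-elementary hyperbolic group, where the bi-exactness dichotomy \cite{PV12} furnishes exactly the two alternatives of the first paragraph: $\Delta(C)\preceq_{\tilde M}M\bar\otimes 1$ or $\Delta(pMp)$ amenable relative to $M\bar\otimes 1$. Throughout, Lemma~\ref{lem-tree} and Remark~\ref{vab} guarantee that every amenable object produced inside $H_k$ is in fact virtually abelian, and the ergodicity of the restriction of $\Gamma\curvearrowright X$ to non-amenable subgroups is used to validate the intertwining steps.

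The main obstacle I anticipate is the bookkeeping of the descent. One must verify that the ``continue-descending'' alternative $(\beta)$--$\Lambda^k_{i-1}$ genuinely reproduces all hypotheses needed to reapply the dichotomy one level down --- amenability of $\Delta(C)$, finite-index control of its normalizer inside $\Delta(pMp)$, and the localization of both into $M\bar\otimes L\Lambda^k_{i-1}$ --- and that the successive conjugations and corner-reductions remain compatible so that the final appeal to bi-exactness at $\Lambda$ is legitimate. A secondary technical point is to upgrade, in case $(\alpha)$, the bare intertwining $\Delta(C)\preceq M\bar\otimes LA^k_i$ into the relative-amenability statement that feeds Proposition~\ref{rel-am}; here the malnormality of the $A^k_i$ (and the HNN triviality condition $\lambda A^k_i\lambda^{-1}\cap\phi^k_i(A^k_i)=\{e\}$) is precisely what makes Proposition~\ref{int} applicable and thus what makes the upgrade possible.
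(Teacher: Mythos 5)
Your overall architecture --- induction along the tower $\La=\La^k_0<\cdots<\La^k_{n_k}=H_k$, the hyperbolic base case via \cite{PV12} plus Propositions \ref{intpreim} and \ref{rel-am} with $\Sigma=\{e\}$, and Remark \ref{vab} to upgrade ``amenable'' to ``virtually abelian'' --- is the same as the paper's. But there are two genuine gaps in the inductive step. First, your handling of case $(\alpha)$ (intertwining of $\Delta(C)$ into the edge algebra $M\bar\otimes LA^k_i$) is wrong: you claim this always forces conclusion (d), but it can equally well produce conclusion (c). Indeed, if $C\preceq_M L^\infty(X)\rtimes\ker(\delta_k)$ already, then a fortiori $\Delta(C)$ intertwines into $M\bar\otimes LA^k_i$ without $\delta_k(\G)$ being virtually abelian. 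The paper handles this case by first pulling back via Proposition \ref{intpreim} to get $C\preceq_M A\rtimes\delta_k^{-1}(\mu A_n\mu^{-1})$ \emph{inside $M$}, enlarging $\mu A_n\mu^{-1}$ to a maximal abelian malnormal subgroup $\Sigma$ using the CSA property, and then splitting into two subcases according to whether the intertwined copy $\theta(Cz)$ embeds into $A\rtimes\ker(\delta_k)$ (giving (c)) or not --- only in the latter subcase is Proposition \ref{int} applicable (its hypothesis is precisely $R\npreceq_N Q$), and there it yields that $\delta_k^{-1}(\Sigma)$ has finite index, hence (d). Your route from ``$\Delta(C)$ intertwines into the edge algebra'' to ``$\Delta(pMp)$ is amenable relative to $M\bar\otimes LA^k_i$'' is not a valid implication: Proposition \ref{int} gives a containment of a normalizer in a subalgebra of $M$, not relative amenability in the comultiplication picture, and Proposition \ref{rel-am} needs the latter.

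Second, your statement of the amalgamated free product dichotomy is garbled, and this breaks the descent. The alternatives of \cite{Io12,Va13} for an amenable subalgebra of $(M\bar\otimes L\La_{n-1})\star_{M\bar\otimes LA_n}(M\bar\otimes LB_n)$ are: the subalgebra intertwines into the edge algebra; the \emph{normalizer intertwines} into one of the two vertex algebras; or the normalizer is \emph{amenable relative to the edge} algebra. Your $(\beta)$ replaces the intertwining-into-vertex alternatives by relative amenability over the vertex algebras, which is not what the theorems give, and you omit the relative-amenability-over-the-edge alternative (which the paper handles as case (6)). This matters for the continuation of the descent: from ``$\Delta(P)\preceq M\bar\otimes L\La_{n-1}$'' one cannot simply relocalize $\Delta(C)$ inside $M\bar\otimes L\La_{n-1}$ and reapply the dichotomy, because $\Delta(C)$ is not a masa there and the inductive hypothesis concerns masas in corners of $M$ with finite-index normalizers. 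The paper instead pulls the intertwining back to $M$ via Proposition \ref{intpreim}, concludes $\delta_k^{-1}(\La_{n-1})$ has finite index in $\G$, transfers the masa-with-finite-index-normalizer structure into a corner of $A\rtimes\delta_k^{-1}(\La_{n-1})$ using \cite[Proposition 3.6]{CIK13}, and applies the induction hypothesis to the restricted homomorphism $(\delta_k)|_{\delta_k^{-1}(\La_{n-1})}$. You flag this bookkeeping as the main obstacle but do not resolve it, and as set up (descending inside $M\bar\otimes LH_k$ rather than inside $M$) it does not go through.
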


\begin{proof}Throughout the proof we denote by $A=L^{\infty}(X)$. Fixing $k$ we will proceed by induction on $n_k$. When  $n_k=0$ we have that $\La=\La_0 $ is a non-elementary hyperbolic group. Let $\Delta \colon M\ra M\bar\otimes L\La$ be the von Neumann algebra $*$-homomorphism arising from $\delta_k$ as described in Notation \ref{comu}. Then, applying \cite[Theorem 3.1]{PV12} for subalgebra $\Delta(C)\subset M\bar \otimes L\La$, one of the following must hold:
\begin{enumerate}
\item[(1)] $\Delta(C)\preceq_{M\bar\otimes L\La} M\otimes 1$;
\item[(2)] $\Delta(P)$ is amenable relative to $M\otimes  1$ inside $M\bar\otimes L\La$.
\end{enumerate} 

Assuming situation (1), then Proposition \ref{intpreim} implies that $C\preceq_M A\rtimes \ker(\delta_k)$. Next assume we are in  situation (2).  Since $P$ has finite index in $M$ then by \cite[Lemma 2.4]{CIK13} we have   $pMp\preceq^s_{pMp} P$ and thus $\Delta(pMp)$ is amenable relative to $\Delta(P)$ inside $pMp\bar\otimes L\La$.
By \cite[Theorem 2.4]{OP07} this further implies that $\Delta(pMp)$ is amenable relative to $M\otimes  1$ inside $M\bar\otimes L\La$. Applying \cite[Proposition 3.5]{CIK13} we deduce that  $\delta_k(\G)$ is amenable and finitely generated. By Remark \ref{vab} $\delta_k(\G)$ is virtually abelian.

Now we show the induction step. For simplicity we denote by $A_n=A^k_{n_k}$, $B_n=B^k_{n_k},$ $\La_{n-1}=\La^k_{n_k-1}$ and following the notation a) above we either have  $\G=\La_{n-1}\star_{A_n}B_n$ or $\G=\La_{n-1}\star_{A_n}$. Applying \cite[Theorem 1.6]{Io12} (or more directly \cite[Theorem A]{Va13}) and \cite[Theorem 4.1]{Va13} for subalgebra $\Delta(C)\subset M\bar \otimes L\G= (M\bar\otimes L \La_{n-1}) \star_{M\bar\otimes LA_n} (M\bar\otimes LB_n) $, one of the following must hold:
\begin{enumerate}
\item[(3)] $\Delta(C)\preceq_{M\bar\otimes L(\G)} M\bar\otimes L(A_n)$;
\item[(4)] $\Delta(P)\preceq_{M\bar\otimes L(\G)} M\bar\otimes  L(B_n)$;
\item[(5)] $\Delta(P)\preceq_{M\bar\otimes L(\G)} M\bar\otimes  L(\Lambda_{n-1})$;
\item[(6)] $\Delta(P)$ is amenable relative to $M\bar \otimes L(A_n)$ inside $M\bar \otimes L(\G)$.
\end{enumerate} 

We will analyze each of these cases separately. First, assume we are in situation (6). Since $P$ has finite index in $M$ then \cite[Lemma 2.4]{CIK13} implies  that $pMp\preceq^s_{pMp} P$ and thus $\Delta(pMp)$ is amenable relative to $\Delta(P)$ inside $pMp\bar\otimes L(A_n)$. Hence \cite[Theorem 2.4]{OP07} gives that $\Delta(pMp)$ is amenable relative to  $pMp\bar\otimes L(A_n)$ inside $M\bar \otimes L(\La)$. Applying Proposition \ref{rel-am} ( whose proof is similar with the proof of \cite[Proposition 3.5]{CIK13}) this further implies that $\delta_k(\G)$ is amenable relative to $A_n$ inside $\La$. Since $A_n$ is finitely generated abelian it follows that $\delta_k(\G)$ is finitely generated amenable and hence virtually abelian, as before.

Assume (4). Then by Proposition \ref{intpreim} we have $P\preceq_M A\rtimes  \delta_k^{-1}( \mu B_n \mu^{-1})$ where $\mu\in \La_{n-1}\star_{A_n} B_n=H_k$. Writing $\La_{n-1}\star_{A_n} B_n=(\mu \La_{n-1}\mu^{-1})\star_{(\mu A_n \mu^{-1})} (\mu B_n\mu^{-1})$ can assume w.l.o.g.\ that $\mu B_n \mu^{-1}=B_n$ and proceeding as in the beginning of the proof of \cite[Theorem 3.1]{CIK13} we can find projections $p_1, p_2\in \mathcal Z(P)$ with $p_1+p_2=p$ and $p_1\neq 0$ such that  $Pq_1 \preceq^s_M A\rtimes  \delta_k^{-1}(B_n)$ and $Pp_2 \npreceq_M A\rtimes  \delta_k^{-1}(B_n)$. As before, since $ P\subset pMp$  has finite index it follows that  $p_1Mp_1  \preceq^s_M Pp_1$. Altogether, by \cite[Remark 3.7]{Va07}, we have  $p_1Mp_1  \preceq_M A\rtimes  \delta_k^{-1}(B_n)$. This further implies that $\delta_k^{-1}(B_n)$ has finite index in $\G$ and since $B_n$ is abelian then it follows that $\delta_k(\G)$ is virtually abelian.

Assuming (5), Proposition \ref{intpreim} implies $P\preceq_M A\rtimes  \delta_k^{-1}(\mu \Lambda_{n-1}\mu^{-1})$ for some $\mu\in \La_{n-1}\star_{A_n} B_n=H_k$. As before suppose w.l.o.g. that $\mu \Lambda_{n-1}\mu^{-1}=\La_{n-1}$. As in the previous case $\delta_k^{-1}(\La_{n-1})$ has finite index in $\G$. Also since $C\preceq_M A\rtimes  \delta_k^{-1}(\Lambda_{n-1})$ and $C\subset pMp$ is a masa then by \cite[Proposition 3.6]{CIK13} we can find a non-zero projection $p_1\in A\rtimes  \delta_k^{-1}(\Lambda_{n-1})$ and a masa $B_1\subset p_1(A\rtimes  \delta_k^{-1}(\Lambda_{n-1}))p_1$ such that $P_1\subset p_1(A\rtimes  \delta_k^{-1}(\Lambda_{n-1}))p_1$ has finite index, where $P_1=\mathcal N_{p_1(A\rtimes  \delta_k^{-1}(\Lambda_{n-1}))p_1}(B_1)''$.  Moreover, we can find non-zero projections $p_0\in C$, $p_1'\in B_1'\cap p_1Mp_1$, and a unitary $u\in M$ such that $u(Cp_0)u^*=B_1p_1'$. Also, by the induction assumptions applied to the group homomorphism $(\delta_k)_{|\delta_k^{-1}(\La_{n-1})}\colon\delta^{-1}(\La_{n-1})\ra \La_{n-1}$ and the algebra $B_1\subset p_1(A\rtimes  \delta_k^{-1}(\Lambda_{n-1}))p_1$, we have one of the following: 
\begin{enumerate}
\item[(7)] $\delta_k(\delta_k^{-1}(\La_{n-1}))$ is virtually abelian;
\item[(8)] $B_1\preceq^s_ {A\rtimes  \delta_k^{-1}(\Lambda_{n-1})} A\rtimes  \ker(\delta_{|\delta_k^{-1}(\La_{n-1})})$ .
\end{enumerate}
Since $\delta_k^{-1}(\La_{n-1})$ has finite index in $\G$ then (7)  implies that $\delta_k(\G)$ is also virtually abelian. Also if we assume (8) then combining with the above we get that $C\preceq_M A\rtimes \ker ( \delta_k)$.

 Assuming (3), Proposition \ref{intpreim} gives $C\preceq_M A\rtimes  \delta_k^{-1}(\mu A_n\mu^{-1} )$ for some $\mu\in \La_{n-1}\star_{A_n} B_n=H_k$. Since $H_k$ is CSA, one can find a maximal abelian subgroup $\mu A_n \mu^{-1}< \Sigma<H_k$ which is malnormal in $H_k$ and notice  that $C\preceq_M A\rtimes  \delta_k^{-1}(\Sigma)=N$. Thus, one can find non-zero projections $z\in C$ and $q\in N$, a non-zero partial isometry $v\in Mp$, and a $*$-homomorphism $\theta \colon Cz\ra qNq$ such that 
 \begin{equation}\label{8.1}\theta(x)v=vx,\text{ for all }x\in Cz.\end{equation}   
 
 Since $C\subset pMp$ is a masa then $v^*v =z$ and $q':=vv^*\in \theta (Cz)'\cap qMq$. Also, we can assume w.l.o.g.\ that the support projection of $E_N(q')$ equals $q$.  Moreover, as in \cite[Lemma 1.5]{Io11} we can assume that $\theta(Cz)$ is a masa in $qNq$.  Notice that if $\theta(Cz)\preceq_N A\rtimes \ker(\delta_k)$ then using \cite[Lemma 1.4.5]{IPP05} we get that $C\preceq_M A\rtimes \ker(\delta_k)$. So for the rest assume that $\theta(Cz)\npreceq_N A\rtimes \ker(\delta_k)$. Applying Proposition  \ref{int} we have $\mathcal N_{qMq}( \theta (Cz))''\subset qNq$. Since $\theta(Cz)$ is a masa in $qNq$ this further implies $vv^*=q'\in qNq\cap \theta (Cz)'= \theta(Cz)$. If $u\in M$ is unitary such that $v= uz$ then $uzu^*=q'$ and relation (\ref{8.1}) implies that  $ \theta(Cz)q' =uBzu^*$. This together with the hypothesis assumptions imply that  $\mathcal N_{q'Mq'}( \theta (Cz)q')''=  u\mathcal N_{zMz}(Cz)''u^*$ has finite index in $uzMzu^*$. Thus, since $\mathcal N_{qMq}( \theta (Cz))''\subset qNq$ then $q'Nq'$ also has finite index in $q'Mq'$. Since by assumptions these are II$_1$ factors if follows that $\delta_k^{-1}(\Sigma)$ has finite index in $\G$; hence, $\delta_k(\G)$ is virtually abelian.
 
 Summarizing, in all cases we have obtained that either $C\preceq_M A\rtimes \ker ( \delta_k)$ or $\delta_k(\G)$ is virtually abelian. Proceeding as in the beginning of the proof of \cite[Theorem 3.1]{CIK13} we obtain the desired conclusion. \end{proof}

\begin{theorem}\label{c-sr1}Let $H_1, H_2, \ldots, H_n$ be groups as in Notation \ref{not2}. Let $\G$ be an ICC group for which exists in injective group homomorphism $\phi\colon \G_0\ra H_1\times H_2\times \cdots \times H_n$. Assume that $\G_0\ca (X,\mu)$ is  free, measure preserving action on a non-atomic probability space whose restriction to any non-amenable subgroup of $\G_0$ is ergodic. Denote by $M_0=L^{\infty}(X)\rtimes \G_0$ the corresponding group measure space von Neumann algebra. For any Cartan subalgebra $B\subset M_0$ there exists a unitary $d\in \mathcal U(M_0)$ such that $dBd^*=L^\infty(X)$.     
\end{theorem}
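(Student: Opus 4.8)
The plan is to run the per-factor dichotomy of Lemma \ref{dicho} on the Cartan subalgebra $B$, combine the resulting intertwinings over the factors $H_1,\dots,H_n$, and then invoke the standard conjugacy criterion for Cartan subalgebras. Write $A=L^\infty(X)$ and $\delta_k=\pi_k\circ\phi\colon\G_0\to H_k$, where $\pi_k$ is the coordinate projection; since $\phi$ is injective, $\bigcap_k\ker\delta_k=\ker\phi=\{e\}$, so $\G_0$ together with the maps $\delta_k$ fits the framework of Notation \ref{not2}. I would first note that $\G_0$ is non-amenable: otherwise each $\delta_k(\G_0)$ would be amenable, hence virtually abelian by Remark \ref{vab}, forcing $\G_0$ to be virtually abelian and contradicting that $\G_0$ is infinite and ICC. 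Thus the restriction of the action to $\G_0$ is ergodic and $M_0$ is a II$_1$ factor. A Cartan subalgebra $B\subset M_0$ is in particular a masa with $\mathcal N_{M_0}(B)''=M_0$, so Lemma \ref{dicho} applies with $p=1$ and $C=B$, giving for each $k$ either (c) $B\preceq^{s}_{M_0}A\rtimes\ker(\delta_k)$, or (d) $\delta_k(\G_0)$ is virtually abelian.

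Next I would turn these per-factor statements into a single intertwining into $A$. Set $S=\{\,k:\delta_k(\G_0)\text{ is virtually abelian}\,\}$ with complement $S^c$; for $k\in S^c$ alternative (d) fails, so alternative (c) holds, and by the argument of the previous paragraph $S^c\neq\emptyset$. Using the standard fact that strong intertwining into crossed products by subgroups is stable under finite intersections (as in the combination steps of \cite{CIK13}), from $B\preceq^{s}_{M_0}A\rtimes\ker(\delta_k)$ for all $k\in S^c$ I would deduce $B\preceq^{s}_{M_0}A\rtimes P$, where $P=\bigcap_{k\in S^c}\ker(\delta_k)$. The whole matter then reduces to showing $P=\{e\}$.

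To see $P=\{e\}$ I would appeal to Proposition \ref{abelianreduction}. If $S=\emptyset$ then $P=\bigcap_k\ker\delta_k=\{e\}$ at once. Otherwise $\delta_S=(\delta_k)_{k\in S}$ has virtually abelian image $A_0:=\delta_S(\G_0)$, being a subgroup of the product $\prod_{k\in S}\delta_k(\G_0)$ of virtually abelian groups, and the homomorphism $\Phi=(\delta_{S^c},\delta_S)\colon\G_0\to(\prod_{k\in S^c}H_k)\times A_0$ is injective (its kernel is $\bigcap_k\ker\delta_k=\{e\}$) with $\pi_{A_0}\circ\Phi=\delta_S$ onto $A_0$. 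Proposition \ref{abelianreduction} then yields a finite-index subgroup $\G_1\leq\G_0$ on which $\delta_{S^c}=\pi_H\circ\Phi$ is injective, i.e.\ $P\cap\G_1=\{e\}$. Since $[\G_0:\G_1]<\infty$, the resulting injection $P\hookrightarrow\G_0/\G_1$ forces $P$ to be finite; being $\ker(\delta_{S^c})$ it is normal in $\G_0$, and a finite normal subgroup of the ICC group $\G_0$ is trivial. Hence $P=\{e\}$ and therefore $B\preceq^{s}_{M_0}A$.

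Finally, since $A$ and $B$ are two Cartan subalgebras of the II$_1$ factor $M_0$ with $B\preceq_{M_0}A$, the standard conjugacy criterion for Cartan subalgebras (see, e.g., \cite{PV11}) produces a unitary $d\in\mathcal U(M_0)$ with $dBd^*=A=L^\infty(X)$, which is the assertion. The hard part will be the middle two steps, namely controlling the factors for which only alternative (d) is available: the intersection stability of strong intertwining and, above all, the passage from ``$\delta_k(\G_0)$ virtually abelian'' to the triviality of $P$ by combining Proposition \ref{abelianreduction} with the ICC hypothesis form the crux, since without eliminating the virtually abelian factors one lands only in $A\rtimes P$ rather than in $A$ itself.
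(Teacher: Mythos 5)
Your argument is correct, and it reorganizes the paper's proof in a way that is arguably cleaner. The paper does not run Lemma \ref{dicho} on $B\subset M_0$ directly: it first chooses a finite-index subgroup $\G<\G_0$ and a subset $F\subset\{1,\dots,n\}$ of minimal cardinality $m$ such that some finite-index subgroup embeds in $H_F$, transfers $B$ to a masa $C$ with finite-index normalizer in a corner of $A\rtimes\G$ (via \cite[Proposition 3.6]{CIK13}), and applies Lemma \ref{dicho} there. The point of the minimality is that alternative (d) can then never occur: if some $\delta_k(\G)$ were virtually abelian, Proposition \ref{abelianreduction} would produce a finite-index subgroup embedding into $H_{F\setminus\{k\}}$, contradicting minimality. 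Hence all coordinates give (c), the kernels intersect trivially, \cite[Lemmas 2.3 and 2.7]{Va10} give $C\preceq_M A$, and this is pulled back to $B\preceq_{M_0}A$ via \cite[Lemma 1.4.5]{IPP05}. You instead accept that alternative (d) may occur for a set $S$ of coordinates and eliminate those coordinates a posteriori: applying Proposition \ref{abelianreduction} to $\Phi=(\delta_{S^c},\delta_S)$ shows that $P=\ker\delta_{S^c}$ meets a finite-index subgroup trivially, hence is finite and normal, hence trivial by the ICC property. The two proofs use exactly the same ingredients (Lemma \ref{dicho}, Proposition \ref{abelianreduction} combined with ICC, the intersection lemmas of \cite{Va10} for the normal subgroups $\ker\delta_k$ and a masa, and Popa's conjugacy criterion for intertwined Cartan subalgebras), but your version works directly with $B\subset M_0$ and avoids the passage to a finite-index subgroup together with the associated corner and partial-isometry bookkeeping. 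One small point of care: you need the injection produced by Proposition \ref{abelianreduction} to be the restriction of $\pi_H\circ\Phi=\delta_{S^c}$ itself, so that $P\cap\G_1=\{e\}$; the statement of the proposition only asserts the existence of some injective homomorphism, but its proof constructs exactly this restriction, so your use of it is legitimate.
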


\begin{proof}  For a subset $F\subset\{1,2,\ldots,n\}$ we denote by $H_F$ the subgroup  consisting of all elements of $H_1\times H_2\times \cdots \times H_n$ whose $i^{th}$ coordinate is trivial, for all $i\in \{1,2,\ldots,n\}\setminus F$. Let $1\leq m\leq  n$ be the smallest integer satisfying the following properties: \begin{enumerate}
\item there exists a finite index subgroup of $\G <\G_0$ for which there exists an injective group homomorphism $\theta \colon \G\ra H_F$ for some $F\subset \{1,2,\ldots,n\}$ with $|F|=m$;
\item there is no finite index subgroup $\G_1< \G$ for which there exists an injective group homomorphism $\nu \colon \G_1\ra H_K$ for any $K\subset \{1,2,\ldots,n\}$ with $|K|=m-1$.
\end{enumerate} 
Assume w.l.o.g.\ that $F=\{1,2,\ldots,m\}$ so $H_F=H_1\times H_2\times \cdots\times H_m$. Denote by $A=L^\infty(X)$ and since  $\G$ has finite index in $\G_0$ then we have that $B\preceq_{M_0} A\rtimes \G=M$. Thus there exists non-zero projections $r\in B$, $p\in M$, a partial isometry $w\in M$ and a $*$-homomorphism  $\theta_0\colon Br\ra pMp$ such that $\theta_0(x)w =wx$, for all $x\in Br$. Notice that $w^*w=r$ and $ww^*\in \theta_0(Br)'\cap pM_0p$. Also we can assume w.l.o.g.\ that the central support of $E_M(ww^*)$ equals $p$. Since $B\subset M_0$ is a Cartan subalgebra then by  \cite[Proposition 3.6]{CIK13} (see also \cite[Lemma 1.5]{Io11}) we can assume that $C:=\theta_0(Br)$ is also a masa in $pMp$ and its normalizer $P=\mathcal N_{pMp}(C)''$ has finite index in $pMp$.
For all $k=1,\ldots,m$ denote by $\delta_k=\pi_{H_k}\circ \theta$, where $\pi_{H_k}\colon H_1\times \cdots\times H_m\ra H_k$ is the canonical projection. Clearly $\delta_k$'s satisfy the properties listed in Notation \ref{not2}.  Then (1) above and Lemma \ref{dicho} imply that for each $1\leq k\leq m$ we have either c) $C\preceq^s_M A\rtimes \ker(\delta_k)$, or, d) $\delta_k(\G)$ is virtually abelian.

Next we claim that possibility d) above never occurs. Indeed if $m=1$ then this is straightforward as the group $\G$ is assumed non-amenable. When $m\geq 2$ assume there exists $k$ such that  $\delta_k(\G)$ is virtually abelian. Since $\G_0$ is  ICC then so is  $\G$ and by Proposition \ref{abelianreduction} there will be a finite index subgroup of $\G_1< \G$ and an injective group homomorphism $\nu \colon \G_1\ra H_K$ where $K=\{ 1,2,\ldots,m\}\setminus \{k\}$. Since $|K|=m-1$, this contradicts the minimality of $m$. Thus for every$1\leq k\leq m$ we have c) $C\preceq^s_M A\rtimes \ker(\delta_k)$. 
 
Notice that by similar arguments as before we can assume that c) holds if we replace $C$ by any algebra $Ca$ where $a\in C$ is a projection.  Moreover, since $C$ is a masa in $pMp$, $\ker(\delta_k)$ are normal in $\G$ satisfying $\bigcap^m_{k=1} \ker(\delta_k)=\{ e\}$, then \cite[Lemmas 2.3 and 2.7]{Va10} further imply that $C\preceq_M A$.  Combining this with the first part of the proof we have by \cite[Lemma 1.4.5]{IPP05} that  $B \preceq_M A$; then using \cite[Appendix 1]{Po01} we get the desired conclusion.\end{proof}

\subsection{Residually free groups and residually $\La$-groups}\label{prelimrhg}

Let $\La$ be a group.
Suppose that $\lambda$ is an element of $\Lambda$ whose centralizer in $\La$, denoted by $C=C_\La(\la)$, is maximal abelian in $\La$.
The amalgamated free product $\La \star_C (C\times \mathbb{Z})$ is then called an \textit{extension of centralizers} of $\La$.
A group obtained from $\La$ by a finite sequence of extensions of centralizers is called an \textit{iterated extension of centralizers} of $\La$.

A group $H$ is called \textit{toral relatively hyperbolic} if $H$ is torsion-free and hyperbolic relative to some collection of subgroups, $\{ P_1,\ldots , P_s\}$, such that each $P_i$ is finitely-generated abelian.
Any toral relatively hyperbolic group is CSA (\cite[Lemma 6.9]{Gr09}, \cite[Section 1.4]{KM12}).
It implies that in any toral relatively hyperbolic group, the centralizer of any non-neutral element is maximal abelian (\cite[Remark 4]{MR96}).

Toral relative hyperbolicity is preserved under taking an extension of centralizers.
In fact, let $\La$ be a toral relatively hyperbolic group and pick a non-neutral element $\lambda \in \La$.
The centralizer $C=C_\La(\la)$ is maximal abelian in $\La$.
Let $L =\La \star_C (C\times \mathbb{Z})$ be the extension of centralizers of $\La$.
By \cite[Theorem 0.1 (2)]{Da03}, $L$ is toral relatively hyperbolic.

A group $\G$ is called \textit{residually free} if for every $\g\in \G$ there exists a homomorphism $\phi \colon \G \ra \mathbb{F}_2$ such that $\phi(\g)\neq 1$.
A group $\G$ is called \textit{fully residually free} if for every finite subset $F\subset \G$ there exists a homomorphism $\phi \colon \G \ra \mathbb{F}_2$ such that $\phi(\g)\neq 1$, for all $\g \in F$.
It is well known that the class of finitely generated, fully residually free groups coincides with the class of limit groups in the sense of Sela, \cite[Theorem 4.6]{Se01} (see also \cite[Theorem 1.1]{CG05}).
We refer to \cite{CG05, KM98a, KM98b, Se01} for details on (fully) residually free groups.

%It follows from the previous powerful results by Kharlampovich-Myasnikov \cite[Theorem 4]{KM98b} that any finitely generated, fully residually free group is a subgroup of an iterated extension of centralizers of a free group (see also \cite[Theorem 4.2]{CG05}), and is therefore obtained from a free group by finite iterated steps of amalgamation of the type described in Notation \ref{not2}.
%Notice that any iterated extension of centralizers of $\mathbb{Z}$ is $\mathbb{Z}^n$ for some $n\in \mathbb{Z}$, and is a subgroup of an iterated extension of centralizers of $\mathbb{F}_2$.
%It also follows from \cite[Corollary 2]{KM98b} and \cite[Claim 7.5]{Se01} that any finitely generated, residually free group is a subgroup of the direct product of finitely many, finitely generated, fully residually free groups.
%Therefore Theorem \ref{c-sr1} above is applicable and as a consequence we obtain the following:

%When $\La$ is any finitely generated non-abelian free group it follows from the previous powerful results by Kharlampovich-Myasnikov \cite{KM98a,KM98b} and independently by Sela \cite{Se01} that any finitely generated, fully residually-$\La$ group embeds into finite iterated steps of amalgamated free products of the type described in Notation \ref{not2}. 

\begin{cor}\label{c-sr}  If  $\G$ is any finitely generated, ICC, residually free group, then any free, mixing, p.m.p.\ action $\G \ca (X,\mu)$ on a non-atomic probability space is $\mathcal C$-superrigid.  \end{cor}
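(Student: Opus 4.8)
The plan is to deduce the corollary directly from Theorem \ref{c-sr1}. The analytic work is already done there, so the only thing to produce from the residual freeness of $\G$ is an injective homomorphism of $\G$ into a finite direct product of groups of the type described in Notation \ref{not2}. Once such an embedding is in hand, the remaining hypotheses of Theorem \ref{c-sr1} are easy to verify, and its conclusion — that $L^\infty(X)$ is the unique Cartan subalgebra of $M=L^\infty(X)\rtimes \G$ up to unitary conjugacy — is exactly the assertion that $\G \ca (X,\mu)$ is $\mathcal C$-superrigid.

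\emph{Group-theoretic input.} First I would invoke the structure theory of finitely generated residually free groups. By the results of Kharlampovich--Myasnikov and Sela \cite{CG05,KM98a,KM98b,Se01}, every finitely generated residually free group embeds into a finite direct product $L_1\times \cdots \times L_n$ of limit groups, and every limit group in turn embeds into an iterated extension of centralizers of a free group of finite rank. A single extension of centralizers has the form $\La'\star_C(C\times \mathbb{Z})$, where $C=C_{\La'}(\la)$ is the centralizer of a non-trivial element; this is precisely an amalgamated free product over the abelian group $C$, with abelian vertex group $C\times \mathbb{Z}$, and $C$ is a proper subgroup of both $\La'$ and $C\times \mathbb{Z}$. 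Since free groups are toral relatively hyperbolic and this property is stable under extension of centralizers \cite[Theorem 0.1]{Da03}, every group in such a tower is toral relatively hyperbolic, hence CSA, so the centralizer $C$ is maximal abelian and therefore malnormal in $\La'$ \cite{MR96,Gr09,KM12}. Thus each $L_k$ embeds into a group $H_k$ built from a free base by amalgamations of exactly the form required in the first clause of Notation \ref{not2}(a) (no HNN steps are needed, and Lemma \ref{lem-tree}(i) governs the amenable subgroups). To realize all factors over one common hyperbolic base $\La$, I would take $\La=\mathbb{F}_R$ with $R$ exceeding the ranks of all the base free groups, realize each base $\mathbb{F}_{r_k}$ as a free factor of $\La$, and use that a free factor of a free group is root-closed, so the centralizer of an element of $\mathbb{F}_{r_k}$ is unchanged whether computed in $\mathbb{F}_{r_k}$ or in $\La$; a short induction propagating a retraction at each stage then exhibits each $H_k$ as an iterated extension of centralizers of $\La$. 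Finally, writing $\phi\colon \G\hookrightarrow H_1\times\cdots\times H_n$ for the resulting embedding and $\delta_k=\pi_{H_k}\circ\phi$ for the coordinate homomorphisms, injectivity of $\phi$ gives $\bigcap_{k}\ker(\delta_k)=\{e\}$, completing the data of Notation \ref{not2}.

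\emph{Dynamical input and conclusion.} It remains to check the action hypothesis of Theorem \ref{c-sr1}. Because $\G\ca (X,\mu)$ is mixing, its restriction to any infinite subgroup is again mixing and hence ergodic; in particular its restriction to any non-amenable (a fortiori infinite) subgroup of $\G$ is ergodic. Moreover an ICC residually free group is non-amenable: a finitely generated amenable residually free group is abelian, and an abelian group is not ICC unless trivial. Thus $M$ is a genuine II$_1$ factor and the non-amenability used in the proof of Theorem \ref{c-sr1} (via Proposition \ref{abelianreduction}) is available. With $\G$ ICC, the embedding $\phi$ as above, and the ergodicity-on-non-amenable-subgroups condition verified, Theorem \ref{c-sr1} applies (with $\G$ in the role of its acting group $\G_0$) and shows that every Cartan subalgebra of $M$ is unitarily conjugate to $L^\infty(X)$, i.e.\ $\G\ca (X,\mu)$ is $\mathcal C$-superrigid.

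\emph{Main obstacle.} The genuinely delicate part is the group theory rather than the operator algebra. The two points requiring care are (i) the embedding of a finitely generated residually free group into a finite product of iterated extensions of centralizers of free groups — this is precisely where finite generation is used essentially, in place of the finite presentation hypothesis of Corollary \ref{main4} — and (ii) arranging all factors $H_k$ to be extensions of centralizers over a single non-elementary hyperbolic base $\La$ while preserving the malnormality and abelianity requirements of Notation \ref{not2}. Once the tower is set up so that each amalgamation is over a malnormal maximal abelian subgroup with abelian complementary vertex group, matching the hypotheses of Theorem \ref{c-sr1} is routine.
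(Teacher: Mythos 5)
Your proof is correct and follows essentially the same route as the paper's: embed $\G$ into a finite direct product of limit groups via Kharlampovich--Myasnikov and Sela, embed each limit group into an iterated extension of centralizers of a free group, observe that each extension of centralizers is an amalgam over a malnormal maximal abelian subgroup with abelian second vertex group as required by Notation \ref{not2}, and apply Theorem \ref{c-sr1} (with mixing supplying ergodicity on all infinite, hence all non-amenable, subgroups). The only divergence is your extra work arranging all towers over a single free base $\mathbb{F}_R$, which the paper does not bother with since the proof of Lemma \ref{dicho} treats each coordinate separately and needs only that each base is non-elementary hyperbolic; the paper instead just records that an iterated extension of centralizers of $\mathbb{Z}$ is $\mathbb{Z}^n$ and embeds into one of $\mathbb{F}_2$, a degenerate case your common-base device also covers.
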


\begin{proof}
It follows from \cite[Corollary 2]{KM98b} and \cite[Claim 7.5]{Se01} that there exist finitely generated, fully residually free groups $G_1,\ldots, G_s$ such that $\G$ is a subgroup of the direct product $G_1\times \cdots \times G_s$.
It further follows from \cite[Theorem 4]{KM98b} that each $G_i$ is a subgroup of an iterated extension $H_i$ of centralizers of a finitely generated, free group (see also \cite[Theorem 4.2]{CG05}).
Notice that any iterated extension of centralizers of $\mathbb{Z}$ is $\mathbb{Z}^n$ for some $n\in \mathbb{N}$, and is a subgroup of an iterated extension of centralizers of $\mathbb{F}_2$.
We may therefore assume that each $H_i$ is an iterated extension of centralizers of a finitely generated, non-abelian free group $F_i$, and is obtained from $F_i$ by finite iterated steps of amalgamation of the type described in Notation \ref{not2}.
The corollary follows from Theorem \ref{c-sr1}.
\end{proof}

Fix a group $\La$. A group $\G$ is called \emph{residually-$\La$} if for every $\g\in \G$ there exists a homomorphism $\phi \colon \G \ra \La $ such that $\phi(\g)\neq 1$.
When we have the stronger property that for every finite set  $F\subset  \G$ there exists a homomorphism $\phi \colon \G \ra \La $ such that $\phi(\g)\neq 1$, for all $\g\in F$,  then $\G$ is called \emph{fully residually-$\La$}.
We refer to \cite{Gr05, Gr09, KM13, KM12} for details on (fully) residually $\Lambda$-groups when $\La$ is hyperbolic, or more generally toral relatively hyperbolic.

%When $\La$ is any finitely generated, non-abelian free group then any residually-$\La$  group is called \emph{residually free}, and any fully residually-$\La$ group is called \emph{fully residually free}. It is well known the latter coincides with the class of all limit groups in the sense of Sela, \cite[Theorem 4.6]{Se01}.

%When $\La$ is any torsion-free, non-elementary hyperbolic group, Kharlampovich-MacDonald found effective embeddings of arbitrary finitely presented, residually-$\La$ groups into finite direct products of toral relatively hyperbolic groups which are also finitely generated, fully residually-$\La$ groups, \cite{KM13}.

\begin{cor}\label{c-sr-La} Let $\La$ be a torsion-free, non-elementary, hyperbolic group. If $\G$ is any finitely presented, ICC, residually $\La$-group, then any free, mixing, p.m.p.\ action $\G \ca (X,\mu)$ on a non-atomic probability space is $\mathcal C$-superrigid.  \end{cor}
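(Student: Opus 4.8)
The plan is to deduce the corollary from Theorem \ref{c-sr1} by producing an injective homomorphism from $\Gamma$ into a finite direct product of groups of the type described in Notation \ref{not2}, all built over the single base $\Lambda$, and then checking that the dynamical hypotheses of Theorem \ref{c-sr1} are met. The proof runs parallel to that of Corollary \ref{c-sr}, with free groups replaced by $\Lambda$. Since $\Lambda$ is torsion-free and non-elementary hyperbolic it is toral relatively hyperbolic, hence CSA, and the centralizer $C_\Lambda(\lambda)$ of any non-neutral element is maximal abelian and malnormal, as recalled in Subsection \ref{prelimrhg}; these are exactly the properties that make an extension of centralizers an amalgamated free product of the form appearing in Notation \ref{not2}(a).

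The first and principal step is group-theoretic. Using the structure theory of residually-$\Lambda$ and fully residually-$\Lambda$ groups over (toral relatively) hyperbolic $\Lambda$ developed in \cite{Gr09, KM13, KM12}, I would first invoke the product decomposition: since $\Gamma$ is finitely presented and residually-$\Lambda$, there exist finitely many finitely generated fully residually-$\Lambda$ groups $G_1,\ldots,G_s$ together with an injective homomorphism $\Gamma \hookrightarrow G_1\times\cdots\times G_s$. It is precisely here that finite presentation of $\Gamma$ is used in an essential way, in contrast with the residually free situation of Corollary \ref{c-sr}, where finite generation suffices. Next, by the embedding theorem for limit groups over $\Lambda$, each $G_i$ embeds into an iterated extension of centralizers $H_i$ of $\Lambda$. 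Composing, one obtains an injective homomorphism $\phi\colon \Gamma \hookrightarrow H_1\times\cdots\times H_s$.

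It then remains to verify that each $H_i$ is a group as in Notation \ref{not2} over the common base $\Lambda$, which in the residually-$\Lambda$ setting is automatic. By construction $H_i$ is obtained from $\Lambda$ by a finite sequence of extensions of centralizers $\Lambda' \mapsto \Lambda'\star_C(C\times\mathbb{Z})$, where $C=C_{\Lambda'}(\lambda)$ for some non-neutral $\lambda\in\Lambda'$. Each intermediate group $\Lambda'$ is again toral relatively hyperbolic, this property being preserved under extension of centralizers (Subsection \ref{prelimrhg}), hence CSA; therefore $C$ is maximal abelian and malnormal in $\Lambda'$. Moreover $C$ is proper in $\Lambda'$, since $\Lambda'$ contains the non-abelian group $\Lambda$, and proper in the abelian group $C\times\mathbb{Z}$. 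Thus every step is an amalgamated free product $\Lambda'\star_C(C\times\mathbb{Z})$ with $C$ abelian and malnormal and $C\times\mathbb{Z}$ abelian, exactly of the form required in Notation \ref{not2}(a); and each $H_i$ is CSA. Hence the factors $H_1,\ldots,H_s$ satisfy Notation \ref{not2} with base $\Lambda$.

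Finally I would check the dynamical hypotheses of Theorem \ref{c-sr1}. The group $\Gamma$ is ICC by assumption. Since $\Gamma\ca(X,\mu)$ is mixing, its restriction to any infinite subgroup is again mixing, hence ergodic; in particular its restriction to every non-amenable subgroup is ergodic. Therefore Theorem \ref{c-sr1} applies to $\phi\colon\Gamma\hookrightarrow H_1\times\cdots\times H_s$ and shows that every Cartan subalgebra of $M=L^\infty(X)\rtimes\Gamma$ is unitarily conjugate to $L^\infty(X)$, that is, $\Gamma\ca(X,\mu)$ is $\mathcal{C}$-superrigid. The substantive difficulty lies entirely in the group-theoretic first step: the two embeddings (the product decomposition over finitely generated fully residually-$\Lambda$ groups, which is what forces the finite presentation hypothesis, and the embedding of such groups into iterated extensions of centralizers) rest on the deep limit-group machinery over hyperbolic groups from \cite{Gr09, KM13, KM12}; once these are available, the verification of Notation \ref{not2} and the reduction to Theorem \ref{c-sr1} are routine.
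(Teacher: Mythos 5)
Your proposal is correct and follows essentially the same route as the paper: the paper invokes \cite[Theorem 3.21]{KM13} to embed $\G$ directly into a finite direct product of iterated extensions of centralizers of $\La$, which is precisely the composite of your two group-theoretic steps, and the remaining verification that each factor is of the form described in Notation \ref{not2} and that the hypotheses of Theorem \ref{c-sr1} hold (ICC, mixing implies ergodicity on infinite subgroups) is identical. The only cosmetic difference is that you factor the embedding through a product of finitely generated fully residually-$\La$ groups, a decomposition the paper leaves implicit inside the cited theorem.
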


\begin{proof}
It follows from \cite[Theorem 3.21]{KM13} that there exist iterated extensions of centralizers of $\La$, $H_1,\ldots, H_s$, such that $\G$ is a subgroup of the direct product $H_1\times \cdots \times H_s$.
The results mentioned in the beginning of this subsection imply that each $H_i$ is toral relatively hyperbolic and is obtained from $\La$ by finite iterated steps of amalgamation of the type described in Notation \ref{not2}.
The corollary follows from Theorem \ref{c-sr1}.
\end{proof}

\end{document}